\newcommand\Ar[3]{\ar[from={#1}, to={#2}, #3]}
\theoremstyle{plain}
\newtheorem{thm}{Theorem}[section]
\newtheorem{prp}[thm]{Proposition}
\newtheorem{lem}[thm]{Lemma}
\newtheorem{cor}[thm]{Corollary}
\newtheorem{clm}{Claim}
\newtheorem{prb}[thm]{Problem}
\newtheorem*{thm-nn}{Theorem}
\newtheorem*{prp-nn}{Proposition}
\newtheorem*{lem-nn}{Lemma}
\newtheorem*{cor-nn}{Corollary}
\newtheorem*{clm-nn}{Claim}
\newtheorem*{cnj-nn}{Conjecture}
\newtheorem*{prb-nn}{Problem}
\newtheorem*{fct}{Fact}
\theoremstyle{definition}
\newtheorem{dfn}[thm]{Definition}
\newtheorem{exm}[thm]{Example}
\newtheorem*{dfn-nn}{Definition}
\newtheorem{rmk}[thm]{Remark}
\newtheorem{ntn}[thm]{Notation}
\newcommand{\xyR}[1]{%
\xydef@\xymatrixrowsep@{#1}}
\newcommand{\xyC}[1]{%
\xydef@\xymatrixcolsep@{#1}}
\newcommand\al{\alpha}
\newcommand\be{\beta}
\newcommand\ga{\gamma}
\newcommand\de{\delta}
\newcommand\ep{\varepsilon}
\newcommand\ze{\zeta}
\newcommand\et{\eta}
\renewcommand\th{\theta}
\newcommand\la{\lambda}
\newcommand\ro{\rho}
\newcommand\si{\sigma}
\newcommand\Si{\Sigma}
\newcommand\ph{\phi}
\newcommand\ps{\psi}
\newcommand\Ga{\Gamma}
\newcommand\De{\Delta}
\newcommand\Ps{\Psi}
\newcommand\Hom{\operatorname{Hom}}
\newcommand\RHom{\mathbf{R}\!\Cdg}
\newcommand\End{\operatorname{End}}
\renewcommand\mod{\operatorname{mod}}
\newcommand\Mod{\operatorname{Mod}}
\newcommand\dgMod{\calC}
\newcommand\DGMod{\calC_{\mathrm{dg}}}
\newcommand\Cdg{\operatorname{\calC_{\mathrm{dg}}}}
\newcommand\ChMod{\operatorname{\calC}}
\newcommand\Jac{\operatorname{Jac}}
\newcommand\Pot{\operatorname{Pot}}
\newcommand\prj{\operatorname{prj}}
\newcommand\hprj{\operatorname{\calH_{\mathrm{p}}}}
\newcommand\thick{\operatorname{thick}}
\newcommand\perf{\operatorname{per}}
\newcommand\calA{{\mathcal A}}
\newcommand\calB{{\mathcal B}}
\newcommand\calC{{\mathcal C}}
\newcommand\calD{{\mathcal D}}
\newcommand\calE{{\mathcal E}}
\newcommand\calF{{\mathcal F}}
\newcommand\calH{{\mathcal H}}
\newcommand\calP{{\mathcal P}}
\newcommand\calS{{\mathcal S}}
\newcommand\calT{{\mathcal T}}
\newcommand\calU{{\mathcal U}}
\newcommand\calV{{\mathcal V}}
\newcommand\bbN{{\mathbb N}}
\newcommand\bbZ{{\mathbb Z}}
\newcommand\bbP{{\mathbb P}}
\newcommand\bbV{{\mathbb V}}
\newcommand\op{^{\mathrm{op}}} 
\newcommand\inv{^{-1}}
\renewcommand\implies{\text{$\Rightarrow$}\ }
\newcommand\incl{\hookrightarrow}
\newcommand\iso{\cong}
\newcommand\ds{\oplus}
\newcommand\ox{\otimes}
\newcommand\Lox{\overset{\mathbf{L}}{\otimes}}
\newcommand\udl{\underline}
\newcommand\ovl{\overline}
\newcommand\Ds{\bigoplus}
\def\dsm#1,#2..#3{\bigoplus_{{#1}={#2}}^{#3}}
\def\sm#1,#2..#3{\sum_{{#1}={#2}}^{#3}}
\newcommand\id{1\kern-.25em{\text{{\rm l}}}}
\newcommand\isoto{\ \raise.8ex\hbox{$^{\sim}$}\kern-.7em\hbox{$\to$}\ }
\newcommand\ya[1]{\xrightarrow{#1}}
\newcommand\blank{\operatorname{-}}
\newcommand\Ltimes{\overset{\mathbf{L}}{\otimes}}
\newcommand\bg{%
\family{cmr}\size{20}{12pt}\selectfont}
\newcommand\bigzerou{%
\smash{\lower1.7ex\hbox{\bg 0}}}
\def\repr[#1;#2;#3;#4;#5]{
\left(
\begin{matrix}#1\\#2\end{matrix}
#3
\begin{matrix}#4\\#5\end{matrix}
\right)}
\newcommand\bmat[1]{\begin{bmatrix} #1 \end{bmatrix}}
\newcommand\smat[1]{\begin{smallmatrix} #1 \end{smallmatrix}}
\newcommand\kCat{\Bbbk\text{-}\mathbf{Cat}}
\newcommand\VCat{\mathbb{V}\text{-}\mathbf{Cat}}
\newcommand\kAB{\Bbbk\text{-}\mathbf{AB}}
\newcommand\DGkCat{\k\text{-}\mathbf{dgCat}}
\newcommand\kdgCat{\k\text{-}\mathbf{dgCat}}
\newcommand\kdgCAT{\k\text{-}\mathbf{dgCAT}}
\newcommand\kDGCAT{\k\text{-}\mathbf{DGCAT}}
\newcommand\kDGCat{\k\text{-}\mathbf{DGCat}}
\newcommand\sfHom{\mathsf{Hom}}
\newcommand\coop{^{\mathrm{coop}}}
\newcommand{\dcoprod}{\displaystyle\coprod}
\newcommand\bfj{\mathbf{j}}
\newcommand\bfp{\mathbf{p}}
\newcommand\bfB{\mathbf{B}}
\newcommand\bfC{\mathbf{C}}
\newcommand\bfD{\mathbf{D}}
\newcommand\bfL{\mathbf{L}}
\newcommand\bfR{\mathbf{R}}
\newcommand\bfV{\mathbf{V}}
\def\Gr{\textstyle\int}
\renewcommand\k{\Bbbk}
\newcommand\Colax{\operatorname{Colax}}
\newcommand\dom{\operatorname{dom}}
\newcommand\com{\operatorname{com}}
\newcommand\To{\Rightarrow}
\newcommand{\bi}[3]{{}_{#2}{#1}_{#3}}
\newcommand{\hyph}{\text{-}}
\newcommand\dbigoplus{\displaystyle\bigoplus}
\newcommand\tri{\mathrm{tri}}
\newcommand\red{\mathrm{red}}
\newcommand{\Ginz}{\widehat{\Gamma}}
\newcommand\bfF{\mathbf{F}}
\newcommand\bfG{\mathbf{G}}
\newcommand\bfps{\boldsymbol{\ps}}
\newcommand\bfa{\boldsymbol{a}}
\newcommand\Univ{\mathsf{Univ}}
\newcommand\SET{\mathsf{SET}}
\newcommand\frU{\mathfrak{U}}
\newcommand\Cls{\mathbf{Class}}
\newcommand\Cat{\mathbf{Cat}}
\newcommand\CAT{\mathbf{CAT}}
\newcommand\uCAT{\underline{\CAT}}
\newcommand\TRI{\mathbf{TRI}}
\newcommand\kTRI{\Bbbk\text{-}\TRI}
\newcommand\kuTRI{\Bbbk\text{-}\underline{\TRI}}
\newcommand\kFRB{\Bbbk\text{-}\mathbf{FRB}}
\newcommand\st{\mathrm{st}}
\newcommand\qis{\mathrm{qis}}
\newcommand\Loc{\operatorname{Loc}}
\newcommand\stder{\overset{\mathrm{sd}}{\leadsto}}
\newcommand\stderr{\overset{\mathrm{sd}}{\,\reflectbox{$\leadsto$}\,}}
\newcommand{\dereq}{\overset{\text{der}}{\sim}}
\newcommand\Nname[1]{|[alias=#1]|}
\newcommand{\wdt}{\widetilde}
\begin{document}
\title[Standard derived equivalences of
diagrams of dg categories]{Characterizations of standard derived equivalences of
diagrams of dg categories and their gluings}

\author{Hideto Asashiba and Shengyong Pan}
\date{December 2025}

\subjclass[2020]{18G35, 16E35, 16E45, 16W22, 16W50}
\thanks{Hideto Asashiba is partially supported by Grant-in-Aid for Scientific Research
(B) 25287001 and (C) 18K03207 from JSPS, and by Osaka Central Advanced Mathematical Institute
(MEXT Joint Usage/Research Center on Mathematics and Theoretical Physics JPMXP0619217849); and
Shengyong Pan is supported by Beijing Natural Science Foundation (1262017,1252011) and the Fundamental Research Funds for the Central Universities of Beijing Jiaotong University (2024JBMC001).}
  
\address{Department of Mathematics, Faculty of Science, Shizuoka University,
836 Ohya, Suruga-ku, Shizuoka, 422-8529, Japan;}
\address{
Institute for Advanced Study, KUIAS, Kyoto University,
Yoshida Ushinomiya-cho, Sakyo-ku, Kyoto 606-8501, 
Japan; and}
\address{Osaka Central Advanced Mathematical Institute,
3-3-138 Sugimoto, Sumiyoshi-ku,
Osaka, 558-8585, Japan.}
\email{asashiba.hideto@shizuoka.ac.jp}

\address{School of Mathematics and Statistics, Beijing Jiaotong University,
Beijing, 100044, China.}
\email{shypan@bjtu.edu.cn}

\begin{abstract}
A diagram consisting of differential graded (dg for short) categories and dg functors
is formulated in this paper as a colax functor $X$
from a small category $I$ to the 2-category $\kdgCat$ of small dg categories, dg functors and dg natural transformations over a fixed commutative ring $\k$.
If $I$ is a group regarded as a category with only one object $*$,
then $X$ is nothing but a colax action of the group $I$ on the dg category $X(*)$.
In this sense, this $X$ can be regarded as a generalization of a dg category with a colax action of a group.
We define a notion of standard derived equivalence between such colax functors by generalizing the corresponding notion between dg categories with a group action.
Our first main result gives some characterizations of this notion, one of which is given in terms of generalized versions of a tilting object and a quasi-equivalence.
On the other hand, for such a colax functor $X$,
the dg categories $X(i)$ with $i$ objects of $I$ can be glued together to have a single dg category $\Gr_I X$, called the Grothendieck construction of $X$.
Our second main result asserts that for such colax functors $X$ and $X'$, the Grothendieck construction $\Gr_I X'$ is derived equivalent to $\Gr_I X$ if there exists a standard derived equivalence from $X'$ to $X$. 
These results generalize the first-named author's results to the dg case, respectively.
Even for dg categories with group actions, these results are new.
In particular, the second result gives a new tool to show the derived equivalence between the orbit categories of dg categories with group actions, which will be illustrated in some examples.
\end{abstract}

\keywords{
Grothendieck construction, 2-category, colax functor, pseudofunctor, derived equivalence, dg category}

\maketitle

\tableofcontents

\section{Introduction}
Throughout this paper we fix a commutative ring $\k$, and all linear categories
and all linear functors are considered to be linear over $\k$.
For any category $\calC$, we denote
by $\calC_0$ and $\calC_1$ the collections of all objects of $\calC$ and of all morphisms in $\calC$, respectively.
Let $\calA$ be a linear category.
Then we have canonical embeddings
$\calA \hookrightarrow \Mod \calA \hookrightarrow \calD(\Mod \calA)$,
where $\Mod \calA$ denotes the category of (right) $\calA$-modules, and
$\calD(\Mod \calA)$ stands for the derived category of $\calA$-modules.
Two linear categories $\calA$ and $\calA'$ are said to be
{\em derived equivalent} if $\calD(\Mod \calA)$ and $\calD(\Mod \calA')$
are equivalent as triangulated categories.
If $\calA$ and $\calA'$ are {\em Morita equivalent}, i.e., if $\Mod \calA$
and $\Mod \calA'$ are equivalent as linear categories, then $\calA$ and $\calA'$
are derived equivalent, but the converse is not true in most cases.
Thus, the derived equivalence classification is usually rougher than the Morita equivalence classification.
Brou\'{e}'s abelian defect conjecture in \cite{Br} made this notion of derived equivalences more important.
In this connection, Rickard classified Brauer tree algebras up to derived equivalence in \cite{Rick2},
and the first-named author gave the derived equivalence classification of
representation-finite selfinjective algebras in \cite{Asa99}.
An essential tool for the classifications above was given by
Rickard's Morita type theorem for derived categories of rings in \cite{Rick}, which was
generalized later by Keller in \cite{Ke1} to differential graded (dg for short)
categories with an alternative proof.
Both theorems give very useful criteria to check whether rings or dg categories
are derived equivalent in terms of tilting complexes or tilting subcategories,
which will be also used in this paper as a fundamental tool.

Recall that a dg category is a graded linear category whose morphism spaces are endowed with differentials satisfying suitable compatibility with the grading, and note that a dg category with a single object is nothing but a dg algebra.
Dg categories are used to enhance triangulated categories
by Bondal--Kapranov in \cite{BK}, which was
motivated by the study of exceptional collections of coherent sheaves on projective varieties.
Also, they are efficiently used in \cite{Ke3} by Keller
to compute derived invariants such as
K-theory, Hochschild (co-)homology and cyclic homology associated with a ring or a variety.

Now, we come back to derived equivalences of linear categories.
If $\calA$ and $\calA'$ are derived equivalent linear categories,
then they share invariants under derived equivalences,
such as the center, the Grothendieck group, and those listed in the previous paragraph.
If we have the classification of a class $\calS$ of linear categories under derived equivalences,
then the computation of an invariant under derived equivalences in question for a complete set of representatives
gives the invariant for all linear categories in the class $\calS$.
To obtain such a classification we need a tool that produces a lot of derived equivalent pairs $\calA$ and $\calA'$.
In \cite{Asa-13}, we have considered a diagram of linear categories over a small category $I$,
which is formulated as a colax functor $X$ from $I$ to the $2$-category $\kCat$
of small linear categories, linear functors and natural transformations.
Then for each morphism $a \colon i \to j$ in $I$,
we have a linear functor $X(a) \colon X(i) \to X(j)$ between linear categories.
We take the Grothendieck construction $\Gr_I X$ of $X$ as a gluing of these $X(i)$s along $X(a)$s.
For two such colax functors $X$ and $X'$, suppose that a derived equivalence from $X'(i)$ to $X(i)$
is given for each $i \in I_0$.
Then we have given a way to glue together these derived equivalences from $X'(i)$ to $X(i)$
to have a derived equivalence between the gluings $\Gr_I X'$ and $\Gr_I X$
if the derived equivalences are ``compatible'' with $X'(a)$ and $X(a)$ \ $(a \in I_1)$.
The latter condition was shown to follow if $X'$ and $X$ are derived equivalent in a natural sense.
This also shows us how to produce
a glued linear category $\Gr_I X'$ that is derived equivalent to $\Gr_I X$,
by using $X$ and derived equivalences from $X'(i)$ to $X(i)$ with $i \in I_0$.
The class of colax functors from $I$ to $\kCat$ is naturally extended to a $2$-category $\Colax(I, \kCat)$,
and hence it is possible to define a notion of equivalences between its objects.
A colax functor $X$ is said to be $\k$-\emph{projective} (resp.\ $\k$-\emph{flat}) if the $\k$-modules
$X(i)(x, y)$ are projective (resp.\ flat) for all $i \in I_0$
and for all objects $x,y$ of $X(i)$.
After defining a tilting colax functor for $X$,
the derived equivalence of colax functors are characterized in
the main result in \cite{Asa-a} as follows.

\begin{thm}
Let $X, X' \in \Colax(I, \kCat)_0$.
If $X$ and $X'$ are derived equivalent, then
$X'$ is equivalent in the $2$-category $\Colax(I, \kCat)$
to a tilting colax functor $\calT$ for $X$.
If $X'$ is $\k$-projective, then the converse holds.
\end{thm}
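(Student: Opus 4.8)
The plan is to reduce both implications to Keller's dg version of Rickard's Morita-type theorem for derived categories (\cite{Ke1}, \cite{Rick}), applied one object of $I$ at a time, and then to keep track of the $2$-categorical data --- the functors $X(a)$ for $a$ a morphism of $I$ and the colax structure $2$-cells --- throughout the argument.

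First I would treat the implication ``derived equivalent $\Rightarrow$ equivalent to a tilting colax functor''. By hypothesis we are given, for each object $i$ of $I$, a triangle equivalence $F_i \colon \calD(\Mod X'(i)) \xrightarrow{\sim} \calD(\Mod X(i))$, together with, for each morphism $a \colon i \to j$ of $I$, a natural isomorphism comparing $F_j$ composed with the functor induced by $X'(a)$ and the functor induced by $X(a)$ composed with $F_i$ (up to the colax $2$-cells), the whole family being subject to the coherence that makes it an equivalence of the associated colax functors $i \mapsto \calD(\Mod X(i))$. I would first arrange each $F_i$ to be standard, i.e.\ realized by a two-sided tilting complex, so that it lifts to the dg (or homotopy) level and one may take honest images of complexes. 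Then set $\calT(i)$ to be the full subcategory of $\calK^b(\prj X(i))$ on the objects $F_i(P)$, where $P$ runs over the representable $X'(i)$-modules. Since $F_i$ is a triangle equivalence, it carries this (tautologically tilting) subcategory of representables onto a tilting subcategory $\calT(i)$ of $\calD(\Mod X(i))$, and it restricts to a $\k$-linear equivalence $X'(i) \xrightarrow{\sim} \calT(i)$. The comparison isomorphisms transport $X'(a)$ to a functor $\calT(a) \colon \calT(i) \to \calT(j)$ and the colax structure of $X'$ to a colax structure on $\calT$; by construction $\calT(a)$ is the restriction of the functor induced by $X(a)$, so $\calT$ is a tilting colax functor for $X$, and the restricted equivalences $X'(i) \xrightarrow{\sim} \calT(i)$ together with the transported $2$-cells constitute an equivalence $X' \simeq \calT$ in $\Colax(I, \kCat)$.

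For the converse, assume $X'$ is $\k$-projective and $X' \simeq \calT$ in $\Colax(I, \kCat)$ for some tilting colax functor $\calT$ for $X$. An equivalence in $\Colax(I, \kCat)$ is in particular a family of $\k$-linear equivalences $X'(i) \simeq \calT(i)$ compatible with the structure, hence a derived equivalence of colax functors, so it suffices to prove $\calT$ and $X$ derived equivalent; moreover $\calT$ is then $\k$-projective, since $\k$-projectivity of the Hom-modules passes along $\k$-linear equivalences. Fix $i$. As $\calT(i)$ is a $\k$-flat (indeed $\k$-projective) tilting subcategory of $\calD(\Mod X(i))$, Keller's theorem provides a standard triangle equivalence between $\calD(\Mod \calT(i))$ and $\calD(\Mod X(i))$ under which the representable $\calT(i)$-modules correspond to the objects of $\calT(i)$ sitting inside $\calD(\Mod X(i))$; this exhibits $\calT(i)$ and $X(i)$ as derived equivalent. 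It remains to assemble these object-wise equivalences into an equivalence of colax functors, i.e.\ to produce coherent comparison isomorphisms for the induced functors along each morphism $a \colon i \to j$. Here I would use that, by the definition of a tilting colax functor, $\calT(a)$ is the restriction of the functor induced by $X(a)$ to the subcategories $\calT(i) \subseteq \calD(\Mod X(i))$; the comparison isomorphisms and their coherence then follow from the functoriality of the standard equivalences and from the fact that they are given by tilting bimodule complexes supported on these subcategories, $\k$-flatness being what makes the bimodule constructions interact well with the underived functors $X(a)$, $X(b)$ and the structure $2$-cells $X(b)X(a) \To X(ba)$.

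The main obstacle I anticipate is exactly this last step in both directions: verifying that the object-wise (standard) equivalences are compatible not merely with the functors $X(a)$ but with \emph{all} the colax structure data, so that the coherence axioms of a pseudo-natural family are satisfied. In the module-theoretic case one handles this by choosing the tilting bimodule complexes functorially and invoking that tensoring with a $\k$-flat bimodule complex commutes with the restriction and induction functors; carrying this through for the converse with only $\k$-projectivity of $X'$ at hand --- and, in the first implication, with no flatness hypothesis on $X$, where the compatibility is supplied by the given equivalence of colax functors rather than constructed --- is the delicate point, and it is what forces the extra hypothesis in the ``if'' part. The remaining ingredients, namely Keller's Morita theorem, the notion of tilting colax functor defined above, and the (immediate) observation that equivalences in $\Colax(I, \kCat)$ are derived equivalences, are quoted or routine.
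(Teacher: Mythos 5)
This theorem is quoted from \cite{Asa-a} and is not proved in the present paper; the closest internal comparison is the proof of its dg analogue, Theorem \ref{thm:characterization}, whose implications $(2)\Rightarrow(3)$ and $(3)\Rightarrow(1)$ correspond to your two directions. Your converse direction does match that argument: pass to $\calT$, use the tautological $\calT(i)$-$X(i)$-bimodule $U(i)$ with ${}_MU(i)_A = M(A)$ and Keller's theorem to obtain objectwise standard equivalences, and build the comparison $2$-cells from the structure isomorphisms $\ro(a)$ of the $I$-equivariant inclusion of $\calT$; the projectivity hypothesis on $X'$ enters essentially where you place it, namely in guaranteeing that the composite of the two tensor functors is again computed by a single (derived) tensor product of bimodules.

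The forward direction, however, contains a genuinely problematic step: ``I would first arrange each $F_i$ to be standard, i.e.\ realized by a two-sided tilting complex.'' This cannot be justified. It is not known in general that a triangle equivalence between derived categories is isomorphic to a standard one, and --- more to the point here --- even if each $F_i$ individually admitted a standard replacement, you would have no way to transport the given comparison $2$-cells $\bfps(a)$ (the data making $(F_i)_{i}$ into an equivalence of the derived colax functors) to those replacements; the whole content of this direction is that these $2$-cells are \emph{given}, not constructed. Fortunately the step is also unnecessary: the construction takes $\calT(i)$ to be the full subcategory of the perfect/homotopy category on objects isomorphic to $F_i(P^\wedge)$, whose morphism spaces are computed purely triangulated-categorically as
$\calD(\Mod X(i))(F_iP^\wedge, F_iQ^\wedge) \iso \calD(\Mod X'(i))(P^\wedge, Q^\wedge) \iso X'(i)(P,Q)$
via $F_i$ and Yoneda, so no lift of $F_i$ to the chain level is ever used. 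The closure of $\calT(i)$ under the functor induced by $X(a)$, and the $2$-cells of the equivalence $X' \simeq \calT$, are obtained by restricting the given $\bfps(a)$ to representables (compare the computation of $\DGMod(X(a))(D)$ for $D \iso F(i)(C^\wedge)$ in the proof of $(2)\Rightarrow(3)$ of Theorem \ref{thm:characterization}). If you delete the standardness reduction and argue directly in the derived category, your sketch of the forward direction becomes the intended argument.
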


The main result in  \cite{Asa-13} gives a sufficient condition for two colax functors to have derived equivalent Grothendieck constructions
as follows :

\begin{thm}
Let $X, X' \in \Colax(I, \kCat)_0$.
Assume that $X$ is $\k$-flat and that $X'$ is equivalent to a tilting colax functor $\calT$ for $X$ in $\Colax(I, \kCat)$.
Then $\Gr_I X$ and $\Gr_I X'$ are derived equivalent.
\end{thm}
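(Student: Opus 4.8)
The plan is to realise $\Gr(X')$ as the endomorphism category of a tilting subcategory of the derived category of $\Gr(X)$, and then to invoke the Morita-type theorem for derived categories recalled in the introduction. First I would reduce to the case $X'=\calT$: since the Grothendieck construction is a $2$-functor $\Colax(I,\kCat)\to\kCat$, the equivalence $X'\simeq\calT$ in $\Colax(I,\kCat)$ yields an equivalence $\Gr(X')\simeq\Gr(\calT)$ of $\k$-categories, hence a Morita (so in particular a derived) equivalence; by transitivity it then suffices to show that $\Gr(\calT)$ is derived equivalent to $\Gr(X)$.

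Next, recall the description of $\Gr(X)$-modules as colax data over the fibres: a right $\Gr(X)$-module amounts to a family $(M_i)_{i\in I_0}$ of right $X(i)$-modules together with morphisms tying $M_i$ to the restriction of $M_j$ along $X(a)$ for each $a\colon i\to j$ in $I$, subject to a compatibility condition encoding the colax structure of $X$, and the analogous statement holds for complexes. Using this I would glue the tilting subcategories $\calT(i)\subseteq\calD(\Mod X(i))$ into a single full subcategory $\mathbf T\subseteq\calD(\Mod\Gr(X))$. Here the $\k$-flatness of $X$ enters decisively: each object of $\calT(i)$ is represented by a bounded complex of $\k$-flat $X(i)$-modules, the induction functors $-\otimes_{X(i)}X(a)$ then already compute the derived induction, so the derived-category-level structure maps $\calT(a)$ lift to honest chain maps, and these assemble---via the colax structure of $\calT$---into genuine complexes of $\Gr(X)$-modules. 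Without flatness this lifting is obstructed.

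Then I would verify that $\mathbf T$ is a tilting subcategory of $\calD(\Mod\Gr(X))$. \emph{Compactness}: each glued complex is perfect, being built from finitely many perfect $X(i)$-complexes along the relevant morphisms of $I$. \emph{Self-orthogonality}: filtering a $\Gr(X)$-module by its fibres produces a spectral sequence that computes $\Hom_{\calD(\Mod\Gr(X))}(\mathbf T,\mathbf T[n])$ from the groups $\Hom_{\calD(\Mod X(i))}(\calT(i),\calT(i)[n])$ and from morphism complexes between distinct fibres; the former vanish for $n\neq0$ by hypothesis, and $\k$-flatness ensures the off-diagonal contributions vanish in nonzero degree. \emph{Generation}: the thick subcategory generated by $\mathbf T$ contains every fibre $\calT(i)$, hence all perfect $X(i)$-complexes, and since $\Gr(X)$ is assembled from the $X(i)$ and $X(a)$ this forces it to contain $\perf\Gr(X)$, which generates $\calD(\Mod\Gr(X))$. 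The same flatness-based bookkeeping identifies $\End_{\calD(\Mod\Gr(X))}(\mathbf T)$, with its composition law, with the Grothendieck construction $\Gr(\calT)$: the Hom-spaces are computed fibrewise as the $\calT(i)$, the off-diagonal blocks as the spaces $\calT(j)(\calT(a)(-),-)$, and composition reproduces the colax structure of $\calT$.

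Finally, the Morita-type theorem for derived categories applied to the tilting subcategory $\mathbf T$ yields a triangle equivalence $\calD(\Mod\Gr(X))\simeq\calD(\Mod\Gr(\calT))$; together with the reduction of the first paragraph this gives $\calD(\Mod\Gr(X))\simeq\calD(\Mod\Gr(X'))$, so $\Gr(X)$ and $\Gr(X')$ are derived equivalent. I expect the main obstacle to be the construction of $\mathbf T$ together with the verification of its self-orthogonality and of the identification $\End(\mathbf T)\cong\Gr(\calT)$: all three hinge on the $\k$-flatness of $X$, which is precisely why that hypothesis is indispensable here, in contrast with the characterisation theorem where flatness (or projectivity) is needed only for one implication.
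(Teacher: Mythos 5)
Your proposal is correct in outline and follows the same architecture as the proof this paper gives for the dg analogue of this statement (Theorem \ref{mainthm2}): reduce to $X'=\calT$ using that $\Gr$ preserves equivalences, glue the fibrewise tilting subcategories $\calT(i)$ into a tilting subcategory of the derived category of $\Gr(X)$, identify its endomorphism category with $\Gr(\calT)$, and conclude by the Morita-type theorem. The differences are concentrated in the two steps you single out as the obstacles. First, the paper never assembles a single object of the glued category out of data spread over several fibres: the glued subcategory consists of the images $\perf(P(i))(U)$ of the objects $U\in\calT(i)$ under induction along the canonical functors $P(i)\colon X(i)\to\Gr(X)$, so each generator lives over one $i\in I_0$ and no chain-level lifting of the structure maps $\calT(a)$ is needed; your ``assembly via the colax structure of $\calT$'' is, on the charitable reading, a reformulation of this induction, but as written it is ambiguous (a single total object woven out of all fibres would be the wrong thing). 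Second, and more substantially, the identification of the endomorphism category with $\Gr(\calT)$ --- including the off-diagonal blocks $\bigoplus_{a\in I(i,j)}\Hom(\calT(a)U,V)$ and the composition law --- is obtained in the paper not by direct bookkeeping but from the covering formalism: the canonical morphism $(P_X,\ph_X)\colon X\to\De(\Gr(X))$ is an $I$-covering via the isomorphism $\Gr(X)(P(i)x,P(j)y)\cong\bigoplus_{a}X(j)(X(a)x,y)$, induction preserves $I$-precoverings (Proposition \ref{prp:precovering-preserved} is the dg form of this), and the strict adjunction $\Gr\dashv\De$ (Corollary \ref{covering-Gr}) then forces the glued category to be $\Gr(\calT)$. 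This packages your ``flatness-based bookkeeping'' into one universal property; $\k$-flatness enters only to make the underived computation of these Hom-spaces agree with the derived one. Two small corrections: the vanishing of the off-diagonal blocks in nonzero degrees is not a consequence of flatness but of $\calT$ being a colax \emph{sub}functor (so $\calT(a)$ carries $\calT(i)$ into $\calT(j)$) combined with the fibrewise self-orthogonality; and flatness is not ``indispensable'' to the phenomenon --- the dg version proved here (via Corollary \ref{cor:Keller}) removes it entirely --- it is only indispensable to the chain-level shortcuts available when one stays inside $\Colax(I,\kCat)$.
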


As a special case when $I$ is a group $G$ (regarded as a category with a single object $*$),
$\Gr_I X = \calA/G$ is the orbit category
(also called the skew group category and denoted by $\calA * G$)
of the linear category $\calA:= X(*)$ with the $G$-action $X$, and hence
it tells us when a derived equivalence between linear categories $\calA$ and $\calA'$ with $G$-actions
have derived equivalent orbit categories $\calA/G$ and $\calA'/G$.

To consider derived equivalences of linear categories it is natural
to deal with it in the setting of dg categories as is seen
in \cite{Ke1}.
Therefore, it is natural to investigate the same problem for dg categories.
In this paper, we will do it by considering the 2-category $\kdgCat$
of small dg $\k$-categories instead of $\kCat$.
To avoid a set theoretic difficulty, we fix a Grothendieck universe
$\frU$ once for all and use the hierarchy:
$\Cls^0_0 \subset \Cls^1_0 \subset \Cls^2_0 \subset \cdots \subset \SET$ of the class $\SET$ of all sets given by Levy \cite{Le18},
where $\Cls^0_0:= \frU$, $\Cls^1_0:= \calP(\frU)$, the power set of $\frU$.
A 2-category $\bfC$ is called $k$-{\em moderate} ($k \ge 0$)
if $\bfC_0, \bfC_1, \bfC_2 \subseteq \Cls^k_0$, where
$\bfC_i$ is the set of all $i$-cells in $\bfC$ for all $i = 0,1,2$.
For a colax functor $X$ from the small category $I$
to $\kdgCat$, the ``derived colax functor'' $\calD(X)$ can be defined
in a natural way (see Corollary \ref{cor:colax-colax}), which is a
colax functor from $I$ to the 2-category $\kuTRI^2$ of 2-moderate
triangulated categories (see Notation \ref{ntn:2-cats}(4)).
For two colax functors $X$ and $X'$ from $I$
to $\kdgCat$, they are said to be \emph{derived equivalent} if
$\calD(X)$ and $\calD(X')$ are equivalent in the 2-category $\kuTRI^2$.
Here we consider a special type of derived equivalence, called
a ``standard derived equivalence''
and (i) characterize it by using the notions of quasi-equivalence
bimodules over colax functors
and tilting colax functors; and (ii) in that case we show that their gluings $\Gr_I X$ and $\Gr_I X'$ are derived equivalent.
More precisely, the first result is stated as follows.

\begin{thm}[Theorem \ref{thm:characterization-1} in the text]
\label{first-result}
Let $X,X'\in \Colax(I,\kdgCat)_0$. 
Then among the statements below, we have implications $(1) \Rightarrow (2) \Rightarrow (3) \Rightarrow (4)$. If $X$ and $X'$ are $\k$-flat, then we also have the implication $(4) \Rightarrow (1)$, namely, all four conditions are equivalent.
\begin{enumerate}
\item
There exists an $X'$-$X$-bimodule $Z$ such that
$\blank\Lox_{X'}Z \colon \calD(X') \to \calD(X)$ is an equivalence in
$\Colax(I, \kuTRI^2)$.

\item
There exists a $1$-morphism $(\bfF, \bfps)\colon \DGMod(X') \to \DGMod(X)$ in the $2$-category \newline
$\Colax(I,\kdgCAT)$ such that
$\bfL(\bfF, \bfps)\colon$ $\calD(X') \to \calD(X)$ is an equivalence in $\Colax(I, \kuTRI^2)$.
\item
There exists a tilting colax functor $T$ for $X$, and 
there exists a quasi-equivalence $X'$-$T$-bimodule $E$.

\item
There exists a tilting colax functor $T$ for $X$, and
there exist $1$-morphisms
\[
(\bfG, \bfps')\colon \DGMod(X') \to \DGMod(T)\ \text{and } (\bfF, \bfps)\colon \DGMod(T) \to \DGMod(X)
\]
in the $2$-category
$\Colax(I,\kdgCAT)$ such that 
\[\bfL(\bfG, \bfps')\colon\calD(X') \to \calD(T)\ \text{and}\ 
\bfL(\bfF, \bfps)\colon\calD(T) \to \calD(X)
\]
are equivalences in $\Colax(I, \kuTRI^2)$.

\end{enumerate}

\end{thm}

The equivalence of the form $\blank\Lox_{X'}Z \colon \calD(X') \to \calD(X)$
in (1) above is called a {\em standard derived equivalence} from $X'$ to $X$,
and we say that $X'$ {\em is standardly derived equivalent to} $X$
if one of the conditions in the theorem above holds.
We denote this situation by $X' \stder X$ or $X \stderr X'$.
The second result is stated as follows.

\begin{thm}[Theorem \ref{mainthm2-bimod} in the text]
\label{second-result}
Let $X, X' \in \Colax(I, \kdgCat)_0$, and
assume that $X$ is $\k$-flat.
If $X'$ is standardly derived equivalent to $X$,
or equivalently, if
there exists a quasi-equivalence $X'$-$T$-bimodule for some
tilting colax functor $\calT$ for $X$,
then $\Gr_I X'$ is derived equivalent to $\Gr_I X$.
\end{thm}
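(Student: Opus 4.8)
The plan is to reduce \thmref{second-result} to the known linear-category statement (the second theorem quoted above from \cite{Asa-13}) by passing through appropriate dg-enhanced categories and their derived categories, then invoking Keller's Morita-type theorem for dg categories. First I would unwind the hypothesis: by the equivalence $(1)\Leftrightarrow(3)$ of \thmref{first-result}, a standard derived equivalence $X' \stder X$ gives a quasi-equivalence $(E,\ph)\colon X' \to \calT$ for some tilting colax functor $\calT$ for $X$, together with an $X'$-$X$-bimodule $Z$ realizing $\blank\Lox_{X'}Z\colon \calD(X')\to \calD(X)$ as an equivalence in $\Colax(I,\kuTRI^2)$. The Grothendieck construction $\Gr$ is functorial on $\Colax(I,\kdgCat)$ (and on the 2-category of colax functors to dg module categories), so it is natural to ask how $\Gr$ interacts with $\calD(\blank)$ and with quasi-equivalences.

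The key steps, in order, are as follows. (i) Show that a quasi-equivalence $(E,\ph)\colon X'\to \calT$ induces a quasi-equivalence (hence a derived equivalence) $\Gr(E,\ph)\colon \Gr(X')\to \Gr(\calT)$ between the glued dg categories; this should follow from the componentwise description of $\Gr$, since a quasi-equivalence is a componentwise quasi-fully-faithful, quasi-essentially-surjective dg functor compatible with the structure 2-cells, and these properties are preserved by gluing. (ii) Identify $\Gr(\calT)$ as a full dg subcategory of a suitable dg module category over $\Gr(X)$ — concretely, the $\calT(i)$ are tilting subcategories for $X(i)$ living in $\calC_{\mathrm{dg}}(X(i))$, compatibly glued, so $\Gr(\calT)$ sits inside $\calC_{\mathrm{dg}}(\Gr(X))$ as a tilting dg subcategory whose objects generate $\perf(\Gr(X))$. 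This is the dg analogue of the gluing-of-tilting-subcategories argument in \cite{Asa-13}, and it is where the $\k$-flatness hypotheses in the linear case were used; one should check whether the dg formulation in \thmref{first-result} — which is stated without $\k$-flatness — lets us avoid that hypothesis here, or whether a cofibrant replacement of $X$ (a $\k$-flat or $\k$-projective model in $\Colax(I,\kdgCat)$) must be inserted first. (iii) Apply Keller's dg Morita theorem: a dg category $\calB$ admitting a tilting dg subcategory $\calS$ with $\thick(\calS)=\perf(\calB)$ and $H^{\neq 0}\Hom_{\calB}(\calS,\calS)=0$ is derived equivalent to $\calS$; applied to $\calB=\Gr(X)$ and $\calS=\Gr(\calT)$ this gives $\calD(\Gr(\calT))\simeq \calD(\Gr(X))$. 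Composing with the derived equivalence $\calD(\Gr(X'))\simeq \calD(\Gr(\calT))$ from step (i) yields $\Gr(X')$ derived equivalent to $\Gr(X)$.

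I expect step (ii) to be the main obstacle. The subtlety is twofold: first, one must show that the pointwise tilting subcategories $\calT(i)\subseteq \calC_{\mathrm{dg}}(X(i))$, together with the transition dg functors $X(a)$ and the coherence 2-cells, really do glue to a single tilting dg subcategory of $\calC_{\mathrm{dg}}(\Gr(X))$ — this requires understanding how modules over $\Gr(X)$ decompose into families of modules over the $X(i)$ with compatibility data, i.e.\ an equivalence $\calC_{\mathrm{dg}}(\Gr(X))\simeq \Gr(\calC_{\mathrm{dg}}\circ X)$ (or a close variant) at the level of dg categories. Second, one must verify the generation condition $\thick(\Gr(\calT))=\perf(\Gr(X))$, which is not automatic from pointwise generation $\thick(\calT(i))=\perf(X(i))$: one needs a dévissage along the morphisms of $I$, building the "standard" $\Gr(X)$-modules (the images of the representable $X(i)$-modules under the gluing) out of the objects of $\Gr(\calT)$ using triangles coming from the colax structure. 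The $\Ext$-vanishing $H^{\neq 0}\Hom_{\Gr(X)}(\Gr(\calT),\Gr(\calT))=0$, by contrast, should reduce to the pointwise vanishing plus an analysis of the cross terms $\Hom(\calT(i),X(a)_*\calT(j))$, which vanish in positive and negative degrees because $\calT(j)$ is a tilting subcategory and $X(a)$ is a dg functor; here is precisely where the absence of a $\k$-flatness assumption must be handled carefully, since the derived tensor $\Lox$ appearing in condition (1) of \thmref{first-result} is genuinely derived, and I would either thread the bimodule $Z$ through the computation directly or first replace $X$ by a cofibrant colax functor and transport the tilting data along the comparison.
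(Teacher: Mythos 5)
Your overall route coincides with the paper's: glue the pointwise tilting subcategories $\calT(i)$ into a single dg subcategory of $\DGMod(\Gr(X))$, show it is tilting for $\Gr(X)$, show that $\Gr$ sends the quasi-equivalence $X'\to\calT$ to a quasi-equivalence $\Gr(X')\to\Gr(\calT)$, and conclude by the dg Keller theorem. Steps (i) and (ii) are exactly the paper's Proposition \ref{prp:qis-qis} and the construction of $\calT'$ (identified with $\Gr(\calT)$ via the $I$-covering machinery of Proposition \ref{prp:precovering-preserved} and Corollary \ref{covering-Gr}). Two remarks on step (ii): the generation condition $\thick(\calT')=\perf(\Gr(X))$ does not require the d\'evissage along morphisms of $I$ that you anticipate; since $\perf(P(i))$ is a triangle functor, each representable $\Gr(X)(\blank,{}_ix)\iso\perf(P(i))(X(i)(\blank,x))$ lies in $\perf(P(i))(\thick\calT(i))\subseteq\thick\calT'$, and that is the whole argument. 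Likewise, no cofibrant replacement of $X$ is needed: the point of Corollary \ref{cor:Keller} is precisely that the $\k$-flatness hypothesis has been removed, so the final step goes through as stated.

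The genuine problem is in your step (iii). You state Keller's theorem with the extra hypothesis $H^{\neq 0}\Hom_{\Gr(X)}(\Gr(\calT),\Gr(\calT))=0$ and then propose to verify it by reducing to the components, claiming the cross terms vanish ``because $\calT(j)$ is a tilting subcategory.'' This is false: in Definition \ref{dfn:dg tilting} a tilting dg subcategory is only required to consist of compact objects generating $\perf$; there is no Ext-vanishing condition, and in general neither the self-Homs of the $\calT(i)$ nor the cross terms $\Hom(\calT(i),\DGMod(X(a))\calT(j))$ are concentrated in degree $0$. If that vanishing were actually needed, your proof would break here. It is not needed: the correct dg form of Keller's theorem (the implication $(3)\Rightarrow(1)$ of Corollary \ref{cor:Keller}, i.e.\ \cite[Lemma 6.1(a)]{Ke1} applied to the evaluation bimodule $U$ with ${}_BU=B$) yields a triangle equivalence $\calD(\calT')\to\calD(\Gr(X))$ from compactness and generation alone, with $\calT'$ retained as a dg category rather than replaced by $H^0(\calT')$. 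You should drop the Ext-vanishing clause and the attempted verification of it, and instead compose the quasi-equivalence $\Gr(X')\to\Gr(\calT)\simeq\calT'$ from step (i) with this equivalence, which is exactly how the paper concludes.
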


We remark that for any $X, X' \in \Colax(I, \kdgCat)$,
we do not know whether the relation 
$X' \stder X$ is a symmetric relation.
Therefore, when we say that ``$X$ and $X'$ are standardly derived equivalent'', this means that there exists a zigzag chain
form $X$ to $X'$ of the form
$X = X_0 \stder X_1 \stderr X_2 \stder \cdots \stderr X_{2n} = X'$
for some $n \ge 1$ and $X_1, X_2, \dots, X_{2n} \in \Colax(I, \kdgCat)$.
In contrast, for dg categories $\calA$ and $\calA'$
the relation that ``$\calA'$ is standardly derived equivalent to $\calA$''
is symmetric (\cite[Lemma 6.2 (b)]{Ke1}), which allows us to express it by saying that ``$\calA$ and $\calA'$ are standardly derived equivalent''.

We also remark the following two special cases.
The first one is the case where $I$ is a category with only one object $*$ and one morphism $\id_{*}$.
In this case, $X$ is identified with a dg category $X(*)$, and hence
Theorem \ref{first-result} generalizes Keller's theorem \cite[Theorem 8.1]{Ke1}.
The second one is the case when $I = G$ is a group.
In this case, $\Gr_I X = \calA/G$ is the orbit dg category 
of a dg category $\calA:= X(*)$ with a $G$-action, and hence
Theorem \ref{second-result} gives us a sufficient condition for a derived equivalence
between dg categories $\calA$ and $\calA'$ with $G$-actions
to have derived equivalent dg orbit categories $\calA/G$ and $\calA'/G$.
We will apply this to the complete Ginzburg dg algebras of quivers with potentials having a $G$-action.
Recall that a quiver with potentials was introduced by Derksen, Weyman and Zelevinsky in \cite{DWZ} to study the theory of cluster algebras.
From a quiver with potentials $(Q, W)$, the Jacobian algebra
$J(Q, W)$ and the completed Ginzburg dg algebra $\Ginz(Q, W)$ are defined,
which are related as $H^0(\Ginz(Q,W)) = J(Q,W)$.
Therefore, $\Ginz(Q,W)$ is regarded as an extension of Jacobian algebra to a dg algebra.

The orbit category (the skew group algebra) $J(Q,W)/G$ was computed
up to Morita equivalence
as the form $J(Q_G, W_G)$ for some quiver with potentials $(Q_G, W_G)$
by Paquette--Schiffler in \cite{PS} in the case that $G$ is a finite subgroup of
the automorphism group of $J(Q_G, W_G)$ acting freely on vertices.
On the other hand,
the orbit dg category (the skew group dg algebra) $\Ginz(Q,W)/G$ was computed
up to Morita equivalence
as the form $\Ginz(Q_G, W_G)$ for some quiver with potentials $(Q_G, W_G)$
by Le Meur in \cite{LM} in the case that $G$ is a finite group (see also Amiot--Plamondon \cite{AP} for the case that $G = \bbZ/2\bbZ$,
Giovannini and Pasquali \cite{GP} for the cyclic case, and Giovannini, Pasquali and Plamondon \cite{GPP} for the finite abelian case).
We remark that for both $J(Q,W)$ and $\Ginz(Q,W)$,
the quiver $Q_G$ can be computed by using a result by Demonet in \cite{De} on the
computation of the skew group algebra of the path algebra of a quiver with an action of a finite group, and in the arbitrary group case, $Q_G$ can be computed from a non-admissible presentation
given in \cite{Asa-Kim} by giving it as an admissible presentation.

By Keller--Yang \cite{KeY},
if $(Q',W')$ is obtained as a mutation of $(Q, W)$, then
the dg algebras $\Ginz(Q,W)$ and $\Ginz(Q',W')$ are derived equivalent.
Using Theorem \ref{first-result} above, we can show that
this derived equivalence sometimes induces
a derived equivalence between $\Ginz(Q_G, W_G)$ and $\Ginz(Q'_G, W'_G)$,
where even if $(Q_G, W_G)$ and $(Q'_G, W'_G)$ do not need to be obtained
by a mutation from each other.
For this phenomenon, an example will be given at the end of the paper
(see Example \ref{exm:Mizuno}).

The paper is organized as follows. In Section 2, we shall fix notations and prepare some basic facts for
our proofs. In Section 3, we collect basic facts about enriched categories that will be needed later.
In section 4, we will introduce the notion of $I$-coverings. This is a generalization of
that of $G$-coverings for a group $G$ introduced in \cite{Asa11}, which was
obtained by generalizing the notion of Galois coverings
introduced by Gabriel in \cite{Gab}. This will be used in the proof of Theorem \ref{second-result}.
In Section 5, we define a 2-functor $\Gr_I\colon \Colax(I, \DGkCat) \to \DGkCat$ whose
correspondence on objects is a dg version of (the opposite version of)
the original Grothendieck construction. In Section 6, we will show that the Grothendieck construction is a strict left adjoint to the diagonal 2-functor, and that $I$-coverings are essentially given
by the unit of the adjunction.
In Section 7, we will give the definition of derived colax functors
together with necessary pseudofunctors.
In Section 8, we define necessary terminologies such as $2$-quasi-isomorphisms for $2$-morphisms,
quasi-equivalences for $1$-morphisms, and the derived $1$-morphism
$\bfL (F, \ps) \colon \calD(X') \to \calD(X)$ of a $1$-morphism
$(F, \ps) \colon X' \to X$ between colax functors,
and show the fact that the derived $1$-morphism of a quasi-equivalence  1-morphism between colax functors $X$, $X'$
turns out to be an equivalence between derived dg module colax functors of $X$, $X'$.
Also, we give definitions of tilting colax functors and of derived equivalences.
In Section 9, we define standard derived equivalences of colax functors from $I$ to $\kdgCat$ and quasi-equivalence bimodules, and prove our Theorem \ref{first-result}, and in Section 10, we prove our Theorem \ref{second-result}.
Two examples are given in Section 11 that illustrate our second main theorem in the group action case.
In Appendix, we review the notions of quasi-equivalences and derived equivalences for dg categories for convenience of the reader.

\section*{Acknowledgements}
The basic part of this work was done during visits of the second author to Shizuoka University
from January to February in 2018 and in 2020. Most of the paper was written through discussions by Zoom afterword. 
The second author would like to thank the first author
for his nice hospitality and useful discussions. He is also very grateful to Xiao-Wu Chen, Bernhard Keller, Henning Krause and Changchang Xi for their encouragement and helpful discussions.

\section{Preliminaries}

In this section, after making some set theoretic remarks,
we recall the definition of the 2-category of colax functors
from a small category $I$ to a 2-category from \cite{Asa-a} (see also Tamaki \cite{Tam}).

First of all, we make set theoretic remarks
(see \cite{Le18} or \cite[Appendix A]{Asa-book} for details).
In this paper, we adopt ZFC (Zermelo-Fraenkel set-theory (ZF) with the axiom of choice (C)) as axioms of set theory,
and we do not assume the existence of urelements.
In addition, we assume {\em the axiom of universe}, stating that
any set is an element of some (Grothendieck) universe.
The class of all sets 
is denoted by
$\SET$
.
The power set of a set $A$ is denoted by $\calP A$, and the set of all non-negative
integers by $\bbN$.
Recall the following (see e.g., \cite{Wi}):
\begin{fct}
The class $\Univ$ of all universes are well-ordered.
\end{fct}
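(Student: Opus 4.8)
The plan is to classify the Grothendieck universes via the cumulative hierarchy and then transport the class well-ordering of the ordinals. The order on $\Univ$ in question is inclusion, and ``well-ordered'' for this (proper) class means: $\subseteq$ is a linear order and every nonempty subclass has a $\subseteq$-least element (equivalently, there is no strictly $\subseteq$-descending $\om$-sequence). The key reduction I would make is the structure theorem: every universe $U$ satisfies $U = V_{\om(U)}$, where I write $\om(U) := U \cap \ORD$ and $V_{(-)}$ for the von Neumann cumulative hierarchy.

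First, $\om(U)$ is a set and is transitive, being the intersection of the transitive classes $U$ and $\ORD$; hence $\om(U)$ is an ordinal. For the inclusion $V_{\om(U)} \subseteq U$ I would prove $V_\alpha \in U$ for every $\alpha < \om(U)$ by transfinite induction, using the standard closure axioms for a universe ($U$ transitive and closed under power sets, pairs, and $U$-indexed unions): the base case $\emptyset \in U$ uses that $U$ is nonempty, transitive and power-set closed; the successor step uses power-set closure; and the limit step uses $\alpha \in U$ together with closure under $U$-indexed unions. Transitivity of $U$ upgrades each $V_\alpha \in U$ to $V_\alpha \subseteq U$, and taking the union over $\alpha < \om(U)$ yields $V_{\om(U)} \subseteq U$.

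The reverse inclusion $U \subseteq V_{\om(U)}$, that is $\rank(x) < \om(U)$ for all $x \in U$, is the crux of the argument and the only place where I anticipate real difficulty. I would argue by contradiction, choosing $x \in U \setminus V_{\om(U)}$ of least rank. Since $U$ is transitive every $y \in x$ lies in $U$, and by minimality of $\rank(x)$ each such $y$ lies in $V_{\om(U)}$; hence $x \subseteq V_{\om(U)}$, so $\rank(x) \le \om(U)$, while $x \notin V_{\om(U)}$ forces $\rank(x) \ge \om(U)$, whence $\rank(x) = \om(U)$. A short computation with the universe axioms shows $\om(U)$ is a limit ordinal, so the ordinals $\rank(y)$ with $y \in x$ form a cofinal subset of $\om(U)$. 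Now regard $g := \{(y, \rank(y)) : y \in x\}$ as a set: each of its members is a Kuratowski pair of two elements of $U$ (note $\rank(y) < \om(U)$ gives $\rank(y) \in U$), hence lies in $U$, so closure under $U$-indexed unions gives successively $g \in U$, $\mathrm{ran}(g) \in U$, and $\bigcup \mathrm{ran}(g) \in U$. But $\bigcup \mathrm{ran}(g) = \om(U)$ because $\mathrm{ran}(g)$ is cofinal in the limit ordinal $\om(U)$, contradicting $\om(U) \notin \om(U) = U \cap \ORD$. This proves $U = V_{\om(U)}$.

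Finally, from $U = V_{\om(U)}$ the assignment $U \mapsto \om(U)$ is injective on $\Univ$, and since $V_\alpha \subseteq V_\beta \iff \alpha \le \beta$ it is an order embedding of $(\Univ, \subseteq)$ into $(\ORD, \le)$. As $(\ORD, \le)$ is a well-ordered class, every subclass of it is linearly ordered with a least element; pulling this back along the embedding shows that $\subseteq$ linearly orders $\Univ$ and that $(\Univ, \subseteq)$ is well-ordered, as required. (Minor conventional points: $\emptyset$ may or may not be counted a universe, sitting at the bottom with $\om(\emptyset) = 0$; and if universes are required to contain $\bbN$, the heights $\om(U)$ that actually occur are exactly the strongly inaccessible cardinals — neither refinement affects the conclusion.)
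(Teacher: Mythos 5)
Your proof is correct. Note that the paper itself offers no argument for this Fact: it is stated with a pointer to Williams's \emph{On Grothendieck universes} \cite{Wi}, so there is no in-text proof to compare against. What you have written is essentially a self-contained reconstruction of the content of that reference, namely the classification $U = V_{\om(U)}$ with $\om(U) := U \cap \ORD$, from which the well-ordering of $(\Univ, \subseteq)$ falls out by transporting the well-ordering of $\ORD$ along the order embedding $U \mapsto \om(U)$. All the delicate points are handled properly: $\om(U)$ is an ordinal because it is a transitive set of ordinals; the inclusion $V_{\om(U)} \subseteq U$ by transfinite induction correctly uses $\lambda \in U$ at limit stages to apply closure under $U$-indexed unions; and the crux $U \subseteq V_{\om(U)}$ is exactly where the replacement-like union axiom must be invoked, which you do by coding the rank function on a minimal counterexample $x$ as a set of Kuratowski pairs inside $U$ and extracting $\sup_{y \in x}\rank(y) = \om(U) \in U$, contradicting $\om(U) \notin \om(U)$. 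Your closing remark on conventions is also apt: since this paper fixes $\bbN \in \frU$ and works only with universes containing $\bbN$, the degenerate cases $\om(U) \in \{0, \omega\}$ are irrelevant here, and the heights realized are the strongly inaccessible cardinals. The only cosmetic quibble is that "every subclass of $(\ORD,\le)$ has a least element" should of course read "every nonempty subclass", and the equivalence with the nonexistence of descending $\om$-sequences uses dependent choice, which is available since the paper works in ZFC.
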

We fix a universe $\frU$ with $\bbN \in \frU$ once for all.
A set $S$ is called a $\frU$-{\em small} set (resp.\ a $\frU$-{\em class}) if
$S \in \frU$ (resp.\ $S \subseteq \frU$).

\begin{dfn}\label{dfn:Psi}
Let $A$ be a set, and assume that $\frU \subseteq A$.
By axiom of universe and Fact above,
there exists the smallest universe $\frU'$ such that
$A \in \frU'$.
We define $\Ps A$ to be the smallest set $X \in \frU'$ satisfying the condition that
\[
X \supseteq A \cup (X \times X) \cup (\bigcup_{I \in A}X^I).
\]
Therefore in particular, since $X = \frU$ satisfies this condition for $A = \frU$,
we have $\Ps \frU = \frU$.
\end{dfn}

\begin{rmk}
The existence of $\Ps A$ is proved in \cite[Proposition A.2.2]{Asa-book}, and it satisfies
\[
\Ps A =  A \cup (\Ps A \times \Ps A) \cup \left(\bigcup_{I \in A} (\Ps A)^I\right).
\]
\end{rmk}

\begin{dfn}\label{dfn:class}
Let $k \in \bbN$.
\begin{enumerate}
\item
An element of $(\calP\Ps)^k\frU$ is called a $k$-\emph{class},
and an element of $((\calP\Ps)^k\frU) \setminus ((\calP\Ps)^{k-1}\frU)$ is called
a \emph{proper $k$-class}.
\item
The category of the $k$-classes (and the maps between them) is denoted by $\Cls^k$.
Therefore we have $\Cls^k_0 = (\calP\Ps)^k\frU$.
\end{enumerate}
\end{dfn}

Following Levy \cite{Le18}, we use the hierarchy of $\SET$:
$\Cls^0_0 \subset \Cls^1_0 \subset \Cls^2_0 \subset \cdots \subset \SET$.

\begin{rmk}\label{rmk:k-class-imi}
The following are immediate from the definition above:
\begin{enumerate}
\item
A 0-class is nothing but a $\frU$-small set.
\item
A 1-class is nothing but a $\frU$-class (indeed, $\Ps \frU = \frU$ shows that $(\calP\Ps)\frU = \calP\frU$).
\item
A $k$-class is nothing but a subset of $\Ps\Cls^{k-1}_0$ for all $k \ge 1$.
\end{enumerate}
In the following, we call $\frU$-small sets and $\frU$-classes simply
small sets and 1-classes, respectively.
\end{rmk}

In this paper, all categories $\calC$ are assumed to be ``small'' categories
in the sense that $\calC_0, \calC(x,y) \in \SET$ for all $x, y \in \calC_0$,
where $\calC_0$ denotes the set of all objects in $\calC$.
For each object $x$ of $\calC$, the identity morphism at $x$ is denoted by
$\id_x$.
We now define $\frU$-small categories, which are simply called small categories below.

\begin{dfn}\label{dfn:small-light-moderate}
Let $\calC$ be a category, and $k \in \bbN$.
\begin{enumerate}
\item
$\calC$ is called a \emph{small category}
if 
$\calC_0$ and all local morphism sets $\calC(x,y)\ (x,y \in \calC_0)$ are small.
\item
$\calC$ is called a \emph{light category}
if
$\calC_0$ is a $1$-class, and all local morphism sets are small sets.
\item
$\calC$ is called a \emph{moderate category}
if 
$\calC_0$ and all local morphism sets are $1$-classes.
\item
More generally, $\calC$ is called a \emph{$k$-moderate category}
if
$\calC_0$ and all local morphism sets are $k$-classes.
$\calC$ is called a \emph{properly} $k$-moderate category
if it is $k$-moderate but not $(k-1)$-moderate.
\end{enumerate}
\end{dfn}

\begin{rmk}
A $0$-moderate category is nothing but a small category.
A $1$-moderate category is just a moderate category.
A small category is a light category, and a light category is a $1$-moderate category.
\end{rmk}


We next summarize essential facts on $2$-categories that will be used later.

\begin{dfn} A $2$-category $\bfC$ is a sequence of the following data:
\begin{itemize}
\item A set $\bfC_0\ (\in \SET)$ of objects, 

\item A family of categories $(\bfC(x,y))_{x,y\in\bfC_0}$,

\item A family of functors $\circ:=(\circ_{x,y,z}:\bfC(y,z)\circ\bfC(x,y)\to\bfC(x,z))_{x,y,z\in\bfC_0}$,

\item A family of functors $(\mu_x:\{*\} \to \bfC(x,x))_{x\in\bfC_0}$,
where $\{*\}$ is the category with only one object $*$ and only one morphism
$\id_*$, and we set $\id_x:= \mu_x(*)$.
\end{itemize}
These data are required to satisfy the following axioms:
\begin{itemize}
\item (Associativity) The following diagram is commutative for all $x,y,z\in\bfC_0$
$$
\xymatrix@C=4pc{
\bfC(z,w)\circ\bfC(y,z)\circ\bfC(x,y)&\bfC(y,w)\circ\bfC(x,y)\\
\bfC(z,w)\circ\bfC(x,z) & \bfC(x,w).
\ar^{\circ\times \id} "1,1"; "1,2"
\ar^{\circ} "2,1"; "2,2"
\ar_{\id\times\circ } "1,1"; "2,1"
\ar^{\circ} "1,2"; "2,2"
}
$$

\item (Unitality) The following diagram is commutative for all  $x,y\in\bfC_0$
$$
\xymatrix@C=4pc{
\{*\}\times\bfC(x,y)& & \bfC(x,y)\times \{*\}\\
& \bfC(x,y)\\
\bfC(y,y)\times\bfC(x,y)& & \bfC(x,y)\times\bfC(x,x).
\ar^{\prj_1} "1,1"; "2,2"
\ar_{\mu_y\times\bfC(x,y)} "1,1"; "3,1"
\ar_{\prj_2} "1,3"; "2,2"
\ar^{\bfC(x,y)\times\mu_x} "1,3"; "3,3"
\ar^{\circ} "3,1"; "2,2"
\ar^{\circ} "3,3"; "2,2"
}
$$

\end{itemize}
\end{dfn}

\begin{rmk}
Elements of $\bfC_0$ are called objects of $\bfC$, and
objects (resp.\ morphisms, compositions) of the category $\bfC(x,y)$
are called 1-morphisms (resp.\  2-morphisms, vertical compositions)
of $\bfC$ for all $x,y\in\bfC_0$.
We sometimes abbreviate $x \in \bfC$ for $x \in \bfC_0$ if there seems to be no risk of confusion, and do the same even when $\bfC$ is a usual category.
To distinguish the vertical composition from the horizontal composition,
we use the notation $\bullet$ for the former, and $\circ$ for the latter.
Sometimes $\circ$ is omitted.
\end{rmk}


\begin{dfn}\label{dfn:2cat-small-light-moderate}
Let $k \in \bbN$.
\begin{enumerate}
\item
The $2$-category of all small categories is denoted by $\Cat$.
\item
The $2$-category of all light categories is denoted by $\CAT$.
\item
The $2$-category of all moderate categories is denoted by $\uCAT$.
\item
The $2$-category of all $k$-moderate categories is denoted by $\uCAT^k$.
\end{enumerate}
\end{dfn}

When $\calC$ is a $2$-category,
if $x, y \in \calC_0$ and $f, g \in \calC(x,y)_0$, then
$\calC(x, y)_0$ (resp.\ $\calC(x, y)(f, g)$) is called
a \emph{local $1$-morphism set} (resp.\ \emph{local $2$-morphism set})
of $\calC$.

\begin{dfn}
Let $\calC$ be a $2$-category.
\begin{enumerate}
\item
$\calC$ is said to be \emph{small} if
$\calC_0$ and all of its local $r$-morphism sets are small for each $r = 1,2$.
\item
$\calC$ is said to be \emph{light} if
$\calC_0$ is a class, and all of its local $r$-morphism sets
are small for each $r = 1,2$.
\item
$\calC$ is said to be \emph{$k$-moderate} if
$\calC_0$ and all of its local $r$-morphism sets are
$k$-classes for each $r = 1,2$, namely if
$\calC_0$ is a $k$-class, and categories $\calC(x, y)$ are
$k$-moderate for all $x, y \in \calC_0$.
\end{enumerate}
\end{dfn}

We cite the following from \cite{Le18} (see \cite[Proposition A.4.2]{Asa-book} for the proof).

\begin{prp}
\label{prp:2-Cat}
The following hold.
\begin{enumerate}
\item
The $2$-category $\Cat$ is light;
\item
The $2$-category $\CAT$ is $2$-moderate; and
\item
The $2$-category $\uCAT^k$ is $(k+1)$-moderate for all $1 \le k \in \bbN$.
\end{enumerate}
\end{prp}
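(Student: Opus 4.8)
The plan is to verify each of the three assertions by a straightforward size bookkeeping, using only the definitions of the hierarchy $\Cls^k_0 = (\calP\Ps)^k\frU$, the identity $\Ps\frU = \frU$, and the characterizations recorded in Remark \ref{rmk:k-class-imi}. For part (1), I would argue that $\Cat_0$ — the collection of all small categories — is a $1$-class, i.e.\ a subset of $\frU$. A small category $\calC$ is determined by its set of objects, its local morphism sets, the source, target, identity and composition maps; each of these data is a small set, and their assembly into a single set (via tuples and unions) lands in $\frU$ because $\frU$ is a universe closed under the relevant set-theoretic operations, which is exactly the role played by the operator $\Ps$ in Definition \ref{dfn:Psi}. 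Hence $\calC \in \Ps\frU = \frU$, so $\Cat_0 \subseteq \frU$ is a $1$-class. For the local morphism sets: if $\calC, \calD$ are small categories, a functor $\calC \to \calD$ is a pair of maps between small sets, hence a small set, and the collection of all such functors is a subset of $\frU$ — but in fact it is itself a small set, being indexed by small data; similarly $\Cat(\calC,\calD)(F,G)$, a set of natural transformations, is small. So $\Cat$ has a $1$-class of objects and small local $r$-morphism sets for $r=1,2$, which is precisely the definition of \emph{light}.

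For part (2), the same analysis is run one level up. A light category $\calC$ has a $1$-class of objects (a subset of $\frU$) and small local morphism sets; encoding $\calC$ as a single object then requires forming $\calP$ of sets built from $\frU$, so $\calC \in (\calP\Ps)^2\frU = \Cls^2_0$, i.e.\ $\calC$ is a $2$-class. The collection $\CAT_0$ of all light categories is therefore a subset of $\Cls^2_0$, hence a $2$-class by Remark \ref{rmk:k-class-imi}(3). Meanwhile, for two light categories $\calC,\calD$, a functor $F\colon\calC\to\calD$ is given by a map on the $1$-class $\calC_0$ together with compatible maps on small local morphism sets; such an $F$ is a subset of (a product involving) $\frU$, hence a $1$-class, and the collection $\CAT(\calC,\calD)_0$ of all of them is again a subset of $\Ps\Cls^1_0$, so a $2$-class — but crucially, when we ask whether $\CAT$ is $2$-moderate we need the local $r$-morphism sets to be $2$-classes, which they are. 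Thus $\CAT_0$ is a $2$-class and all local $r$-morphism sets are $2$-classes, i.e.\ $\CAT$ is $2$-moderate.

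Part (3) is the evident induction on $k$: a $k$-moderate category has objects and local morphism sets that are $k$-classes, i.e.\ subsets of $\Ps\Cls^{k-1}_0$; encoding such a category as a single set pushes it into $(\calP\Ps)^{k+1}\frU = \Cls^{k+1}_0$, so it is a $(k+1)$-class, and likewise a functor between two $k$-moderate categories, being a map between $k$-classes, is a $(k+1)$-class, as is the collection of all such functors and all natural transformations between them. Hence $\uCAT^k_0$ and all its local $r$-morphism sets are $(k+1)$-classes, so $\uCAT^k$ is $(k+1)$-moderate. Throughout I would lean on the closure identity $\Ps A = A \cup (\Ps A \times \Ps A) \cup \bigcup_{I\in A}(\Ps A)^I$ from the Remark after Definition \ref{dfn:Psi} to see that forming the relevant tuples, function sets, and unions does not escape the universe $\frU'$ in which $\Ps A$ was found, and on the fact that $\Cls^k$ is closed under passing to subsets.

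The main obstacle is not conceptual but bookkeeping-theoretic: one must be scrupulous about the \emph{exact} level at which each piece of data lands — in particular, distinguishing "is a subset of $\Ps\Cls^{k-1}_0$" (a $k$-class) from "is an element of $\Cls^{k}_0$", and checking that the collection of all functors between two $k$-moderate categories does not accidentally jump two levels rather than one. The delicate point is that a functor's action on a $1$-class of objects is genuinely a $1$-class of ordered pairs, not a small set, so the functor itself is one level up from a morphism in a small category; getting the arithmetic of these level shifts exactly right — and confirming it matches the indexing in Proposition \ref{prp:2-Cat} — is where the care is needed. Since the detailed verification is carried out in \cite[Proposition A.4.2]{Asa-book} and in Levy \cite{Le18}, I would cite those for the routine parts and only spell out the one-level-up induction step in full.
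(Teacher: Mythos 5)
The paper does not actually prove Proposition \ref{prp:2-Cat}: it is quoted from Levy \cite{Le18}, with the proof deferred to \cite[Proposition A.4.2]{Asa-book}. So your sketch can only be judged on its own terms. Its overall strategy --- level bookkeeping with the operator $\Ps$ and the hierarchy $\Cls^k_0=(\calP\Ps)^k\frU$ --- is certainly the intended one, and your part (1) is correct.

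In parts (2) and (3), however, you commit exactly the off-by-one slip you warn about at the end. You argue that each light category, coded as a set, lies in $(\calP\Ps)^2\frU=\Cls^2_0$, and conclude that $\CAT_0\subseteq\Cls^2_0$ is ``hence a $2$-class by Remark \ref{rmk:k-class-imi}(3)''. But that remark says a $2$-class is a subset of $\Ps\Cls^1_0$, whereas $\Cls^2_0=\calP\Ps\Cls^1_0$: a subset of $\calP\Ps\Cls^1_0$ is an element of $\calP\Ps\Cls^2_0$, i.e.\ in general only a $3$-class, and one cannot have $\calP\Ps\Cls^1_0\subseteq\Ps\Cls^1_0$ (the closure properties of $\Ps$ do not include closure under power sets, and Cantor forbids it). What you must show instead is that each light category is an \emph{element} of $\Ps\Cls^1_0$, not of $\calP\Ps\Cls^1_0$. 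This does hold: the object class is an element of $\Cls^1_0\subseteq\Ps\Cls^1_0$; each hom-family is a function from a $1$-class into $\Ps\Cls^1_0$ and so lies in $\bigcup_{I\in\Cls^1_0}(\Ps\Cls^1_0)^I\subseteq\Ps\Cls^1_0$; and the closure property $\Ps A\supseteq (\Ps A\times\Ps A)$ keeps the assembled tuple inside $\Ps\Cls^1_0$. Then $\CAT_0\subseteq\Ps\Cls^1_0$ is a $2$-class. (Note that in part (1) you make the correct move: you land each small category as an element of $\frU=\Ps\frU$ and only then pass to the subset.) The same correction is needed in the induction of part (3): the step must place each $k$-moderate category in $\Ps\Cls^k_0$, not in $\Cls^{k+1}_0$, so that $\uCAT^k_0\subseteq\Ps\Cls^k_0$ is a $(k+1)$-class. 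Your treatment of the local $1$- and $2$-morphism sets, by contrast, is carried out at the correct level.
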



\begin{dfn}
\label{dfn:colax-fun}
Let $I$ be a small category and $\bfC$ a 2-category.
A {\em colax functor}
(or an {\em oplax} functor)
from $I$ to $\bfC$ is a triple
$(X, X_i, X_{b,a})$ of data:
\begin{itemize}
\item
a quiver morphism $X\colon I \to \bfC$, where $I$ and $\bfC$ are regarded as quivers
by forgetting additional data such as 2-morphisms or compositions;
\item
a family $(X_i)_{i\in I_0}$ of 2-morphisms $X_i\colon X(\id_i) \Rightarrow \id_{X(i)}$ in $\bfC$
indexed by $i\in I_0$; and
\item
a family $(X_{b,a})_{(b,a)}$ of 2-morphisms
$X_{b,a} \colon X(ba) \Rightarrow X(b)X(a)$
in $\bfC$ indexed by $(b,a) \in \com(I):=
\{(b,a)\in I_1 \times I_1 \mid ba \text{ is defined}\}$
\end{itemize}
satisfying the axioms:

\begin{enumerate}
\item[(a)]
For each $a\colon i \to j$ in $I$ the following are commutative:
$$
\vcenter{
\xymatrix{
X(a\id_i) \ar@{=>}[r]^(.43){X_{a,\id_i}} \ar@{=}[rd]& X(a)X(\id_i)
\ar@{=>}[d]^{X(a)X_i}\\
& X(a)\id_{X(i)}
}}
\qquad\text{and}\qquad
\vcenter{
\xymatrix{
X(\id_j a) \ar@{=>}[r]^(.43){X_{\id_j,a}} \ar@{=}[rd]& X(\id_j)X(a)
\ar@{=>}[d]^{X_jX(a)}\\
& \id_{X(j)}X(a)
}}\quad;\text{ and}
$$
\item[(b)]
For each $i \ya{a}j \ya{b} k \ya{c} l$ in $I$ the following is commutative:
$$
\xymatrix@C=3em{
X(cba) \ar@{=>}[r]^(.43){X_{c,ba}} \ar@{=>}[d]_{X_{cb,a}}& X(c)X(ba)
\ar@{=>}[d]^{X(c)X_{b,a}}\\
X(cb)X(a) \ar@{=>}[r]_(.45){X_{c,b}X(a)}& X(c)X(b)X(a).
}
$$
\end{enumerate}
\end{dfn}

\begin{dfn}\label{dfn:1-mor-Colax(I,C)}
Let $\bfC$ be a 2-category and $X = (X, X_i, X_{b,a})$, $X'= (X', X'_i, X'_{b,a})$
be colax functors from $I$ to $\bfC$.
A {\em $1$-morphism} (called a {\em left transformation}) from $X$ to $X'$
is a pair $(F, \ps)$ of data
\begin{itemize}
\item
a family $F:=(F(i))_{i\in I_0}$ of 1-morphisms $F(i)\colon X(i) \to X'(i)$
in $\bfC$
; and
\item
a family $\ps:=(\ps(a))_{a\in I_1}$ of 2-morphisms
$\ps(a)\colon X'(a)F(i) \Rightarrow F(j)X(a)$
$$
\xymatrix{
X(i) & X'(i)\\
X(j) & X'(j)
\ar_{X(a)} "1,1"; "2,1"
\ar^{X'(a)} "1,2"; "2,2"
\ar^{F(i)} "1,1"; "1,2"
\ar_{F(j)} "2,1"; "2,2"
\ar@{=>}_{\ps(a)} "1,2"; "2,1"
}
$$
in $\bfC$ indexed by $a\colon i \to j$ in $I_1$
\end{itemize}
satisfying the axioms
\begin{enumerate}
\item[(a)]
For each $i \in I_0$ the following is commutative:
$$
\vcenter{
\xymatrix{
X'(\id_i)F(i) & F(i)X(\id_i)\\
\id_{X'(i)}F(i) & F(i)\id_{X(i)}
\ar@{=>}^{\ps(\id_i)} "1,1"; "1,2"
\ar@{=} "2,1"; "2,2"
\ar@{=>}_{X'_iF(i)} "1,1"; "2,1"
\ar@{=>}^{F(i)X_i} "1,2"; "2,2"
}}\quad;\text{ and}
$$
\item[(b)]
For each $i \ya{a} j \ya{b} k$ in $I$ the following is commutative:
$$
\xymatrix@C=4pc{
X'(ba)F(i) & X'(b)X'(a)F(i) & X'(b)F(j)X(a)\\
F(k)X(ba) & & F(k)X(b)X(a).
\ar@{=>}^{X'_{b,a}F(i)} "1,1"; "1,2"
\ar@{=>}^{X'(b)\ps(a)} "1,2"; "1,3"
\ar@{=>}_{F(k)\,X_{b,a}} "2,1"; "2,3"
\ar@{=>}_{\ps(ba)} "1,1"; "2,1"
\ar@{=>}^{\ps(b)X(a)} "1,3"; "2,3"
}
$$
\end{enumerate}
A $1$-morphism $(F, \ps) \colon X \to X'$ is said to be
$I$-{\em equivariant} if $\ps(a)$ is a 2-isomorphism in $\bfC$
for all $a \in I_1$.
\end{dfn}

\begin{dfn}
\label{dfn:2-mor-Colax(I,C)}
Let $\bfC$ be a 2-category, $X = (X, X_i, X_{b,a})$, $X'= (X', X'_i, X'_{b,a})$
be colax functors from $I$ to $\bfC$, and
$(F, \ps)$, $(F', \ps')$ 1-morphisms from $X$ to $X'$.
A {\em $2$-morphism} from $(F, \ps)$ to $(F', \ps')$ is a
family $\ze= (\ze(i))_{i\in I_0}$ of 2-morphisms
$\ze(i)\colon F(i) \Rightarrow F'(i)$ in $\bfC$
indexed by $i \in I_0$
such that the following is commutative for all $a\colon i \to j$ in $I$:
$$
\xymatrix@C=4pc{
X'(a)F(i) & X'(a)F'(i)\\
F(j)X(a) & F'(j)X(a).
\ar@{=>}^{X'(a)\ze(i)} "1,1"; "1,2"
\ar@{=>}^{\ze(j)X(a)} "2,1"; "2,2"
\ar@{=>}_{\ps(a)} "1,1"; "2,1"
\ar@{=>}^{\ps'(a)} "1,2"; "2,2"
}
$$
\end{dfn}

\begin{dfn}
\label{dfn:comp-1-mors}
Let $\bfC$ be a 2-category, $X = (X, X_i, X_{b,a}), X'= (X', X'_i, X'_{b,a})$
and $X''= (X'', X_i'', X_{b,a}'')$ colax functors from $I$ to $\bfC$, and
let $(F, \ps)\colon X \to X'$, $(F', \ps')\colon X' \to X''$
be 1-morphisms.
Then the {\em composite} $(F', \ps')(F, \ps)$ of $(F, \ps)$ and
$(F', \ps')$ is a 1-morphism from $X$ to $X''$ defined by
$$
(F', \ps')(F, \ps):= (F'F, \ps'\circ\ps),
$$
where $F'F:=(F'(i)F(i))_{i\in I_0}$ and for each $a\colon i \to j$ in $I$,
$
(\ps'\circ\ps)(a):= F'(j)\ps(a)\bullet \ps'(a)F(i)
$
is the pasting of the diagram
$$
\xymatrix@C=4pc{
X(i) & X'(i) & X''(i)\\
X(j) & X'(j) & X''(j).
\ar_{X(a)} "1,1"; "2,1"
\ar_{X'(a)} "1,2"; "2,2"
\ar^{F(i)} "1,1"; "1,2"
\ar_{F(j)} "2,1"; "2,2"
\ar@{=>}_{\ps(a)} "1,2"; "2,1"
\ar^{X''(a)} "1,3"; "2,3"
\ar^{F'(i)} "1,2"; "1,3"
\ar_{F'(j)} "2,2"; "2,3"
\ar@{=>}_{\ps'(a)} "1,3"; "2,2"
}
$$
\end{dfn}

The following is straightforward to verify.

\begin{prp}
Let $\bfC$ be a $2$-category.
Then colax functors $I \to \bfC$,
$1$-morphisms between them, and $2$-morphisms between
$1$-morphisms $($defined above$)$ define a $2$-category,
which we denote by $\Colax(I, \bfC)$.
\end{prp}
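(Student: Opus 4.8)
The plan is to verify directly that the stated data satisfy the axioms of a $2$-category, reducing each axiom to the corresponding one already available in $\bfC$, applied componentwise over $I_0$ (for objects and $1$-morphisms) or over composable pairs of arrows of $I$ (for the compatibility squares). First I would fix colax functors $X, X' \colon I \to \bfC$ and make $\Colax(I,\bfC)(X,X')$ into a category: given $2$-morphisms $\ze \colon (F,\ps) \To (F',\ps')$ and $\ze' \colon (F',\ps') \To (F'',\ps'')$, set $(\ze' \bullet \ze)(i) := \ze'(i) \bullet \ze(i)$ and $\id_{(F,\ps)}(i) := \id_{F(i)}$ using the vertical composition in $\bfC(X(i), X'(i))$. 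One checks $\ze' \bullet \ze$ is again a $2$-morphism by pasting the two commuting squares (for $\ze$ and $\ze'$) along their common edge $X'(a)F'(i) \Rightarrow F'(j)X(a)$, and the identity square is trivial; associativity and unitality of $\bullet$ then hold because they hold in each $\bfC(X(i), X'(i))$.

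Next I would treat horizontal composition. The composite $(F',\ps')(F,\ps) = (F'F, \ps'\circ\ps)$ is defined in the excerpt, so the task is to check it satisfies axioms (a) and (b) of Definition \ref{dfn:1-mor-Colax(I,C)}. For (a) one expands $(\ps'\circ\ps)(\id_i) = F'(i)\ps(\id_i) \bullet \ps'(\id_i)F(i)$, substitutes the unit axioms (a) for $(F,\ps)$ and for $(F',\ps')$, and simplifies via the interchange law in $\bfC$; for (b) one expands $(\ps'\circ\ps)(ba)$ and $(\ps'\circ\ps)(b)X(a)$ and glues the axiom-(b) diagram for $(F,\ps)$ to that for $(F',\ps')$ along the edge $X'(b)F(j)X(a)$, again using interchange. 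I would also introduce the identity $1$-morphism $\id_X := (\id_X, \id)$, with $\id_X(i) := \id_{X(i)}$ and $\id(a)$ the canonical $2$-cell assembled from the unitors of $\bfC$, and verify (a), (b) for it.

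It then remains to check that $\circ$ is functorial and that $\circ$ and $\bullet$ satisfy the associativity and unitality axioms of a $2$-category. For $2$-morphisms $\ze \colon (F,\ps)\To (G,\chi)$ and $\ze' \colon (F',\ps') \To (G',\chi')$ I set $(\ze'\circ\ze)(i) := \ze'(i)\circ\ze(i)$ and check, by a short diagram chase with the compatibility squares of $\ze, \ze'$ and interchange, that this is a $2$-morphism $(F',\ps')(F,\ps) \To (G',\chi')(G,\chi)$; preservation of identity $2$-morphisms and of vertical composition (the middle-four interchange) is then immediate componentwise. Associativity of horizontal composition holds because $F''(F'F) = (F''F')F$ componentwise on objects and the glueing $2$-cells $((\ps''\circ\ps')\circ\ps)(a)$ and $(\ps''\circ(\ps'\circ\ps))(a)$ agree after a chase using associativity of $\circ$ and $\bullet$ in $\bfC$; the unit laws $(F,\ps)\id_X = (F,\ps) = \id_{X'}(F,\ps)$ come from the unitors of $\bfC$. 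Putting these together exhibits $\Colax(I,\bfC)$ as a $2$-category.

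I expect the only nontrivial point to be axiom (b) for the composite $(F',\ps')(F,\ps)$: it requires pasting several $2$-cells of $\bfC$ that involve the colax structure morphisms $X_{b,a}$, $X'_{b,a}$, $X''_{b,a}$ and invoking both the interchange law and axiom (b) for each of $(F,\ps)$ and $(F',\ps')$ in the correct order. Once the pasting diagram is laid out correctly, the remaining verifications are routine bookkeeping with $2$-cells in $\bfC$, which is why the statement is asserted to be "straightforward to verify".
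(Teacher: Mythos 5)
Your proposal is correct and is precisely the componentwise verification that the paper leaves implicit when it says the statement is ``straightforward to verify'': vertical and horizontal compositions are defined using those of $\bfC$ indexed over $I_0$ and $I_1$, and every axiom reduces to the corresponding axiom in $\bfC$ together with the interchange law and axioms (a), (b) for the constituent $1$-morphisms. The only cosmetic remark is that since $\bfC$ is a strict $2$-category here, the unitors you invoke for the identity $1$-morphism and the unit laws are genuine identities, so those steps are even simpler than your phrasing suggests.
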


\begin{ntn}\label{ntn:co-op}
Let $\bfC$ be a 2-category.
Then we denote by $\bfC^{\text{op}}$
(resp.\ $\bfC^{\text{co}}$) the 2-category
obtained from $\bfC$ by reversing the 1-morphisms
(resp.\ the 2-morphisms), and we set
$\bfC^{\text{coop}}:=(\bfC^{\text{co}})^{\text{op}}=(\bfC^{\text{op}})^{\text{co}}$.
\end{ntn}

\section{Enriched categories}

In this section we collect basic facts about enriched categories which will be needed later. Throughout this section, we fix a symmetric monoidal category $\mathbb{V}$ and work over $\mathbb{V}$.
Before starting our discussion we recall the definition of symmetric monoidal categories.

\begin{dfn}
\label{dfn:monoidal-cat}
(1) A {\em monoidal category} is a sequence of the data
\begin{itemize}
\item
a category $\bbV$,
\item
an object 1 of $\bbV$,
\item
a functor $\ox \colon \bbV \times \bbV \to \bbV$,
\item
a natural isomorphism
$a = (a_{A,B,C} \colon (A\ox B) \ox C \to A \ox (B \ox C))_{A,B,C \in \bbV_0}$
called the {\em associator},
\item
a natural isomorphism $\ell = (\ell_A \colon 1 \ox A \to A)_{A \in \bbV_0}$
called the {\em left unit isomorphism},
\item
a natural isomorphism $r = (r_A \colon A \ox 1 \to A)_{A \in \bbV_0}$
called the {\em right unit isomorphism}
\end{itemize}
that satisfies the following axioms:
\begin{enumerate}
\item[(a)]
For any $A, B, C, D \in \bbV_0$, the following is commutative:
\[
\begin{tikzcd}
\Nname{L2}((A \ox B) \ox C) \ox D&& \Nname{R2}(A \ox (B \ox C))\ox D \\
 \Nname{L1}(A\ox B) \ox (C \ox D) && \Nname{R1}A\ox ((B\ox C) \ox D)\\
&\Nname{C}A\ox (B \ox (C\ox D))
\Ar{L1}{C}{"a"'}
\Ar{R1}{C}{"1\ox a"}
\Ar{L2}{L1}{"a"'}
\Ar{R2}{R1}{"a"}
\Ar{L2}{R2}{"a\ox 1"}
\end{tikzcd};
\]
\item[(b)]
For any $A, B \in \bbV_0$, the following is commutative:
\[
\begin{tikzcd}
A \ox (B \ox 1) & A\ox B\\
(A \ox B) \ox 1
\Ar{2-1}{1-1}{"a"'}
\Ar{1-1}{1-2}{"\id_A\ox r"}
\Ar{2-1}{1-2}{"r"'}
\end{tikzcd}; \text{and}
\]
\item[(c)]
$\ell_1 = r_1 \colon 1\ox 1 \to 1$.
\end{enumerate}
According to \cite{Kel64}, it is known that both of the following diagrams automatically turn out to be commutative for all objects $A, B$ in a monoidal category $\bbV$:
\[
\begin{tikzcd}
1 \ox (A\ox B) & A\ox B\\
(1\ox A) \ox B
\Ar{1-1}{1-2}{"\ell"}
\Ar{2-1}{1-1}{"a"'}
\Ar{2-1}{1-2}{"\ell \ox \id_B"'}
\end{tikzcd}\ \text{and}\ 
\begin{tikzcd}
A \ox (1\ox B) & A\ox B\\
(A\ox 1) \ox B
\Ar{1-1}{1-2}{"\id_A \ox \ell"}
\Ar{2-1}{1-1}{"a"'}
\Ar{2-1}{1-2}{"r \ox \id_B"'}
\end{tikzcd}.
\]

(2) A {\em switching operation} on $\bbV$ is a natural isomorphism
$t= (t_{A,B} \colon A \ox B \to B \ox A)_{A,B \in \bbV_0}$
such that the following is commutative:
\[
\begin{tikzcd}
A\ox B & B \ox A\\
C\ox D & D\ox C
\Ar{1-1}{1-2}{"t_{A,B}"}
\Ar{1-1}{2-1}{"f\ox g"'}
\Ar{1-2}{2-2}{"g\ox f"}
\Ar{2-1}{2-2}{"t_{C,D}"'}
\end{tikzcd}
\]
for all morphisms $f\colon A \to C$ and $g\colon B \to D$ in $\bbV$.

(3) A monoidal category $\bbV$ with a switching operation $t$ is called a {\em symmetric monoidal category} if 
the following hold:
\begin{enumerate}
\item[(a)]
$t_{A,B} \circ t_{B,A} = \id_{B \ox A}$ for all $A, B \in \bbV_0$; and
\item[(b)]
For any $A, B, C \in \bbV_0$, the following is commutative:
\[
\begin{tikzcd}
& A \ox (B\ox C)\\
(B\ox C)\ox A && (A \ox B) \ox C\\
B\ox (C \ox A) && (B \ox A) \ox C\\
& B\ox (A\ox C)
\Ar{1-2}{2-1}{"t_{A, B\ox C}"'}
\Ar{2-3}{1-2}{"a_{A,B,C}"}
\Ar{2-1}{3-1}{"a_{B,C,A}"}
\Ar{2-3}{3-3}{"t_{A, B}\ox \id_C"}
\Ar{4-2}{3-1}{"\id_B\ox t_{A,C}"}
\Ar{3-3}{4-2}{"a_{B,A,C}"}
\end{tikzcd}.
\]
\end{enumerate}

\end{dfn}

\begin{exm}
\label{exm:s-mon-cat}
The following give examples of symmetric monoidal categories:
\begin{enumerate}
\item
$\bbV:= {\bf Cat}$, the category of small caetegories.
Here, $1$ is given by the category $\{*\}$ with only one object and one morphism, $\ox$ is given by the direct product of small categories and
$a, \ell, r, t$ are given as the canonical isomorphisms.
\item
$\bbV:= \Mod\k$, the category of $\k$-modules.
In this case, $1$ is given by $\k$, $\ox$ is given by
the tensor product over $\k$, and
$a, \ell, r, t$ are also given as the canonical isomorphisms.
\item
$\bbV:= \ChMod(\k)$, category of the (unbounded) chain complexes
(here we use cocomplexes) of $\k$-modules
and chain morphisms, i.e., degree-preserving morphisms commuting with the differentials.
In this case, $1$ is given by the complex $\k$ concentrated in degree 0,
for $A, B \in \bbV_0$, $A \ox B$ is given as the tensor chain complex over $\k$, 
and also $a, \ell, r, t$ are given as the canonical isomorphisms.
Note that for each $A \in \bbV_0$, the ``underlying set''
$\ChMod(\k)(\k, A)$ is the set of 0-cocycles $Z^0(A)$ of $A$.
\end{enumerate}
\end{exm}

\begin{dfn} A category $\calA$ {\em enriched over} $\mathbb{V}$, or simply a $\mathbb{V}$-{\em category}, consists of the following data:
\begin{itemize}
\item
a class of objects $\calA_0$;
\item
for two objects $x, y$ in $\calA$, an object $\calA(x, y)$ in $\mathbb{V}$;
\item
for three objects $x, y, z$ in $\calA$, a morphism
 $$
 \circ: \calA(y,z)\otimes \calA(x,y)\to  \calA(x,z)
 $$
in $\mathbb{V}$; and
\item
for an object $x$ in $\calA$, a morphism in $\mathbb{V}$
$$
\id_x: 1 \to \calA(x,x)
$$
\end{itemize}
satisfying the following conditions:
\begin{itemize}
\item[(1)] For any objects $x, y, z, w$, the following diagram is commutative:
$$\footnotesize
\begin{tikzcd}[column sep=10pt]
\Nname{L}(\calA(z,w)\otimes\calA(y,z))\otimes \calA(x,y)&&
\Nname{R}\calA(z,w)\otimes(\calA(y,z)\otimes \calA(x,y))\\
 \Nname{y}\calA(y,w)\otimes\calA(x,y)&&\Nname{z} \calA(z,w)\otimes\calA(x,z)\\
&\Nname{A}\calA(x,w)
\Ar{L}{R}{"a"}
\Ar{L}{y}{"\circ\times \id_{\calA(x,y)}"'}
\Ar{R}{z}{"\id_{\calA(z,w)} \times\circ"}
\Ar{y}{A}{"\circ"}
\Ar{z}{A}{"\circ"'}
\end{tikzcd}
; \text{and}
$$
\item[(2)] For any objects $x, y$, the following diagram is commutative:
$$
\begin{tikzcd}
\calA(y,y)\otimes\calA(x,y) &  \calA(x,y) & \calA(x,y)\otimes \calA(x,x)\\
1\otimes\calA(x,y)& &  \calA(x,y)\otimes 1
\Ar{1-1}{1-2}{"\circ"}
\Ar{1-3}{1-2}{"\circ"'}
\Ar{2-1}{1-2}{"{\ell_{\calA(x,y)}}"'}
\Ar{2-3}{1-2}{"{r_{\calA(x,y)}}"}
\Ar{2-1}{1-1}{"{\id_y \ox \id_{\calA(x,y)}}"}
\Ar{2-3}{1-3}{"{\id_{\calA(x,y)}\ox \id_x}"'}
\end{tikzcd}.
$$
\end{itemize}
\end{dfn}

\begin{dfn} Given $\mathbb{V}$-categories $\calA,\calB$, a {\em $\mathbb{V}$-functor} or an {\em enriched functor} $F:\calA\to\calB$
consists of the following data:
\begin{itemize}
\item
for each $x \in \calA_0$, an object $F(x)$ of $\calB$;

\item
for any $x, y\in \calA_0$, a morphism in $\mathbb{V}$,
$$
F_{x,y}: \calA(x,y)\to \calB(F(x),F(y))
$$
\end{itemize}
that satisfies the following axioms:
\begin{itemize}
 \item[(1)] For any $x,y,z \in \calA_0$, the following diagram is commutative:
$$
\vcenter{
\xymatrix@C=4pc{
\calA(y,z)\otimes \calA(x,y) &&    \calA(x,z)\\
 \calB(F(y),F(z))\otimes\calB(F(x),F(y))&&  \calB(F(x),F(z))
\ar^{\circ}"1,1";"1,3"
\ar^{\circ}"2,1";"2,3"
\ar_{F_{y,z}\times F_{x,y}}"1,1";"2,1"
\ar^{F_{x,z} }"1,3";"2,3"
}}; \text{and}
$$

\item[(2)] For each $x \in \calA_0$, the following diagram is commutative:
$$
\vcenter{
\xymatrix@C=4pc{
1 & \calA(x,x)\\
 &  \calA(F(x),F(x))
\ar^{1_x}"1,1";"1,2"
\ar^{F_{x,x}}"1,2";"2,2"
\ar_{1_{F(x)}} "1,1";"2,2"
}}.
$$
\end{itemize}
\end{dfn}

\begin{dfn}
Let $F, G \colon \calA \to \calB$ be $\bbV$-functors between $\bbV$-categories.
A {\em $\bbV$-natural transformation} $\al$ from $F$ to $G$, denoted by
$\al \colon F \To G$,
is a family $\al = (\al(x))_{x \in \calA_0}$ of morphisms $\al(x) \colon 1 \to \calB(F(x), G(x))$ in $\bbV$ making the following diagram commutative
for all $x,y \in \calA_0$:
\begin{equation}
\label{eq:V-nat}\footnotesize
\begin{tikzcd}
&\calA(x,y)\\
\calA(x,y) \ox 1 && 1\ox \calA(x,y)\\
\calB(G(x),G(y))\ox \calB(F(x),G(x)) &&\calB(F(y),G(y))\ox \calB(F(x), F(y))\\
&\calB(F(x),G(y))
\Ar{1-2}{2-1}{"r\inv"'}
\Ar{1-2}{2-3}{"\ell\inv"}
\Ar{2-1}{3-1}{"G\ox \al(x)"'}
\Ar{2-3}{3-3}{"\al(y)\ox F"}
\Ar{3-1}{4-2}{"\circ"'}
\Ar{3-3}{4-2}{"\circ"}
\end{tikzcd}.
\end{equation}
The composition of $\bbV$-natural transformations is defined in an obvious way.
\end{dfn}

\begin{dfn}
The $2$-category of {\em small} $\mathbb{V}$-categories, $\mathbb{V}$-functors, and $\bbV$-natural transformations is denoted by $\VCat$.
\end{dfn}

\begin{exm}\label{exm-dg}
The following are examples of $\bbV$-categories.
\begin{enumerate}
\item
In the case where $\mathbb{V} = {\bf Cat}$,
$\bbV$-categories are nothing but (strict) 2-categories. $\bbV$-functors are called 2-functors.
\item
In the case where $\mathbb{V} = \Mod \k$, $\mathbb{V}$-categories are nothing but 
$\k$-linear categories.
In this case, $\VCat$ is denoted by $\kCat$.
\item
In the case where $\mathbb{V} = \ChMod(\k)$, 
$\mathbb{V}$-categories are called dg (differential graded)
categories over $\k$.
In this case, $\VCat$ is denoted by $\DGkCat$.
In most cases we only deal with small dg categories, therefore
we sometimes omit the word ``small'' if there seems to be no
confusion.
\end{enumerate}
\end{exm}

\begin{dfn}
A dg category $\DGMod(\k)$ is defined as follows.
Objects are chain complexes of $\k$-modules, thus the same as $\ChMod(\k)$.
Let $X, Y$ be objects of $\DGMod(\k)$.
Then $\DGMod(\k)(X, Y)$ is a complex of $\k$-modules defined by
\[
\begin{aligned}
\DGMod(\k)(X, Y)&:= \Ds_{n \in \bbZ} \DGMod(\k)^n(X, Y), \text{where}\\
\DGMod(\k)^n(X, Y)&:= \prod_{p\in \bbZ}(\Mod \k)(X^p, Y^{p+n}),
\end{aligned}
\]
and with a differential
$d = (d^n \colon \DGMod(\k)^n(X, Y) \to \DGMod(\k)^{n+1}(X, Y))_{n\in \bbZ}$
defined by
\[
d^n(f):= (d_{Y}^{p+n}f^p - (-1)^{n} f^{p+1}d_X^p)_{p\in \bbZ}
\]
for all $f = (f^p)_{p\in \bbZ} \in \DGMod(\k)^n(X, Y)$.
\end{dfn}

\begin{dfn}
\label{dfn:0-cocyc}
Let $\calC$ be a dg category, $x, y \in \calC_0$, and take
$f = (f^i)_{i\in \bbZ} \in \calC(x,y) = \Ds_{i\in \bbZ}\calC^i(x,y)$.
If $f^i = 0$ for all $i \ne 0$ and $d_{\calC}(f) = 0$, then we call $f$ a {\em $0$-cocycle morphism}.
We identify each $0$-cocycle element $g \in Z^0(\calC(x,y))$ of $\calC(x,y)$ with the $0$-cocycle morphism $f \in \calC(x,y)$ defined by
$f^0:= g$ and $f^i = 0$ for all $i \ne 0$.
\end{dfn}

\begin{rmk}
We here recall the explicit form of compositions in a dg category.
Let $\calC$ be a dg category, $x, y, z \in \calC$,
and $f  = (f^i)_{i\in \bbZ} \in \calC(x,y) = \Ds_{i\in \bbZ}\calC^i(x,y)$,
$g = (g^j)_{j\in \bbZ} \in \calC(y, z) = \Ds_{j\in \bbZ}\calC^j(y,z)$.
Then we have the formula
\begin{equation}\label{eq:comp-dg}
g \circ f := \left(\sum_{i\in\bbZ} g^{n-i}\circ f^i \right)_{n \in \bbZ}.
\end{equation}
On the other hand, in the opposite category $\calC\op$ of
$\calC$ having the composition $*$, we have
$f \in \calC\op(y,x), g \in \calC\op(z,y)$, and
\begin{equation}\label{eq:opp-comp}
f * g = \left(\sum_{i\in\bbZ} (-1)^{(n-i)i}\, g^{n-i}\circ f^i \right)_{n \in \bbZ}.
\end{equation}
Note that the representable functor $\calC(\blank, z) = \calC\op(z, \blank)$
is a functor $\calC\op \to \DGMod(\k)$, and hence
$\calC(f, z) \colon \calC(y,z) \to \calC(x,z)$ is defined
as $\calC\op(z, f) \colon \calC\op(z,y) \to \calC\op(z,x)$ by
\[
\calC(f, z)(g):= \calC\op(z,f)(g):= f*g = \left(\sum_{i\in\bbZ} (-1)^{(n-i)i}\, g^{n-i}\circ f^i \right)_{n \in \bbZ}.
\]
\end{rmk}

\begin{rmk}
\label{rmk:dg-nat}
Consider the case that $\bbV= \ChMod(\k)$, and let
$F, G \colon \calA \to \calB$ be dg functors between dg categories.
Then a $\bbV$-natural transformation is called a {\em dg natural transformation}.
By definition, a dg natural transformation $\al \colon F \To G$ is a family
$\al = (\al(x))_{x \in \calA_0}$ of morphisms $\al(x) \colon \k \to \calB(F(x), G(x))$ in $\ChMod(\k)$ making the diagram \eqref{eq:V-nat} commutative.
We set $\al_x:= \al(x)(1_{\k})$, where $1_{\k}$ is the identitiy of $\k$,
and make the identification $\al = (\al_x)_{x\in \calA_0}$.
As in Exmaple \ref{exm:s-mon-cat} (3), $\al_x \in Z^0(\calB(F(x), G(x)))$
for all $x \in \calA_0$, and
the commutativity of \eqref{eq:V-nat} is equivalent to saying that the following is
commutative in $\calB$ for all morphisms $f \colon x \to y$ in $\calA$:
\[
\begin{tikzcd}
F(x) & F(y)\\
G(x) & G(y)
\Ar{1-1}{1-2}{"F(f)"}
\Ar{2-1}{2-2}{"G(f)"'}
\Ar{1-1}{2-1}{"\al_x"'}
\Ar{1-2}{2-2}{"\al_y"}
\end{tikzcd}.
\]
Here we have to remark that both $\al_x$ and $\al_y$ are $0$-cocycles in $\calB(F(x), G(x))$
and in $\calB(F(y), G(y))$, respectively.
Thus, we can set $F(f) = (F(f)^n)_{n\in \bbZ}$,
$G(f) = (G(f)^n)_{n\in \bbZ}$,
$\al_x = (\al_x)^0$, and $\al_y = (\al_y)^0$,
and the commutativity of the diagram above is
equivalent to
the equality
\[
\al_y F(f)^n = G(f)^n \al_x
\]
for all $n \in \bbZ$.
In particular, this is used in the case where $\calB = \DGMod(\k)$, the dg category of dg $\k$-modules, later.
In this case the 0-cocycles are the chain morphisms.
\end{rmk}

We do not use the following notion explicitly, but we introduce it here to make clear the relationship between our setting and other general settings.

\begin{dfn}
\label{dfn:fun-dg-cat}
Let $\calA$ and $\calB$ be dg categories.
Then the {\em functor dg category} $\sfHom(\calA, \calB)$ is defined as follows.
Objects are the dg functors $\calA \to \calB$.
Let $F, G \colon \calA \to \calB$ be two dg functors, and $n \in \bbZ$.
A  {\em derived transformation}
$\al^n \colon F \To G$ {\em of degree $n$} from $F$ to $G$ is a family
$\al^n = (\al^n_{x})_{x \in \calA_0}$ of morphisms $\al^n_{x} \in \calB(F(x), G(x))^n$ such that for any morphism $f \in \calA(x,y)^m$, $x,y \in \calA_0$, we have 
\begin{equation}
\label{eq:derived-natural}
\al^n_{y}F(f)=(-1)^{mn}G(f)\al^n_{x}.
\end{equation}
Then we denote by $\sfHom(\calA, \calB)^n(F, G)$
the set of all derived transformations of degree $n$ from $F$ to $G$, and set
\[
\sfHom(\calA, \calB)(F, G):= \Ds_{n \in \bbZ} \sfHom(\calA, \calB)^n(F, G),
\]
elements of which are called {\em derived transformations} from $F$ to $G$.
The differential $d$ is given by
$d(\al^n_x):= d_{\calB}(\al^n_x)$ for all
$\al \in \sfHom(\calA, \calB)(F,G)$, $n \in \bbZ$,
and $ x \in \calA_0$.
\end{dfn}

\begin{rmk}
\label{rmk:moderate-fun-dg-cat}
In Definition \ref{dfn:fun-dg-cat}, the category
$\sfHom(\calA, \calB)$ is \emph{small} if both $\calA$ and $\calB$ are small;
\emph{light} if $\calA$ is small and $\calB$ is light; and 
\emph{$k$-moderate} if $\calA$ is $(k-1)$-moderate and $\calB$ is $k$-moderate
for all $k \ge 1$.
\end{rmk}

\begin{dfn}
We denote by $\kDGCat$ the $2$-category whose objects are the small dg categories,
whose $1$-morphisms are the dg-functors between these objects,
and whose $2$-morphisms are the derived transformations bvetween these dg functors.
The vertical composition of $2$-morphisms is defined in an obvious way
by a formula similar to \eqref{eq:comp-dg}, and the horizontal composition of $2$-morphisms
is defined by
\[
\be^n \circ \al^m:= (\be^n \circ F) \bullet (E' \circ \al^m)
= (-1)^{mn} (F' \circ \al^m) \bullet (\be^n \circ E)
\]
for all $2$-morphisms $\al = (\al^n)_{n \in \bbZ}$ and $\be = (\be^n)_{n \in \bbZ}$ in a diagram
\[
\begin{tikzcd}
\calA & \calB & \calC
\Ar{1-1}{1-2}{"E", ""'{name=a}, bend left}
\Ar{1-1}{1-2}{"F"', ""{name=b}, bend right}
\Ar{a}{b}{"\al", Rightarrow}
\Ar{1-2}{1-3}{"E'", ""'{name=c}, bend left}
\Ar{1-2}{1-3}{"F'"', ""{name=e}, bend right}
\Ar{c}{e}{"\be", Rightarrow}
\end{tikzcd}
\]
in $\kDGCat$, where
$(E' \circ \al^m)_x:= E'(\al^m_x)$ and $(\be^n \circ E)_y:= \be^n_{E(y)}$
for all $x \in \calA_0, y \in \calB_0$.
Note the difference with $\DGkCat$.
Objects and $1$-morphisms are the same, but $2$-morphisms are different:
they are dg natural transformations in $\DGkCat$, and
derived transformations in $\kDGCat$.
Therefore, for left part of the diagram above,
we have
\begin{equation}
\label{eq:dg-DG}
\DGkCat(\calA, \calB)(E, F) = Z^0(\kDGCat(\calA, \calB)(E,F)).
\end{equation}
This means that for any derived transformation $\al \colon E \to F$,
$\al$ becomes a dg natural transformation if and only if
$\al$ is a 0-cocycle morphism.
We note that both $\DGkCat$ and $\kDGCat$ are light $2$-categories.
\end{dfn}

\begin{dfn}
By replacing small dg categories with light dg categories in the definitions of
$\kdgCat$ and $\kDGCat$, we define
the $2$-categories $\kdgCAT$ and $\kDGCAT$, respectively.
We remark that these are $2$-moderate $2$-categories.
\end{dfn}

\section{$I$-coverings}

In this section we introduce the notion of $I$-coverings that is a generalization of
that of $G$-coverings for a group $G$ introduced in \cite{Asa11}, which was
obtained by generalizing the notion of Galois coverings
introduced by Gabriel in \cite{Gab}.
This will be used in the proof of our main theorem.

In the following, we will consider $I$-coverings 
in $\DGkCat$, i.e., in the case that $\mathbb{V} = \ChMod(\k)$.
The precise form in this case is described as follows.

\begin{dfn}
We define a 2-functor $\De\colon \DGkCat \to \Colax(I, \DGkCat)$ as follows,
which is called the {\em diagonal} 2-functor:
\begin{itemize}
\item
Let $\calC \in \DGkCat$. Then $\De(\calC)$ is defined to be a functor
sending each morphism $a\colon i \to j$ in $I$ to
$\id_{\calC}\colon \calC \to \calC$.
\item
Let $E \colon \calC \to \calC'$ be a $1$-morphism in $\DGkCat$.
Then $\De(E)\colon \De(\calC) \to \De(\calC')$ is a $1$-morphism
$(F,\ps)$ in $\Colax(I, \DGkCat)$
defined by $F(i):=E$ and $\ps(a):= \id_E$ for all $i \in I_0$ and all $a \in I_1$:
$$
\xymatrix{
\calC & \calC'\\
\calC & \calC'.
\ar^E "1,1"; "1,2"
\ar^E "2,1"; "2,2"
\ar_{\id_{\calC}} "1,1"; "2,1"
\ar^{\id_{\calC'}} "1,2"; "2,2"
\ar@{=>}_{\id_E}"1,2";"2,1"
}
$$
\item
Let $E, E'\colon \calC \to \calC'$ be 1-morphisms (that is, dg functors) in $\DGkCat$, and
$\al \colon  E \To E'$ a 2-morphism in $\DGkCat$.
Then $\De(\al)\colon \De(E) \To \De(E')$ is a 2-morphism in
$\Colax(I, \DGkCat)$ defined by $\De(\al):= (\al)_{i\in I_0}$.
\end{itemize}
\end{dfn}

\begin{rmk}\label{rmk:1-mor-to-De}
Let $\bfC$ be a 2-category,
$X=(X, X_i, X_{b,a}) \in \Colax(I, \bfC)_0$, and $C \in \bfC_0$.
Further let
\begin{itemize}
\item $F$ be a family of  1-morphisms $F(i)\colon X(i) \to C$ in $\bfC$
indexed by $i\in I_0$; and
\item $\ps$ be a family of 2-morphisms $\ps(a)\colon F(i) \Rightarrow F(j)X(a)$
indexed by $a\colon i \to j$ in $I$:
$$
\xymatrix{
X(i) & C\\
X(j) & C
\ar^{F(i)} "1,1"; "1,2"
\ar_{F(j)} "2,1"; "2,2"
\ar_{X(a)} "1,1";"2,1"
\ar@{=} "1,2";"2,2"
\ar@{=>}_{\ps(a)} "1,2";"2,1"
}
$$
\end{itemize}
Then $(F, \ps)$ is in $\Colax(I, \bfC)(X, \De(C))$
if and only if the following hold.
\begin{enumerate}
\item[(a)]
For each $i \in I_0$ the following is commutative:
$$
\vcenter{
\xymatrix{
F(i) & F(i)X(\id_i)\\
 & F(i)\id_{X(i)}
\ar@{=>}^{\ps(\id_i)} "1,1"; "1,2"
\ar@{=} "1,1"; "2,2"
\ar@{=>}^{F(i)X_i} "1,2"; "2,2"
}}\quad;\text{ and}
$$
\item[(b)]
For each $i \ya{a} j \ya{b} k$ in $I$ the following is commutative:
$$
\xymatrix@C=4pc{
F(i) &  F(j)X(a)\\
F(k)X(ba) &  F(k)X(b)X(a).
\ar@{=>}^{\ps(a)} "1,1"; "1,2"
\ar@{=>}_{F(k)\,X_{b,a}} "2,1"; "2,2"
\ar@{=>}_{\ps(ba)} "1,1"; "2,1"
\ar@{=>}^{\ps(b)X(a)} "1,2"; "2,2"
}
$$
\end{enumerate}
\end{rmk}

\begin{dfn}
Let $\calC \in \DGkCat$ and $(F, \ps) \colon X \to \De(\calC) $ be in $\Colax(I, \DGkCat)$.
Then
\begin{enumerate}
\item
$(F, \ps)$ is called an $I$-{\em precovering} (of $\calC$) if 
for any $i,j \in I_0$, $x \in X(i), y \in X(j)$,
the morphism
$$
(F,\ps)_{x,y}^{(1)}\colon \Ds_{a\in I(i,j)}X(j)(X(a)x, y) \to \calC(F(i)x, F(j)y)
$$
of $\k$-complexes 
defined by the following is an isomorphism:
$$\begin{aligned}
\Ds_{a\in I(i,j)}X(j)(X(a)x, y)
&\ya{\Ds_{a\in I(i,j)}F(j)} \Ds_{a\in I(i,j)}\calC(F(j)X(a)x, F(j)y)
\\
&\ya{\Ds_{a\in I(i,j)}\calC(\ps(a)_x, F(j)y)}\Ds_{a\in I(i,j)}
\calC(F(i)x, F(j)y)\\
&\ya{\text{summation}}
\calC(F(i)x, F(j)y),
\end{aligned}
$$
the precise form of which is given as follows:
\begin{equation}
\label{eq:1-covering}
\begin{aligned}
(F,\ps)^{(1)}_{x,y}(((f^n_a)_{n\in \mathbb{Z}})_{a\in I(i,j)})
&=
\sum_{a\in I(i,j)} \ps(a)_x * F(j)(f_a)\\
&=
\left(\sum_{a\in I(i,j)}\sum_{r \in \bbZ} (-1)^{(n-r)r} F(j)(f_a)^{n-r} \circ \ps(a)_x^r\right)_{n \in \bbZ}\\
&=\left(\sum_{a\in I(i,j)}F(j)(f_a)^{n} \circ \ps(a)_x\right)_{n \in \bbZ},
\end{aligned}
\end{equation}
where the second term is computed by using \eqref{eq:opp-comp}, and
the last term uses the fact that $\ps(a)_x$ is concentrated in degree 0 (Remark \ref{rmk:dg-nat}).
\item
$(F, \ps)$ is called an $I$-{\em covering} if it is an $I$-precovering and is {\em dense},
i.e., for each $c \in \calC_0$ there exists an $i \in I_0$ and $x \in X(i)_0$
such that $F(i)(x)$ is isomorphic to $c$ in $\calC$.
\end{enumerate}
\end{dfn}

\section{Grothendieck constructions}

In this section we define a 2-functor $\Gr_I\colon \Colax(I, \VCat) \to \VCat$ whose
correspondence on objects is a $\mathbb{V}$-enriched version of (the opposite version of)
the original Grothendieck construction (cf. \cite{Tam}).
In particular, we deal with the case of $\DGkCat$ later.

\begin{dfn}
We define a $2$-functor $\Gr_I\colon \Colax(I, \VCat) \to \VCat$,
which is called the {\em Grothendieck construction}.

{\bf On objects.}  Let $X=(X(i), X_i, X_{b,a}) \in \Colax(I, \VCat)_0$.
Then $\Gr_I X \in \VCat_0$ is defined as follows.
\begin{itemize}
\item  $(\Gr_I X)_0:= \bigcup_{i\in I_0} \{ i \} \times X(i)_0
= \{{}_ix:= (i,x) \mid i \in I_0, x \in X(i)_0\}$.
\item  For each ${}_ix, {}_jy \in (\Gr_I X)_0$, we set
$$
(\Gr_I X)({}_ix, {}_jy) := \bigoplus_{a\in I(i,j)} X(j)(X(a)x, y).
$$
\item  For any ${}_ix, {}_jy, {}_kz \in (\Gr_I X)_0$ and
each $f=(f_a)_{a\in I(i,j)}\in (\Gr_I X)({}_ix, {}_jy)$,
$g=(g_b)_{b\in I(j,k)}\in (\Gr_I X)({}_jy, {}_kz)$, we set
$$
g\circ f:= \left(\sum_{\begin{smallmatrix}a\, \in\, I(i,j)\\b\, \in\, I(j,k)\\c\, =\, ba\end{smallmatrix}}
g_b\circ X(b)f_a
\circ X_{b,a}x \right)_{c\,\in\, I(i,k)},
$$
which is the composite of the following:
\begin{equation}\footnotesize
\label{eq:comp-Gr}
\begin{tikzcd}
(\Gr_I X)({}_jy, {}_kz) \times (\Gr_I X)({}_ix, {}_jy) & (\Gr_I X)({}_ix, {}_kz)\\
\Ds_{b\in I(j,k)}X(k)(X(b)y, z) \times \Ds_{a\in I(i,j)}X(j)(X(a)x, y)&
\Ds_{c\in I(i,k)}X(k)(X(c)x, z)\\
\Ds_{b,a}\{X(k)(X(b)y, z) \times X(j)(X(a)x, y)\}&
\Ds_{b,a}X(k)(X(ba)x, z)\\
\Ds_{b,a}\{X(k)(X(b)y, z) \times X(j)(X(b)X(a)x, X(b)y)\}&
\Ds_{b,a}X(k)(X(b)X(a)x, z),
\Ar{1-1}{1-2}{dashed}
\Ar{1-1}{2-1}{equal}
\Ar{2-1}{3-1}{equal}
\Ar{3-1}{4-1}{"\Ds_{b,a}(\id \times X(b))"'}
\Ar{4-1}{4-2}{}
\Ar{4-2}{3-2}{"{\Ds_{b,a}X(k)(X_{b,a}x,\, z)}"'}
\Ar{3-2}{2-2}{"\text{summation}"'}
\Ar{2-2}{1-2}{equal}
\end{tikzcd}
\end{equation}
where elements are mapped as follows:
\[
\begin{tikzcd}
((g_b)_b, (f_a)_a) & \left(\sum_{c=ba} g_b \circ X(b)f_a \circ X_{b,a}x\right)_{c}\\
(g_b, f_a)_{b,a} & (g_b \circ X(b)f_a \circ X_{b,a}x)_{b,a}\\
(g_b, X(b)f_a)_{b,a} & (g_b \circ X(b)f_a)_{b,a}.
\Ar{1-1}{1-2}{mapsto, dashed}
\Ar{1-1}{2-1}{mapsto}
\Ar{2-1}{3-1}{mapsto}
\Ar{3-1}{3-2}{mapsto}
\Ar{3-2}{2-2}{mapsto}
\Ar{2-2}{1-2}{mapsto}
\end{tikzcd}
\]

Note here that the composition with $X_{b,a}x$ is ``contravariant'',
which is used in \eqref{eq:comp-dg-Gr}.

\item For each ${}_ix \in (\Gr_I X)_0$ the identity $\id_{{}_ix}$ is given by
$$
\id_{{}_ix} = (\de_{a,\id_i}X_i\,x)_{a\in I(i,i)} \in \Ds_{a\in I(i,i)}X(i)(X(a)x,x),
$$
where $\de$ is the Kronecker delta\footnote{
This is used to mean that the $a$-th component is $\et_i\,x$ if $a=\id_i$,
or 0 otherwise.
}.
\end{itemize}

{\bf On 1-morphisms.}
Let $X=(X, X_i, X_{b,a})$ and $X'=(X', X_i', X_{b,a}')$ be objects of $\Colax(I, \VCat)$, and let
$(F, \ps)\colon X \to X'$ be a 1-morphism in $\Colax(I, \VCat)$.
Then a 1-morphism
$$
\Gr_I(F, \ps) \colon \Gr_I X \to \Gr_I X'
$$
in $\VCat$ is defined as follows.
\begin{itemize}
\item For each ${}_ix \in (\Gr_I X)_0$, $\Gr_I(F, \ps)({}_ix):={}_i(F(i)x)$.
\item
Let ${}_ix, {}_jy \in (\Gr_I X)_0$.
Then we define
\[
\Gr_I(F,\ps) \colon (\Gr_I X)({}_ix, {}_jy) \to (\Gr_I X')({}_i(F(i)x), {}_j(F(j)y))
\]
as the composite
\begin{equation}
\label{eq:Gr-mor}
\begin{aligned}
\Ds_{a\in I(i,j)}X(j)(X(a)x, y)
&\ya{\Ds_{a\in I(i,j)}F(j)} \Ds_{a\in I(i,j)}X'(j)(F(j)X(a)x, F(j)y)
\\
&\ya{\Ds_{a\in I(i,j)}X'(j)(\ps(a)_x, F(j)y)}\Ds_{a\in I(i,j)}
X'(j)(X'(a)F(i)x, F(j)y).
\end{aligned}
\end{equation}
Namely, for each $f=(f_a)_{a\in I(i,j)} \in (\Gr_I X)({}_ix, {}_jy)$,
we set
$$
\Gr_I(F,\ps)(f):= (F(j)f_a\circ \ps(a)x)_{a\in I(i,j)}.
$$
\end{itemize}

{\bf On 2-morphisms.}
Let $X=(X, X_i, X_{b,a})$ and $X'=(X', X_i', X_{b,a}')$ be objects of $\Colax(I, \VCat)$,
$(F, \ps)\colon X \to X'$ and $(F', \ps')\colon X' \to X''$  1-morphisms in $\Colax(I, \VCat)$,
and let $\ze\colon (F,\ps) \Rightarrow (F', \ps')$ be a 2-morphism in $\Colax(I, \VCat)$.
Then a 2-morphism
$$
\Gr_I \ze \colon \Gr_I(F,\ps) \Rightarrow \Gr_I(F', \ps')
$$
in $\VCat$ is defined by
$$
(\Gr_I \ze){}_ix :=
\begin{cases}
\ze(i)_x \circ X'_i(F(i)x)& \text{\ if\ } a=\id_i \\
0& \text{\ if\ } a\neq \id_i
\end{cases}
$$
in $\Gr_I X'$ for each ${}_ix \in (\Gr_I X)_0$.
\end{dfn}

In particular, in the case that $\mathbb{V} = \dgMod(\k)$, i.e., that
$\VCat=\kdgCat$,
the precise form of the
Grothendieck construction
$$\Gr_I\colon \Colax(I, \kdgCat) \to \kdgCat
$$ 
is described as follows.

{\bf On objects.}  Let $X=(X, X_i, X_{b,a}) \in \Colax(I, \DGkCat)_0$.
Then $\Gr_I X \in \DGkCat_0$ is defined as follows.
\begin{itemize}
\item  $(\Gr_I X)_0:= \bigcup_{i\in I_0} \{ i \} \times X(i)_0
= \{{}_ix:= (i,x) \mid i \in I_0, x \in X(i)_0\}$.
\item  For each ${}_ix, {}_jy \in (\Gr_I X)_0$, we set
$$
(\Gr_I X)({}_ix, {}_jy) := \bigoplus_{a\in I(i,j)} X(j)(X(a)x, y)= \bigoplus_{a\in I(i,j)} \bigoplus_{n\in \mathbb{Z}}X(j)^n(X(a)x, y),
$$ where note that $X(j)(X(a)x, y)$ is a dg $\k$-module.

\item  For any ${}_ix, {}_jy, {}_kz \in (\Gr_I X)_0$ and
each $f=(f^p_a)_{a\in I(i,j),p\in \mathbb{Z}}\in (\Gr_I X)({}_ix, {}_jy)$,
$g=(g^q_b)_{b\in I(j,k),q\in \mathbb{Z}}\in (\Gr_I X)({}_jy, {}_kz)$, it turns out that
\begin{equation}\label{eq:comp-dg-Gr}
\begin{aligned}
g\circ f&=
\left(\sum_{\begin{smallmatrix}a\, \in\, I(i,j)\\b\, \in\, I(j,k)\\c\, =\, ba\end{smallmatrix}}
X_{b,a}x * (g_b\circ (X(b)f_a))\right)_{c\,\in\, I(i,k),n\in\bbZ}\\
&=
\left(\sum_{\begin{smallmatrix}a\, \in\, I(i,j)\\b\, \in\, I(j,k)\\c\, =\, ba\end{smallmatrix}}
\sum_{p,r\in\mathbb{Z}}
(-1)^{(n-r)r}g^{n-r-p}_b\circ (X(b)f_a)^{p}
\circ (X_{b,a}x)^r \right)_{c\,\in\, I(i,k),n\in\mathbb{Z}}
\end{aligned}
\end{equation}
because of the contravariant part in \eqref{eq:comp-Gr}.

\item For each ${}_ix \in (\Gr_I X)_0$ the identity $\id_{{}_ix}$ is given by
$$
\id_{{}_ix} = (\de_{a,\id_i}X_i\,x)_{a\in I(i,i)} \in \Ds_{a\in I(i,i)}X(i)(X(a)x,x)=\bigoplus_{a\in I(i,j)} \bigoplus_{p\in \mathbb{Z}}X(i)^p(X(a)x, x).
$$
\end{itemize}

{\bf On 1-morphisms.}
Let $X=(X, X_i, X_{b,a})$ and $X'=(X', X_i', X_{b,a}')$ be objects of $\Colax(I, \kdgCat)$, and let
$(F, \ps)\colon X \to X'$ be a 1-morphism in $\Colax(I, \kdgCat)$.
Then a 1-morphism
$$
\Gr_I(F, \ps) \colon \Gr_I X \to \Gr_I X'
$$
in $\DGkCat$ is defined as follows.
\begin{itemize}
\item
For each ${}_ix \in (\Gr_I X)_0$, $\Gr_I(F, \ps)({}_ix):={}_i(F(i)x)$.
\item
Let ${}_ix, {}_jy \in (\Gr_I X)_0$.
Then we define
\[
\Gr_I(F,\ps) \colon (\Gr_I X)({}_ix, {}_jy) \to (\Gr_I X')({}_i(F(i)x), {}_j(F(j)y))
\]
as in \eqref{eq:Gr-mor}.
Namely, for each $f=((f^n_a)_{n\in \mathbb{Z}})_{a\in I(i,j)} \in (\Gr_I X)({}_ix, {}_jy)=\Ds_{a\in I(i,j)}X(j)\linebreak[3] (X(a)x, y)$, we have
\[
\begin{aligned}
((f^n_a)_{n\in \mathbb{Z}})_{a\in I(i,j)} &\mapsto ((F(j)(f^n_a))_{n\in \mathbb{Z}})_{a\in I(i,j)}\\
&\mapsto 
\ps(a)_x * ((F(j))(f^n_a))_{n\in \mathbb{Z}})_{a\in I(i,j)}
\\
&=
(( F(j)(f_a)^{n} \circ\ps(a)_x)_{n\in\mathbb{Z}})_{a\in I(i,j)}
\quad \text{(cf.\ \eqref{eq:opp-comp})}
\end{aligned}
\]
Thus we have
\begin{equation}
\label{eq:comp-Gr1}
\Gr_I(F,\ps)(f) =
(( F(j)(f_a)^{n} \circ\ps(a)_x)_{n\in\mathbb{Z}})_{a\in I(i,j)}.
\end{equation}
\end{itemize}

{\bf On 2-morphisms.}
Let $X=(X, X_i, X_{b,a})$ and $X'=(X', X_i', X_{b,a}')$ be objects of $\Colax(I, \kdgCat)$,
$(F, \ps)\colon X \to X'$ and $(F', \ps')\colon X \to X'$ 1-morphisms in $\Colax(I, \kdgCat)$,
and let $\ze\colon (F,\ps) \Rightarrow (F', \ps')$ a 2-morphism in $\Colax(I, \kdgCat)$.
Then a 2-morphism
$$
\Gr_I \ze \colon \Gr_I(F,\ps) \Rightarrow \Gr_I(F', \ps')
$$
in $\kdgCat$ is defined by
$$
(\Gr_I \ze)({}_ix) =
\begin{cases}
\ze(i)_x \circ X'_i(F(i)x)
= (\sum_{r \in \bbZ} 
\ze(i)^{n-r}_x \circ X'_i(F(i)x)^r)_{n\in \bbZ}
& \text{\ if\ } a=\id_i \\
0& \text{\ if\ } a\neq \id_i
\end{cases}
$$
in $\Gr_I X'$ for each ${}_ix \in (\Gr_I X)_0$.

\begin{exm}\label{exm:Gr}
Let $A$ be a dg $\k$-algebra with the differential $d_A$
regarded as a dg $\k$-category with a single object.
Then $A \in \kdgCat_0$.
Consider the functor $X:= \De(A) \colon I \to \kdgCat$.
Then it is straightforward to verify the following.
\begin{enumerate}
\item If $I$ is a free category defined by the quiver $1 \to 2$,
then $\Gr_I X$ is isomorphic to the triangular dg algebra
$\bmat{A&0\\A&A}$.

\item More generally, if $I$ is a free category $\bbP Q$ defined by a quiver $Q$,
then $\Gr_I X$ is isomorphic to the dg path-category $AQ$ of $Q$ over $A$
defined as follows:
\begin{itemize}
\item $(AQ)_0:= Q_0$.

\item For any $i,j \in Q_0$, 
\[
AQ(i,j):=\Ds_{\mu\in \mathbb{P}Q(i,j)}A\mu=\left\{\sum_{\mu\in \mathbb{P}Q(i,j)}a_{\mu}\mu\, \right|\, \left.(a_{\mu})_{\mu \in \bbP Q(i,j)}\in \Ds_{\mu \in \bbP Q(i,j)}A \right\}.
\]

\item For any $i,j,k \in Q_0$, the composition 
$AQ(j,k) \times AQ(i,j)\to AQ(i,k)$ is given by 
$$\sum_{\nu\in  \bbP Q(j,k)}b_{\nu}\nu \times
\sum_{\mu\in  \bbP Q(i,j)}a_{\mu}\mu \mapsto \sum_{\smat{\mu\in\bbP Q(i,j),\\\nu\in  \bbP Q(j,k)}}b_{\nu}a_{\mu}\nu\mu=\sum_{\la\in  \bbP Q(i,k)}\left(\sum_{\la=\nu\mu}b_{\nu}a_{\mu}\right)\la.
$$
\item
For any $i,j \in Q_0$ and any $n \in \bbZ$, we set $(AQ)^n(i,j)=\Ds_{\mu\in  \bbP Q(i,j)}A^n\mu$.

\item For any $i,j \in Q_0$ and any $n \in \bbZ$,
the differential $d:(AQ)^n(i,j)\to (AQ)^{n+1}(i,j)$ is given by 
\[d\left(\sum_{\mu\in  \bbP Q(i,j)}a_{\mu}\mu\right)=\sum_{\mu\in  \bbP Q(i,j)}d_A(a_\mu)\mu,\]
which automatically satisfies the graded Leibniz rule.
\end{itemize}
Indeed, we can define an isomorphism $\ph \colon AQ \to \Gr_I X$ as follows:
We regard $A$ as a category with a single object $*$.
Then for each $i \in Q_0$, we have $X(i)_0 = \{*\}$ and $X(i)_1 = A$.
Then $(\Gr_I X)_0 = \bigsqcup_{i \in Q_0}X(i)_0 = \bigcup_{i \in Q_0}\{{}_i*\} = \{{}_i* \mid i \in Q_0\}$.
Therefore, we define a bijection $\ph_0 \colon (AQ)_0 \to (\Gr_I X)_0$ by $i \mapsto {}_i*$.
For any $i,j \in Q_0$,
since we have $(AQ)(i,j) = \bigoplus_{\mu\in \bbP Q(i,j)} A\mu$,
and
$$
(\Gr_I X)({}_i*, {}_j*) := \bigoplus_{\mu\in I(i,j)} X(j)(X(\mu)*, *)
=\bigoplus_{\mu\in I(i,j)} X(j)_1 
= \bigoplus_{\mu\in \bbP Q(i,j)} A,
$$
we define a bijection $\ph_1 \colon (AQ)(i,j) \to (\Gr_I X)({}_i*, {}_j*)$ by
$\sum_{\mu\in \bbP Q} a_{\mu}\mu \mapsto (a_{\mu})_{\mu\in \bbP Q}$.
Then $\ph:= (\ph_0, \ph_1) \colon AQ \to \Gr_I X$ turns out to be an isomorphism.

\item If $I$ is a poset $S$,
then $\Gr_I X$ is isomorphic to the incidence dg category $AS$ of $S$ over $A$
defined as follows:
\begin{itemize}
\item
$(AS)_0:= S$ as a set.

\item
For any $i,j \in S$, $(AS)(i,j):= \begin{cases}
A & \text{if $i \le j$},\\
0 & \text{otherwise}.
\end{cases}$

\item
For any $i,j,k \in S$, the composition $AS(j,k) \times AS(i,j) \to AS(i,k)$ 
is given by the multiplication of $A$ for the case that $i \le j \le k$,
and as zero otherwise.

\item
For any $i,j \in Q_0$ and any $n \in \bbZ$, we set
$(AS)^n(i,j):= \begin{cases}
A^n & \text{if $i \le j$},\\
0 & \text{otherwise}.
\end{cases}$

\item
For any $i,j \in Q_0$ and any $n \in \bbZ$,
the differential $d:(AS)^n(i,j)\to (AS)^{n+1}(i,j)$ is given by 
$d_A \colon A^n \to A^{n+1}$ if $i \le j$, and as zero otherwise,
which automatically satisfies the graded Leibniz rule.
\end{itemize}
Indeed, we can define an isomorphism $\ph \colon AS \to \Gr_I X$ as follows:
We regard $A$ as a category with a single object $*$.
Then for each $i \in S$, we have $X(i)_0 = \{*\}$ and $X(i)_1 = A$.
Then $(\Gr_I X)_0 = \bigsqcup_{i \in I_0}X(i)_0 = \bigcup_{i \in I_0}\{{}_i*\} = \{{}_i* \mid i \in S\}$.
Therefore, we define a bijection $\ph_0 \colon (AS)_0 \to (\Gr_I X)_0$ by $i \mapsto {}_i*$.
For any $i,j \in S$,
since we have $(AS)(i,j) = \begin{cases}
A & \text{if $i \le j$},\\
0 & \text{otherwise}
\end{cases}$,
and
$$
\begin{aligned}
(\Gr_I X)({}_i*, {}_j*) &:= \bigoplus_{\mu\in S(i,j)} X(j)(X(\mu)*, *)
=\bigoplus_{\mu\in S(i,j)} X(j)_1 \\
&= \bigoplus_{\mu\in S(i,j)} A=A, \text{if $i \le j$},
\end{aligned}
$$
we define a bijection $\ph_1 \colon (AS)(i,j) \to (\Gr_I X)({}_i*, {}_j*)$ by
$\sum_{\mu\in S} a_{\mu}\mu \mapsto (a_{\mu})_{\mu\in S}$.
Then $\ph:= (\ph_0, \ph_1) \colon AQ \to \Gr_I X$ turns out to be an isomorphism.

\item If $I$ is a monoid $G$,
then $\Gr_I X$ is isomorphic to the monoid dg algebra\footnote{
Since $AG$ has the identity $1_A1_G$,
this is regarded as a category with a single object.
} $AG$ of $G$ over $A$ defined as follows:
\begin{itemize}
\item
$AG:= \Ds_{g\in G}Ag$.

\item
The multiplication $AG \times AG \to AG$ is defined by
\[
\left(\sum_{g\in G} a_g g\right)\cdot \left(\sum_{h\in G} b_h h\right):=
\sum_{g,h \in G} (a_g b_h) gh = \sum_{f \in G}\left(\sum_{gh=f} a_g b_h \right)f.
\]

\item
For each $n \in \bbZ$, $(AG)^n:= \Ds_{g\in G}A^n g$.

\item
The differential $d:(AG)^n\to (AG)^{n+1}$ is given by 
$d\left(\sum_{g\in G} a_g g\right):= \sum_{g\in G} d_A(a_g) g$,
which automatically satisfies the graded Leibniz rule.

\end{itemize}
\end{enumerate}

In (3) above, $AS$ is defined to be the factor category
of the dg path-category $AQ$ modulo the ideal
generated by the full commutativity relations in $Q$,
where $Q$ is the Hasse diagram of $S$ regarded as a quiver by
drawing an arrow $x \to y$ if $x \le y$ in $Q$.
If $S$ is a finite poset, then $AS$ is identified with the usual incidence dg algebra.

See \cite{Asa-Kim} for further examples of the Grothendieck constructions
of functors, further examples of the Grothendieck constructions
of a functor $X \colon I \to \kdgCat$ will be done in a forthcoming paper
\cite{Asa-P3}.
\end{exm}

\begin{dfn}
Let $X \in \Colax(I, \VCat)$.
We define a left transformation $(P_X, \ph_X):= (P, \ph)\colon X \to \De(\Gr_I X)$
(called the {\em canonical morphism}) as follows.
\begin{itemize}
\item For each $i \in I_0$, the functor $P(i)\colon X(i) \to \Gr X$ is  defined by
$$
\left\{
\begin{aligned}
P(i)x&:= {}_ix\\
P(i)f &:=(\de_{a,\id_i} f\circ (X_i\,x))_{a\in I(i,i)}\colon {}_ix \to {}_iy\text{\   in\ $\Gr_I X$}
\end{aligned}
\right.
$$
for all $f\colon x \to y$ in $X(i)$.
\item For each $a \colon i \to j$ in $I$, the natural transformation
$\ph(a)\colon P(i) \Rightarrow P(j)X(a)$
$$\xymatrix{
X(i) & \Gr_I X\\
X(j) & \Gr_I X
\ar^{P(i)} "1,1";"1,2"
\ar_{P(j)} "2,1";"2,2"
\ar_{X(a)} "1,1";"2,1"
\ar@{=} "1,2";"2,2"
\ar@{=>}_{\ph(a)}"1,2";"2,1"
}
$$
is defined by $\ph(a)x:= (\de_{b,a} \id_{X(a)x})_{b \in I(i,j)}$ for all $x \in X(i)_0$.
\end{itemize}
\end{dfn}

Now let  $X \in \Colax(I, \kdgCat)$.
The left transformation $(P_X, \ph_X):= (P, \ph)\colon X \to \De(\Gr_I X)$ is as follows.
\begin{itemize}
\item For each $i \in I_0$, the dg functor $P(i)\colon X(i) \to \Gr_I X$ is  defined by $P(i)x:= {}_ix$ for all $x \in X(i)_0$, and
by setting $P(i)f \colon {}_ix \to {}_iy$ as
\begin{equation}\label{eq:can-cov}
\begin{aligned}
P(i)f:&=(\de_{a,\id_i} (X_i\,x) * f)_{a\in I(i,i)}\\
&= \left(\left(\de_{a,\id_i} \sum_{r\in \bbZ}(-1)^{(n-r)r} f^{n-r} \circ (X_i\,x)^r\right)_{n\in \bbZ}\right)_{a\in I(i,i)}
\end{aligned}
\end{equation}
for all $f\colon x \to y$ in $X(i)$.
Note here that the map
$\calC(X_i(x), y) \colon \calC(x,y) \to \calC(X(\id_i)x, y)$, $f \mapsto f \circ X_ix$
is given by the contravariant functor $\calC(\blank, y)$ at $X_ix$.

\item For each $a \colon i \to j$ in $I$, the dg natural transformation
$\ph(a)\colon P(i) \Rightarrow P(j)X(a)$
$$\xymatrix{
X(i) & \Gr_I X\\
X(j) & \Gr_I X
\ar^{P(i)} "1,1";"1,2"
\ar_{P(j)} "2,1";"2,2"
\ar_{X(a)} "1,1";"2,1"
\ar@{=} "1,2";"2,2"
\ar@{=>}_{\ph(a)}"1,2";"2,1"
}
$$
is defined by $\ph(a)x:= (\de_{b,a} \id_{X(a)x})_{b \in I(i,j)}$ for all $x \in X(i)_0$.
\end{itemize}

\begin{lem}
The $(P, \ph)$ defined above is a $1$-morphism in $\Colax(I, \kdgCat)$.
\end{lem}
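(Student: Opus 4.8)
The plan is to verify directly that the data $(P,\ph)$ satisfy the definition of a $1$-morphism of colax functors (Definition~\ref{dfn:1-mor-Colax(I,C)}); since the codomain is the constant colax functor $\De(\Gr(X))$, I will use Remark~\ref{rmk:1-mor-to-De}, which reduces the task to three checks: (I) each $P(i)\colon X(i)\to\Gr(X)$ is a dg functor, (II) each $\ph(a)\colon P(i)\Rightarrow P(j)X(a)$ is a dg natural transformation, and (III) the two coherence conditions (a) and (b) of Remark~\ref{rmk:1-mor-to-De} hold. All of these amount to unwinding the explicit composition law \eqref{eq:comp-dg-Gr} of $\Gr(X)$ together with the opposite-composition formula \eqref{eq:opp-comp}. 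The key simplification throughout is that the structure $2$-morphisms $X_i$ and $X_{b,a}$ of $X$, being $2$-morphisms of $\kdgCat$, are dg natural transformations, so by Remark~\ref{rmk:dg-nat} their components $X_i x$ and $X_{b,a}x$ are $0$-cocycles; consequently ``contravariant'' composition with $X_{b,a}x$ (resp. with $X_i x$) in \eqref{eq:comp-Gr}/\eqref{eq:comp-dg-Gr} carries no Koszul sign, and in each composite only the $\id_i$-th (resp. the $a$-th) summand of the relevant direct sum contributes.

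For (I): graded $\k$-linearity of $P(i)$ on hom-complexes is immediate from \eqref{eq:can-cov}, and compatibility with the differentials follows because $X_i x$ is a $0$-cocycle, so $d\bigl((X_i x)*f\bigr)=(X_i x)*df$. Preservation of identities, $P(i)\id_x=\id_{{}_ix}$, is read off from the definitions (the $\id_i$-component of $\id_{{}_ix}$ is $X_i x=\id_x\circ X_i x$). For preservation of composition I expand $P(i)g\circ P(i)f$ via \eqref{eq:comp-dg-Gr}: only the summand with $c=\id_i\cdot\id_i=\id_i$ survives, and after using the dg-naturality of $X_i$ to slide $X_i$ past $f$ and then a colax unit triangle, Definition~\ref{dfn:colax-fun}(a) at $\id_i$, in the form $X(\id_i)(X_i x)\circ X_{\id_i,\id_i}x=\id_{X(\id_i)x}$ to cancel the leftover factors, what remains is $(g\circ f)\circ X_i x$, which is exactly the $\id_i$-component of $P(i)(g\circ f)$.

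For (II) and (III): the components $\ph(a)x=(\de_{b,a}\id_{X(a)x})_{b}$ are $0$-cocycles, and naturality of $\ph(a)$ against a morphism $f\colon x\to y$ of $X(i)$ is checked by expanding $\ph(a)_y\circ P(i)(f)$ and $\bigl(P(j)X(a)\bigr)(f)\circ\ph(a)_x$ with \eqref{eq:comp-dg-Gr}; all but the $a$-th summand vanish, and the two colax unit triangles of Definition~\ref{dfn:colax-fun}(a) at $\id_i$ and at $\id_j$ collapse both sides to $X(a)(f)$. Condition (a) of Remark~\ref{rmk:1-mor-to-De} follows by expanding $(P(i)X_i)_x\circ\ph(\id_i)x$ and applying the unit triangle at $\id_i$, which reduces it to $X_i x=(\id_{{}_ix})_{\id_i}$; thus $(P(i)X_i)\bullet\ph(\id_i)=\id_{P(i)}$. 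For condition (b), with $i\ya{a}j\ya{b}k$, I evaluate both composites at $x\in X(i)_0$: the left-hand path $\bigl(\ph(b)X(a)\bigr)_x\circ\ph(a)x$ is a composite of identities followed by ``precomposition with $X_{b,a}x$'', which gives $X_{b,a}x$, while the right-hand path $\bigl(P(k)X_{b,a}\bigr)_x\circ\ph(ba)x$ collapses, via the unit triangle at $ba$, likewise to $X_{b,a}x$; hence the square commutes.

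I do not expect any conceptual obstacle; the whole proof is bookkeeping. The genuine care lies in (a) tracking the component indices in the direct-sum hom-complexes of $\Gr(X)$ and the ``contravariant'' slot occupied by $X_{b,a}x$ in \eqref{eq:comp-dg-Gr}, and (b) the Koszul signs in \eqref{eq:opp-comp} — both of which are neutralized once one observes that the relevant structure $2$-morphisms are $0$-cocycles. It is worth recording that only the colax \emph{unit} axioms of Definition~\ref{dfn:colax-fun}(a), together with the dg-naturality of $X_i$, are used; the colax associativity axiom Definition~\ref{dfn:colax-fun}(b) does not appear explicitly here, having already been used in establishing that $\Gr(X)$ is a dg category. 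This verification is the dg, sign-tracked analogue of the corresponding computation for $\kCat$ in \cite{Asa-a, Asa-13} (see also \cite{Tam}).
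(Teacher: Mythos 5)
Your proof is correct and follows exactly the route the paper takes: its entire proof of this lemma is the one-line remark that the claim ``is straightforward by using Remark~\ref{rmk:1-mor-to-De}'', and your write-up is precisely that verification carried out in full, with the index bookkeeping, the $0$-cocycle observation killing the Koszul signs, and the two colax unit triangles correctly identified as the only axioms needed.
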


\begin{proof}
This is straightforward by using Remark \ref{rmk:1-mor-to-De}.
\end{proof}

Consider the cases that 
$X$ in $\Colax(I, \kDGCat)$ and that $X$ in $\Colax(I, \kdgCat)$.
In both cases, we have the canonical $I$-covering
$(P, \ph)\colon X \to \De(\Gr_I X)$ as shown below.

\begin{prp}\label{prp:can-covering-DGCat}
Let $X \in \Colax(I, \kDGCat)_0$.
Then the canonical morphism $(P, \ph)\colon X \to \De(\Gr_I X)$
is an $I$-covering.
More precisely, the morphism
$$(P, \ph)_{x,y}^{(1)}\colon \Ds_{a\in I(i,j)}X(j)(X(a)x, y) \to (\Gr_I X)(P(i)x, P(j)y)$$
is the identity
for all $i, j \in I_0$ and all $x \in X(i)_0$, $y \in X(j)_0$.
\end{prp}

\begin{proof}
By the definitions of $(\Gr_I X)_0$ and of $P$ it is obvious
that $(P, \ph)$ is dense.
Let $i, j \in I_0$ and $x \in X(i)$, $y \in X(j)$.
We only have to show that
$$
(P, \ph)_{x,y}^{(1)}\colon \Ds_{a\in I(i,j)}X(j)(X(a)x, y) \to (\Gr_I X)(P(i)x, P(j)y)
$$
is the identity.
Let $f = (f_a)_{a\in I(i,j)}\in \Ds_{a\in I(i,j)}X(j)(X(a)x, y)$.
Then by noting the form of $f_a \colon X(a)x \to y$ in $X(j)$,
we have the following equalities for each $n \in \bbZ$:
\[\footnotesize
\begin{aligned}
&(P, \ph)_{x,y}^{(1)}(f)^n
=
\sum_{a\in I(i,j)}  P(j)(f_a)^{n} \circ \ph(a)_x\quad (\text{by }\eqref{eq:1-covering})\\
&= \sum_{a\in I(i,j)}  \left(\de_{b, \id_j} \sum_{s \in \bbZ}(-1)^{(n-s)s}f_a^{n-s} \circ X_j(X(a)x)^s \right)_{b\in I(j,j)} \circ \ph(a)_x
\quad (\text{by } \eqref{eq:can-cov})\\
&= \sum_{a\in I(i,j)} \left(\de_{b, \id_j} \sum_{s \in \bbZ}(-1)^{(n-s)s}f_a^{n-s} \circ X_j(X(a)x)^s \right)_{b\in I(j,j)} \circ (\de_{c,a} \id_{X(a)x})_{c \in I(i,j)}\quad (\text{by } \eqref{eq:comp-dg-Gr})\\
&= \sum_{a\in I(i,j)} \hspace{-5pt}
\left(\hspace{-5pt}\sum_{\smat{b\in I(j,j)\\c\in I(i,j)\\d=bc}}  \hspace{-10pt}\de_{b, \id_j} \hspace{-5pt}\sum_{r,s \in \bbZ}(-1)^{(n-r)r}(-1)^{(n-r-s)s}f_a^{n-r-s} \circ X_j(X(a)x)^s \circ X(b)(\de_{c,a} \id_{X(a)x})^0 \circ (X_{b,c}x)^r \hspace{-5pt} \right)_{\hspace{-5pt}{d\in I(i,j)}}\\
&= \sum_{a\in I(i,j)} 
\left(\de_{d,a}\sum_{r,s \in \bbZ}(-1)^{(n-r)r}(-1)^{(n-r-s)s}f_a^{n-r-s} \circ X_j(X(a)x)^s \circ X(\id_j)( \id_{X(a)x})^0 \circ (X_{\id_j,a}x)^r\right)_{d\in I(i,j)}\\
&= \sum_{a\in I(i,j)} 
\left(\de_{d,a}\sum_{r,s \in \bbZ}(-1)^{(n-r)r}(-1)^{(n-r-s)s}f_a^{n-r-s} \circ X_j(X(a)x)^s \circ  (X_{\id_j,a}x)^r\right)_{d\in I(i,j)}\\
&= \sum_{a\in I(i,j)} 
\left(\de_{d,a}\sum_{\smat{r,s,t \in \bbZ\\n=r+s+t}}(-1)^{rs+rt+st}f_a^{t} \circ X_j(X(a)x)^s \circ  (X_{\id_j,a}x)^r\right)_{d\in I(i,j)}\quad (m:=r+s)\\
\end{aligned}
\]
\[
\begin{aligned}
&= \sum_{a\in I(i,j)} 
\left(\de_{d,a}\sum_{\smat{r,m,t \in \bbZ\\n=m+t}}(-1)^{r(m-r)+mt}f_a^{t} \circ X_j(X(a)x)^{(m-r)} \circ  (X_{\id_j,a}x)^r\right)_{d\in I(i,j)}\\
&= \sum_{a\in I(i,j)} 
\left(\de_{d,a}\sum_{\smat{m,t \in \bbZ\\n=m+t}}(-1)^{mt}f_a^{t} \circ \sum_{r\in \bbZ}(-1)^{(m-r)r}X_j(X(a)x)^{(m-r)} \circ  (X_{\id_j,a}x)^r\right)_{d\in I(i,j)}\\
&=\sum_{a\in I(i,j)} 
\left(\de_{d,a} ((X_{\id_j,a}x * X_j(X(a)x) ))*
f_a)^n \right)_{d\in I(i,j)}\hspace{15.5em}\\
&\overset{*}{=}\sum_{a\in I(i,j)} 
\left(\de_{d,a} (\id_{X(a)x}*
f_a)^n \right)_{d\in I(i,j)}
= f^n.
\end{aligned}
\]
In the above, the equality $\overset{*}{=}$ holds.
Indeed, let $(\blank)\op \colon X(j) \to X(j)\op$ be the
canonical contravariant functor defined by
$u\op:= u$ for all $u \in X(j)_0 \cup X(j)_1$, and $(h \circ g)\op = g * h$ for all morphisms $g \colon u \to v, h\colon v \to w$ in $X(j)$.
If we have an equality $h \circ g = \id_{u}$ in $X(j)$,
then we have $g * h = (h \circ g)\op = \id_u\op = \id_u$.
By applying this fact to the case where
$g = X_{\id_j,a}x, h = X_j(X(a)x), u = X(a)x$, we have
$X_{\id_j,a}x * X_j(X(a)x) = \id_{X(a)x}$.
\end{proof}

\begin{prp}\label{prp:can-covering}
Let $X \in \Colax(I, \kdgCat)_0$.
Then the canonical morphism $(P, \ph)\colon X \to \De(\Gr_I X)$
is an $I$-covering.
More precisely, the morphism
$$(P, \ph)_{x,y}^{(1)}\colon \Ds_{a\in I(i,j)}X(j)(X(a)x, y) \to (\Gr_I X)(P(i)x, P(j)y)$$
is the identity
for all $i, j \in I_0$ and all $x \in X(i)_0$, $y \in X(j)_0$.
\end{prp}

\begin{proof}
The proof is almost the same.
The difference is  that
$X_j$ and $X_{b,c}$ are dg natural transformations, and thus their degrees are $0$.
This makes the long computation above simpler as follows.

\[\small
\begin{aligned}
&(P, \ph)_{x,y}^{(1)}(f)^n
=
\sum_{a\in I(i,j)}  P(j)(f_a)^{n} \circ \ph(a)_x\\
&= \sum_{a\in I(i,j)}  \left(\de_{b, \id_j} \sum_{s \in \bbZ}(-1)^{(n-s)s}f_a^{n-s} \circ X_j(X(a)x)^s \right)_{b\in I(j,j)} \circ \ph(a)_x
\\
&= \sum_{a\in I(i,j)} \left(\de_{b, \id_j} f_a^n \circ X_j(X(a)x) \right)_{b\in I(j,j)} \circ (\de_{c,a} \id_{X(a)x})_{c \in I(i,j)}\\
&= \sum_{a\in I(i,j)} \hspace{-5pt}
\left(\hspace{-5pt}\sum_{\smat{b\in I(j,j)\\c\in I(i,j)\\d=bc}}  \hspace{-10pt}\de_{b, \id_j} \hspace{-5pt}\sum_{r\in \bbZ}(-1)^{(n-r)r}f_a^{n-r} \circ X_j(X(a)x) \circ X(b)(\de_{c,a} \id_{X(a)x})^0 \circ (X_{b,c}x)^r \hspace{-5pt} \right)_{\hspace{-5pt}{d\in I(i,j)}}\\
&= \sum_{a\in I(i,j)} 
\left(\de_{d,a}f_a^n \circ X_j(X(a)x) \circ X(\id_j)( \id_{X(a)x})^0 \circ (X_{\id_j,a}x)^0\right)_{d\in I(i,j)}\\
&= \sum_{a\in I(i,j)} 
\left(\de_{d,a}f_a^{n} \circ X_j(X(a)x) \circ  (X_{\id_j,a}x)\right)_{d\in I(i,j)}
\\
&=\sum_{a\in I(i,j)} 
\left(\de_{d,a}f_a^n\circ (X_j(X(a)x)\circ X_{\id_j,a}x))
 \right)_{d\in I(i,j)}\hspace{15.5em}\\
&\overset{*}{=}\sum_{a\in I(i,j)} 
\left(\de_{d,a}f_a^n\circ \id_{X(a)x}\right)_{d\in I(i,j)}
= f^n.
\end{aligned}
\]
The equality $\overset{*}{=}$ holds since
$X_j(X(a)x)\circ X_{\id_j,a}x= \id_{X(a)x}$.

\end{proof}

\begin{lem}\label{covering-equivalence}
Let $X \in \Colax(I, \kdgCat)_0$ and
$H\colon \Gr_I X \to \calC$ be in $\DGkCat$ and consider the composite $1$-morphism
$(F, \ps) \colon X \ya{(P, \ph)} \De(\Gr_I X) \ya{\De(H)} \De(\calC)$.
Then $(F, \ps)$ is an $I$-covering if and only if $H$ is an equivalence.
\end{lem}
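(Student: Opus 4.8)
The plan is to reduce everything to Proposition~\ref{prp:can-covering} together with the classical fact that a dg functor is an equivalence precisely when it is fully faithful and dense. First I would unwind the composite $(F,\ps) = \De(H)\circ (P,\ph)$. Since all the structure $2$-morphisms of $\De(H)$ are identities, the composition rule for $1$-morphisms in $\Colax(I,\DGkCat)$ gives $F(i) = H\circ P(i)$ for every $i \in I_0$ and $\ps(a) = H\ph(a)$ for every $a \in I_1$; in particular $F(i)x = H({}_ix)$ for all $x \in X(i)_0$, and $\ps(a)_x = H(\ph(a)_x)$.

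The key step is to identify, for all $i,j \in I_0$, $x \in X(i)_0$, $y \in X(j)_0$, the map $(F,\ps)^{(1)}_{x,y}$ with the map $H_{{}_ix,{}_jy}\colon \Gr(X)({}_ix,{}_jy) \to \calC(H{}_ix,H{}_jy)$ induced by the dg functor $H$ on morphism complexes, under the canonical identification $\Gr(X)({}_ix,{}_jy) = \Ds_{a\in I(i,j)} X(j)(X(a)x,y)$. Indeed, by the explicit formula \eqref{eq:1-covering}, using that $H$ is $\k$-linear on morphism complexes and commutes with composition degreewise (hence with the opposite composition $*$), for $f = (f_a)_a$ one computes
\[
\begin{aligned}
(F,\ps)^{(1)}_{x,y}(f)
&= \sum_{a} \ps(a)_x * F(j)(f_a)
= \sum_{a} H(\ph(a)_x) * H(P(j)(f_a))\\
&= H\!\left(\sum_{a} \ph(a)_x * P(j)(f_a)\right)
= H\!\left((P,\ph)^{(1)}_{x,y}(f)\right) = H(f),
\end{aligned}
\]
the last equality being Proposition~\ref{prp:can-covering}, which says $(P,\ph)^{(1)}_{x,y}$ is the identity. (The signs in \eqref{eq:opp-comp} play no role here since $\ph(a)_x$ is concentrated in degree~$0$.) Thus $(F,\ps)^{(1)}_{x,y} = H_{{}_ix,{}_jy}$.

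Next I would translate the two clauses of the definition of an $I$-covering. Every object of $\Gr(X)$ is of the form ${}_ix$ with $i\in I_0$ and $x\in X(i)_0$; hence, by the previous step, the statement that $(F,\ps)^{(1)}_{x,y}$ is an isomorphism of $\k$-complexes for all $i,j,x,y$ — i.e.\ that $(F,\ps)$ is an $I$-precovering — is equivalent to the statement that $H_{c,c'}$ is an isomorphism in $\ChMod(\k)$ for all $c,c'\in\Gr(X)_0$, i.e.\ that $H$ is fully faithful. Likewise the density of $(F,\ps)$ — for each $c\in\calC_0$ there exist $i,x$ with $F(i)x = H({}_ix)$ isomorphic to $c$ in $\calC$ — says exactly, since every object of $\Gr(X)$ is some ${}_ix$, that $H$ is dense. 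Therefore $(F,\ps)$ is an $I$-covering if and only if $H$ is fully faithful and dense, which by the standard characterization of equivalences of dg categories is equivalent to $H$ being an equivalence; a quasi-inverse is obtained by choosing, for each $c\in\calC_0$, an object $G(c)\in\Gr(X)_0$ and a closed degree-$0$ isomorphism $H(G(c))\iso c$ (using the axiom of choice), and extending $G$ to morphisms via full faithfulness.

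The only genuine content is the middle step, the identification of $(F,\ps)^{(1)}_{x,y}$ with the hom-complex map of $H$; given Proposition~\ref{prp:can-covering} this is essentially immediate, and the sole delicate point — compatibility of $H$ with the sign conventions of \eqref{eq:1-covering} and \eqref{eq:opp-comp} — is automatic here because $\ph(a)_x$ lives in degree~$0$. So I do not expect a real obstacle: the lemma is formal once Proposition~\ref{prp:can-covering} and the equivalence criterion for dg functors are available.
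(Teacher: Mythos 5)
Your proof is correct and follows essentially the same route as the paper's: the paper also reduces the statement to the commutative triangle $(F,\ps)^{(1)}_{x,y} = H_{{}_ix,{}_jy}\circ (P,\ph)^{(1)}_{x,y}$ with $(P,\ph)^{(1)}_{x,y}$ the identity (Proposition~\ref{prp:can-covering}), together with the observation that density of $(F,\ps)$ is equivalent to density of $H$. Your explicit computation of $(F,\ps)^{(1)}_{x,y}(f)=H(f)$ is exactly the verification the paper records in the proof of Corollary~\ref{covering-Gr}.
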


\begin{proof}
Obviously $(F, \ps)$ is dense if and only if $H$ is dense.
Further, for each $i, j \in I_0$, $x \in X(i)$ and $y \in X(j)$,
$(F, \ps)^{(1)}_{x,y}$ is an isomorphism if and only if $H_{{}_ix, {}_jy}$ is an isomorphism,
because
we have a commutative diagram
$$
\xymatrix@C=40pt{
\Ds_{a\in I(i,j)}X(j)(X(a)x, y) & \calC(F(i)x, F(j)y)\\
(\Gr_I X)({}_ix, {}_jy)
\ar^(.58){(F, \ps)^{(1)}_{x,y}}"1,1";"1,2"
\ar@{=}_{(P,\ph)^{(1)}_{x,y}}"1,1";"2,1"
\ar_{H_{{}_ix, {}_jy}}"2,1";"1,2"
}
$$
by Proposition \ref{prp:can-covering}.
\end{proof}

\section{Adjoints}

In this section we will show that the Grothendieck construction is a strict left adjoint to
the diagonal 2-functor, and that $I$-coverings are essentially given
by the unit of the adjunction.

\begin{dfn}
Let $\calC \in \VCat$.
We define a functor $Q_{\calC} \colon \Gr_I\De(\calC) \to \calC$ by
\begin{itemize}
\item $Q_{\calC}({}_ix) := x$ for all ${}_ix \in (\Gr_I\De(\calC))_0$; and
\item $Q_{\calC}((f_a)_{a \in I(i,j)}):= \sum_{a\in I(i,j)}f_a$
for all $(f_a)_{a \in I(i,j)} \in (\Gr_I\De(\calC))({}_ix, {}_jy)$
and for all ${}_ix, {}_jy \in (\Gr_I\De(\calC))_0$.
\end{itemize}
It is easy to verify that $Q_{\calC}$ is a $\mathbb{V}$-functor.
\end{dfn}

\begin{thm}\label{Gr-De-adjoint}
The $2$-functor $\Gr_I\colon \Colax(I, \VCat) \to \VCat$ is a strict left $2$-adjoint to
the $2$-functor $\De\colon \VCat \to \Colax(I, \VCat)$.
The unit is given by the family
of canonical morphisms $(P_X, \ph_X) \colon X \to \De(\Gr_I X)$
indexed by $X \in \Colax(I, \VCat)$, and the counit is given by the family of
$Q_{\calC} \colon \Gr_I\De(\calC) \to \calC$
defined as above indexed by $\calC \in \VCat$.

In particular, $(P_X, \ph_X)$ has a strict universality
in the comma category $(X \!\!\downarrow\!\! \De)$, 
i.e., for each $(F, \ps) \colon X \to \De(\calC)$ in
$\Colax(I, \VCat)$ with $\calC \in \VCat$,
there exists a unique $H \colon \Gr_I X) \to \calC $ in $\VCat$ such that
the following is a commutative diagram in $\Colax(I, \VCat)$:
$$
\xymatrix{
X & \De(\calC).\\
\De(\Gr_I X)
\ar^{(F,\ps)}"1,1";"1,2"
\ar_{(P_X,\ph_X)} "1,1"; "2,1"
\ar@{-->}_{\De(H)} "2,1";"1,2"
}
$$
\end{thm}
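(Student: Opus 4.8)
The plan is to prove the strict universal property of the canonical morphism $(P_X,\ph_X)$ displayed at the end of the statement first; the strict $2$-adjointness, and the identifications of the unit and the counit, then follow by the standard $2$-categorical formalism. So fix $(F,\ps)\colon X\to\De(\calC)$ in $\Colax(I,\VCat)$ with $\calC\in\VCat$, and define the candidate $H\colon\Gr(X)\to\calC$ by $H({}_ix):=F(i)(x)$ on objects and, on hom-objects,
$$
H\colon \Gr(X)({}_ix,{}_jy)=\Ds_{a\in I(i,j)}X(j)(X(a)x,y)\longrightarrow \calC(F(i)x,F(j)y)
$$
as the map whose $a$-component sends $f_a\colon X(a)x\to y$ to $F(j)(f_a)\circ\ps(a)_x$; in the enriched setting this is the evident composite of $\Ds_a F(j)$, $\Ds_a\calC(\ps(a)_x,F(j)y)$ and the summation map, exactly as in \eqref{eq:Gr-mor} with $X'=\De(\calC)$.

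\textbf{$H$ is a $\bbV$-functor.} Preservation of identities reduces, by the explicit form of $\id_{{}_ix}$, to the equation $F(i)(X_ix)\circ\ps(\id_i)_x=\id_{F(i)x}$, which is axiom (a) of Remark \ref{rmk:1-mor-to-De} evaluated at $x$. Preservation of composition is the crux: expanding $H(g\circ f)$ via the composition formula \eqref{eq:comp-Gr} (resp.\ \eqref{eq:comp-dg-Gr} in the dg case) yields the terms $F(k)(g_b)\circ F(k)X(b)(f_a)\circ F(k)(X_{b,a}x)\circ\ps(ba)_x$, while expanding $H(g)\circ H(f)$ and applying $\bbV$-naturality of $\ps(b)$ at $f_a$ yields $F(k)(g_b)\circ F(k)X(b)(f_a)\circ\ps(b)_{X(a)x}\circ\ps(a)_x$; these match term by term precisely because $\ps(b)_{X(a)x}\circ\ps(a)_x=F(k)(X_{b,a}x)\circ\ps(ba)_x$ is axiom (b) of Remark \ref{rmk:1-mor-to-De} at $x$. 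In the dg case one must additionally track the Koszul signs coming from the contravariant $*$ in \eqref{eq:comp-dg-Gr}; this is the only genuinely delicate bookkeeping in the whole argument.

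\textbf{The triangle and uniqueness.} Next I would verify $\De(H)\circ(P_X,\ph_X)=(F,\ps)$. Unwinding the composition in $\Colax(I,\VCat)$ and using that the $\ps$-component of $\De(H)$ is the identity, this amounts to $H\circ P(i)=F(i)$ and $H\ph(a)=\ps(a)$ for all $i$ and $a$; the former follows from axiom (a) (the same computation as for identities above), the latter is immediate from $\ph(a)_x=(\de_{b,a}\id_{X(a)x})_b$ and the definition of $H$. For uniqueness I would first record the decomposition
$$
f=(f_a)_{a\in I(i,j)}=\sum_{a\in I(i,j)}P(j)(f_a)\circ\ph(a)_x\quad\text{in }\Gr(X)({}_ix,{}_jy),
$$
which follows from the colax identity $X_j(X(a)x)\circ X_{\id_j,a}x=\id_{X(a)x}$ already used in the proof of Proposition \ref{prp:can-covering}. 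Then any $H'$ with $\De(H')\circ(P_X,\ph_X)=(F,\ps)$ satisfies $H'P(j)=F(j)$ and $H'\ph(a)=\ps(a)$, hence $H'(f)=\sum_a F(j)(f_a)\circ\ps(a)_x=H(f)$ and $H'$ agrees with $H$ on objects as well, so $H$ is unique.

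\textbf{From the universal property to the strict $2$-adjunction.} To upgrade this I would show that $\De(-)\circ(P_X,\ph_X)$ is an \emph{isomorphism} of hom-categories $\VCat(\Gr(X),\calC)\xrightarrow{\ \sim\ }\Colax(I,\VCat)(X,\De(\calC))$, not merely a bijection on objects. Bijectivity on objects is what was just shown. On $2$-morphisms, a $2$-morphism $\ze$ between images $(F,\ps)=\De(H)\circ(P_X,\ph_X)$ and $(F',\ps')=\De(H')\circ(P_X,\ph_X)$ is a family $\ze(i)\colon F(i)\To F'(i)$, and one puts $\eta_{{}_ix}:=\ze(i)_x$; using the decomposition of $f$ above, $\bbV$-naturality of each $\ze(i)$, and the $2$-morphism axiom for $\ze$ at $x$ (which reads $\ps'(a)_x\circ\ze(i)_x=\ze(j)_{X(a)x}\circ\ps(a)_x$ since $\De(\calC)(a)=\id$), one checks that $\eta$ is the unique $\bbV$-natural transformation $H\To H'$ with $\eta_{{}_ix}=\ze(i)_x$. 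Functoriality and $2$-naturality in $X$ and $\calC$ are then routine from the constructions. The unit is $(P_X,\ph_X)$ by construction, and the counit at $\calC$ is the unique $\bbV$-functor corresponding to $\id_{\De(\calC)}$, which by the explicit formula for $H$ applied to $(F,\ps)=\id_{\De(\calC)}$ is exactly $Q_{\calC}$; the triangle identities then hold automatically. The main obstacle throughout is the functoriality of $H$ on composition — matching the colax coherence cells $X_{b,a}$ against axiom (b) for $\ps$ (with the correct signs in the dg case); once the decomposition $f=\sum_a P(j)(f_a)\circ\ph(a)_x$ is available, everything else is formal.
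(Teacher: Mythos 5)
Your proof is correct, but it establishes the adjunction by the opposite route from the paper. You prove the strict universal property of the unit first: you construct $H$ explicitly from $(F,\ps)$, check functoriality (matching the colax cell $X_{b,a}$ against axiom (b) for $\ps$), verify the triangle, prove uniqueness via the decomposition $f=\sum_a P(j)(f_a)\circ\ph(a)_x$, and then upgrade the object-level bijection to an isomorphism of hom-categories to get the strict $2$-adjunction, recovering $Q_\calC$ as the $H$ corresponding to $\id_{\De(\calC)}$. The paper instead takes the unit--counit formulation as primary: it verifies the two triangle identities $\De\ep\cdot\et\De=\id_\De$ and $\ep\Gr\cdot\Gr\et=\id_\Gr$ by direct computation and lets the universal property (the ``In particular'' clause) fall out of the adjunction formally. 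The computational core overlaps substantially --- the identity $X_j(X(a)x)\circ X_{\id_j,a}x=\id_{X(a)x}$ and the fact that $(P_X,\ph_X)^{(1)}_{x,y}$ is the identity (Proposition \ref{prp:can-covering}) drive both the paper's second claim and your uniqueness step. What your approach buys is an explicit formula for the mediating functor $H$ and a self-contained uniqueness argument, which is precisely what Lemma \ref{covering-equivalence} and Corollary \ref{covering-Gr} consume downstream; what the paper's approach buys is brevity, since checking two strict equalities of $1$-morphisms avoids having to verify that the comparison is an isomorphism on $2$-morphisms as well (though the paper correspondingly leaves the $2$-naturality of $\et$ and $\ep$ implicit). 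Your treatment of $2$-morphisms via $\eta_{{}_ix}:=\ze(i)_x$ is sketched but sound. One caveat you share with the paper: the elementwise manipulation of $\Ds_a$ and the summation map presumes $\bbV$ is concrete enough (e.g., $\Mod\k$ or $\ChMod(\k)$) for such reasoning, which is the level of generality actually used.
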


\begin{proof}
For simplicity set $\et:=((P_X, \ph_X))_{X \in \Colax(I, \VCat)_0}$ and
$\ep:= (Q_{\calC})_{\calC \in \VCat_0}$.

\begin{clm}
$\De \ep \cdot \et \De = \id_{\De}$.
\end{clm}
Indeed,
let $\calC \in \VCat$.
It is enough to show that
$\De(Q_{\calC}) \cdot (P_{\De(\calC)}, \ph_{\De(\calC)}) = \id_{\De(\calC)}$.
Now
$$
\begin{aligned}
\mathrm{LHS}
 &=\left((Q_{\calC}P_{\De(\calC)}(i))_{i\in I_0}, (Q_{\calC}\ph_{\De(\calC)}(a))_{a\in I_1}\right)
 , \text{ and}\\
\mathrm{RHS}
 &=\left((\id_{\calC})_{i\in I_0}, (\id_{\id_{\calC}})_{a\in I_1}\right).
\end{aligned}
 $$
{\it First entry$\colon$}Let $i \in I$.
Then $Q_{\calC}P_{\De(\calC)}(i) = \id_{\calC}$
because for each $x, y \in \calC_0$ and each $f\in \calC(x, y)$ we have
$(Q_{\calC}P_{\De(\calC)}(i))(x) = Q_{\calC}({}_ix) = x$; and
$(Q_{\calC}P_{\De(\calC)}(i))(f) = (\de_{a, \id_i}f \cdot ((\et_{\De(\calC)})_i\, x))_{a\in I_1} =
\sum_{a \in I(i, i)}\de_{a, \id_i}f = f$.

{\it Second entry$\colon$} Let $a \colon i \to j$ in $I$.
Then $Q_{\calC}\ph_{\De(\calC)}(a) = \id_{\id_{\calC}}$ because
for each $x \in \calC_0$ we have
$Q_{\calC}\left(\ph_{\De(\calC)}(a)x\right)
= Q_{\calC}\left((\de_{b,a}\id_{\De(\calC)(a)x})_{b\in I(i,j)}\right)
=\sum_{b\in I(i,j)}\de_{b,a}\id_x = \id_x = \id_{\id_{\calC}x}$.
This shows that $\mathrm{LHS} = \mathrm{RHS}$.

\begin{clm}
$\ep \Gr_I \cdot \Gr_I \et = \id_{\Gr_I}$.
\end{clm}
Indeed, let $X \in \Colax(I, \VCat)$.
It is enough to show that
$Q_{\Gr_I X}\cdot \Gr_I(P_X, \ph_X) = \id_{\Gr_I X}$.

{\it On objects$\colon$} Let ${}_ix \in (\Gr_I X)_0$.
Then $Q_{\Gr_I X}\left(\Gr_I(P_X, \ph_X)(x)\right)
= Q_{\Gr_I X}({}_i(P_X(i)x))
= {}_ix$.

{\it On morphisms$\colon$} Let $f = (f_a)_{a\in I(i,j)} \colon {}_ix \to {}_jy$ be in $\Gr_I X$.
Then we have 
\[\begin{aligned}
Q_{\Gr_I X}\Gr_I(P_X, \ph_X)(f)
= Q_{\Gr_I X}((P_X(j)(f_a)\circ\ph_X(a)x)_{a\in I(i,j)})\\
= \sum_{a\in I(i,j)}P_X(j)(f_a)\circ\ph_X(a)x
= (P_X, \ph_X)^{(1)}_{x,y}(f) = f.
\end{aligned}
\]
Thus the claim holds.
The two claims above prove the assertion.
\end{proof}

\begin{cor}\label{covering-Gr}
Let $(F, \ps) \colon X \to \De(\calC)$ be in $\Colax(I, \VCat)$.
Then the following are equivalent.
\begin{enumerate}
\item $(F, \ps)$ is an $I$-covering;
\item There exists an equivalence $H \colon \Gr_I X \to \calC$ such that
the diagram
$$
\xymatrix{
X & \De(\calC)\\
\De(\Gr_I X)
\ar^{(F, \ps)}"1,1";"1,2"
\ar_{(P_X, \ph_X)}"1,1";"2,1"
\ar_{\De(H)}"2,1";"1,2"
}
$$
is strictly commutative.
\end{enumerate}
\end{cor}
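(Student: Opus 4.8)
The plan is to read the corollary as an immediate consequence of two facts already in hand: the strict universal property of the canonical morphism $(P_X,\ph_X)$ from Theorem~\ref{Gr-De-adjoint}, and the criterion of Lemma~\ref{covering-equivalence} (resting on Proposition~\ref{prp:can-covering}) recognizing when a composite $X \ya{(P,\ph)} \De(\Gr(X)) \ya{\De(H)} \De(\calC)$ is an $I$-covering. So no new construction is needed; one only has to assemble these pieces.

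For $(1)\Rightarrow(2)$, I would start from an $I$-covering $(F,\ps)\colon X \to \De(\calC)$ and invoke the strict universality in Theorem~\ref{Gr-De-adjoint} to produce the unique $H\colon \Gr(X)\to\calC$ in $\VCat$ with $\De(H)\cdot(P_X,\ph_X)=(F,\ps)$; the asserted triangle then commutes strictly by construction. To see that $H$ is an equivalence, note that $(F,\ps)$ is now literally the composite $1$-morphism appearing in Lemma~\ref{covering-equivalence}, so that lemma converts the hypothesis ``$(F,\ps)$ is an $I$-covering'' into ``$H$ is an equivalence''.

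For $(2)\Rightarrow(1)$, given an equivalence $H$ making the triangle strictly commute, the equation $(F,\ps)=\De(H)\cdot(P_X,\ph_X)$ again exhibits $(F,\ps)$ as the composite of Lemma~\ref{covering-equivalence}, and that lemma, read in the other direction, yields that $(F,\ps)$ is an $I$-covering. The only point demanding attention is the bookkeeping needed to apply Lemma~\ref{covering-equivalence} in the ambient setting: its proof rests on the commutative square relating $(F,\ps)^{(1)}_{x,y}$ to $H_{{}_ix,{}_jy}$ through the identity map $(P_X,\ph_X)^{(1)}_{x,y}$, which is exactly the content of Proposition~\ref{prp:can-covering} (resp.\ Proposition~\ref{prp:can-covering-DGCat}). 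Beyond this I do not foresee any genuine obstacle: both implications are short formal deductions from the adjunction and the covering-equivalence lemma.
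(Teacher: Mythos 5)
Your proposal is correct and follows exactly the paper's own route: the paper's proof likewise combines the strict universality of $(P_X,\ph_X)$ from Theorem~\ref{Gr-De-adjoint} (to produce the unique $H$ with $\De(H)\cdot(P_X,\ph_X)=(F,\ps)$) with Lemma~\ref{covering-equivalence} (to translate ``$(F,\ps)$ is an $I$-covering'' into ``$H$ is an equivalence'' and back), supported by the identity $(P_X,\ph_X)^{(1)}_{x,y}=\mathrm{id}$ from Proposition~\ref{prp:can-covering}. Your remark on the bookkeeping needed to apply the lemma in the general $\VCat$ setting is a fair observation, but it does not change the argument.
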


\begin{proof}
This immediately follows by Theorem \ref{Gr-De-adjoint} and Lemma \ref{covering-equivalence}. More precisely, 

\begin{equation}
\begin{aligned}
(F,\ps)^{(1)}_{x,y}(((f^n_a)_{n\in \mathbb{Z}})_{a\in I(i,j)})
&=
\sum_{a\in I(i,j)} \ps(a)_x * F(j)(f_a)\\
&=\sum_{a\in I(i,j)} H\phi(a)_x * HP(j)(f_a)\\
&=H(\sum_{a\in I(i,j)} \phi(a)_x * P(j)(f_a))\\
&=H(P, \ph)_{x,y}^{(1)}(f).
\end{aligned}
\end{equation}
\end{proof}

In particular, in the case that $\mathbb{V} = \dgMod(\k)$, i.e., 
$\VCat=\kdgCat$, we have the following.

The $2$-functor $\Gr_I\colon \Colax(I, \kdgCat) \to \kdgCat$ is a strict left $2$-adjoint to
the $2$-functor $\De\colon \kdgCat \to \Colax(I, \kdgCat)$.
The unit is given by the family
of canonical morphisms $(P_X, \ph_X) \colon X \to \De(\Gr_I X)$
indexed by $X \in \Colax(I, \kdgCat)$, and the counit is given by the family of
$Q_{\calC} \colon \Gr_I\De(\calC) \to \calC$
defined as above indexed by $\calC \in \kdgCat$.

In particular, $(P_X, \ph_X)$ has a strict universality in the comma category
$(X \!\!\downarrow\!\! \De)$, i.e., for each $(F, \ps) \colon X \to \De(\calC)$ in
$\Colax(I, \kdgCat)$ with $\calC \in \kdgCat$,
there exists a unique $H \colon \Gr_I X \to \calC $ in $\kdgCat$ such that
the following is a commutative diagram in $\Colax(I, \kdgCat)$:
$$
\xymatrix{
X & \De(\calC).\\
\De(\Gr_I X)
\ar^{(F,\ps)}"1,1";"1,2"
\ar_{(P_X,\ph_X)} "1,1"; "2,1"
\ar@{-->}_{\De(H)} "2,1";"1,2"
}
$$

\section{The derived colax functors}

Let $X\colon I \to \kdgCat$ be a colax functor.
In this section we formulate
the definition of
the ``derived category $\calD(X)$'' of $X$
as a colax functor from $I$ to a $2$-category of triangulated categories
by modifying the definition given in the previous paper
\cite{Asa-a}.
We first recall the definition of colax functors between $2$-categories.

\begin{dfn}
\label{dfn:colax-fun-2cat}
Let $\bfB$ and $\bfC$ be 2-categories.

(1) A {\em colax functor} from $\bfB$ to $\bfC$ is a triple
$(X, \et, \th)$ of data:
\begin{itemize}
\item
a triple $X=(X_0, X_1, X_2)$ of maps $X_i\colon \bfB_i \to \bfC_i$ ($\bfB_i$ denotes the
collection of $i$-morphisms of $\bfB$ for each $i=0,1,2$) preserving domains and codomains of all 1-morphisms and 2-morphisms
(i.e.\ $X_1(\bfB_1(i,j)) \subseteq \bfC_1(X_0i, X_0j)$
for all $i, j \in \bfB_0$ and $X_2(\bfB_2(a,b)) \subseteq \bfC_2(X_1a, X_1b)$
for all $a, b \in \bfB_1$ (we omit the subscripts of $X$ below));
\item
a family $\et:=(\et_i)_{i\in \bfB_0}$ of 2-morphisms $\et_i\colon X(\id_i) \Rightarrow \id_{X(i)}$ in $\bfC$
indexed by $i\in \bfB_0$; and
\item
a family $\th:=(\th_{b,a})_{(b,a)}$ of 2-morphisms
$\th_{b,a} \colon X(ba) \Rightarrow X(b)X(a)$
in $\bfC$ indexed by $(b,a) \in \com(\bfB):=
\{(b,a)\in \bfB_1 \times \bfB_1 \mid ba \text{ is defined}\}$
\end{itemize}
satisfying the axioms:

\begin{enumerate}
\item[(i)]
$(X_1, X_2) \colon \bfB(i,j) \to \bfC(X_0i,X_0j)$ is a functor
for all $i, j \in \bfB_0$;
\item[(ii)]
For each $a\colon i \to j$ in $\bfB_1$ the following are commutative:
$$
\vcenter{
\xymatrix{
X(a\id_i) \ar@{=>}[r]^(.43){\th_{a,\id_i}} \ar@{=}[rd]& X(a)X(\id_i)
\ar@{=>}[d]^{X(a)\et_i}\\
& X(a)\id_{X(i)}
}}
\qquad\text{and}\qquad
\vcenter{
\xymatrix{
X(\id_j a) \ar@{=>}[r]^(.43){\th_{\id_j,a}} \ar@{=}[rd]& X(\id_j)X(a)
\ar@{=>}[d]^{\et_jX(a)}\\
& \id_{X(j)}X(a)
}}\quad;
$$
\item[(iii)]
For each $i \ya{a}j \ya{b} k \ya{c} l$ in $\bfB_1$ the following is commutative:
$$
\vcenter{
\xymatrix@C=3em{
X(cba) \ar@{=>}[r]^(.43){\th_{c,ba}} \ar@{=>}[d]_{\th_{cb,a}}& X(c)X(ba)
\ar@{=>}[d]^{X(c)\th_{b,a}}\\
X(cb)X(a) \ar@{=>}[r]_(.45){\th_{c,b}X(a)}& X(c)X(b)X(a)
}}\quad;\text{ and}
$$
\item[(iv)]
For each $a, a' \colon i \to j$ and $b, b' \colon j \to k$ in $\bfB_1$
and each $\al \colon a \to a'$, $\be \colon b \to b'$ in $\bfB_2$
the following is commutative:
$$
\xymatrix{
X(ba) & X(b)X(a)\\
X(b'a') & X(b')X(a').
\ar@{=>}^{\th_{b,a}}"1,1";"1,2"
\ar@{=>}^{\th_{b',a'}}"2,1";"2,2"
\ar@{=>}_{X(\be*\al)}"1,1";"2,1"
\ar@{=>}^{X(\be)*X(\al)}"1,2";"2,2"
}
$$
\end{enumerate}

(2) A {\em lax functor} from $\bfB$ to $\bfC$ is a colax functor
from $\bfB$ to $\bfC^{\text{co}}$ (see Notation \ref{ntn:co-op}).

(3) A {\em pseudofunctor} from $\bfB$ to $\bfC$ is a colax functor with
all $\et_i$ and $\th_{b,a}$ 2-isomorphisms.

(4) We define a 2-category $\Colax(\bfB, \bfC)$ having all the colax functors
$\bfB \to \bfC$ as the objects as follows.

{\bf 1-morphisms.}
Let $X = (X, \et, \th)$, $X'= (X', \et', \th')$
be colax functors from $\bfB$ to $\bfC$.
A {\em $1$-morphism} (called a {\em left transformation}) from $X$ to $X'$
is a pair $(F, \ps)$ of data
\begin{itemize}
\item
a family $F:=(F(i))_{i\in \bfB_0}$ of 1-morphisms $F(i)\colon X(i) \to X'(i)$
in $\bfC$
; and
\item
a family $\ps:=(\ps(a))_{a\in \bfB_1}$ of 2-morphisms
$\ps(a)\colon X'(a)F(i) \Rightarrow F(j)X(a)$
$$\vcenter{\xymatrix{
X(i) & X'(i)\\
X(j) & X'(j)
\ar_{X(a)} "1,1"; "2,1"
\ar^{X'(a)} "1,2"; "2,2"
\ar^{F(i)} "1,1"; "1,2"
\ar_{F(j)} "2,1"; "2,2"
\ar@{=>}_{\ps(a)} "1,2"; "2,1"
}}
$$
in $\bfC$  indexed by $a\colon i \to j \text{ in }\bfB_1$
that satisfies the following three conditions:
\item[(0)]
for each $\al \colon a \To b$ in $\bfB(i,j)$ the following
is commutative:
\begin{equation}\label{eq:naturality-psi}
\vcenter{\xymatrix@C=10ex{
X'(a)F(i) & X'(b)F(i)\\
F(j)X(a) & F(j)X(b),
\ar@{=>}^{X'(\al)F(i)}"1,1";"1,2"
\ar@{=>}_{F(j)X(\al)}"2,1";"2,2"
\ar@{=>}_{\ps(a)}"1,1";"2,1"
\ar@{=>}^{\ps(b)}"1,2";"2,2"
}}
\end{equation}
thus $\ps$ gives a family of natural transformations of functors:
$$
\vcenter{\xymatrix@C=15ex{
\bfB(i,j) & \bfC(X'(i), X'(j))\\
\bfC(X(i), X(j)) & \bfC(X(i), X'(j))
\ar"1,1";"1,2"^{X'}
\ar"1,1";"2,1"_{X}
\ar"1,2";"2,2"^{\bfC(F(i), X'(j))}
\ar"2,1";"2,2"_{\bfC(X(i), F(j))}
\ar@{=>}"1,2";"2,1"_{\ps_{ij}}
}}\quad(i,j\in \bfB_0),
$$
\end{itemize}
\begin{enumerate}
\item[(a)]
For each $i \in \bfB_0$ the following is commutative:
$$
\vcenter{
\xymatrix{
X'(\id_i)F(i) & F(i)X(\id_i)\\
\id_{X'(i)}F(i) & F(i)\id_{X(i)}
\ar@{=>}^{\ps(\id_i)} "1,1"; "1,2"
\ar@{=} "2,1"; "2,2"
\ar@{=>}_{\et'_iF(i)} "1,1"; "2,1"
\ar@{=>}^{F(i)\et_i} "1,2"; "2,2"
}}\quad;\text{ and}
$$
\item[(b)]
For each $i \ya{a} j \ya{b} k$ in $\bfB_1$ the following is commutative:
$$
\xymatrix@C=4pc{
X'(ba)F(i) & X'(b)X'(a)F(i) & X'(b)F(j)X(a)\\
F(k)X(ba) & & F(k)X(b)X(a).
\ar@{=>}^{\th'_{b,a}F(i)} "1,1"; "1,2"
\ar@{=>}^{X'(b)\ps(a)} "1,2"; "1,3"
\ar@{=>}_{F(k)\,\th_{b,a}} "2,1"; "2,3"
\ar@{=>}_{\ps(ba)} "1,1"; "2,1"
\ar@{=>}^{\ps(b)X(a)} "1,3"; "2,3"
}
$$
\end{enumerate}

{\bf 2-morphisms.}
Let $X = (X, \et, \th)$, $X'= (X', \et', \th')$
be colax functors from $\bfB$ to $\bfC$, and
$(F, \ps)$, $(F', \ps')$ 1-morphisms from $X$ to $X'$.
A {\em $2$-morphism} from $(F, \ps)$ to $(F', \ps')$ is a
family $\ze= (\ze(i))_{i\in \bfB_0}$ of 2-morphisms
$\ze(i)\colon F(i) \Rightarrow F'(i)$ in $\bfC$
indexed by $i \in \bfB_0$
such that the following is commutative for all $a\colon i \to j$ in $\bfB_1$:
$$
\xymatrix@C=4pc{
X'(a)F(i) & X'(a)F'(i)\\
F(j)X(a) & F'(j)X(a).
\ar@{=>}^{X'(a)\ze(i)} "1,1"; "1,2"
\ar@{=>}^{\ze(j)X(a)} "2,1"; "2,2"
\ar@{=>}_{\ps(a)} "1,1"; "2,1"
\ar@{=>}^{\ps'(a)} "1,2"; "2,2"
}
$$

{\bf Composition of 1-morphisms.}
Let $X = (X, \et, \th)$, $X'= (X', \et', \th')$ and $X''= (X'', \et'', \th'')$
be colax functors from $\bfB$ to $\bfC$, and
let $(F, \ps)\colon X \to X'$, $(F', \ps')\colon X' \to X''$
be 1-morphisms.
Then the composite $(F', \ps')(F, \ps)$ of $(F, \ps)$ and
$(F', \ps')$ is a 1-morphism from $X$ to $X''$ defined by
$$
(F', \ps')(F, \ps):= (F'F, \ps'\circ\ps),
$$
where $F'F:=((F'(i)F(i))_{i\in \bfB_0}$ and for each $a\colon i \to j$ in $\bfB$,
$
(\ps'\circ\ps)(a):= F'(j)\ps(a)\circ \ps'(a)F(i)
$
is the pasting of the diagram
$$
\xymatrix@C=4pc{
X(i) & X'(i) & X''(i)\\
X(j) & X'(j) & X''(j).
\ar_{X(a)} "1,1"; "2,1"
\ar_{X'(a)} "1,2"; "2,2"
\ar^{F(i)} "1,1"; "1,2"
\ar_{F(j)} "2,1"; "2,2"
\ar@{=>}_{\ps(a)} "1,2"; "2,1"
\ar^{X''(a)} "1,3"; "2,3"
\ar^{F'(i)} "1,2"; "1,3"
\ar_{F'(j)} "2,2"; "2,3"
\ar@{=>}_{\ps'(a)} "1,3"; "2,2"
}
$$
\end{dfn}

\begin{rmk}
We make the following remarks.
\begin{enumerate}
\item
Note that a (strict) 2-functor from $\bfB$ to $\bfC$ is a pseudofunctor with
all $\et_i$ and $\th_{b,a}$ identities.
\item
By regarding the category $I$ as a 2-category with all 2-morphisms identities,
the definition (1) of colax functors above coincides
with Definition \ref{dfn:colax-fun}.
\item
When $\bfB = I$, the definition (4) of $\Colax(\bfB, \bfC)$
above coincides with that of $\Colax(I, \bfC)$ given before.
\item
It is well-known that the composite of pseudofunctors turns out to be a pseudofunctor \cite[Lemma 9.2]{Asa-13} or \cite[Lemma 4.1.27(2)]{JY}.
\end{enumerate}

\end{rmk}

\begin{ntn}
\label{ntn:2-cats}
We introduce the following $2$-categories.
\begin{enumerate}
\item
$\kAB$ denotes the $2$-category of light abelian $\k$-categories,
$\k$-functors between them,
and natural transformations between these $\k$-functors.
\item
$\kFRB$ denotes the $2$-category of light Frobenius $\k$-categories,
$\k$-functors between them,
and natural transformations between these $\k$-functors.
\item
$\kTRI$ denotes the $2$-category of light triangulated $\k$-categories,
triangle $\k$-functors between them,
and natural transformations between these triangle $\k$-functors.
\item
$\kuTRI^2$ denotes the $2$-category of $2$-moderate triangulated $\k$-categories,
triangle $\k$-functors between them,
and natural transformations between these triangle $\k$-functors.
\end{enumerate}
\end{ntn}

\begin{dfn}
Let $\calA \in \DGkCat_0 = \kDGCat_0$.
A dg functor $\calA\op \to \DGMod(\k)$ is called a {\em right dg $\calA$-module}.
We set
\[
\begin{aligned}
\ChMod(\calA)&:= \DGkCat(\calA\op, \DGMod(\k)), \text{and}\\
\DGMod(\calA)&:= \kDGCat(\calA\op, \DGMod(\k)).
\end{aligned}
\]
$\ChMod(\calA)$ is called the {\em category of $($right$)$ dg $\calA$-modules}, and
$\DGMod(\calA)$ is called the {\em dg category of $($right$)$ dg $\calA$-modules}.
Thus in particular, we have
\[
\ChMod(\calA)_0 = \DGMod(\calA)_0,
\]
which consists of the right dg $\calA$-modules.
Note that $\ChMod(\calA)$ is in $\kAB$, or more presicely, in $\kFRB$,
whereas $\DGMod(\calA)$ is in $\kdgCAT$ and in $\kDGCAT$.
By \eqref{eq:dg-DG},
they have the following relation for all objects $X, Y$:
\[
\ChMod(\calA)(X, Y) = Z^0(\DGMod(\calA)(X,Y)).
\]
Thus a morphism $X \to Y$ in $\ChMod(\calA)$ is given as a dg natural transformation, but in $\DGMod(\calA)$ it is given as a derived transformation.
\end{dfn}

\begin{dfn}
\label{dfn:bimodule}
Let $\calA$ and $\calB$ be small dg categories.
\begin{enumerate}
\item
A dg functor $\calB \to \DGMod(\k)$ is called a {\em left dg $\calB$-module}.
A $\calB$-$\calA$-bimodule is a dg functor $M \colon \calA\op \ox_\k \calB \to \DGMod(\k)$.
\item
For each $B \in \calB_0$ and $A \in \calA_0$, we set ${}_BM:= M(\blank, B) \colon \calA\op \to \DGMod(\k)$
and $M_A:= M(A, \blank) \colon \calB \to \DGMod(\k)$, and ${}_BM_A:= M(A, B)$.
Note that ${}_BM$ is a right dg $\calA$-module, $M_A$ is a left dg $\calB$-module,
and ${}_BM_A$ is a dg $\k$-module.
\item
If $f \colon A' \to A$ is a morphism in $\calA$, and $g \colon B' \to B$ is a morphism in $\calB$,
then we set ${}_gM:= M(\blank, g)$ and $M_f:= M(f,\blank)$.
Note that $M_f \colon M_{A} \to M_{A'}$ is a derived transformation between 
covariant dg functors,
while
${}_gM \colon {}_{B'} M \to {}_{B}M$ is a derived transformation between
contravariant dg functors.
\item
To emphasise that $M$ is a $\calB$-$\calA$-bimodule, we sometimes use the notation
${}_\calB M_\calA$.
\item
The dg category $\calA$ defines an $\calA$-$\calA$-bimodule 
$\calA(\blank, ?) \colon \calA\op \ox_\k \calA \to \DGMod(\k)$
by $(x,y) \mapsto \calA(x, y)$.
We denote this bimodule by ${}_\calA \calA_\calA$,
and we use the same convention that
${}_x\calA:= \calA(\blank, x)$, $\calA_y:= \calA(y, \blank)$, and
${}_x\calA_y:= \calA(y,x)$ for all $x, y \in \calA_0$; and
for any morpisms $f \colon x' \to x$, $g \colon y' \to y$ in $\calA$,
we write
${}_g \calA \colon {}_{y'}\calA \to {}_{y}\calA$, and
$\calA_f \colon \calA_x \to \calA_{x'}$.
Sometimes we abbreviate them as $g^\wedge \colon {y'}^\wedge \to y^\wedge$ and
${}^\wedge f \colon {}^\wedge x \to {}^\wedge x'$, respectively.
\end{enumerate}
\end{dfn}

\begin{rmk}
\label{rmk:0-cocycle}
In Definition \ref{dfn:bimodule} (3),
note that $0$-cocycle morphisms are preserved by the correspondences
$f \mapsto M_{f}$ and $g \mapsto {}_{g}M$, namely,
if $f \in Z^0(\calA(A', A))$ (resp.\ $g \in Z^0(\calB(B',B))$),
then $M_f \in Z^0(\DGMod(\calB\op)(M_A, M_{A'})) = \calC(\calB\op)(M_A, M_{A'})$
(resp.\ ${}_gM \in Z^0(\DGMod(\calA)$ $({}_{B'}M, {}_BM)) = \calC(\calA)({}_{B'}M, {}_BM)$).

Indeed, 
since $M_A$ is a left dg $\calB$-module, 
the dg functor $M \colon \calA\op \ox_\k \calB \to \DGMod(\k)$
induces a dg functor $M:\calA\op  \to \DGMod(\calB\op)$.
Therefore, for any $A, A'\in\calA_0$, by noting that $\calA\op(A,A') = \calA(A', A)$,
it induces a chain map
$$
M_{A,A'}:\calA(A',A)  \to \DGMod(\calB\op)( M_{A}, M_{A'}).
$$
Hence if $f \in Z^0(\calA(A', A))$, then
$M_f \in Z^0(\DGMod(\calB\op)( M_{A}, M_{A'}))$.
Thus in this case,
$M_f \colon M_{A} \to M_{A'}$
is a dg natural transformation between dg functors.
Similar argument works for the remaining case.

\end{rmk}

\begin{ntn}
\label{ntn:bimodule}
Let $\calA, \calB, \calA', \calB'$ be small dg categories,
$E \colon \calA' \to \calA$, $F \colon \calB' \to \calB$ dg functors, and
$M$ an $\calA$-$\calB$-bimodule.
\begin{enumerate}
\item
We denote by ${}_E M$, $M_F$ and ${}_E M_F$
the $\calA'$-$\calB$-bimodule, $\calA$-$\calB'$-bimodule, and $\calA'$-$\calB'$-bimodule
defined as follows, respectively:
\[\footnotesize
\begin{aligned}
{}_E M&:= M(\blank, E(?)) = M \circ (\id_{\calB\op}\ox_k E) \colon \calB\op \ox_\k \calA' \ya{\id_{\calB\op}\ox_k E} \calB\op \ox_\k \calA \ya{M} \DGMod(\k),\\
M_F&:= M(F(\blank), ?) = M \circ (F \ox_k \id_{\calA}) \colon {\calB'}\op \ox_\k \calA \ya{F \ox_k \id_{\calA}} \calB\op \ox_\k \calA \ya{M} \DGMod(\k), \text{and}\\
{}_E M_F&:= M(F(\blank), E(?)) = M \circ (F\ox_k E) \colon {\calB'}\op \ox_\k \calA' \ya{F\ox_k E} \calB\op \ox_\k \calA \ya{M} \DGMod(\k).
\end{aligned}
\]
\item
Moreover, if $E' \colon \calA' \to \calA$ and $F' \colon \calB' \to \calB$ are dg functors,
and $\al \colon E \To E'$, $\be \colon F \To F'$ are derived transformations,
then $M$ defines morphisms of bimodules as follows:
\[
\begin{aligned}
{}_{\al} M&:= M(\blank, \al_{(?)}) = M \circ  (\id_{\calB\op}\ox_k \al) \colon {}_E M \to {}_{E'}M\\
M_{\be}&:= M(\be_{(\blank)}, ?) = M \circ (\be \ox_k \id_{\calA}) \colon M_{F'} \to M_F, \text{and}\\
{}_{\al} M_{\be}&:= M(\be_{(\blank)}, \al_{(?)}) = M \circ (\be \ox_\k \al) \colon {}_E M_{F'} \to {}_{E'}M_F.
\end{aligned}
\]
\item
We often abbreviate the morphism of $\calA'$-$\calA$-bimodules
(induced from the bimodule ${}_{\calA}\calA_\calA$)
\[
{}_{\al} \calA \colon {}_E \calA \to {}_{E'}\calA \quad \text{as}\quad 
\ovl{\al} \colon \ovl{E} \to \ovl{E'},
\]
and the morphism of $\calB$-$\calB'$-bimodules (induced from the bimodule ${}_{\calB}\calB_\calB$)
\[
\calB_{\be} \colon \calB_{F'} \to \calB_F \quad \text{as}\quad
\ovl{\be}^* \colon \ovl{F'}^* \to \ovl{F}^*.
\]
\end{enumerate}
\end{ntn}

\begin{dfn}
\label{dfn:associator}
Let $\calA, \calB, \calC$ and $\calD$ be small dg categories, and
${}_\calD W_\calC, {}_\calC V_\calB$, ${}_\calB U_\calA$ bimodules.
Then the canonical isomorphism
\[
\bfa = \bfa_{W,V,U} \colon (W\ox_\calC V) \ox_\calB U \to W \ox_\calC (V \ox_\calB U)
\]
that represent the associativity of the tensor products
is called the {\em associator} of tensor products (cf.\ Definition \ref{dfn:monoidal-cat}).
\end{dfn}

\begin{dfn}
Since both $\kdgCat$ and $\kDGCat$ are 2-categories,
the assignments $\calA \mapsto \DGMod(\calA)$
and $\calA \mapsto \ChMod(\calA)$
are extended to representable 2-functors
\[
\begin{aligned}
\DGMod':= \kDGCat((\blank)\op, \DGMod(\k))) &\colon \kDGCat \to \kDGCAT\coop \text{ and}\\
\dgMod':= \kdgCat((\blank)\op, \DGMod(\k))) &\colon \kdgCat \to \kFRB\coop,
\end{aligned}
\]
respectively.
By modifying these, we define pseudofunctors
\[
\begin{aligned}
\DGMod &\colon \kDGCat \to \kDGCAT \text{ and}\\
\dgMod &\colon \kdgCat \to \kFRB
\end{aligned}
\]
as follows.
For any diagram
\[
\begin{tikzcd}
\calA & \calB
\Ar{1-1}{1-2}{"E", ""'{name=a}, bend left}
\Ar{1-1}{1-2}{"F"', ""{name=b}, bend right}
\Ar{a}{b}{"\al", Rightarrow}
\end{tikzcd}
\]
in $\kDGCat$ (resp.\ in $\kdgCat$),
we define $\DGMod(E):= \blank \ox_\calA {}_E\calB$ and
$\DGMod(\al):= \blank \ox_\calA {}_{\al}\calB$
(resp.\ $\dgMod(E):= \blank \ox_\calA {}_E\calB$ and
$\dgMod(\al):= \blank \ox_\calA {}_{\al}\calB$) (see Notation \ref{ntn:bimodule}).
Note that
$\dgMod(\al):= \blank \ox_\calA {}_{\al}\calB$ is defined by
regarding $\al$ in $\kdgCat$ as a $2$-morphism in $\kDGCat$ concentrated in degree 0.

We define the structures of pseudofunctors for $\DGMod$ and $\dgMod$ as follows.
\begin{itemize}
\item For each $\calA \in \kDGCat_0 = \kdgCat_0$, we define
$\et_\calA \colon \DGMod(\id_\calA) \To \id_{\DGMod(\calA)}$
(resp.\ $\et_\calA \colon \dgMod(\id_\calA) \To \id_{\dgMod(\calA)}$)
by setting 
$$
\et_\calA M \colon M \otimes_\calA \calA(?,\blank) \to M
$$
to be the canonical isomorphisms for all $M \in \DGMod(\calA)_0 = \dgMod(\calA)_0$.

\item For each pair of dg functors $\calA \ya{F} \calA' \ya{G} \calA''$
in $\kDGCat_1 = \kdgCat_1$,
we define 
\[
\begin{aligned}
\th_{G, F} &\colon \DGMod(GF) \To \DGMod(G) \circ \DGMod(F)\\
(\text{resp.}\ \th_{G, F} &\colon \dgMod(GF) \To \dgMod(G) \circ \dgMod(F))
\end{aligned}
\]
as the canonical isomorphism
$$
 \blank\otimes_\calA {}_{GF}\calA''\To (\blank\otimes_\calA {}_F\calA')\otimes_{\calA'} {}_G\calA''
$$
\end{itemize}
given as the composite of the canonical isomorphisms
(see Definition \ref{dfn:associator})
\[
\begin{aligned}
\blank\otimes_\calA {}_{GF}\calA''
&\iso \blank\otimes_\calA (\calA'(?,F(\blank))\otimes_{\calA'}\calA''(?, G(\blank)))\\
&\ya{\bfa_{(\blank),\calA'(?,F(\blank)),\calA''(?, G(\blank))}\inv} (\blank\otimes_\calA \calA'(?,F(\blank)))\otimes_{\calA'} \calA''(?, G(\blank)).
\end{aligned}
\]
It is straightforward to check that this defines pseudofunctors $\DGMod$ and $\dgMod$.
\end{dfn}

\begin{rmk}
\label{rmk:C'-exact}
In the definition above, note that $\dgMod(E)$ is a left adjoint to $\dgMod'(E)$ 
and that $\dgMod'(E)$ has also a right adjoint.
Therefore, $\dgMod'(E)$ is an exact functor.
\end{rmk}

\begin{rmk}
\label{rmk:Yoneda-emb}
Using the notation in Definition \ref{dfn:bimodule} (5),
the Yoneda embedding $Y_\calA \colon \calA \to \DGMod(\calA)$
can be defined 
as the functor sending a morphism $f \colon x \to y$ in $\calA$ to
the morphism $f^\wedge \colon x^\wedge \to y^\wedge$ in $\DGMod(\calA)$.
\end{rmk}

\begin{dfn}
As is easily seen,
the stable category construction $\calF \mapsto \udl{\calF}$ can be
extended to a $2$-functor $\st \colon \kFRB \to \kTRI$
in an obvious way.
Then we set
\[
\calH:= \st \circ \dgMod \colon \kdgCat \to \kTRI,
\]
which turns out to be a pseudofunctor
as a composite of a pseudofunctor and a 2-functor.
For each $\calA \in \kdgCat$,
$\calH(\calA)$ is called the {\em homotopy category} of $\calA$.

For each $\calA \in \kDGCat_0$, we set
\[
\calD(\calA):= \calH(\calA)[\qis\inv]
\]
to be the quotient category of the homotopy category of $\calA$ with respect to quasi-isomorphisms, and call it the {\em derived category} of $\calA$.
\end{dfn}

\begin{rmk}
We note that for each $\calA \in \kDGCat$,
the homotopy category $\calH(\calA)$ is a light triangulated category, but
the derived category $\calD(\calA)$ is a properly 2-moderate triangulated category.
This is in spite of the fact that we have the isomorphisms \eqref{eq:Hom-dercat} below.
\end{rmk}

\begin{dfn}
\label{dfn:dgMod-D}
Let $\calA \in \kDGCat_0$.
\begin{enumerate}
\item
We denote by $\hprj(\calA)$
the full subcategory of the homotopy category $\calH(\calA)$ of $\calA$ consisting of
the {\em homotopically projective} objects $M$, i.e., objects $M$ such that $\calH(\calA)(M, A) =0$
for all acyclic objects $A$.

\item
Let $\hprj(\calA) \ya{\si_{\calA}} \calH(\calA) \ya{Q_{\calA}} \calD(\calA)$
be the inclusion functor and the quotient functor, respectively,
and set $\bfj_{\calA}:= Q_{\calA} \circ \si_{\calA}$.
Then there exists a functor $\bfp_{\calA} \colon \calD(\calA) \to \hprj(\calA)$
giving a left adjoint $\si_\calA \circ \bfp_\calA$ to $Q_\calA$:
\[
\begin{tikzcd}
&\hprj(\calA)\\
\calH(\calA) && \calD(\calA)
\Ar{2-3}{1-2}{"\bfp_{\calA}"'}
\Ar{1-2}{2-1}{"\si_{\calA}"'}
\Ar{2-1}{2-3}{"Q_{\calA}"', ""{name=b}, bend right=10pt}
\Ar{1-2}{b}{"\perp"description,phantom}
\end{tikzcd}
\]
such that
\begin{equation}
\label{eq:pj=1}
\bfp_{\calA}\bfj_{\calA} = \id_{\hprj(\calA)}
\end{equation}
is satisfied\footnote{
This can be done by taking  $\bfp_\calA(P):= P$ for all $P \in \hprj(\calA)$.}
and the counit
$\ep_{\calA} \colon (\si_\calA \circ \bfp_{\calA}) \circ Q_\calA \To \id_{\calH(\calA)}$
consists of quasi-isomorphisms
$\ep_{\calA,M}\colon \bfp_{\calA}M \to M$ for all $M \in \calH(\calA)_0$.
In particular, both $\bfp_{\calA}$ and $\bfj_{\calA}$ are equivalences and
quasi-inverses to each other, and by the adjoint above we have a canonical
isomorphism
\begin{equation}
\label{eq:Hom-dercat}
\calD(\calA)(L, M) \iso \calH(\calA)(\bfp_\calA(L), M)
\end{equation}
for all $L, M \in \calD(\calA)_0 = \calH(\calA)_0$.
Here, note that the right hand sides are always small, but the left hand sides are not.
\end{enumerate}
\end{dfn}

\begin{dfn}
We define two $2$-categories $\calC(\kdgCat)$ and $\calH(\kdgCat)$ as follows:
\begin{itemize}
\item
$\calC(\kdgCat)_0:= \{\calC(\calA) \mid \calA \in \kdgCat_0\}$.
\item
For any objects $\dgMod(\calA), \dgMod(\calB)$ of $\calC(\kdgCat)$,
$1$-morphisms from $\dgMod(\calA)$ to $\dgMod(\calB)$ are the $\k$-functors
$F \colon \dgMod(\calA) \to \dgMod(\calB)$
satisfying the condition
\begin{equation}
\label{eq:Hp-preserved}
F(\hprj(\calA)_0) \subseteq \hprj(\calB)_0
\end{equation}
When this is the case, we say that $F$ {\em preserves homotomically projectives}.
\item
$\calH(\kdgCat)_0:= \{\calH(\calA) \mid \calA \in \kdgCat_0\}$.
\item
For any objects $\calH(\calA), \calH(\calB)$ of $\calH(\kdgCat)$),
$1$-morphisms from $\calH(\calA)$ to $\calH(\calB)$ are
the $\k$-functors of the form $\udl{F}\, (:= \mathrm{st}(F))$
for some $\k$-functors $F \colon \dgMod(\calA) \to \dgMod(\calB)$
satisfying the condition \eqref{eq:Hp-preserved}.
\item
In both $2$-categories, the $2$-morphisms are the natural transformations between those $1$-morphisms.
\end{itemize}
\end{dfn}

\begin{rmk}
(1) By definition, the $2$-functor $\st \colon \kFRB \to \kTRI$
restricts to a $2$-functor
\[
\st \colon \calC(\kdgCat) \to \calH(\kdgCat).
\]

(2) In the definition above, unlike objects,
note that we defined the $1$-morphisms in $\calC(\kdgCat)$
not as the ``image'' of $1$-morphisms in $\kdgCat$ under $\calC$.

Nevertheless, pseudofunctors
\[
\calC \colon \kdgCat \to \kFRB \ \text{and}\ 
\calH \colon \kdgCat \to \kTRI
\]
restrict to pseudofunctors
\[
\calC \colon \kdgCat \to \calC(\kdgCat) \ \text{and}\ 
\calH \colon \kdgCat \to \calH(\kdgCat).
\]
Indeed,
let $F \colon \calA \to \calB$ be a $1$-morphism in $\kdgCat$.
It is enough to show that 
$\calC(F)$ is a $1$-morphism in $\calC(\kdgCat)$, i.e. that
$\calC(F)(\hprj(\calA)_0) \subseteq \hprj(\calB)_0$.
Let $P \in \hprj(\calA)_0$.
Then since $\calC(F)$ is a left adjoint to $\calC'(F)$ , we have
\[
\calH(\calB)(\calC(F)(P), A) \iso \calH(\calA)(P, \calC'(F)(A))
\]
for all acyclic objects $A$ of $\calC(\calB)$.
The right hand side is zero because $\calC'(F)$ is exact by Remark \ref{rmk:C'-exact}, and hence $\calC'(F)(A)$ is acyclic in $\calC(\calA)$.
This shows that $\calC(F)(P) \in \hprj(\calB)_0$.
Noting that $\calC(F) = \blank \ox_{\calA} ({}_F\calB)$ and
that ${}_A({}_F\calB) = \calB(\blank, F(A))$ is a projective $\calB$-module
for all $A \in \calA_0$,
this argument is generalized in Lemma \ref{lem:right-htp} below.

Since we would like to make the domain of a pseudofunctor $\bfL$ larger,
we adapted this definition of $1$-morphisms.
\end{rmk}

\begin{dfn}
Let $\calA$ and $\calB$ be small dg categories.
A $\calB$-$\calA$-bimodule $U$ is said to be {\em right homotopically projective}
if ${}_BU$ is a homotopically projective right dg $\calA$-module for all $B \in \calB_0$.
\end{dfn}

\begin{lem}
\label{lem:right-htp}
Let $U$ be a $\calB$-$\calA$-bimodule for dg categories $\calA$ and $\calB$.
Then the following are equivalent:
\begin{enumerate}
\item
$U$ is right homotopically projective.
\item
The dg functor $\blank \ox_{\calB}U \colon \DGMod(\calB) \to \DGMod(\calA)$
preserves homotopically projective objects.
\end{enumerate}
\end{lem}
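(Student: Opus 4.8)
The plan is to derive both implications from the dg Yoneda lemma together with the tensor--hom adjunction for dg modules, after recording two standard facts. First, for a small dg category $\calE$ and dg $\calE$-modules $M,N$ one has $H^n\bigl(\DGMod(\calE)(M,N)\bigr)\iso\calH(\calE)(M,N[n])$, so that $M\in\hprj(\calE)_0$ is equivalent to the complex $\DGMod(\calE)(M,N)$ being acyclic for every acyclic $N$. Second, each representable module $B^\wedge=\calB(\blank,B)$ lies in $\hprj(\calB)_0$, because the dg Yoneda isomorphism $\DGMod(\calB)(B^\wedge,N)\iso N(B)$ shows this complex is acyclic whenever $N$ is; and by co-Yoneda there is a natural isomorphism $B^\wedge\ox_\calB U\iso{}_BU$ of dg $\calA$-modules.

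The implication $(2)\Rightarrow(1)$ is then immediate: for each $B\in\calB_0$ the module $B^\wedge$ is homotopically projective, hence so is ${}_BU\iso B^\wedge\ox_\calB U$ by hypothesis; as $B$ was arbitrary, $U$ is right homotopically projective.

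For $(1)\Rightarrow(2)$ I would first set up the right adjoint of $\blank\ox_\calB U$ at the dg level: the right dg $\calB$-module $\curHom_\calA(U,A)$ given on objects by $B\mapsto\DGMod(\calA)({}_BU,A)$, with the $\calB$-action induced contravariantly in $B$ from the covariance of $B\mapsto{}_BU$, fitting into a natural isomorphism of complexes
\[
\DGMod(\calA)(P\ox_\calB U,\,A)\iso\DGMod(\calB)\bigl(P,\,\curHom_\calA(U,A)\bigr).
\]
Assuming $U$ right homotopically projective and $P\in\hprj(\calB)_0$, let $A$ be any acyclic dg $\calA$-module. Each value $\curHom_\calA(U,A)(B)=\DGMod(\calA)({}_BU,A)$ has $n$-th cohomology $\calH(\calA)({}_BU,A[n])=0$, since ${}_BU$ is homotopically projective and $A[n]$ is acyclic; hence $\curHom_\calA(U,A)$ is an acyclic dg $\calB$-module. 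As $P$ is homotopically projective, the right-hand side of the displayed isomorphism is acyclic, so $\DGMod(\calA)(P\ox_\calB U,A)$ is acyclic; letting $A$ vary over all acyclic $\calA$-modules gives $P\ox_\calB U\in\hprj(\calA)_0$, i.e.\ $\blank\ox_\calB U$ preserves homotopically projectives.

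The one nonformal point --- hence where I expect the main work to lie --- is establishing the tensor--hom adjunction in the precise dg form above: checking that $\curHom_\calA(U,A)$ is genuinely a dg $\calB$-module and that the adjunction isomorphism is natural in both $P$ and $A$, so that it may be applied to the acyclic objects arising in the argument. This is routine enriched-category bookkeeping --- it is an instance of the general fact that a bimodule tensor functor between dg module categories has a right adjoint --- but once it is in place both directions reduce to one-line deductions.
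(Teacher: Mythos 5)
Your proposal is correct and follows essentially the same route as the paper: the implication $(2)\Rightarrow(1)$ via ${}_BU \iso {}_B\calB \ox_{\calB}U$ with ${}_B\calB$ homotopically projective, and $(1)\Rightarrow(2)$ via the tensor--hom adjunction $\DGMod(\calA)(P\ox_{\calB}U, A) \iso \DGMod(\calB)(P, \DGMod(\calA)(U,A))$ together with the observation that $\DGMod(\calA)(U,A)$ is objectwise acyclic because each $H^i(\DGMod(\calA)({}_BU,A)) = \calH(\calA)({}_BU,A[i]) = 0$. The ``nonformal point'' you flag (that the inner hom is a genuine dg $\calB$-module and the adjunction is natural) is indeed taken as standard in the paper, which simply writes down the adjunction isomorphism and applies $H^0$.
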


\begin{proof}
(1) \implies (2).
For any $P \in \hprj(\calB)_0$ and any acyclic object $A \in \calH(\calA)_0$,
applying $H^0$ to the isomorphism
$\DGMod(\calA)(P\ox_{\calB}U, A) \iso
\DGMod(\calB)(P, \DGMod(\calA)(U,A))$, we have
\[
\calH(\calA)(P\ox_{\calB}U, A) \iso
\calH(\calB)(P, \DGMod(\calA)(U,A)).
\]
The right hand side is $0$
because for each $B \in \calB_0$, $(\DGMod(\calA)(U,A))(B)$ becomes acyclic, which is shown by
\[
H^i((\DGMod(\calA)(U,A))(B)) = H^i(\DGMod(\calA)({}_BU,A)) = \calH(\calA)({}_BU,A[i]) = 0
\]
for all $i \in \bbZ$.  Hence $P \ox_{\calB}U \in \hprj(\calA)$.

(2) \implies (1).
Let $B \in \calB_0$.
Then by (2), ${}_BU \iso {}_B\calB \ox_{\calB}U$ is homotopically projective
because ${}_B\calB$ is.
\end{proof}

\begin{dfn}
\label{dfn:L}
We further define pseudofunctors $\bfL$ and $\calD$ in the diagram
\[
\begin{tikzcd}
&\calC(\kdgCat) & \kFRB\\
\kdgCat &\calH(\kdgCat) & \kTRI\\
&\kuTRI^2
\Ar{2-1}{1-2}{"\dgMod"}
\Ar{1-2}{1-3}{hookrightarrow}
\Ar{2-1}{2-2}{"\calH"}
\Ar{2-2}{2-3}{hookrightarrow}
\Ar{2-1}{3-2}{"\calD"'}
\Ar{1-2}{2-2}{"\mathrm{st}"}
\Ar{1-3}{2-3}{"\mathrm{st}"}
\Ar{2-2}{3-2}{"\bfL"}
\end{tikzcd}.
\]
To define $\bfL$, consider a diagram
\[
\begin{tikzcd}
\calH(\calA) & \calH(\calB)
\Ar{1-1}{1-2}{"E", ""'{name=a}, bend left}
\Ar{1-1}{1-2}{"F"', ""{name=b}, bend right}
\Ar{a}{b}{"\al", Rightarrow}
\end{tikzcd}
\]
in $\calH(\kdgCat)$.
We set $\bfL(\calH(\calA)):= \calD(\calA)$.
Since $E(\hprj(\calA)_0) \subseteq \hprj(\calB)_0$ by definition of $\calH(\kdgCat)$,
$E$ restricts to a functor $E| \colon \hprj(\calA) \to \hprj(\calB)$,
and we can define $\bfL(E)$ as the
composite $\bfL(E):= \bfj_\calB \circ E| \circ \bfp_\calA$ as in the diagram
\[
\begin{tikzcd}
\hprj(\calA) & \hprj(\calB)\\
\calD(\calA) & \calD(\calB)
\Ar{2-1}{1-1}{"\bfp_\calA"}
\Ar{1-1}{1-2}{"E|"}
\Ar{1-2}{2-2}{"\bfj_\calB"}
\Ar{2-1}{2-2}{"\bfL(E)"', dashed}
\end{tikzcd}.
\]
Moreover, using the restriction $\al| \colon E| \To F|$ of $\al$,
we define $\bfL(\al)$ by setting
$\bfL(\al):= \bfj_\calB \circ \al| \circ \bfp_\calA$.

Then for any functors $\calH(\calA) \ya{E} \calH(\calB) \ya{E'} \calH(\calC)$
in $\calH(\kdgCat)$, we have
\begin{equation}
\label{eq:L-comp}
\bfL(E')\circ \bfL(E) = \bfL(E' \circ E).
\end{equation}
Indeed, since $\bfp_\calB \circ \bfj_\calB = \id_{\hprj(\calB)}$ (see \eqref{eq:pj=1}), we have
\[
\begin{aligned}
\bfL(E')\circ \bfL(E)
&= j_\calC \circ E'| \circ \bfp_\calB \circ \bfj_\calB \circ E| \circ \bfp_\calA\\
&= j_\calC \circ E'| \circ E| \circ \bfp_\calA = \bfL(E' \circ E).
\end{aligned}
\]
Also, for each $\calH(\calA) \in \calH(\kdgCat)$, we have a natural isomorphism
\[
\et_\calA  \colon  \id_{\calD(\calA)} \Longrightarrow 
Q_\calA \circ \si_\calA \circ\bfp_\calA
= Q_\calA \circ \id_{\calH(\calA)} \circ \si_\calA \circ \bfp_\calA
= \bfL(\id_{\calH(\calA)})
\]
given by the unit of the adjoint $\si_\calA \circ \bfp_\calA \dashv Q_\calA$.
These natural isomorphisms define a structure of a pseudofunctor for $\bfL$,
and it is easy to check that $\bfL$ is in fact a pseudofunctor.
Finally, we define $\calD$
as the composite
\[
\calD:= \bfL \circ \calH \colon \kdgCat \to \kuTRI^2,
\]
which is a pseudofunctor being a composite of pseudofunctors.
\end{dfn}

Since $\bfL$ is a pseudofunctor satisfying
\eqref{eq:L-comp}, we have the following.
This will be used in the proof of Theorem \ref{thm:characterization-1}.

\begin{lem}
\label{lem:L-2mor}
The pseudofunctor $\bfL \colon \calH(\kdgCat) \to \kuTRI^2$ strictly preserves
the vertical and the horizontal compositions of $2$-morphisms.
More precisely, let $E,F,G: \calH(\calA) \to \calH(\calB)$,
$E', F' \colon \calH(\calB) \to \calH(\calC)$,
and $\al \colon E \To F$, $\be \colon F \To G, \ga \colon E' \To F'$
be in $\calH(\kdgCat)$.
Then we have
\[
\begin{aligned}
\bfL(\be \bullet \al) &= \bfL(\be) \bullet \bfL(\al), \text{and}\\
\bfL(\ga \circ \al) &= \bfL(\ga) \circ \bfL(\al).
\end{aligned}
\]
\end{lem}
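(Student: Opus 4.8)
The plan is to unwind the definition of $\bfL$ on $2$-morphisms, $\bfL(\al) = \bfj_\calB \circ \al| \circ \bfp_\calA$ for $\al \colon E \To F$ with $E, F \colon \calH(\calA) \to \calH(\calB)$, and to observe that this operation factors as the composite of two operations, each strictly compatible with the relevant compositions: (i) the restriction $(-)|$ of functors and transformations to the full subcategories $\hprj$ of homotopically projective objects; and (ii) whiskering by the fixed functors $\bfj_\calB$ on the left and $\bfp_\calA$ on the right. Both operations preserve vertical composition for purely formal reasons; the only point needing care is that in $\bfL$ the functors $\bfj$ and $\bfp$ sit on the \emph{outside}, so that the composite $\bfL(\ga)\circ\bfL(\al)$ acquires a ``middle'' factor $\bfp_\calB \circ \bfj_\calB$, and this is where the identity $\bfp_\calB \circ \bfj_\calB = \id_{\hprj(\calB)}$ from \eqref{eq:pj=1} enters.

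For the vertical composition: since $\hprj(\calA) \subseteq \calH(\calA)$ is a full subcategory and all $1$-morphisms in $\calH(\kdgCat)$ preserve homotopically projective objects, restriction is well defined and satisfies $(\be \bullet \al)| = \be| \bullet \al|$ (vertical composition of natural transformations being computed componentwise); and whiskering on either side by a fixed functor preserves vertical composition of natural transformations. Hence
\[
\bfL(\be \bullet \al) = \bfj_\calB \circ (\be \bullet \al)| \circ \bfp_\calA = \bfj_\calB \circ (\be| \bullet \al|) \circ \bfp_\calA = (\bfj_\calB \circ \be| \circ \bfp_\calA) \bullet (\bfj_\calB \circ \al| \circ \bfp_\calA) = \bfL(\be)\bullet\bfL(\al).
\]

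For the horizontal composition, with $\al \colon E \To F$, $E,F \colon \calH(\calA)\to\calH(\calB)$ and $\ga \colon E'\To F'$, $E',F' \colon \calH(\calB)\to\calH(\calC)$: again $(\ga\circ\al)| = \ga|\circ\al|$, since $E, F$ preserve homotopically projectives and so the components of $\al$ restrict to morphisms of $\hprj(\calB)$; therefore $\bfL(\ga\circ\al) = \bfj_\calC \circ (\ga|\circ\al|) \circ \bfp_\calA$. On the other hand I would evaluate $\bfL(\ga)\circ\bfL(\al)$, which is a natural transformation between $\bfL(E')\bfL(E) = \bfL(E'\circ E)$ and $\bfL(F')\bfL(F) = \bfL(F'\circ F)$ by \eqref{eq:L-comp}, at an object $L \in \calD(\calA)_0$ via the usual formula $(\bfL(\ga)\circ\bfL(\al))_L = \bfL(F')(\bfL(\al)_L) \circ \bfL(\ga)_{\bfL(E)(L)}$; substituting the definitions of $\bfL$ on $1$- and on $2$-morphisms and cancelling the factors $\bfp_\calB\circ\bfj_\calB = \id$ collapses this to $\bfj_\calC\bigl(F'|((\al|)_{\bfp_\calA L}) \circ (\ga|)_{E|(\bfp_\calA L)}\bigr) = \bfj_\calC\bigl((\ga|\circ\al|)_{\bfp_\calA L}\bigr) = \bigl(\bfj_\calC\circ(\ga|\circ\al|)\circ\bfp_\calA\bigr)_L = \bfL(\ga\circ\al)_L$. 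Since a natural transformation is determined by its components, this gives $\bfL(\ga\circ\al) = \bfL(\ga)\circ\bfL(\al)$.

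The whole argument is routine bookkeeping; the only substantive input is the equality $\bfp_\calB\circ\bfj_\calB = \id_{\hprj(\calB)}$, exactly the feature already responsible for the strict composition law \eqref{eq:L-comp}, so no new ideas beyond those used to construct $\bfL$ are needed. The mild obstacle I anticipate is simply carrying out the whiskering cancellation cleanly in the horizontal case and keeping the source/target functors correctly identified via \eqref{eq:L-comp}.
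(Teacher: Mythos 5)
Your proposal is correct and matches the paper's intent exactly: the paper's proof is the single line ``Straightforward by \eqref{eq:pj=1}'', and your argument is precisely the bookkeeping that makes it straightforward, with the identity $\bfp_\calB \circ \bfj_\calB = \id_{\hprj(\calB)}$ as the only substantive input, entering only in the horizontal case just as you anticipated.
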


\begin{proof}
The first equality follows from the following computation:
$$
\begin{aligned}
\bfL(\be \bullet \al )&= \bfj_\calB \circ (\be \bullet \al)| \circ \bfp_\calA\\
&= \bfj_\calB \circ (\be| \bullet \al|) \circ \bfp_\calA\\
&= (\bfj_\calB \circ \be|\circ \bfp_\calA) \bullet
(\bfj_\calB \circ \al|\circ \bfp_\calA) = \bfL(\be) \bullet \bfL(\al).
\end{aligned}
$$
The second equlity holds by \eqref{eq:pj=1} as follows.
$$
\begin{aligned}
\bfL(\be) \circ \bfL(\al)
&= (\bfj_\calC \circ \ga|\circ \bfp_\calA) \circ
   (\bfj_\calC \circ \al|\circ \bfp_\calA)\\
&= \bfj_\calC \circ (\ga| \circ \al |) \circ \bfp_\calA)\\
&= \bfj_\calC \circ (\ga \circ \al)| \circ \bfp_\calA
= \bfL(\ga \circ \al).
\end{aligned}
$$
\end{proof}

\begin{dfn}
\label{dfn:L-dgfun}
(1) Define a $2$-subcategory $\DGMod(\kdgCat)$ of $\kdgCAT$ as follows:
Objects are the dg categories of the form $\DGMod(\calA)$ for
some $\calA \in \kdgCat_0$, $1$-morphisms are the dg functors 
$F \colon \DGMod(\calA) \to \DGMod(\calB)$ preserving homotopically projective objects
with $\calA, \calB \in \kdgCat_0$,
and $2$-morphisms are the dg natural transformations between these $1$-morphisms.

(2) By noting the fact that
for any $\calA \in \kdgCat_0$, we have
\[
\begin{aligned}
\ChMod(\calA)(X, Y) &= Z^0(\DGMod(\calA)(X,Y))\\
\calH(\calA)(X, Y) &= H^0(\DGMod(\calA)(X,Y))
\end{aligned}
\]
for all objects $X, Y \in \DGMod(\calA)_0 = \calC(\calA)_0 = \calH(\calA)_0$, we define 
$2$-functors $Z^0$ and \ $H^0$ in the 
the following diagram so that it turns out to be
strictly commutative:
\[
\begin{tikzcd}[column sep=20pt]
&\DGMod(\kdgCat)\\
\kdgCat &\dgMod(\kdgCat)\\
& \calH(\kdgCat)
\Ar{2-1}{1-2}{"\DGMod"}
\Ar{2-1}{2-2}{"\dgMod"}
\Ar{2-1}{3-2}{"\calH"'}
\Ar{1-2}{2-2}{"Z^0"}
\Ar{2-2}{3-2}{"\mathrm{st}"}
\Ar{1-2}{3-2}{"H^0", bend left, xshift=30pt}
\end{tikzcd}.
\]
For each $\DGMod(\calA) \in \DGMod(\kdgCat)_0$,
$Z^0(\DGMod(\calA)):= \calC(\calA)$ and $H^0(\DGMod(\calA))$ $:= \calH(\calA)$.
Next, let $E, F \colon$ $\DGMod(\calA) \to \DGMod(\calB)$ be dg functors in $\DGMod(\kdgCat)$
and $\al \colon E \To F$ a dg natural transformation.
We define $Z^0(E), Z^0(\al)$ and $H^0(E), H^0(\al)$ as follows.
We set
\[
(Z^0(E))(M):= E(M), \text{and } (H^0(E))(M):= E(M)
\]
for all $M \in \DGMod(\calA)_0 = \calC(\calA)_0 = \calH(\calA)_0$.
For each $M, N \in \DGMod(\calA)_0$, the dg functor
$E$ induces a chain map
\[
E(M, N) \colon \DGMod(\calA)(M,N) \to \DGMod(\calB)(E(M), E(N)).
\]
Using this we set
\[
\begin{aligned}
(Z^0(E))(M,N)&:= Z^0(E(M,N)) \colon \calC(\calA)(M,N) \to \calC(\calB)(E(M),E(N)), \text{and}\\
(H^0(E))(M,N)&:= H^0(E(M,N)) \colon \calH(\calA)(M,N) \to \calH(\calB)(E(M),E(N)).
\end{aligned}
\]
Finally, by noting that $\al_M \in \calC(\calB)(E(M), F(M))$, we set
\[
\begin{aligned}
(Z^0(\al))_M&:= \al_M\in \calC(E(M),F(M)), \text{and}\\
(H^0(\al))_M&:= H^0(\al_M)\in \calH(E(M),F(M))
\end{aligned}
\]
Note here that if  $\al$ above were just a general derived transformation, then neither $Z^0(\al)$ nor $H^0(\al)$ is defined.
This is a reason why we consider colax functors $X \colon I \to \kdgCat$ rather than
$X \colon I \to \kDGCat$ below.
\end{dfn}

\begin{rmk}
\label{rmk:L-dgfun}
Consider a dg functor $F \colon \DGMod(\calA) \to \DGMod(\calB)$
with $\calA, \calB \in \kdgCat_0$.
Here, we do not assume the condition $F(\hprj(\calA)_0) \subseteq \hprj(\calB)_0$.
Even in this case, $F$ induces a triangle functor
$H^0(F): \calH(\calA)\to \calH(\calB)$, and it is possible to define
\[
\bfL(H^0(F)) \colon \calD(\calA) \to \calD(\calB)
\]
by
$\bfL(H^0(F)):= Q_\calB \circ H^0(F) \circ \si_\calA \circ \bfp_\calA$ as in the left part of the diagram
\[
\begin{tikzcd}
\calH(\calA) & \calH(\calB) & \calH(\calC)\\
\hprj(\calA) & \hprj(\calB) & \hprj(\calC)\\
\calD(\calA) & \calD(\calB) & \calD(\calC)
\Ar{1-1}{1-2}{"H^0(F)"}
\Ar{1-2}{1-3}{"H^0(F')"}
\Ar{2-1}{1-1}{"\si_\calA"}
\Ar{3-1}{2-1}{"\bfp_\calA"}
\Ar{2-2}{1-2}{"\si_\calB"}
\Ar{3-2}{2-2}{"\bfp_\calB"}
\Ar{1-2}{3-2}{"Q_\calB", bend left, xshift=10pt}
\Ar{2-3}{1-3}{"\si_\calC"}
\Ar{3-3}{2-3}{"\bfp_\calC"}
\Ar{1-3}{3-3}{"Q_\calC", bend left, xshift=10pt}
\end{tikzcd},
\]
although $H^0(F)$ may be not in the domain of the pseudofunctor $\bfL$.

Of course, if $F$ preserves homotopically projective objects, then these two definitions of $\bfL(H^0(F))$
coincide because
we have the restriction $H^0F| \colon \hprj(\calA) \to \hprj(\calB)$ satisfying the commutativity
$H^0(F)\circ \si_\calA = \si_\calB \circ H^0(F)|$,
and $\bfj_\calB = Q_\calB \circ \si_\calB$.
In the following, we simply set $\bfL(F):= \bfL(H^0(F))$ (e.g., see Theorem \ref{thm:LH-eq}).

Let
$\DGMod(\calA) \ya{F} \DGMod(\calB) \ya{F'} \DGMod(\calC)$ be dg functors with
$\calA, \calB$ and $\calC$ small dg categories.
Then we can definie a natural transformation
$\bfL_{F',F} \colon \bfL(F') \circ \bfL(F) \To \bfL(F' \circ F)$ by
\begin{equation}
\label{eq:lax-comp-L}
\begin{aligned}
\bfL_{F',F}=(Q_{\calC}\circ H^0F') \circ \ep_{\calB} \circ (H^0F \circ \si_{\calA} \circ \bfp_{\calA}):\\
\overbrace{Q_{\calC} \circ H^0F' \circ \si_{\calB}\circ\bfp_{\calB}}^{\bfL(F')}\circ \overbrace{Q_{\calB}\circ H^0F \circ \si_{\calA} \circ \bfp_{\calA}}^{\bfL(F)}\\
\Longrightarrow
\overbrace{Q_{\calC} \circ (H^0F' \circ \id_{\calH(\calB)} \circ  H^0F) \circ \si_{\calA} \circ \bfp_{\calA}}^{\bfL(F'\circ F)},
\end{aligned}
\end{equation}
where $\ep_{\calB} \colon (\si_\calB \circ \bfp_{\calB}) \circ Q_\calB\To \id_{\calH(\calB)}$ is the the counit.
Then in general, the equality $\eqref{eq:L-comp}$ does not need to hold.
There are following two special cases.

Case 1. In the case that $F$ preserves homotopically projective objects.
In this case, the equality $\eqref{eq:L-comp}$ holds,
Indeed,
\[
\begin{aligned}
\bfL(F')\circ \bfL(F) &=
Q_{\calC} \circ H^0F' \circ \si_{\calB}\circ\bfp_{\calB} \circ
Q_{\calB}\circ H^0F \circ \si_{\calA} \circ \bfp_{\calA}\\
&=
Q_{\calC} \circ H^0F' \circ \si_{\calB}\circ\bfp_{\calB} \circ
Q_{\calB}\circ \si_{\calB} \circ (H^0F|) \circ \bfp_{\calA}\\
&=
Q_{\calC} \circ H^0F' \circ \si_{\calB}\circ\bfp_{\calB} \circ
\bfj_{\calB} \circ (H^0F|) \circ \bfp_{\calA}\\
&=
Q_{\calC} \circ H^0F' \circ \si_{\calB}\circ (H^0F|) \circ \bfp_{\calA}\\
&=
Q_{\calC} \circ H^0F'\circ H^0F \circ \si_{\calA} \circ \bfp_{\calA}
= \bfL(F'\circ F).
\end{aligned}
\]

Case 2. In the case that $F'$ preserves acyclic objects (hence preserves quasi-isomorphisms).
In this case, 
$\bfL_{F',F} \colon \bfL(F') \circ \bfL(F) \overset{\sim}{\To} \bfL(F' \circ F)$ is an isomorphism
because for each object $M \in \calD(\calA)$,
we have
\[
(\bfL_{F',F})_M = Q_{\calC}(H^0F'(\ep_{\calB,H^0F(\si_{\calA}(\bfp_{\calA}(M)))}),
\]
which turns out to be an isomorphism.
\end{rmk}

To make the remark above more precisely, we introduce the following three 2-categories
$\tilde{\DGMod}(\kdgCat)$, $\tilde{\calC}(\kdgCat)$ and $\tilde{\calH}(\kdgCat)$
that have more 1-morphisms than $\DGMod(\kdgCat)$, $\calC(\kdgCat)$ and $\calH(\kdgCat)$ have, resplectively.

\begin{dfn}
We define $2$-categories $\tilde{\DGMod}(\kdgCat)$, $\tilde{\calC}(\kdgCat)$
and \linebreak[4] $\tilde{\calH}(\kdgCat)$ as follows:
\begin{itemize}
\item
$\tilde{\DGMod}(\kdgCat)_0:= \{\DGMod(\calA) \mid \calA \in \kdgCat_0\}$.
\item
For any objects $\DGMod(\calA), \DGMod(\calB)$ of $\tilde{\DGMod}(\kdgCat)$,
$1$-morphisms from $\DGMod(\calA)$ to $\DGMod(\calB)$ are the dg functors 
$F \colon \DGMod(\calA) \to \DGMod(\calB)$ with $\calA, \calB \in \kdgCat_0$
(here we do not assume that they satisfy \eqref{eq:Hp-preserved}).
\item
$\tilde{\calC}(\kdgCat)_0:= \{\calC(\calA) \mid \calA \in \kdgCat_0\}$.
\item
For any objects $\dgMod(\calA), \dgMod(\calB)$ of $\tilde{\calC}(\kdgCat)$,
$1$-morphisms from $\dgMod(\calA)$ to $\dgMod(\calB)$ are the $\k$-functors
$F \colon \dgMod(\calA) \to \dgMod(\calB)$.
\item
$\tilde{\calH}(\kdgCat)_0:= \{\calH(\calA) \mid \calA \in \kdgCat_0\}$.
\item
For any objects $\calH(\calA), \calH(\calB)$ of $\tilde{\calH}(\kdgCat)$),
$1$-morphisms from $\calH(\calA)$ to $\calH(\calB)$ are
the $\k$-functors of the form $\udl{F}\, (:= \mathrm{st}(F))$
for some $\k$-functors $F \colon \dgMod(\calA) \to \dgMod(\calB)$.
\item
In each $2$-category, the $2$-morphisms are the natural transformations between those $1$-morphisms.
\end{itemize}
\end{dfn}

The following expands upon Remark \ref{rmk:L-dgfun}.

\begin{prp}
\label{prp:L-lax}
The $2$-functor $H^0 \colon \Cdg(\kdgCat) \to \calH(\kdgCat)$ is extended to
a $2$-functor $H^0 \colon \tilde{\Cdg}(\kdgCat) \to \tilde{\calH}(\kdgCat)$, and
the pseudofunctor $\bfL \colon \calH(\kdgCat) \to \kuTRI^2$ is extended to
a lax functor $\bfL \colon \tilde{\calH}(\kdgCat) \to \kuTRI^2$.
\end{prp}

\begin{proof}
The first assertion is straightforward.
For the second assertion, we summarize the structures of the lax functor $\bfL$.
For dg functors
$\DGMod(\calA) \ya{F} \DGMod(\calB) \ya{F'} \DGMod(\calC)$ with
$\calA, \calB$ and $\calC$ small dg categories.
The structure morphism $\bfL_{F',F} \colon \bfL(F') \circ \bfL(F) \to \bfL(F' \circ F)$ is given in
\eqref{eq:lax-comp-L}.
Also, for each $\calH(\calA) \in \tilde{\calH}(\kdgCat)$, we have a natural isomorphism
\[
\et_\calA  \colon  \id_{\calD(\calA)} \Longrightarrow 
Q_\calA \circ \si_\calA \circ\bfp_\calA
= Q_\calA \circ H^0(\id_{\DGMod(A)}) \circ \si_\calA \circ \bfp_\calA
= \bfL(\id_{\DGMod(\calA)})
\]
given by the unit of the adjoint $\si_\calA \circ \bfp_\calA \dashv Q_\calA$, which defines the remaining structure of the lax functor $\bfL$.
It is easy to check that $\bfL$ is in fact a lax functor which satisfies the axioms:

\begin{enumerate}
\item[(i)]
For each $a\colon i \to j$ in $I$ the following are commutative:
$$
\vcenter{
\xymatrix{
\bfL F\id_{\calD(\calA)}\ar@{=>}[r]^(.53){\bfL F\circ \eta_{\calA}}  \ar@{=}[rd]& \bfL F\bfL(\id)
\ar@{=>}[d]^{\bfL_{F,\id}}\\
& \bfL(F\id)
}}
\qquad\text{and}\qquad
\vcenter{
\xymatrix{
\id_{\calD(\calA)} \bfL F\ar@{=>}[r]^(.53){\eta_{\calA}\circ \bfL F} \ar@{=}[rd]& \bfL(\id)\bfL F
\ar@{=>}[d]^{\bfL_{\id,F}}\\
& \bfL(\id F)
}}\quad;\text{ and}
$$

\[
\begin{aligned}
\bfL_{F,\id}=(Q_{\calC}\circ H^0F) \circ \ep_{\calB} \circ (H^0\id \circ \si_{\calA} \circ \bfp_{\calA}):\\
\overbrace{Q_{\calC} \circ H^0F \circ \si_{\calB}\circ\bfp_{\calB}}^{\bfL(F)}\circ \overbrace{Q_{\calB}\circ H^0\id \circ \si_{\calA} \circ \bfp_{\calA}}^{\bfL(\id)}\\
\Longrightarrow
\overbrace{Q_{\calC} \circ (H^0F \circ \id_{\calH(\calB)} \circ  H^0\id) \circ \si_{\calA} \circ \bfp_{\calA}}^{\bfL(F\id)},
\end{aligned}
\]

\item[(ii)]

For dg functors
$\DGMod(\calA) \ya{F} \DGMod(\calB) \ya{F'} \DGMod(\calC)\ya{F''} \DGMod(\calE)$ with
$\calA, \calB,\calC$ and $\calE$ small dg categories. The following is commutative:
$$
\xymatrix@C=3em{
\bfL(F'')\circ\bfL(F') \circ \bfL(F)  \ar@{=>}[r]^(.53){\bfL(F'')\circ\bfL_{F',F}} \ar@{=>}[d]_{\bfL_{F'',F'}\circ\bfL F}& \bfL F''\circ \bfL(F' \circ F)
\ar@{=>}[d]^{\bfL_{F'',(F',F)}}\\
\bfL (F''\circ F') \circ \bfL F \ar@{=>}[r]_(.45){\bfL_{(F'',F'),F}}& \bfL(F''\circ F' \circ F).
}
$$
\end{enumerate}

\end{proof}

\begin{dfn}
\label{dfn:prj-Kb}
(1) The pseudofunctor $\calH \colon \kdgCat \to \kTRI$
restricts to a pseudofunctor $\hprj\colon \kdgCat \to \kTRI$
sending $\calA$ to $\hprj(\calA)$ for all $\calA \in \kdgCat_0$.

(2) 
For each $\calA \in \kdgCat$, we define 
$\perf(\calA)$  to be the smallest full triangulated subcategory of $\calD(\calA)$ closed under direct summands (and isomorphisms), and containing the representable functors $\calA(\blank,M)$ for all $M\in\calA_0$.
Hence note that the class of objects of $\perf(\calA)$ coincides with the class of compact objects in $\calD(\calA)$ by a theorem explained in \cite[Sect.\ 5]{Ke1}.
The category $\perf(\calA)$ is called the {\em perfect derived category} of $\calA$.
Then the pseudofunctor $\calD\colon \kdgCat \to \kuTRI^2$ restricts to
a pseudofunctor $\perf \colon \kdgCat \to \kuTRI^2$ sending $\calA$ to $\perf(\calA)$ for all $\calA \in \kdgCat_0$.
\end{dfn}

We cite the following theorem from \cite{Asa-13},
which is a useful tool to define new colax functors from an old one by composing with pseudofunctors.

\begin{thm}\label{comp-pseudofun}
Let $\bfB, \bfC$ and $\bfD$ be $2$-categories and $V \colon \bfC \to \bfD$ a
pseudofunctor.
Then the obvious correspondence
$$
\Colax(\bfB, V) \colon \Colax(\bfB, \bfC) \to \Colax(\bfB, \bfD)
$$
turns out to be a pseudofunctor.
\end{thm}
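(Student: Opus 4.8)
The plan is to construct $\Colax(\bfB,V)$ explicitly by post-composition with $V$ and then to verify the pseudofunctor axioms componentwise, reducing each one to the structure $2$-isomorphisms and coherence of $V$ together with the colax axioms already available inside $\Colax(\bfB,\bfC)$. Throughout, write $\et^V_c\colon V(\id_c)\To\id_{Vc}$ and $\th^V_{g,f}\colon V(gf)\To V(g)V(f)$ for the invertible structure $2$-morphisms of the pseudofunctor $V$.

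\textbf{Action on objects.} For $X=(X,\et,\th)\in\Colax(\bfB,\bfC)_0$, set $\Colax(\bfB,V)(X)$ to be the quiver morphism $i\mapsto V(X(i))$, $a\mapsto V(X(a))$, $\al\mapsto V(X(\al))$, equipped with the structure $2$-morphisms $\et^V_{X(i)}\bullet V(\et_i)\colon V(X(\id_i))\To\id_{V(X(i))}$ and $\th^V_{X(b),X(a)}\bullet V(\th_{b,a})\colon V(X(ba))\To V(X(b))V(X(a))$. Checking that this is a colax functor amounts to axioms (i)--(iv) of Definition \ref{dfn:colax-fun-2cat}: (i) holds because $V$ restricts to a functor on each hom-category; (ii) and (iii) are obtained by applying $V$ to the corresponding diagrams for $X$ and inserting the coherence $2$-isomorphisms of $V$, using naturality of $\et^V$ and $\th^V$ to slide the structure cells past the images of the cells of $X$; (iv) is $V$ applied to (iv) for $X$ pasted with the naturality square of $\th^V$ in both variables.

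\textbf{Action on $1$- and $2$-morphisms.} For a $1$-morphism $(F,\ps)\colon X\to X'$, take $i\mapsto V(F(i))$ on objects and, for $a\colon i\to j$ in $\bfB_1$, the $2$-morphism
\[
\th^V_{F(j),X(a)}\bullet V(\ps(a))\bullet(\th^V_{X'(a),F(i)})\inv\colon V(X'(a))V(F(i))\To V(F(j))V(X(a));
\]
for a $2$-morphism $\ze$, take the family $i\mapsto V(\ze(i))$. One then checks the $1$-morphism axioms of Definition \ref{dfn:colax-fun-2cat}: the naturality condition \eqref{eq:naturality-psi} and axiom (a) are short pastings using naturality of $\th^V$ and the unit coherence of $V$, while axiom (b) is the longest verification, obtained by applying $V$ to axiom (b) for $(F,\ps)$ and rewriting with coherence axiom (iii) of $V$ and naturality of $\th^V$ until the two composite routes coincide. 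That $V\ze$ satisfies the $2$-morphism square follows by applying $V$ to that square and conjugating by $\th^V$. Preservation of identity $2$-morphisms and of vertical composition of $2$-morphisms is strict, inherited from $V$ being a functor on each $\bfC(x,y)$.

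\textbf{Pseudofunctor structure and coherence.} Finally, equip $\Colax(\bfB,V)$ with unit $2$-isomorphisms $\et_X\colon\Colax(\bfB,V)(\id_X)\To\id_{\Colax(\bfB,V)(X)}$ of $i$-component $\et^V_{X(i)}$, and, for composable $1$-morphisms $X\ya{(F,\ps)}X'\ya{(F',\ps')}X''$, associativity $2$-isomorphisms $\th_{(F',\ps'),(F,\ps)}$ of $i$-component $\th^V_{F'(i),F(i)}$; one verifies that each such family really is a $2$-morphism in $\Colax(\bfB,\bfD)$, i.e.\ that its compatibility square with the $\ps$-data commutes, which is again naturality of $\et^V$ and $\th^V$, and invertibility is clear. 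The remaining pseudofunctor coherence conditions (ii)--(iv) for $\Colax(\bfB,V)$ are checked componentwise: at each $i\in\bfB_0$ they are precisely conditions (ii)--(iv) for $V$ at the relevant objects of $\bfC$. I expect the only real obstacle to be organizational: axiom (b) for $1$-morphisms, and to a lesser extent axiom (iii) of the pseudofunctor, each unfold into a pasting of five or six $2$-cells, so one must route the proof through the coherence of $V$ in the right order; there is no conceptual difficulty beyond this bookkeeping, so I would write out those two checks carefully and leave the rest as routine.
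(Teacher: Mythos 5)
The paper does not actually prove this statement—it cites it from \cite{Asa-13}—but your construction is exactly the ``obvious correspondence'' the statement refers to: post-composition with $V$, with unit and composition constraints built from $\et^V$ and $\th^V$, and all verifications reducing to the coherence and naturality of $V$'s structure cells. Your outline is correct and matches the intended (standard) argument, so nothing further is needed beyond writing out the two pastings you already flagged as the longest checks.
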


\begin{cor}\label{cor:colax-colax}
Let $X \colon I \to \kdgCat$
be a colax functor.
Then the following are colax functors again
\[
\begin{aligned}
&\text{The dg colax functor of }X:\ \DGMod(X) := \DGMod \circ X \colon I \to \kDGCAT,\\
&\text{The complex colax functor of }X:\ \ChMod(X):=\ChMod\circ X\colon I \to \kFRB,\\
&\text{The homotopy colax functor of }X:\ \calH(X):=\calH\circ X \colon I \to \kTRI,\\
&\text{The homotopically projecitve colax functor of }X:\ \hprj(X):=\hprj\circ X \colon I \to \kTRI,\\
&\text{The derived colax functor of }X:\ \calD(X):= \calD  \circ X \colon I \to \kuTRI^2, \text{ and}\\
&\text{The perfect derived colax functor of }X:\ \perf(X) := \perf \circ X \colon I \to \kuTRI^2.
\end{aligned}
\]
\end{cor}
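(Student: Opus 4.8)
The plan is to read off all five assertions from Theorem~\ref{comp-pseudofun} with $\bfB := I$. The only thing one has to check is that each of the five correspondences is (the restriction to $\kdgCat$ of) a pseudofunctor, and this has in fact already been arranged in Section~7.

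Concretely: $\ChMod = \dgMod \colon \kdgCat \to \kFRB$ is a pseudofunctor by its definition; $\DGMod$ is a pseudofunctor $\kDGCat \to \kDGCAT$, and since a dg natural transformation is a degree-$0$ cocycle derived transformation and $\blank\ox_\calA {}_\al\calB$ carries $0$-cocycles to $0$-cocycles, it restricts along the inclusion $2$-functor $\kdgCat \hookrightarrow \kDGCat$ to a pseudofunctor $\kdgCat \to \kDGCAT$; $\calH = \st \circ \dgMod \colon \kdgCat \to \kTRI$ is a pseudofunctor as the composite of a pseudofunctor with the $2$-functor $\st$, and likewise restricts to a pseudofunctor $\calH \colon \kdgCat \to \calH(\kdgCat)$; $\calD = \bfL \circ \calH \colon \kdgCat \to \kuTRI^2$ is a pseudofunctor as the composite of $\calH \colon \kdgCat \to \calH(\kdgCat)$ with the pseudofunctor $\bfL$ of Definition~\ref{dfn:L}; and $\perf \colon \kdgCat \to \kTRI$ is a pseudofunctor as the restriction of $\hprj$ in Definition~\ref{dfn:prj-Kb}.

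Then I would invoke Theorem~\ref{comp-pseudofun}: for any pseudofunctor $V \colon \kdgCat \to \bfD$ it produces a pseudofunctor $\Colax(I, V) \colon \Colax(I, \kdgCat) \to \Colax(I, \bfD)$ whose value on an object $Y$ is the composite $V \circ Y$. A pseudofunctor in particular sends objects to objects, so since $X \in \Colax(I, \kdgCat)_0$ by hypothesis, $V \circ X = \Colax(I, V)(X)$ lies in $\Colax(I, \bfD)_0$, i.e.\ is again a colax functor $I \to \bfD$. Applying this with $V = \ChMod, \DGMod, \calH, \calD, \perf$ in turn yields that $\ChMod(X)$, $\DGMod(X)$, $\calH(X)$, $\calD(X)$, $\perf(X)$ are colax functors with codomains $\kFRB$, $\kDGCAT$, $\kTRI$, $\kuTRI^2$, $\kTRI$ respectively, as asserted.

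I do not anticipate a genuine obstacle here: the corollary is essentially a formal consequence of Theorem~\ref{comp-pseudofun} together with the pseudofunctoriality of $\DGMod$, $\dgMod$, $\calH$, $\calD$, $\perf$ established earlier. The only points requiring a moment's care are notational, namely keeping track of which $2$-category is the codomain of each pseudofunctor, and the remark that although $\DGMod$ was introduced with domain $\kDGCat$, what is meant by $\DGMod$ in the statement is its restriction along $\kdgCat \hookrightarrow \kDGCat$, this restriction being legitimate precisely because tensoring preserves $0$-cocycles so that it stays within the dg-natural-transformation $2$-morphisms.
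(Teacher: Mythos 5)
Your proposal is correct and is exactly the paper's argument: the corollary is stated immediately after Theorem~\ref{comp-pseudofun} precisely as a formal consequence of applying $\Colax(I,V)$ to $X$ for $V$ each of the pseudofunctors $\DGMod$, $\ChMod$, $\calH$, $\calD$, $\perf$ constructed in Section~7. Your extra remark on restricting $\DGMod$ along $\kdgCat \hookrightarrow \kDGCat$ via preservation of $0$-cocycles is a reasonable (and harmless) elaboration of a point the paper leaves implicit.
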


\begin{rmk}
Let $X = (X, X_i, X_{b,a}) \in \Colax(I, \kdgCat)$.

(1) An explicit description of the complex colax functor
\[
\ChMod(X):=\ChMod\circ X = (\ChMod(X), \ChMod(X)_i, \ChMod(X)_{b,a}) \colon I \to \kFRB
\]
of $X$ is given as follows.

\begin{itemize}
\item
for each $i \in I_0$,
$\ChMod(X)(i) = \ChMod(X(i))$; and
\item
for each $a \colon i \to j$ in $I$, the functor
$\ChMod(X)(a) \colon \ChMod(X)(i) \to \ChMod(X)(j)$
is given by $\ChMod(X)(a) = \text{-}\ox_{X(i)}\ovl{X(a)}$,
where $\ovl{X(a)}$ is the $X(i)$-$X(j)$-bimodule 
$\ovl{X(a)}:= {}_{X(a)}X(j)$ (Notation \ref{ntn:bimodule} (3)).
\end{itemize}

(2) An explicit description of the derived colax functor
$\calD(X) \colon I \to  \kuTRI^2$ of $X$ is as follows. 

\begin{itemize}
\item
for each $i \in I_0$,
$\calD(X)(i) = \calD(X(i))$; and
\item For each $a\colon i \to j$ in $I$,
$\calD(X)(a)\colon \calD(X)(i) \to \calD(X)(j)$
is given by
$$\text{-}\Lox_{X(i)}\ovl{X(a)}:= \bfL(\text{-}\ox_{X(i)}\ovl{X(a)})\colon \calD(X(i)) \to \calD(X(j)).$$
\end{itemize}
Note that by the remark in Definition \ref{dfn:prj-Kb} (2), $\perf(X)$ is
a colax subfunctor of $\calD(X)$.
\end{rmk}

\begin{rmk} Recall that $\De$ is a diagonal functor.
Let $\calC \in \kdgCat_0$.
Then it is obvious by definitions that
$$
\De(\hprj(\calC)) = \hprj(\De(\calC)) \quad \text{and}\quad \De(\perf(\calC)) = \perf(\De(\calC)).
$$
\end{rmk}

\begin{prp}
\label{prp:precovering-preserved}
The pseudofunctor $\perf$ preserves $I$-precoverings, that is,
if $(F, \ps) \colon X \to \De(\calC)$ is an $I$-precovering in $\Colax(I, \kdgCat)$
with $\calC \in \kdgCat_0$, then
so is 
$$
\perf(F, \ps) \colon \perf(X) \to \De(\perf(\calC))
$$ in $\Colax(I, \kuTRI^2).$
\end{prp}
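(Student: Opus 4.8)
The plan is a d\'evissage that reduces the assertion to the level of representable modules, where it follows from the hypothesis that $(F,\ps)$ is an $I$-precovering together with the fact that the cohomology functor $H^0$ preserves isomorphisms. First I would put $(G,\ph):=\perf(F,\ps)$, so that by Theorem~\ref{comp-pseudofun} applied to the pseudofunctor $\perf$ one has $G(i)=\perf(F(i))\colon \perf(X(i))\to\perf(\calC)$ for $i\in I_0$, while for $a\colon i\to j$ in $I$ the $2$-morphism $\ph(a)\colon G(i)\To G(j)\perf(X)(a)$ is obtained from $\ps(a)$ through the pseudofunctor structure of $\perf$ (recall $\perf(X)(a)=\perf(X(a))$). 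What has to be shown is that for all $i,j\in I_0$, $P\in\perf(X(i))_0$ and $Q\in\perf(X(j))_0$ the comparison morphism
\[
(G,\ph)^{(1)}_{P,Q}\colon \bigoplus_{a\in I(i,j)}\perf(X(j))\bigl(\perf(X)(a)P,\,Q\bigr)\longrightarrow \perf(\calC)\bigl(G(i)P,\,G(j)Q\bigr)
\]
is an isomorphism of $\k$-modules.

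For the d\'evissage, fix $i,j$ and $Q$. Since each $\perf(X)(a)$ is a triangle functor, since $\perf(X(j))(\blank,Q)$ and $\perf(\calC)(\blank,G(j)Q)$ are cohomological, and since a (possibly infinite) direct sum of exact sequences of $\k$-modules is exact, both source and target of $(G,\ph)^{(1)}_{\blank,Q}$ are cohomological functors of $P\in\perf(X(i))$, and $(G,\ph)^{(1)}_{\blank,Q}$ is a natural transformation between them. Hence the full subcategory of those $P$ on which $(G,\ph)^{(1)}_{P,Q}$ is an isomorphism is closed under isomorphisms and shifts and, by the five lemma applied to the long exact sequences attached to a distinguished triangle, under cones, i.e.\ it is a full triangulated subcategory of $\perf(X(i))$. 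Running the same argument in the variable $Q$ (both sides are now homological in $Q$) and using that $\perf(X(i))$, resp.\ $\perf(X(j))$, is by definition the smallest full triangulated subcategory closed under isomorphisms that contains the representable modules, I reduce to the case in which $P=X(i)(\blank,x)$ and $Q=X(j)(\blank,y)$ are representable.

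In that case I would compute both sides using that $\perf$ sends representables to representables: $\perf(X)(a)(X(i)(\blank,x))\iso X(j)(\blank,X(a)x)$, $G(i)(X(i)(\blank,x))\iso\calC(\blank,F(i)x)$, $G(j)(X(j)(\blank,y))\iso\calC(\blank,F(j)y)$; that on homomorphism complexes between representables $\perf(F(j))$ acts as $H^0(F(j))$; and that $\ph(a)_{X(i)(\blank,x)}$ is the $H^0$-class of $\calC(\blank,\ps(a)_x)$, where $\ps(a)_x$ is a $0$-cocycle. Inserting the Yoneda isomorphisms $\perf(X(j))(X(j)(\blank,X(a)x),X(j)(\blank,y))\iso H^0(X(j)(X(a)x,y))$ and $\perf(\calC)(\calC(\blank,F(i)x),\calC(\blank,F(j)y))\iso H^0(\calC(F(i)x,F(j)y))$, and using that $H^0$ commutes with direct sums, a comparison of the defining formula of $(G,\ph)^{(1)}$ with \eqref{eq:1-covering} should identify $(G,\ph)^{(1)}_{X(i)(\blank,x),\,X(j)(\blank,y)}$ with $H^0\bigl((F,\ps)^{(1)}_{x,y}\bigr)$. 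Since $(F,\ps)$ is an $I$-precovering, $(F,\ps)^{(1)}_{x,y}$ is an isomorphism of $\k$-complexes, hence so is $H^0$ of it, and this finishes the argument.

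The step I expect to be the main obstacle is precisely this last identification: one must track the pseudofunctor structure of $\perf$, hence of $\Colax(I,\perf)$, through the Yoneda isomorphisms carefully enough to verify that, for representable $P,Q$, composing $G(j)(g_a)$ with $\ph(a)_P$ and summing over $a\in I(i,j)$ corresponds on the nose to the $0$-cocycle $\sum_{a}F(j)(f_a)\circ\ps(a)_x\in Z^0(\calC(F(i)x,F(j)y))$ of \eqref{eq:1-covering}, including matching the degree conventions. Everything else — the d\'evissage and the invariance of isomorphisms under $H^0$ — is routine; note in particular that density plays no role here, so no separate argument for it is required.
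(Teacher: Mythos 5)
Your proposal is correct, but it takes a genuinely different route from the paper. The paper works directly with arbitrary $M\in\perf(X(i))_0$ and $N\in\perf(X(j))_0$: it uses the adjunction between $\Lox$ and $\bfR\Hom$ to rewrite $\perf\calC(M\Lox_{X(i)}\ovl{F(i)},\,N\Lox_{X(j)}\ovl{F(j)})$ as $\perf X(i)(M,\bfR\Hom_{\calC}(\ovl{F(i)},N\Lox_{X(j)}\ovl{F(j)}))$, invokes the precovering hypothesis once, in the form of the isomorphism of $X(j)$-$X(i)$-bimodules $\coprod_{a}X(j)_{X(a)}\to{}_{F(j)}\calC_{F(i)}$, to identify the inner $\bfR\Hom$ with $\coprod_a\bfR\Hom_{X(j)}(\ovl{X(a)},N)$, and then uses the compactness of $M$ to pull the coproduct out of $\perf X(i)(M,\blank)$. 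You instead perform a d\'evissage in both variables, using that source and target of $(G,\ph)^{(1)}$ are (co)homological and that $\perf$ is generated as a triangulated category by the representables, and you use the precovering hypothesis only through $H^0$ of the pointwise isomorphisms $(F,\ps)^{(1)}_{x,y}$. Your version is more elementary — no $\bfR\Hom$, no adjunction calculus, and the commutation of the infinite sum over $I(i,j)$ is absorbed into the exactness of direct sums of $\k$-modules rather than a compactness argument — at the price of having to track the pseudofunctor structure of $\perf$ through the Yoneda identifications in the base case. That identification, which you correctly flag as the crux, does hold (it is essentially the computation of \eqref{eq:1-covering} transported along the canonical isomorphisms $\perf(X)(a)(x^{\wedge})\iso(X(a)x)^{\wedge}$ and $G(i)(x^{\wedge})\iso(F(i)x)^{\wedge}$, in the spirit of Proposition \ref{prp:can-covering}), and leaving it at the level of ``routine but requiring care'' is comparable to the paper's own ``it is not hard to verify the commutativity of the following diagram.'' Two small points worth making explicit if you write this up: state the order of the two d\'evissages (first reduce $Q$ to representables for representable $P$, then reduce $P$ for arbitrary $Q$, or vice versa — either works, but one must fix an order), and note that naturality of $(G,\ph)^{(1)}_{\blank,Q}$ in $P$, needed for the five lemma, follows from the naturality of $\ph(a)$.
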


\begin{proof}

Let $i,j \in I_0$ and $M \in (\perf X(i))_0, N \in (\perf X(j))_0$.
It suffices to show that $\perf(F,\ps)$ induces an isomorphism
\[
\perf(F, \ps)^{(1)}_{M,N} \colon
\coprod_{a \in I(i,j)}\perf X(j)(M \Lox_{X(i)} \ovl{X(a)},N)
\to \perf\calC(M \Lox_{X(i)} \ovl{F(i)}, N \Lox_{X(j)} \ovl{F(j)}).
\]
By assumption, $(F, \ps)$ induces an isomorphism
$(F,\ps)^{(1)}_{x,y} \colon \coprod_{a\in I(i,j)}X(j)(X(a)x,y)$ $\to \calC(F(i)x, F(j)y)$
for all $x \in X(i)_0, y \in X(j)_0$.
Namely, 
\[
(F, \ps)^{(1)} \colon \coprod_{a\in I(i,j)}X(j)_{X(a)} \to {}_{F(j)}\calC_{F(i)}
\]
is a morphism of $X(j)$-$X(i)$-bimodules.
We first show the following.
\begin{clm-nn}
There exists an isomorphism
\[
\bfR\Hom_{\calC}(\ovl{F(i)}, N \Lox_{X(j)}\ovl{F(j)}) \to 
\coprod_{a\in I(i,j)}\bfR\Hom_{X(j)}(\ovl{X(a)}, N)).
\]
\end{clm-nn}
Indeed,
this is given by the composite of the following isomorphisms:
\[
\begin{aligned}
\bfR\Hom_{\calC}(\ovl{F(i)}, N \Lox_{X(j)}\ovl{F(j)}) &=
\bfR\Hom_{\calC}({}_{F(i)}\calC, N \Lox_{X(j)} {}_{F(j)}\calC)\\
&\overset{(\rm a)}{\to}
N \Lox_{X(j)} {}_{F(j)}\calC_{F(i)}\\
&\overset{(\rm b)}{\to}
N \Lox_{X(j)} \dcoprod_{a \in I(i,j)} X(j)_{X(a)}\\
&\overset{(\rm c)}{\to}
\dcoprod_{a \in I(i,j)} N \Lox_{X(j)} X(j)_{X(a)}\\
&\overset{(\rm d)}{\to}
\dcoprod_{a \in I(i,j)} N_{X(a)}\\
&\overset{(\rm e)}{\to}
\dcoprod_{a \in I(i,j)} \bfR\Hom_{X(j)}({}_{X(a)}X(j),N)\\
&=
\dcoprod_{a \in I(i,j)} \bfR\Hom_{X(j)}(\ovl{X(a)},N),
\end{aligned}
\]
where (a) is obtained by the Yoneda lemma, (b) is an isomorphism,
induced from  $((F,\ps)^{(1)})\inv$, 
(c) is the natural isomorphism induced by the cocontinuity of the tensor product,
(d) comes from the property of the tensor product, and
(e) is given by the Yoneda lemma.
Now, it is not hard to verify the commutativity of the following diagram:
\[\footnotesize
\xymatrix{
\dcoprod_{a \in I(i,j)}\perf X(j)(M \Lox_{X(i)} \ovl{X(a)},N) &
 \perf\calC(M \Lox_{X(i)} \ovl{F(i)}, N \Lox_{X(j)} \ovl{F(j)})\\
\dcoprod_{a \in I(i,j)}\perf X(i)(M ,\bfR\Hom_{X(j)}(\ovl{X(a)}, N)) &
 \perf X(i)(M, \bfR\Hom_{\calC}(\ovl{F(i)}, N \Lox_{X(j)} \ovl{F(j)})\\
\perf X(i)(M , \dcoprod_{a \in I(i,j)} \bfR\Hom_{X(j)}(\ovl{X(a)}, N)),
\ar"1,1";"1,2"^{\perf(F, \ps)^{(1)}_{M,N}}
\ar"1,1";"2,1"^{\simeq}_{(\rm i)}
\ar"1,2";"2,2"^{\simeq}_{(\rm ii)}
\ar"2,1";"3,1"^{\simeq}_{(\rm iii)}
\ar"2,2";"3,1"_{\simeq}^{(\rm iv)}
}
\]
where the isomorphisms (i) and (ii) are given by adjoints,
and (iii) is the natural morphism, which is an isomorphism because $M$ is compact,
and (iv) is an isomorphism given by the claim above.
Hence $\perf(F, \ps)^{(1)}_{M,N}$ is an isomorphism.
\end{proof}

\begin{prp}
\label{prp:hprj}
The homotopically projecitve colax functor $\hprj$ induce a colax funtor
$$
\begin{aligned}
\hprj(P, \ps) \colon \hprj(X) \to \De(\hprj(\Gr_I X))
\end{aligned}
$$ in $\Colax(I, \kuTRI^2).$
\end{prp}

\begin{proof}
For each $i\in I_0$, if $U$ is in $\hprj(X(i))$,
then 
$$
\hprj(P(i))(U)\iso U\ox_{X(i)}\ovl{P(i)}\\
=U \ox_{X(i)}(\Gr_I X)(\blank, P(i)(?))
$$
Therefore, if $Y$ is an acyclic $\Gr_I X$-module, 
$$
\begin{aligned}
\calH(\Gr_I X)(\hprj(P(i))(U), Y)\iso \calH(\Gr_I X)(U \ox_{X(i)}(\Gr_I X)(\blank, P(i)(?)),Y)\\\iso 
\calH(X(i))(U, \calH(\Gr_I X)((\Gr_I X)(\blank, P(i)(?)),Y))=0.
\end{aligned}
$$
Then $
\hprj(P(i))(U)$ is in $\hprj(\Gr_I X)$. 
For each $a \colon i \to j$ in $I_1$, the dg natural transformation
$\ph(a)\colon P(i) \Rightarrow P(j)X(a)$
$$\xymatrix{
X(i) & \Gr_I X\\
X(j) & \Gr_I X
\ar^{P(i)} "1,1";"1,2"
\ar_{P(j)} "2,1";"2,2"
\ar_{X(a)} "1,1";"2,1"
\ar@{=} "1,2";"2,2"
\ar@{=>}_{\ph(a)}"1,2";"2,1"
}
$$
is defined by $\ph(a)x:= (\de_{b,a} \id_{X(a)x})_{b \in I(i,j)}$ for all $x \in X(i)_0$, then we have the following diagram
$$\xymatrix{
\hprj(X(i)) & \hprj(\Gr_I X)\\
\hprj(X(j)) & \hprj(\Gr_I X).
\ar^{\hprj(P(i))} "1,1";"1,2"
\ar_{\hprj(P(j))} "2,1";"2,2"
\ar_{\hprj(X(a))} "1,1";"2,1"
\ar@{=} "1,2";"2,2"
\ar@{=>}_{\hprj(\ph(a))}"1,2";"2,1"
}
$$
This completes the proof.
\end{proof}

\begin{dfn}[Quasi-equivalences \cite{Ke3}]
\label{dfn:q-eq}
Let $\calA, \calB$ be small dg categories and
$E \colon \calA \to \calB$ a dg functor.
Then $E$ is called a {\it quasi-equivalence} if
\begin{enumerate}
\item
The restriction
$E_{X,Y}\colon \calA(X,Y) \to \calB(E(X), E(Y))$  of $E$ to $\calA(X,Y)$ is a quasi-isomorphism for all $X, Y \in \calA_0$; and
\item
The induced functor $H^0(E): H^0(\calA)\to H^0(\calB)$ is an equivalence.
\end{enumerate}
We say that $\calA$ is \emph{quasi-equivalent to} $\calB$ if
there exists a quasi-equivalence $E \colon \calA \to \calB$.
\end{dfn}

\begin{rmk}
\label{rmk:q-eq-rel}
The relation for small dg categories $\calA$ and be
that $\calA$ is quasi-equivalent to $\calB$ 
is clearly reflexive and transitive,
but we do not know whether this relation is symmetric or not.
\end{rmk}

\begin{dfn}\label{dfn:dg tilting}
For a triangulated category $\calU$ and a class of objects $\calV$ in $\calU$,
we denote by $\thick_\calU \calV$ (resp.\ $\Loc_\calU \calV$)
the smallest full triangulated subcategory of $\calU$
closed under direct summands (resp.\ infinite direct sums) that contains $\calV$.

Let $\calA$ be a small dg category, and $\calT$ a full dg subcategory of {$\DGMod(\calA)$}.
Then $\calT$ is called a {\em tilting dg subcategory} for $\calA$, if
\begin{enumerate}
\item
$\calT_0\subseteq \perf(\calA)_0\bigcap\hprj(\calA)_0$,
\ (hence every $T \in \perf(\calA)_0$ is a compact object in $\calD(\calA)$\footnote{Recall that an object $T$ of $\calD(\calA)$ is in $\perf(\calA)$ if and only if $T$ is a compact object in $\calD(\calA)$ by Neeman's theorem \cite[Theorem 5.3]{Ke1}.}); and
\item
$\thick_{\calD(\calA)}(\calT_0)=\perf(\calA)$ (equivalently $\Loc_{\calD(\calA)}(\calT)= \calD(\calA)$).
\end{enumerate}
Thus, in Keller's terminology in \cite{Ke1}, $\calT$ is tilting
if and only if $\calT_0$ forms a set of small (= compact) generators for $\calD(\calA)$.
\end{dfn}

\section{Derived equivalences of colax functors}

In this section, we define necessary terminologies such as $2$-quasi-isomorphisms for $2$-morphisms,
quasi-equivalences for $1$-morphisms, and the derived $1$-morphism
$\bfL (\check{F}, \check{\ps}) \colon$ $\calD(X) \to \calD(X')$ of a $1$-morphism
$(F, \ps) \colon X \to X'$ between colax functors,
and show the fact that the derived $1$-morphism of a quasi-equivalence between colax functors $X$, $X'$
turns out to be an equivalence between derived dg module colax functors of $X$, $X'$.
Finally, we give definitions of tilting subfunctors and of derived equivalences.

\begin{dfn}
Let $\bfC$ be a $2$-category and $(F, \ps) \colon X \to X'$
a $1$-morphism in the $2$-category $\Colax(I, \bfC)$.
Then $(F, \ps)$ is called $I$-{\em equivariant}
if for each $a \in I_1$, $\ps(a)$ is a $2$-isomorphism in $\bfC$.
\end{dfn}

We cite the following without a proof.
\begin{lem}[\cite{Asa-a}]
\label{colax-eq}
Let $\bfC$ be a $2$-category and $(F, \ps) \colon X \to X'$
a $1$-morphism in the $2$-category $\Colax(I, \bfC)$.
Then $(F, \ps)$ is an equivalence in $\Colax(I, \bfC)$
if and only if
\begin{enumerate}
\item
For each $i \in I_0$, $F(i)$
is an equivalence in $\bfC$; and
\item
For each $a \in I_1$, $\ps(a)$ is a $2$-isomorphism in $\bfC$
$($namely, $(F,\ps)$ is $I$-equivariant$)$.
\end{enumerate}
\end{lem}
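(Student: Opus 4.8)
The plan is to prove the two implications separately; the componentwise statement about objects is the easy half, while the invertibility of the structure $2$-cells and the sufficiency direction carry the real content.

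\emph{Necessity.} Suppose $(F,\ps)$ is an equivalence in $\Colax(I,\bfC)$, with quasi-inverse $(G,\chi)\colon X'\to X$ and $2$-isomorphisms $\eta\colon\id_X\To(G,\chi)(F,\ps)$ and $\ep\colon(F,\ps)(G,\chi)\To\id_{X'}$. A $2$-morphism of $\Colax(I,\bfC)$ is a family of $2$-morphisms of $\bfC$ and is invertible exactly when each component is, so evaluating $\eta,\ep$ at $i\in I_0$ yields $2$-isomorphisms $\eta(i)\colon\id_{X(i)}\To G(i)F(i)$, $\ep(i)\colon F(i)G(i)\To\id_{X'(i)}$, which exhibit $F(i)$ as an equivalence of $\bfC$ with quasi-inverse $G(i)$; this is~(1). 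For~(2) I would unwind the statement that $\eta$ and $\ep$ are $2$-morphisms of $\Colax(I,\bfC)$: for each $a\colon i\to j$ it gives compatibility squares relating $\ps(a)$, $\chi(a)$ and whiskered copies of $\eta,\ep$, in which the composite $1$-morphisms are computed by the pasting formulas $(\chi\circ\ps)(a)=G(j)\ps(a)\bullet\chi(a)F(i)$ and $(\ps\circ\chi)(a)=F(j)\chi(a)\bullet\ps(a)G(i)$. Thus the two squares say that $G(j)\ps(a)\bullet\chi(a)F(i)$ and $F(j)\chi(a)\bullet\ps(a)G(i)$ are $2$-isomorphisms. Whiskering the first by $F(j)$ shows that $F(j)\chi(a)F(i)$ has a left inverse, and whiskering the second by $F(i)$ shows it has a right inverse, hence it is invertible; since $F(i)$ and $F(j)$ are equivalences of $\bfC$, pre- and post-composition by them are equivalences of hom-categories and so reflect isomorphisms, whence $\chi(a)$ is a $2$-isomorphism. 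Cancelling $\chi(a)F(i)$ from the first square then makes $G(j)\ps(a)$ invertible, and removing the equivalence $G(j)$ shows $\ps(a)$ is a $2$-isomorphism.

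\emph{Sufficiency.} Assume~(1) and~(2). For each $i$ choose an adjoint equivalence $F(i)\dashv G(i)$ in $\bfC$ with unit $\eta(i)\colon\id_{X(i)}\To G(i)F(i)$ and counit $\ep(i)\colon F(i)G(i)\To\id_{X'(i)}$. Using the invertibility of $\ps(a)$, define for $a\colon i\to j$ the mate
\[
\chi(a):=\bigl(G(j)X'(a)\ep(i)\bigr)\bullet\bigl(G(j)\,\ps(a)^{-1}\,G(i)\bigr)\bullet\bigl(\eta(j)\,X(a)\,G(i)\bigr)\colon X(a)G(i)\To G(j)X'(a),
\]
which is a $2$-isomorphism because $\ps(a)$ is. The task is then to check that $(G,\chi)$ is a $1$-morphism of $\Colax(I,\bfC)$: axiom~(a) of Definition~\ref{dfn:1-mor-Colax(I,C)} follows from axiom~(a) for $\ps$ together with a triangle identity and the unit coherence of $\bfC$, and axiom~(b) is a hexagonal diagram involving the transition $2$-cells $X_{b,a}$, $X'_{b,a}$ that collapses, after inserting unit--counit cancellations and applying interchange, to axiom~(b) for $\ps$ (this is precisely the functoriality of the mate construction). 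Finally, the families $(\eta(i))_i$ and $(\ep(i))_i$ form $2$-morphisms $\id_X\To(G,\chi)(F,\ps)$ and $(F,\ps)(G,\chi)\To\id_{X'}$ of $\Colax(I,\bfC)$: the required compatibility squares for each $a$ hold by construction, again using a triangle identity. Being componentwise $2$-isomorphisms, they are $2$-isomorphisms of $\Colax(I,\bfC)$, so $(F,\ps)$ is an equivalence there.

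The step I expect to be the main obstacle is verifying that the mate data $(G,\chi)$ satisfies the colax $1$-morphism axioms, especially axiom~(b): its diagram mixes the coherence $2$-cells of both colax functors with several copies each of $\eta$ and $\ep$, and reducing it is a purely formal but lengthy manipulation in which the coherence axioms of Definition~\ref{dfn:colax-fun} and the triangle identities are genuinely used together. Everything else is bookkeeping of whiskerings and the interchange law.
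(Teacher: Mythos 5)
Your proof is correct, and it is the standard argument (the paper itself only cites this lemma from \cite{Asa-a} without reproducing a proof): necessity by extracting componentwise equivalences from the unit and counit and then using the two composite structure cells $G(j)\ps(a)\bullet\chi(a)F(i)$ and $F(j)\chi(a)\bullet\ps(a)G(i)$ to give $F(j)\chi(a)F(i)$ a left and a right inverse, reflecting invertibility back through the hom-category equivalences; sufficiency by taking adjoint equivalences and defining $\chi(a)$ as the mate of $\ps(a)^{-1}$. Your mate formula type-checks, the compatibility squares for $(\eta(i))_i$ and $(\ep(i))_i$ do collapse via interchange and the triangle identity exactly as you say, and the verification of axioms (a) and (b) for $(G,\chi)$ that you leave as a sketch is indeed the routine functoriality of mates (doctrinal adjunction); you have correctly located where the remaining, purely formal, work lies.
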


To define the notion of $2$-quasi-isomorphisms in $\kdgCat$, we need the following statement.

\begin{lem}
\label{lem:char-q-eq}
Consider a $2$-morphism $\al$ in the $2$-category $\kdgCat$ as in
\[
\begin{tikzcd}
\calA & \calB
\Ar{1-1}{1-2}{"E", ""'{name=a}, bend left}
\Ar{1-1}{1-2}{"F"', ""{name=b}, bend right}
\Ar{a}{b}{"\al", Rightarrow}
\end{tikzcd}.
\]
We adapt Notation \ref{ntn:bimodule} (3), e.g., $\ovl{E}:= {}_E\calB$, $\ovl{\al}:= {}_{\al}\calB$
and $\ovl{E}^*:= \calB_E, \ovl{\al}^*:= \calB_{\al}$.
Note that since $\al$ is a dg natural transformation,
both $\ovl{\al}$ and $\ovl{\al}^*$ are $0$-cocyle morphisms
by Remark \ref{rmk:0-cocycle}, and
hence it is possible to define $\blank\Lox_{\calA}\ovl{\al}$, $H^0{}_{\al_x}\calB$ and so on.
Then the following are equivalent.

\begin{enumerate}
\item
The $2$-morphism $\text{-}\Lox_{\calA} \ovl{\al}$ in the diagram
\[
\begin{tikzcd}[column sep=60pt]
\calD(\calA) & \calD(\calB)
\Ar{1-1}{1-2}{"\text{-}\Lox_{\calA} \ovl{E}", ""'{name=a}, bend left}
\Ar{1-1}{1-2}{"\text{-}\Lox_{\calA} \ovl{F}"', ""{name=b}, bend right}
\Ar{a}{b}{"\text{-}\Lox_{\calA} \ovl{\al}", Rightarrow}
\end{tikzcd}
\]
is a $2$-isomorphism in $\kuTRI^2$.

\item
$H^0{}_{\al_x}\calB \colon {}_{E(x)}\calB \to {}_{F(x)}\calB$ is a quasi-isomorphism in $\calH(\calB)$
for all $x \in \calA_0$.

\item
The $2$-morphism $\ovl{\al}^* \Lox_{\calA} \text{-}$ in the diagram
\[
\begin{tikzcd}[column sep=60pt]
\calD(\calA\op) & \calD({\calB}\op)
\Ar{1-1}{1-2}{"\ovl{F}^* \Lox_{\calA} \text{-}", ""'{name=a}, bend left}
\Ar{1-1}{1-2}{"\ovl{E}^* \Lox_{\calA} \text{-}"', ""{name=b}, bend right}
\Ar{a}{b}{"\ovl{\al}^* \Lox_{\calA} \text{-}", Rightarrow}
\end{tikzcd}
\]
is a $2$-isomorphism in $\kuTRI^2$.

\item
$H^0\calB_{\al_x} \colon \calB_{E(x)} \to \calB_{F(x)}$ is a quasi-isomorphism in $\calH({\calB}\op)$.
\end{enumerate}
\end{lem}

\begin{proof}
(1) $\Rightarrow$ (2).
Let $x \in \calA_0$.
Note that we have
${}_x\calA\Lox_{\calA} \ovl{\al} \iso Q_{\calB}H^0 {}_{\al_x}\calB$, 
which is an isomorphism in $\calD(\calB)$ if and only if
$H^0{}_{\al_x}\calB$ is a quasi-isomorphism in $\calH(\calB)$.
Hence (2) follows from (1) by applying (1) to the representable functor ${}_x\calA$.

(2) $\Rightarrow$ (1).
Let $\calU$ be the full subcategory of $\calD(\calA)$ consisting of objects $M$ satisfying the condition
that $M \Lox_{\calA}\ovl{\al} \colon M \Lox_{\calA}\ovl{E} \to M \Lox_{\calA}\ovl{F}$ is an isomorphism.
Then by (2) we have ${}_x\calA \in \calU$ for all $x \in \calA_0$.
Here, it is easy to show that
$\calU$ is a triangulated subcategory of $\calD(\calA)$ and that
$\calU$ is closed under isomorphisms and direct sums with
small index sets.
Therefore we have $\calU = \calD(\calA)$,
which means that (1) holds. 

(2) $\Rightarrow$ (4).
Assume that
$H^0{}_{\al_x}\calB \colon {}_{E(x)}\calB \to {}_{F(x)}\calB$ 
is a quasi-isomorphism in $\calH(\calB)$.
Then $Q_{\calB}H^0{}_{\al_x}\calB$ is an isomorphism in $\calD(\calB)$.
We set $\Hom_{\calB}(\cdot, \blank):= \DGMod(\calB)(\cdot, \blank)$.
Then the functor
\[
\RHom_{\calB}(\cdot, {}_{\calB}\calB_{\calB}):= Q_{\calB}H^0\DGMod(\calB)(\bfp_{\calB}(\cdot), {}_{\calB}\calB_{\calB}) \colon
\calD(\calB) \to \calD({\calB}\op)
\]
sends the isomorphism $Q_{\calB}H^0{}_{\al_x}\calB$ to an isomorphism
$$
\begin{aligned}
\RHom_{\calB}({}_{\al_x}\calB,{}_{\calB}\calB_{\calB}) &\colon
\RHom_{\calB}({}_{F(x)}\calB,{}_{\calB}\calB_{\calB})\\
&\qquad\to \RHom_{\calB}({}_{E(x)}\calB,{}_{\calB}\calB_{\calB}),
\end{aligned}
$$
in $\calD({\calB}\op)$,
which is given by
$$
\begin{aligned}
Q_{\calB\op}H^0\Hom_{\calB}({}_{\al_x}\calB,{}_{\calB}\calB_{\calB}) &\colon
\Hom_{\calB}({}_{F(x)}\calB,{}_{\calB}\calB_{\calB})\\
&\qquad\to \Hom_{\calB}({}_{E(x)}\calB,{}_{\calB}\calB_{\calB}),
\end{aligned}
$$
in $\calD(\calB\op)$ (see Remark \ref{dfn:L-dgfun} for $H^0$).
By the Yoneda lemma, it is isomorphic to
$$
Q_{\calB\op}H^0\calB_{\al_x} \colon \calB_{F(x)} \to \calB_{E(x)}
$$
and is an isomorphism in $\calD({\calB}\op)$.
As a consequence, $H^0\calB_{\al_x}$ is a quasi-isomorphism in $\calH({\calB}\op)$.


(4) $\Rightarrow$ (2).
This is proved in the same way as in the converse direction.

(3) $\Leftrightarrow$ (4).
The same proof for the equivalence (1) $\Leftrightarrow$ (2) works
also for this case.
\end{proof}

\begin{dfn}
\label{q-eq}
Let $E, F \colon  \calA \to \calB$ be $1$-morphisms and
$\al \colon E\To F$
a $2$-morphism in the $2$-category $\kdgCat$.
Then $\al$ is called a {\em $2$-quasi-isomorphism} in
$\kdgCat$
if one of the statements (1), \ldots, (4) in Lemma \ref{lem:char-q-eq} holds.
\end{dfn}

\begin{rmk}
We can use the condition (2) above to check whether 
$\al$ is a $2$-quasi-equivalence.
Once it is checked, we can use the property (1).
\end{rmk}

\begin{dfn}
\label{dfn:der-eq-criterion}
Let $(F, \ps) \colon X \to X'$ be a $1$-morphism in
$\Colax(I, \kdgCat)$.
Then $(F, \ps)$ is called a {\em quasi-equivalence $1$-morphism} if
\begin{enumerate}
\item
For each $i \in I_0$, $F(i): X(i) \to  X'(i)$
is a quasi-equivalence; and
\item
For each $a \in I_1$, $\ps(a)$ is a 2-quasi-isomorphism (see Definition \ref{q-eq}).
\end{enumerate}
See the diagram below to understand the situation:
$$
\xymatrix@R=30pt@C=40pt{
X(i)&X'(i)\\
X(j)&X'(j).
\ar"1,1";"2,1"_{X(a)}
\ar"1,2";"2,2"^{X'(a)}
\ar"1,1";"1,2"^{F(i)}_{\text{\rm q-eq}}
\ar"2,1";"2,2"_{F(j)}^{\text{\rm q-eq}}
\ar@{=>}"1,2";"2,1"^{\text{\rm 2-qis}}_{\ps(a)}
}
$$
\end{dfn}

\begin{rmk}
In the above, consider the condition
\begin{enumerate}
\item[$(2')$]
For each $a \in I_1$, $\ps(a)$ is a $2$-isomorphiusm.
\end{enumerate}
Then obviously $(2')$ implies $(2)$.
Therefore, a $1$-morphism $(F, \ps)$ satisfying (1) and $(2')$ can be called
an {\em $I$-equivariant quasi-equivalence $1$-morphism}.
\end{rmk}

\begin{dfn}
\label{dfn:std-der-eq-2}
Let $X, X' \in \Colax(I, \kdgCat)$, and
$(\bfF, \bfps) \colon \Cdg(X)\to \Cdg(X')$
be in $\Colax(I, \kdgCAT)$\footnote{
We distinguish 1-morphisms between $X$ and $X'$ in $\Colax(I, \kdgCat)$
and 1-morphisms between $\Cdg(X)$ and $\Cdg(X')$ in $\Colax(I, \kdgCAT)$
by using bold face fonts for the latter.}.
Then we define a $1$-morphism
\[
\bfL(\bfF, \bfps):= (\bfL\bfF, \bfL\bfps) \colon \calD(X') \to \calD(X)
\]
in $\kuTRI^2$ as follows.
For each $a \colon i \to j$ in $I$, $(\bfL \bfF)(i):= \bfL(\bfF(i))$, and $(\bfL\bfps)(a)$ is
defined by the following commutative diagram:
\[
\begin{tikzcd}
(\blank\Lox_{X'(i)}\ovl{X'(a)}) \circ \bfL \bfF(i) & \bfL \bfF(j) \circ (\blank\Lox_{X'(i)}\ovl{X(a)})\\
\bfL((\blank\ox_{X(i)}\ovl{X'(a)} \circ \bfF(i)) & \bfL(\bfF(j) \circ (\blank\ox_{X(i)}\ovl{X(a)})),
\Ar{1-1}{1-2}{"(\bfL \bfps)(a)", Rightarrow}
\Ar{2-1}{2-2}{"\bfL(\bfps(a))", Rightarrow}
\Ar{1-1}{2-1}{Rightarrow}
\Ar{1-2}{2-2}{equal}
\end{tikzcd}
\]
where the vertical arrow on the right is the identity
(see Case 1 in Remark \ref{rmk:L-dgfun}), and that
on the left is
$\bfL_{(\blank\ox_{X(i)}\ovl{X(a)}), \bfF(i)}$, the structure morphism of the lax functor $\bfL$ (see Remark \ref{rmk:L-dgfun}),
which turns out to be the identity if $\bfF(i)$ preserves homotopically projective objects.
\end{dfn}

\begin{dfn}
Let $(F, \ps) \colon X \to X'$ be 
a 1-morhism
in $\Colax(I, \kdgCat)$.
This yields a $1$-morphism
\[
(\check{F}, \check{\ps}):= ((\check{F}(i))_{i\in I_0}, (\check{\ps}(a))_{a\in I_1}) \colon \DGMod(X) \to \DGMod(X'),
\]
in $\Colax(I, \kdgCAT)$, which defines  a 1-morphism
\[
\bfL(\check{F}, \check{\ps}) = \calD((F, \ps)) \colon \calD(X) \to \calD(X')
\]
in $\Colax(I, \kuTRI^2)$.
The explicit forms of $(\check{F}, \check{\ps})$ and $\bfL(\check{F}, \check{\ps}):= (\bfL \check{F}, \bfL\check{\ps})$ are given as follows.
For each $i \in I_0$, using the dg $X(i)$-$X'(i)$-bimodule $\ovl{F(i)}:={}_{F(i)}X'(i)$, we define
a dg functor
\[
\check{F}(i):= \blank\ox_{X'(i)}\ovl{F(i)} \colon \DGMod(X(i)) \to \DGMod(X'(i)).
\]
Note here that the bimodule $\ovl{F(i)}$ is right homotopically projective.
Hence by Lemma \ref{lem:right-htp}, $\check{F}(i)$ preserves homotopically projective objects.
This defines a triangle functor
\[
(\bfL \check{F})(i) := \bfL(\check{F}(i)) = \blank\Lox_{X(i)} \ovl{F(i)} \colon \calD(X(i)) \to \calD(X'(i)).
\]
Next let $a \colon i \to j$ be a morphism in $I$.
Then $\ps(a) \colon X'(a) F(i) \To F(j)X(a)$
induces a morphism of $X'(j)$-$X(i)$-bimodules
\[
\ovl{\ps(a)} \colon \ovl{X'(a) F(i)} \to \ovl{F(j)X(a)},
\]
where we adapt Notation \ref{ntn:bimodule} (3),
e.g., $\ovl{X'(a) F(i)}:={}_{X'(a) F(i)}X'(j)$,
which induces the diagram
\begin{equation}\label{eq:ovl-psi}
\vcenter{
\xymatrix{
(\blank\otimes_{X(i)}\ovl{F(i)})\otimes_{X'(i)} \ovl{X'(a)}& (\blank\otimes_{X(i)}\ovl{{X(a)}})\otimes_{X(j)}\ovl{F(j)}\\
 \blank\otimes_{X(i)}\ovl{X'(a) F(i)}& \blank\otimes_{X(i)}\ovl{F(j)X(a)}
\ar@{==>}"1,1";"1,2"^{\check{\ps}(a)}
\ar@{=>}"2,1";"2,2"_{\blank\otimes_{X(i)}\ovl{\ps(a)}}
\ar@{=>}"1,1";"2,1"_{\sim}
\ar@{=>}"1,2";"2,2"^{\sim}
}}
\end{equation}
of 2-morphisms in $\DGMod(\kdgCat)$, where the vertical morphisms are natural isomorphisms.
Then $\check{\ps}(a)$ is defined as the 
unique 2-morphism making this diagram commutative, which is usually identified with
$\blank\otimes_{X(i)}\ovl{\ps(a)}$.
This gives us the diagram
\[
\xymatrix@C=50pt{
\DGMod(X(i)) & \DGMod(X'(i))\\
\DGMod(X(j)) & \DGMod(X'(j)).
\ar"1,1";"1,2"^{\check{F}(i)}
\ar"2,1";"2,2"_{\check{F}(j)}
\ar"1,1";"2,1"_{\blank\otimes_{X(i)}\ovl{X(a)}=\DGMod(X(a))}
\ar"1,2";"2,2"^{\blank\otimes_{X'(i)}\ovl{X'(a)}=\DGMod(X'(a))}
\ar@{=>}"1,2";"2,1"_{\check{\ps}(a)}
}
\]
and the $1$-morphism $(\check{F}, \check{\ps}) \colon \DGMod(X) \to \DGMod(X')$.
By Lemma \ref{lem:L-2mor},
the pseudofunctor $\bfL \circ H^0$ sends the diagram \eqref{eq:ovl-psi} to the commutative diagram
\begin{equation}
\label{eq:L-dot-Lox}
\vcenter{
\xymatrix@C=50pt{
(\blank\Lox_{X(i)}\ovl{F(i)})\Lox_{X'(i)}\ovl{X'(a)} & (\blank\Lox_{X(i)}\ovl{{X(a)}})\Lox_{X(j)}\ovl{F(j)}\\
\blank\Lox_{X(i)}\ovl{X'(a) F(i)}& \blank\Lox_{X(i)}\ovl{F(j)X(a)}
\ar@{=>}"1,1";"1,2"^{\bfL(\check{\ps}(a))}
\ar@{=>}"2,1";"2,2"_{\blank\Lox_{X(i)}\ovl{\ps(a)}}
\ar@{=>}"1,1";"2,1"_{\sim}
\ar@{=>}"1,2";"2,2"^{\sim}
}}
\end{equation}
in $\kuTRI^2$.
Using this we set
\[
(\bfL \check{\ps})(a) := \bfL (\check{\ps}(a)) 
\colon (\blank\Lox_{X'(i)}\ovl{X'(a)}) \circ \bfL \check{F}(i) \To \bfL \check{F}(j) \circ (\blank\Lox_{X(i)}\ovl{X(a)}),
\]
which gives us the diagram
\[
\xymatrix@C=50pt{
\calD(X(i)) & \calD(X'(i))\\
\calD(X(j)) & \calD(X'(j)).
\ar"1,1";"1,2"^{(\bfL \check{F})(i)}
\ar"2,1";"2,2"_{(\bfL \check{F})(j)}
\ar"1,1";"2,1"_{\blank\Lox_{X(i)}\ovl{X(a)}=\calD(X(a))}
\ar"1,2";"2,2"^{\blank\Lox_{X'(i)}\ovl{X'(a)}=\calD(X'(a))}
\ar@{=>}"1,2";"2,1"_{(\bfL\check{\ps})(a)}
}
\]
and the $1$-morphism $(\bfL \check{F}, \bfL \check{\ps}) \colon \calD(X) \to \calD(X')$.
\end{dfn}

The following says that a quasi-equivalence between colax functors
induces a derived equivlence between them, which will be important for our main result.

\begin{prp}
\label{prp:der-eq-criterion}
Let $(F, \ps) \colon X \to X'$ be a quasi-equivalence
1-morphism in $\Colax(I,$ $\kdgCat)$.
Then
$\bfL (\check{F}, \check{\ps})
\colon \calD(X) \to \calD(X')$
is an equivalence in $\Colax(I, \kuTRI^2)$.
\end{prp}

\begin{proof}
Let $i \in I_0$.
Then since $F(i) \colon X(i) \to X'(i)$ is a quasi-equivalence,
we have
$$
(\bfL \check{F})(i) := \blank\Lox_{X(i)} \ovl{F(i)} \colon \calD(X(i)) \to \calD(X'(i))
$$ 
is an equivalence of triangulated categories by Theorem \ref{thm:q-eq-der-eq}.

Let $a \colon i \to j$ be a morphism in $I$.
Then since
\[
\ps(a) \colon X'(a) F(i) \To F(j)X(a)
\] 
is a 2-quasi-isomorphism, $\blank\Lox_{X(i)}\ovl{\ps(a)}$ is a $2$-isomorphism by definition.
Hence by the commutative diagram \eqref{eq:L-dot-Lox},
\[
(\bfL \check{\ps})(a)
\colon (\blank\Lox_{X'(i)}\ovl{X'(a)}) \circ \bfL \check{F}(i) \To \bfL \check{F}(j) \circ (\blank\Lox_{X(i)}\ovl{X(a)}).
\] 
is a 2-isomorphism.
Therefore, by Lemma \ref{colax-eq}, $\bfL (\check{F}, \check{\ps})$ is an equivalence in $\Colax(I, \kuTRI^2)$.
\end{proof}

A dg $\k$-category $\calA$ is called $\k$-{\em projective}
(resp.\ $\k$-{\em flat}) if
$\calA(x,y)$ are dg projective (resp.\ flat) $\k$-modules if for any $x,y\in\calA_0$, the complex $\calA(x,y)$ is homotopically projective (resp.\ flat) in $\calH(\k)$, that is,
$\Hom_\k(\calA(x,y),\blank)$ (resp.$\blank\ox_{\k}\calA(x,y)$) sends acyclic complexes to acyclic ones.

\begin{dfn}\label{dfn:tilting-colax 1}
Let $X\colon I \to \kdgCat$ be a colax functor.
\begin{enumerate}
\item
$X$ is called $\k$-{\em projective} (resp.\ $\k$-{\em flat})
if $X(i)$ are $\k$-projective (resp.\ $\k$-flat) for all $i \in I_0$.

\item
Let $Y, Y' \colon I \to \kdgCAT$ be colax functors.
Then $Y'$ is called a {\em colax subfunctor} of $Y$ if
there exists a $I$-{\em equivariant inclusion}
$1$-morphism $Y' \to Y$,
namely, a $1$-morphism $(\si, \ro) \colon Y' \to Y$
such that $\si(i) \colon Y'(i) \to Y(i)$ is the inclusion for each $i \in I_0$, and $\ro(a) \colon Y(a) \si(i) \To \si(j) Y'(a)$ is
an $2$-isomorphism (i.e., a dg natural isomorphism)
for each morphism $a \colon i \to j$ in $I$.

\item
A colax subfunctor $\calT$ of $\DGMod(X)$
is called a {\em tilting colax functor} for $X$
if for each $i \in I_0$,
$\calT(i)\subseteq \DGMod(X(i))$ is a tilting dg subcategory for $X(i)$ (see Definition \ref{dfn:dg tilting}).
See the diagram below for $(\si, \ro)$:
$$
\xymatrix{
\calT(i) &\DGMod(X(i)) \\
\calT(j) &\DGMod(X(j)).
\ar"1,1";"2,1"_{\calT(a)}
\ar@{^{(}->}"1,1";"1,2"^{\si(i)}
\ar@{^{(}->}"2,1";"2,2"_{\si(j)}
\ar"1,2";"2,2"^{\DGMod(X(a))}
\ar@{=>}"1,2";"2,1"^{\ro(a)}_{\sim}
}
$$
\end{enumerate}
\end{dfn}

\begin{dfn}
\label{dfn:der-eq}
Let $X, X' \in \Colax(I, \kdgCat)$.
Then $X$ and $X'$ are said to be {\em derived equivalent} 
(we denoted this fact by $X \dereq X$)
if
$\calD(X)$ and $\calD(X')$ are equivalent
in the 2-category $\Colax(I, \kuTRI^2)$.
Note by Lemma \ref{colax-eq} that this is the case if and only if
there exists a $1$-morphism
$(F, \ps): \calD(X)\to \calD(X')$ in $\Colax(I, \kuTRI^2)$
such that
\begin{enumerate}
\item
For each $i \in I_0$, $F(i):\calD(X(i)) \to \calD(X'(i))$ is a triangle equivalence in $\kuTRI^2$; and
\item
For each $a \in I_1$, $\ps(a)$ is a 2-isomorphism in $\kuTRI^2$
$($i.e., $(F, \ps)$ is $I$-equivariant$)$.
\end{enumerate}
\end{dfn}

In the next section, we will characterize
a derived equivalence between colax functors in $\Colax(I, \kdgCat)$
given by a left derived functor between dg module categories
or given by the left derived tensor functor of a bimodule.

\section{Characterizations of standard derived equivalences of colax functors}

In this section,
we define standard derived equivalences between colax functors
from $I$ to $\kdgCat$, and give its characterizations as
our first main result in this paper.
We will fully use notations in Definition \ref{dfn:bimodule}.

The following lemma given by Keller characterizes dg bimdoules which induced an equivalences of derived categories.

\begin{lem}
\cite[Lemma 3.10]{Ke3}
\label{lem:eq-bi-der-eq}
Let $\calA$ and $\calB$ be dg categories and $E$ an $\calA$-$\calB$ bimodule.
 Then
$\blank\Lox_{\calA}E\colon \calD( \calA) \to 
\calD(\calB)$
is an equivalence of triangulated categories
if and only if
\begin{enumerate}
\item the dg $\calB$-module ${}_AE$ is perfect for all $A\in\calA$,
\item the morphism 
\[
\bi{\calA}{A'}{A}\to \bi{(\bfR\Cdg(\calB))}{\bi{E}{A'}{}}{\bi{E}{A}{}}
\]
is a quasi-isomorphism for all $A,A'\in\calA$ and
\item the dg $\calB$-modules ${}_AE$, $A\in\calA$, form a tilting dg subcategory for $\calB$
{\em (Definition \ref{dfn:dg tilting})}.
\end{enumerate}
\end{lem}

\begin{dfn}
\label{dfn:qeq-dg-bimod}
Let $\calA$ and $\calB$ be dg categories and $E$ an $\calA$-$\calB$ bimodule.
We denote by $\udl{\calA}$ the full subcategory of $\calD(\calA)$
with 
\[
\udl{\calA}_0 = \{D \in \calD(\calA) \mid D \iso {}_C\calA
\text{ for some }C \in \calA_0\}.
\]
Then $E$ is called a \emph{quasi-equivalence bimodule}
if
\begin{enumerate}
\item[(a)] 
$\blank\Lox_{\calA}E\colon \calD(\calA) \to 
\calD(\calB)$
is an equivalence of triangulated categories,
and 
\item[(b)] it gives rise to an equivalence
$\udl{\calA} \to \udl{\calB}$, that is,
$\udl{\calA}\Lox_{\calA} E \subseteq \udl{\calB}$.
\end{enumerate}
\end{dfn}

We first cite the following from \cite[Theorem 8.1]{Ke1} without a proof.

\begin{thm}
\label{thm:LH-eq}
Let $\calA$ and $\calC$ be small dg categories.
Consider the following conditions.
\begin{enumerate}
\item
There is a dg functor $H: \DGMod(\calC) \to \DGMod(\calA)$ such that $\bfL
H: \calD(\calC)\to \calD(\calA)$ is an equivalence $($see Remark $\ref{rmk:L-dgfun})$.
\item
There exists a quasi-equivalence $\calC$-$\calT$-bimodule
for some tilting dg subcategory $\calT$ for $\calA$.
\item
There exists a dg category $\calB$ and dg functors
$$\DGMod(\calC) \xrightarrow{G} \DGMod(\calB) \xrightarrow{F} \DGMod(\calA)$$
such that $\bfL G$ and $\bfL F$ are equivalences.
\end{enumerate}
Then
\begin{enumerate}
\item[(a)]
$(1)$ implies $(2)$.
\item[(b)]
$(2)$ implies $(3)$.
\end{enumerate}
\end{thm}

Next, we cite the statement \cite[Theorem 8.2]{Ke1} in the $\k$-flat case.

\begin{thm}[Keller]
\label{thm:Keller}
Let $\calA$ and $\calB$ be small dg $\k$-categories and assume that
$\calA$ is $\k$-flat.
Then the following are equivalent.
\begin{enumerate}
\item
There exists a $\calB$-$\calA$-bimodue $Y$ such that
$\text{-}\Lox_\calB Y: \calD(\calB) \to \calD(\calA)$
is a triangle equivalence.
\item
There is a dg functor $H: \DGMod(\calC) \to \DGMod(\calA)$ such that $\bfL H: \calD(\calC)\to \calD(\calA)$ is an equivalence $($see Remark $\ref{rmk:L-dgfun})$.
\item
$\calB$ is quasi-equivalent to a tilting dg subcategory for $\calA$.
\end{enumerate}
\end{thm}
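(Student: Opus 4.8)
The plan is to run the cycle (3) $\Rightarrow$ (1) $\Rightarrow$ (2) $\Rightarrow$ (3). The implication (2) $\Rightarrow$ (3) is free: it is exactly part~(a) of Theorem~\ref{thm:LH-eq}, after identifying the dg category denoted $\calC$ there with $\calB$. I would also point out along the way that the hypothesis that $\calA$ be $\k$-flat is never used below; it is kept only to match \cite[Theorem~8.2]{Ke1}, and its redundancy here is precisely the remark in the introduction that the homotopically projective machinery of the previous sections removes such flatness assumptions.

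For (1) $\Rightarrow$ (2): given a $\calB$-$\calA$-bimodule $Y$ with $\blank\Lox_\calB Y\colon \calD(\calB)\to\calD(\calA)$ an equivalence, I would first replace $Y$ by a right homotopically projective resolution $\tilde Y\to Y$ (resolve each right dg $\calA$-module ${}_BY$ and assemble the resolutions into a bimodule in the usual way). By Lemma~\ref{lem:right-htp} the dg functor $H:=\blank\ox_\calB\tilde Y\colon \DGMod(\calB)\to\DGMod(\calA)$ preserves homotopically projectives, so $\bfL H$ is defined (not merely in the sense of Remark~\ref{rmk:L-dgfun}) and equals $\blank\Lox_\calB\tilde Y\cong\blank\Lox_\calB Y$, which is an equivalence; hence (2) holds.

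For (3) $\Rightarrow$ (1): by hypothesis there is a tilting dg subcategory $\calT\subseteq\DGMod(\calA)$ for $\calA$ and a quasi-equivalence $G\colon\calB\to\calT$ (its direction is irrelevant, by Theorem~\ref{thm:q-eq-der-eq}). I would first treat $\calT$ by itself. Let $Z$ be the evaluation $\calT$-$\calA$-bimodule $Z\colon\calA\op\ox_\k\calT\to\DGMod(\k)$, $Z(A,T):=T(A)$, with morphism part given by evaluating (derived) transformations; then ${}_TZ=T$ as a right dg $\calA$-module for each $T\in\calT_0$. I claim $\blank\Lox_\calT Z\colon\calD(\calT)\to\calD(\calA)$ is an equivalence. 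It is a coproduct-preserving triangle functor carrying the representable ${}_T\calT$ to $T\in\perf(\calA)_0\subseteq\hprj(\calA)_0$, and since $\calT$ is a \emph{full} dg subcategory of $\DGMod(\calA)$ one has, for all $T,T'\in\calT_0$ and $k\in\bbZ$,
\[
\Hom_{\calD(\calT)}({}_T\calT,{}_{T'}\calT[k])=H^k(\calT(T,T'))=H^k(\DGMod(\calA)(T,T'))=\Hom_{\calD(\calA)}(T,T'[k]),
\]
the last equality because $T$ is homotopically projective. Thus the hypotheses of Lemma~\ref{ff} are met and $\blank\Lox_\calT Z$ is fully faithful; its right adjoint $\RHom_\calA(Z,\blank)$ sends $M$ to the $\calT$-module whose value at $T$ is $\RHom_\calA(T,M)$, and if this vanishes for all $T\in\calT_0$ then $M$ lies in the right orthogonal of $\Loc_{\calD(\calA)}(\calT_0)=\calD(\calA)$ (condition~(2) of Definition~\ref{dfn:dg tilting}), forcing $M=0$, so Lemma~\ref{equ} gives that $\blank\Lox_\calT Z$ is an equivalence. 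Now set $Y:=\ovl{G}\ox_\calT Z$, a $\calB$-$\calA$-bimodule, where $\ovl{G}={}_G\calT$; since ${}_B\ovl{G}=\calT(\blank,G(B))$ is representable, $\ovl{G}$ is right homotopically projective over $\calT$, and ${}_BY\cong Z(\blank,G(B))=G(B)\in\perf(\calA)_0\subseteq\hprj(\calA)_0$, so $Y$ is right homotopically projective over $\calB$. Using Lemma~\ref{lem:right-htp} twice (so that $\blank\ox_\calB\ovl{G}$ and $\blank\ox_\calT Z$ preserve homotopically projectives) and associativity of the tensor product, I get, naturally in a homotopically projective $M$ over $\calB$,
\[
M\Lox_\calB Y=(M\ox_\calB\ovl{G})\ox_\calT Z=(M\Lox_\calB\ovl{G})\Lox_\calT Z,
\]
i.e. $\blank\Lox_\calB Y\cong(\blank\Lox_\calT Z)\circ(\blank\Lox_\calB\ovl{G})$; the second factor is an equivalence by Theorem~\ref{thm:q-eq-der-eq} and the first by the above, so $\blank\Lox_\calB Y$ is an equivalence and (1) holds.

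The main obstacle is (3) $\Rightarrow$ (1). Full faithfulness of $\blank\Lox_\calT Z$ collapses to the tautological identification of Hom-complexes inside a full dg subcategory, so the real work is (a) essential surjectivity via the localizing subcategory generated by $\calT_0$, and (b) the bookkeeping that turns a composite of two derived tensor functors back into the derived tensor with a single underived bimodule — which is exactly where the right homotopical projectivity of $\ovl{G}$, of $Z$, and of $Y$ is used. Everything else is formal, and in particular no step invokes $\k$-flatness of $\calA$.
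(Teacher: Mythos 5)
Your argument is essentially correct, but note first that the paper does not prove this statement at all: it is quoted from Keller (\cite[Theorem 8.2]{Ke1}) without proof, and the paper's actual contribution on this point is Corollary \ref{cor:Keller}, where the $\k$-flatness hypothesis is removed as the one-object, one-morphism specialization of Theorem \ref{thm:characterization}. Your proof is in effect a self-contained, one-object version of the cycle used there, and your observation that $\k$-flatness is never needed agrees with Remark \ref{rmk:k-flat}. The differences are these. For (3) $\Rightarrow$ (1) the paper's proof of Theorem \ref{thm:characterization} uses exactly your evaluation bimodule (its $U(i)$, with ${}_MU(i)_A = M(A)$) and the same right-homotopical-projectivity bookkeeping to collapse the composite of two tensor functors into a single underived bimodule, but it quotes \cite[Lemma 6.1(a)]{Ke1} for the equivalence $\blank\Lox_\calT Z \colon \calD(\calT)\to\calD(\calA)$, whereas you reprove that step from Lemmas \ref{ff} and \ref{equ}; your verification of their hypotheses (compactness of the ${}_TZ$, the tautological identification of Hom-complexes in a full dg subcategory, and the right-orthogonal argument from $\Loc_{\calD(\calA)}(\calT)=\calD(\calA)$) is sound. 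One small point: the direction of the quasi-equivalence in (3) is not irrelevant for your construction of $Y$ --- you need $G\colon\calB\to\calT$ so that $\ovl{G}={}_G\calT$ is a $\calB$-$\calT$-bimodule --- and that is the intended reading (compare Theorem \ref{thm:characterization}(3)); since you proceed with that direction anyway, nothing breaks. Finally, your (1) $\Rightarrow$ (2) is more complicated than necessary: statement (2) explicitly allows $\bfL H$ in the sense of Remark \ref{rmk:L-dgfun}, so you may simply take $H:=\blank\ox_\calB Y$, for which $\bfL H=\blank\Lox_\calB Y$ by definition. The detour through a right homotopically projective bimodule resolution $\tilde Y\to Y$ is then not needed, which is just as well, because ``resolve each ${}_BY$ and assemble in the usual way'' glosses over the need for a resolution that is dg-functorial in $B$ (the bar resolution need not land in homotopically projectives without flatness hypotheses); that step would require an extra argument if it were actually load-bearing.
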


\begin{dfn}
\label{dfn:dg-std-der-eq}
The derived equivalence of the form 
$\bfL H\colon \calD(\calB)\to \calD(\calA)$ or
$\text{-}\Lox_\calB Y\colon \calD(\calB)\to \calD(\calA)$ above
is called a {\em standard derived equivalence} from $\calB$ to $\calA$, and if the statements of Theorem \ref{thm:Keller} hold,
then we say that $\calB$ is {\em standardly derived equivalent} to $\calA$.
Here it seems as if this relation between $\calA$ and $\calB$ would not be symmetric,
but actually it is symmetric.
Indeed, in that case, the $\calA$-$\calB$-bimodule
$Y^T:= \DGMod(\calA)(\bi{Y}{\calB}{\calA}, \bi{\calA}{\calA}{\calA})$ induces a triangle equivalence
$\text{-}\Lox_\calA Y^T: \calD(\calA) \to \calD(\calB)$.
Thus this relation is symmetric for $\calA$ and $\calB$.
\end{dfn}

We cite the following from \cite[Proposition 3.14]{Gen17} and \cite[Proposition 2.16]{Im}.

\begin{prp}[Co-Yoneda lemma]
\label{prp:coYoneda}
Let $\calC$ be a small dg category, and $T$ a right dg $\calC$-module.
Then we have
$T(y) \iso T \ox_\calC \calC_y
\iso \int^{x \in \calC}T(x) \ox_\k {}_x\calC_y$
natural in $y \in \calC$.
Hence we have
$$
T \iso \int^{x\in \calC} T(x) \ox_\k {}_x\calC.
\qed
$$
\end{prp}

Note that by definition, the coend on the right hand side is a kind of
a dg colimit, and hence commutes with the dg tensor product functors.

The following lemma can be seen as a preservation of dg natural transformations by the tensor product.

\begin{lem}
\label{lem:tens-pres-dg-nat}
Let $\calA, \calB$ be small dg categories, and
$\bi{M}{\calB}{\calA}, \bi{N}{\calB}{\calA}$ bimodules.
If
$$
f \colon M \to N
$$
is a $0$-cocycle morphism of $\calB$-$\calA$-bimodules, then
$$
\blank\ox_{\calB} f \colon
\blank \ox_{\calB}M \to \blank \ox_{\calB}N
$$
is a dg natural transformation between dg functors
$$
\blank \ox_{\calB}M \text{ and } \blank \ox_{\calB}N
\colon \Cdg(\calB) \to \Cdg(\calA).
$$
\end{lem}

\begin{proof}
(1) Note first that $\blank\ox_{\calB} f$ is homogeneous of degree zero because for any $X\in\Cdg(\calB)$, we have $(\blank\ox_{\calB} f)_X =\id_X\ox_{\calB} f \colon X \ox_{\calB}M \to X\ox_{\calB}N$, and both
$\id_X$ and $f$ are homogeneous of degree 0.

(2) $d(\blank \ox_{\calB} f)=0$, namely, the following diagram is commutative for any $X\in\Cdg(\calB)$:
$$
\begin{tikzcd}[column sep=50pt]
X \ox_{\calB} M  &
              X\ox_{\calB}N\\
X\ox_{\calB} M &
             X \ox_{\calB}N
\Ar{1-1}{1-2}{"X \ox_{\calB} f"}
\Ar{2-1}{2-2}{"X\ox_{\calB} f"}
\Ar{1-1}{2-1}{"d"'}
\Ar{1-2}{2-2}{"d"}
\end{tikzcd}.
$$

Indeed, for any homogeneous element $x \in X$ and any $m \in M$,
we have
$$
\begin{aligned}
(d(X\ox_\calB f))(x \ox m) &= dx\ox f(m)+(-1)^{|x|}x\ox df(m), \text{ and}\\
((X\ox_\calB f)d)(x \ox m) &= (X\ox_\calB f)(dx\ox m +(-1)^{|x|}x \ox dm)\\
&= dx\ox f(m)+(-1)^{|x|}x\ox f(dm)\\
&= dx\ox f(m)+(-1)^{|x|}x\ox df(m),
\end{aligned}
$$
which shows the commutativity of the diagram.

(3)
It remains to show that for any $\al:X\to Y$ in $\Cdg(\calB)$, the following diagram is commutative:
$$
\begin{tikzcd}[column sep=50pt]
X \ox_{\calB} M  &
              Y\ox_{\calB}M\\
X\ox_{\calB} N &
             Y \ox_{\calB}N
\Ar{1-1}{1-2}{"\al \ox_{\calB} \id_M"}
\Ar{2-1}{2-2}{"\al\ox_{\calB} \id_N"}
\Ar{1-1}{2-1}{"\id_X \ox_{\calB} f"'}
\Ar{1-2}{2-2}{"\id_Y \ox_{\calB} f"}
\end{tikzcd},
$$
that is, for any $x\in\calA_0$, we have to show
the commutativity of the diagram
\begin{equation}
\label{eq:alpha-nat}
\begin{tikzcd}[column sep=50pt]
X \ox_{\calB} M_x  &
              Y\ox_{\calB}M_x\\
X\ox_{\calB} N_x &
             Y \ox_{\calB}N_x
\Ar{1-1}{1-2}{"\al \ox_{\calB} \id_{M_x}"}
\Ar{2-1}{2-2}{"\al\ox_{\calB} \id_{N_x}"}
\Ar{1-1}{2-1}{"\id_X \ox_{\calB} f_x"'}
\Ar{1-2}{2-2}{"\id_Y \ox_{\calB} f_x"}
\end{tikzcd}.
\end{equation}
Now the diagram
$$
\begin{tikzcd}[column sep=50pt]
X_y \ox_{\k} {}_y M_x  &
              Y_y\ox_{\k}{}_y M_x\\
X_y\ox_{\k} {}_y N_x &
             Y_y \ox_{\k}{}_y N_x
\Ar{1-1}{1-2}{"\al_y \ox_{\k} \id_{{}_y M_x}"}
\Ar{2-1}{2-2}{"\al_y\ox_{\k} \id_{{}_y N_x}"}
\Ar{1-1}{2-1}{"\id_X \ox_{\k} {}_y f_x"'}
\Ar{1-2}{2-2}{"\id_Y \ox_{\k} {}_y f_x"}
\end{tikzcd}
$$
is commutative because both the clockwise composite and
the counter-clockwise composite coincide with $\al \ox_\k {}_yf_x$.
Therefore, since $X \ox_{\calB} M_x =\int^{y\in \calB} X_y\ox_{\k} {_y}M_x$, we obtain the commutative diagram \eqref{eq:alpha-nat}
by applying $\int^{y\in \calB}$ to this diagram.
\end{proof}

The following is immediate from Lemma \ref{lem:tens-pres-dg-nat}.

\begin{lem}
\label{lem:tens-pres-dg-nat-2}
Let $\calA, \calA', \calB, \calB'$ be small dg categories, and
$\bi{M}{\calA'}{\calA}, \bi{M'}{\calB'}{\calB,}$,
$\bi{L}{\calA}{\calB}$, and $\bi{L'}{\calA'}{\calB'}$ bimodules.
If
$$
f \colon M \ox_\calA L \to L'\ox_{\calB'}M'
$$
is a $0$-cocycle morphism of $\calA'$-$\calB$-bimodules, then
$$
\blank\ox_{\calA'} f \colon
\blank \ox_{\calA'}(M \ox_\calA L) \to \blank \ox_{\calA'}(L'\ox_{\calB'}M')
$$
turns out to be a dg natural transformation
between functors
$$
\blank \ox_{\calA'}(M \ox_\calA L)\text{ and } \blank \ox_{\calA'}(L'\ox_{\calB'}M')
\colon \Cdg(\calA') \to \Cdg(\calB).
$$
Hence up to associators, we have the following diagram
in $\kdgCAT$:
\[\begin{tikzcd}
	{\Cdg(\calA')} & {\Cdg(\calA)} \\
	{\Cdg(\calB')} & {\Cdg(\calB)}
	\arrow["{\blank\ox_{\calA'}M}", from=1-1, to=1-2]
	\arrow["{\blank\ox_{\calA'}L'}"', from=1-1, to=2-1]
	\arrow["{\blank\ox_{\calA'}f}"', Rightarrow, dashed, from=1-2, to=2-1]
	\arrow["{\blank\ox_{\calA}L}", from=1-2, to=2-2]
	\arrow["{\blank\ox_{\calB'}M'}"', from=2-1, to=2-2]
\end{tikzcd}.\]
\end{lem}

\begin{dfn}
\label{df:bimod-colax}
Let $X,X'\in \Colax(I,\kdgCat)$.
\begin{enumerate}
\item
An $X'$-$X$-{\em bimodule} is a pair $Z =((Z(i))_{i\in I_0}, (Z(a))_{a\in I_1})$ as in the diagram
\[\begin{tikzcd}[column sep = 60pt, row sep = 50pt]
	{\Cdg(X'(i))} & {\Cdg(X(i))} \\
	{\Cdg(X'(j))} & {\Cdg(X(j))}
	\arrow["{\blank\ox_{X'(i)}Z(i)}", from=1-1, to=1-2]
	\arrow["{\blank\ox_{X'(i)}\ovl{X'(a)}}"', from=1-1, to=2-1]
	\arrow["{\blank\ox_{X'(i)}Z(a)}"', Rightarrow, dashed, from=1-2, to=2-1]
	\arrow["{\blank\ox_{X(i)}\ovl{X(a)}}", from=1-2, to=2-2]
	\arrow["{\blank\ox_{X'(j)}Z(j)}"', from=2-1, to=2-2]
\end{tikzcd},\]
where
$Z(i)$ is an $X'(i)$-$X(i)$-bimodule for all $i \in I_0$, and
$$
Z(a) \colon Z(i) \ox_{X(i)}\ovl{X(a)} \to \ovl{X'(a)} \ox_{X'(j)}Z(j)
$$
is a 0-cocycle morphism of $X'(i)$-$X(j)$-bimodules
for all morphisms $a \colon i \to j$ in $I$,
such that
\[
\blank\ox_{X'}Z:= ((\blank\ox_{X'(i)}Z(i))_{i\in I_0}, (\blank\ox_{X'(\dom(a))}Z(a))_{a\in I_1})
\colon \DGMod(X') \to \DGMod(X)
\]
is a $1$-morphism in $\Colax(I, \kdgCAT)$, where $\dom(a) = i$ is the domain of $a \in I_1$, and
$\blank\ox_{X'(i)}Z(a)$ is given up to associators (see Definition \ref{dfn:associator}), 
i.e., it is identified with the composite
with associators as in the diagram
\[
\begin{tikzcd}[column sep=60pt]
(\blank\ox_{X'(i)} Z(i)) \ox_{X(i)}\ovl{X(a)} & (\blank\ox_{X'(i)} \ovl{X'(a)} )\ox_{X'(j)}Z(j)\\
\blank\ox_{X'(i)} (Z(i) \ox_{X(i)}\ovl{X(a)}) & \blank\ox_{X'(i)} (\ovl{X'(a)} \ox_{X'((j))}Z(j))
\Ar{1-1}{2-1}{"{\bfa_{(\blank),Z(i),\ovl{X(a)}}}"}
\Ar{2-2}{1-2}{"{\bfa_{(\blank),\ovl{X'(a)},Z(j)}\inv}"'}
\Ar{2-1}{2-2}{"\blank\ox_{X'(i)}Z(a)"}
\Ar{1-1}{1-2}{dashed}
\end{tikzcd}
\]
(Note that $\blank\ox_{X'(i)}Z(a)$ is a dg natural transformation by
Lemma \ref{lem:tens-pres-dg-nat-2}).

\item
If ${}_BZ(i)$ is a homotopically projective dg $X(i)$-module for all $B \in X'(i)_0$
and $i \in I_0$, then $Z$ is said to be {\em right homotopically projective}.

\item
A $1$-morphism $(F, \ps) \colon X' \to X$ in $\Colax(I, \kdgCat)$
is said to {\em preserve homotopically projective objects} if $F(i)$ does for all $i \in I_0$.
\end{enumerate}

\end{dfn}

Recall that there is another definition of quasi-equivalences between dg categories by using ``quasi-functor'' given by Keller.
We will give a similar definition in our setting.

\begin{dfn}
\label{dfn:qe-bimod}
Let $X, X' \in \Colax(I, \kdgCat)$, and
$Z$ be a $X'$-$X$-bimodule.
\begin{enumerate}
\item
We denote by $\udl{X}$ the colax subfunctor of $\calD(X)$
such that $\udl{X}(i)$ is the full subcategory of $\calD(X(i))$
with $\udl{X}(i):= \udl{X(i)}$.
The structure of $\udl{X}$ is given as follows:
$\udl{X} = ((\udl{X}(i))_{i\in I_0}, (\udl{X}(a))_{a\in I_1})$, where for each $a \colon i \to j$ in $I$, we have a
strict commutative diagram
\[
\begin{tikzcd}[column sep=90pt]
\calD(X(i)) & \calD(X(j))\\
\udl{X}(i) & \udl{X}(j)
\Ar{1-1}{1-2}{"\calD(X(a)) =\, \blank\Lox_{X(i)}\ovl{X(a)}"}
\Ar{2-1}{2-2}{"\udl{X}(a)"'}
\Ar{2-1}{1-1}{"\si(i)", hookrightarrow}
\Ar{2-2}{1-2}{"\si(j)"', hookrightarrow}
\end{tikzcd}.
\]
Namely, it is required that 
$$
(\udl{X}(i)_0) \Lox_{X(i)}\ovl{X(a)} \subseteq \udl{X}(j)_0.
$$
\item
We denote by $\bbZ\udl{X}$ the colax subfunctor of $\calD(X)$
such that $\bbZ \udl{X}(i)$ is the full subcategory of $\calD(X(i))$
with
\[
\bbZ \udl{X}(i)_0 = \{D \in \calD(X(i)) \mid D \iso {}_CX(i)[n]
\text{ for some }C \in X(i)_0, n\in\bbZ\}.
\]

\item
The $X'$-$X$-bimodule $Z$ is called a \emph{quasi-functor} if 
$\blank\Lox_{X'}Z \colon \calD(X') \to \calD(X)$ gives rise to a $1$-morphism $\udl{X'}\to \udl{X}$ in the sense that the following two statements
(a) and (b) hold:

(a) for each $i \in I_0$,
we have a strictly commutative diagram
\[
\begin{tikzcd}
\calD(X'(i)) & \calD(X(i))\\
\udl{X'}(i) & \udl{X}(i)
\Ar{1-1}{1-2}{"\blank\Lox_{X'(i)}Z(i)"}
\Ar{2-1}{2-2}{"\blank\Lox_{X'(i)}Z(i)"'}
\Ar{2-1}{1-1}{"\si'(i)", hookrightarrow}
\Ar{2-2}{1-2}{"\si(i)"', hookrightarrow}
\end{tikzcd}.
\]
Namely, it is required that for any $C \in X'(i)_0$,
there exists some $D \in X(i)_0$ such that
${}_CZ(i) \iso {}_C{X'(i)} \Lox_{X'(i)}Z(i) \iso {}_D{X(i)}$.

(b) For each $a:i\to j$ in $I$, the 2-morphism
$$
(\blank\Lox_{X'(i)}Z)(a): \calD X(a) \circ (\blank \Lox_{X'(i)}Z(i))
\To (\blank \Lox_{X'(j)}Z(j)) \circ \calD X'(a)
$$
induces a 2-morphism
$\blank\Lox_{X'(i)}Z(a)$ between 1-morphisms from $\udl{X'}(i)$
to $\udl{X}(j)$ in the following diagram: 
\begin{equation}
\label{eq:cube}
\begin{tikzcd}[row sep=scriptsize, column sep=scriptsize]
& \calD(X'(i)) \arrow[dl,"\calD X'(a)"'] \arrow[rr,"\blank\Lox_{X'(i)}Z(i)"]  & & \calD(X(i)) \arrow[dl,"\calD X(a)"]\\
\calD(X'(j))\arrow[rr, crossing over
]  & & \calD(X(j)) \\
&\udl{X'}(i) \arrow[dl] \arrow[rr] \arrow["\si'(i)" near start, uu,hookrightarrow] & & \udl{X}(i) \arrow[dl]\arrow[uu,hookrightarrow]\\
\udl{X'}(j) \arrow[rr]\arrow[uu,hookrightarrow] & & \udl{X}(j)
\Ar{1-4}{2-1}{"(\blank\Lox_{X'(i)}Z)(a)"' description, Rightarrow,crossing over}
\Ar{3-4}{4-1}{"(\blank\Lox_{X'(i)}Z)(a)"' description, Rightarrow}
\Ar{4-3}{2-3}{"\si(j)"' near end, crossing over,hookrightarrow}
\end{tikzcd}.
\end{equation}

\item
The $X'$-$X$-bimodule  $Z$ is called a \emph{quasi-equivalence bimodule} if 
\begin{enumerate}
\item[(a)]
$\blank\Lox_{X'}Z \colon \calD(X') \to \calD(X)$ is an equivalence in
$\Colax(I, \kuTRI^2)$, and 
\item[(b)] it gives rise to an equivalence $\udl{X'}\to \udl{X}$.
\end{enumerate}
Here in the diagram \eqref{eq:cube},
the condition (b) is equivalent to saying that
 $\blank\Lox_{X'(i)}Z(i) \colon \udl{X'}(i) \to \udl{X}(i)$
is an equivalence for all $i \in I_0$, and
that $(\blank\Lox_{X'(i)}Z)(a)$ in the bottom square
is a 2-isomorphism for all $a\colon i \to j$ in $I_1$.
Note that it also required that
$((\blank\Lox_{X'(i)}Z)(a)) \circ \si'(i) = 
\si(j) \circ ((\blank\Lox_{X'(i)}Z)(a))$ for all morphisms $a \colon i \to j$ in $I$
(note that both hand sides are horizontal composites), 
but this is automatically satisfied%
\footnote{
The left/right faces, and the front/back faces are strictly commutative by (3) and  (1), respectively.}.
\end{enumerate}
\end{dfn}

\begin{rmk}
\label{rmk:q-eq-1-mor-bimod}
Note that in Definition \ref{dfn:der-eq-criterion} 
another notion of \emph{quasi-equivalence} is defined
for a $1$-morphsim, which we have to distinguish.
Any quasi-equivalence $1$-morphism $X' \to X$ induces
a quasi-equivalence $X'$-$X$-bimdoule by Proposition \ref{prp:der-eq-criterion}.
Indeed, in this proposition,
the $X'$-$X$-bimodule
$E:=(\ovl{F(i)}, \ovl{\ps(a)})_{i\in I_0, a\in I_1}$ is equal to
$\bfL (\check{F}, \check{\ps})$ and
is a quasi-equivalence bimodule.
\end{rmk}

\begin{lem} 
\label{lem:equiv-bimodule}
In the same setting as in Definition \ref{dfn:qe-bimod},
the following are equivalent
\begin{enumerate}
\item $\blank\Lox_{X'}Z \colon \calD(X') \to \calD(X)$ is an equivalence in
$\Colax(I, \kuTRI^2)$ giving rise to an equivalence $\udl{X'}\to \udl{X}$.

\item $\blank\Lox_{X'}Z$ gives rise to equivalences  $\bbZ\udl{X'}\to \bbZ\udl{X}$ and $\udl{X'}\to \udl{X}$.
\end{enumerate}
In this case,
$Z$ turns out to be  a quasi-equivalence $X'$-$X$-bimodule.
\end{lem}
\begin{proof}
(1) \implies (2). This is trivial by the definition of quasi-equivalence.

(2) \implies (1).
Assume that the statement (2) holds.
To show that $\blank\Lox_{X'}Z \colon \calD(X') \to \calD(X)$ is an equivalence in
$\Colax(I, \kuTRI^2)$. It follows from \cite[Lemma 7.2]{Ke1} that $\blank\Lox_{X'(i)}Z(i) \colon \calD(X'(i)) \to \calD(X(i))$ is an equivalence for each $i\in I_0$.
Hene by Lemma \ref{colax-eq},
it remains to show that $(\blank\Lox_{X'}Z)(a) =( \blank\Lox_{X'(i)} Z(a))\bullet \bfL_{(\blank\ox_{X(i)}\ovl{X(a)}),(\blank\ox_{X'(i)}Z(i))}$ is a $2$-natural isomorphism for all $a \colon i \to j$ in $I$. 
To show this, it is enough to show that
$(\blank\Lox_{X'}Z)(a)_{C^\wedge} = \blank\Lox_{X'(i)}Z(a))_{x'^\wedge} \circ\bfV_{x'^\wedge}$ is invertible for all $C\in X'(i)$, where
$\bfV:= \bfL_{(\blank\ox_{V(i)}\ovl{X(a)}),(\blank\ox_{X'(i)}Z(i))}$.
We have a commutative diagram
{\footnotesize
$$
\begin{tikzcd}[column sep=70pt]
(x'^\wedge\Lox_{X'(i)}Z(i))\Lox_{V(i)}\ovl{X(a)}& (x'^\wedge\ox_{X'(i)}Z(i))\Lox_{V(i)}\ovl{X(a)}\\
(x'^\wedge\ox_{X'(i)}\ovl{X'(a)})\ox_{X'(j)}Z(j))
& (x'^\wedge\ox_{X'(i)}Z(i))\ox_{V(i)}\ovl{X(a)}\\
\Ar{1-1}{1-2}{"\iso"}
\Ar{1-2}{2-2}{"\iso", "\text{(a)}"'}
\Ar{1-1}{2-1}{"(\blank\Lox_{X'} Z)(a)_{x'^\wedge}"'}
\Ar{2-2}{2-1}{"(\blank\Lox_{X'(i)}Z(a))_{x'^\wedge}"}
\Ar{1-1}{2-2}{"\bfV_{x'^\wedge}"}
\end{tikzcd},
$$
}
by which
$\bfV_{x'^\wedge}$ is clearly an isomorphism, and
$\blank\Lox_{X'(i)}Z(a))_{x'^\wedge}$ is an isomorphism by the second assumption.
\end{proof}

\begin{lem}\label{2-morph}
Let $X, X' \in \Colax(I, \kdgCat)$, and
$Z$ a $X'$-$X$-bimodule.
Then for any $a \colon i \to j$ in $I$,
$Z$ naturally induces a $2$-morphism
\[
\widehat{Z(a)} \colon
(\blank \ox_{X'(i)}\ovl{X'(a)}) \circ \Cdg(X(i))(Z(i),\blank)
\To
\Cdg(X(j))(Z(j), \blank) \circ (\blank \ox_{X(i)}\ovl{X(a)}).
\]
\end{lem}

\begin{proof}
We first define a 2-morphism
\[
\begin{aligned}
\Cdg(X(i))(Z(i),\blank) &\circ \Cdg(X(j))(\ovl{X(a)},\blank)\\
&\To 
\Cdg(X'(j))(\ovl{X'(a)},\blank) \circ \Cdg(X(j))(Z(j),\blank).
\end{aligned}
\]
This is defined as a composite of the following 2-morphisms
\[
\begin{aligned}
& \Cdg(X(i))(Z(i),\blank) \circ \Cdg(X(j))(\ovl{X(a)},\blank)\\
=& \Cdg(X(i))(Z(i), \Cdg(X(j))(\ovl{X(a)},\blank))\\
\overset{\iso}{\To}& \Cdg(X(j))(Z(i)\ox_{X(i)}\ovl{X(a)},\blank)\\
\overset{(*)}{\To}& 
\Cdg(X(j))(\ovl{X'(a)}\ox_{X'(j)}Z(j),\blank)\\
\overset{\iso}{\To}& 
\Cdg(X'(j))(\ovl{X'(a)},\Cdg(X(j))(Z(j),\blank))\\
=& \Cdg(X'(j))(\ovl{X'(a)},\blank) \circ \Cdg(X(j))(Z(j),\blank),
\end{aligned}
\]
where $(*)$ stands for $\Cdg(X(j))(Z(a),\blank)$, and
the isomorphisms are given by adjunctions.

Then we can define $\widehat{Z(a)}$ as the composite of the following
(the second and the third ones are obtained by
applying the functors $(\blank \ox_{X'(i)}\ovl{X'(a)})$ from the left, and
$(\blank \ox_{X(i)}\ovl{X(a)})$ from the right to the above.)
\[
\begin{aligned}
& (\blank \ox_{X'(i)}\ovl{X'(a)}) \circ \Cdg(X(i))(Z(i),\blank) \circ \id\\
\overset{\mathrm{(a)}}{\To}& 
(\blank \ox_{X'(i)}\ovl{X'(a)}) \circ \Cdg(X(i))(Z(i),\blank) \circ \Cdg(X(j))(\ovl{X(a)},\blank) \circ (\blank \ox_{X(i)}\ovl{X(a)})\\
\overset{\mathrm{(b)}}{\To}& 
(\blank \ox_{X'(i)}\ovl{X'(a)}) \circ \Cdg(X'(j))(\ovl{X'(a)},\blank) \circ \Cdg(X(j))(Z(j),\blank)
\circ (\blank \ox_{X(i)}\ovl{X(a)})\\
\overset{\mathrm{(c)}}{\To}& 
\id \circ \Cdg(X(j))(Z(j), \blank) \circ (\blank \ox_{X(i)}\ovl{X(a)}),
\end{aligned}
\]
where (a) is given by the unit, (b) is given by the 2-morphism
defined above, and (c) is given by the counit.
\end{proof}

\begin{dfn}
\label{dfn:lift}
Let $X\in \Colax(I, \kdgCat)$,
and let $W$ be a colax subfunctor of $\calD(X)$
such that $W(i)$ is a full subcategory of $\calD(X(i))$ for all $i \in I_0$.
\begin{enumerate}
\item
We denote by $\bbZ W$ the colax subfunctor of $\calD(X)$
such that $\bbZ W(i)$ is the full subcategory of $\calD(X(i))$
with $\bbZ W(i)_0 = \{U[n] \mid U \in W(i)_0, n\in\bbZ\}$.

\item
A \emph{lift} of $W$ is a pair $(X', {}_{X'}Z_{X})$ of a colax functor
$X'\in \Colax(I, \kdgCat)$ and an $X'$-$X$-bimodule $Z$ such that
$\blank\Lox_{X'}Z \colon \calD(X') \to \calD(X)$ gives rise to equivalences
$\bbZ\udl{X'}\to \bbZ W$ and $\udl{X'}\to W$. Note that we have the following commutative diagram
\[
\begin{tikzcd}[row sep=scriptsize, column sep=scriptsize]
& \calD(X'(i)) \arrow[dl,"\calD(X'(a))"'] \arrow[rr,"\blank\Lox_{X'(i)}Z(i)"]  & & \calD(X(i)) \arrow[dl,"\calD(V(a))"near end]  \\
\calD(X'(j)) 
& & \calD(X(j)) \\
&\bbZ\udl{X'}(i) \arrow[dl] \arrow[rr]& & \bbZ W(i)  \arrow[dl]\arrow[uu]\\
\bbZ\udl{X'}(j) \arrow[rr]\arrow[uu] & & \bbZ W(j)
\Ar{3-2}{1-2}{}
\Ar{2-1}{2-3}{"\blank\Lox_{X'(j)}Z(j)"'near end,crossing over}
\Ar{1-4}{2-1}{"(\blank\Lox_{X'(i)}Z)(a)"' description, Rightarrow,crossing over}
\Ar{3-4}{4-1}{"(\blank\Lox_{X'(i)}Z)(a)"' description, Rightarrow}
\Ar{4-3}{2-3}{crossing over}
\end{tikzcd}.
\]

\item A \emph{standard lift} of $W$ is a lift $(V, {}_VM_X)$ of $W$ constructed as follows:
Take $V$ as the colax subfunctor of $\DGMod(X)$ such that
for each $i\in I$,  $V(i)$ is the full subcategory of $\DGMod(X(i))$
with $V(i)_0:= \{\bfp_{X(i)}U \mid U \in W(i)_0\},V(a):= \DGMod(X(a))|_{V(i)}$ for all $a\in I_1$,
and $M$ as a $V$-$X$-bimodule definied by
${}_BM(i)_A:= B(A) \iso \DGMod(X(i))(A^\wedge, B)$ for all $B \in V_0, A \in X(i)_0$.
Note here that ${}_BM(i) = B$ is homotopically projective for all $B \in V(i)$.
Moreover, we define
a $0$-cocycle morphism
\[
M(a) \colon M(i) \ox_{X(i)}\ovl{X(a)} \to \ovl{V(a)} \ox_{V(j)}M(j)
\]
of $V(i)$-$X(j)$-bimodules by
\[
\begin{aligned}
M(a)_{C^\wedge}\colon C^\wedge\ox_{V(i)} M(i)\ox_{X(i)}\ovl{X(a)}\iso M(i)(\blank,C)\ox_{X(i)}\ovl{X(a)}\\
\iso C\ox_{X(i)}\ovl{X(a)}\iso
V(a)(C)\iso
C^\wedge\ox\ovl{V(a)}\ox M(j)
\end{aligned}
\]
for all $a\in I_1$ and $C\in V(i)$.
\end{enumerate}
\end{dfn}

\begin{dfn}
For each small dg category $\calC$,
we denote by $\hprj^b(\calC)$ the smallest full subcategory of $\hprj(\calC)$ that is closed under isomorphisms and contains $A^\wedge$ for all $A\in \calC$.

For each $X \in \Colax(I, \kdgCat)$,
we define $\hprj^b(X)$ to be a colax subfuncor of $\hprj(X)$ such that $(\hprj^b(X))(i):=\hprj^b(X(i))$ for all $i \in I_0$.
\end{dfn}

\begin{rmk}
Consider the same setting as in Definition \ref{dfn:lift}.
If $(X', {}_{X'}Z_{X})$ is a lift of $W$, then $\blank\Lox_{X'}Z$ induces an equivalence from  $\hprj^b(X')$ onto the colax subfunctor of $\calD(X)$ generated by $W$.
If $Z(i)_B$ is homotopically projective for all $B\in X'(i)$, 
then a quasi-inverse of $\blank\Lox_{X'}Z$ is given by $\RHom(X(i))(Z(i),\blank)$. 
\end{rmk}

\begin{dfn}\label{lift:inverse}
Consider the same setting as in Definition \ref{dfn:lift}, and
assume that $(X', {}_{X'}Z_{X})$ is a lift of $W$.
Then we define a 2-isomorphism
\[
\wdt{Z(a)} \colon
(\blank \Lox_{X'(i)}\ovl{X'(a)}) \circ \RHom(X(i))(Z(i),\blank)
\To
\RHom(X(j))(Z(j),\blank) \circ W(a)
\]
as the composite of the following three 2-isomorphisms
\[
\begin{aligned}
&(\blank \Lox_{X'(i)}\ovl{X'(a)})\circ \RHom(X(i))(Z(i), \blank)\\
&\overset{\mathrm{(a)}}{\To}
\RHom(X(j)(Z(j),\blank)\circ(\blank \Lox_{X'(j)}Z(j))\circ (\blank \Lox_{X'(i)}\ovl{X'(a)})\circ \RHom(X(i))(Z(i), \blank)\\
&\overset{\mathrm{(b)}}{\To}
\RHom(X(j))(Z(j),\blank)\circ W(a) \circ (\blank\Lox_{X'(i)}Z(i))\circ \RHom(X(i))(Z(i), \blank)
\\
&\overset{\mathrm{(c)}}{\To}
\RHom(X(j))(Z(j),\blank)\circ W(a),
\end{aligned}
\]
where (a) and (c) are induced by $\id \To  \RHom(X(j))(Z(j), \blank)\circ (\blank \Lox_{X'(j)}Z(j))$ and
$(\blank\Lox_{X'(i)}Z(i))\circ \RHom(X(i))(Z(i), \blank)\To \id$,
respectively, and
(b) is given by 
\[
\RHom(X(j)(Z(j),\blank)) \circ ((\blank\Lox_{X'(i)} (Z(a)^{\inv}) \circ \RHom(X(i))(Z(i), \blank).
\]
The situation above is visualized in the following diagram:
   \[
\begin{tikzcd}[column sep=70pt]
\hprj^b(X'(i))&W(i)&\hprj^b(X'(i)) \\
\hprj^b(X'(j))&W(j) &\hprj^b(X'(j))
\Ar{1-1}{2-1}{"\hprj^b{X'(a)}"'}
\Ar{1-2}{2-2}{"W(a)"}
\Ar{1-1}{1-2}{"\blank\Lox_{X'(i)}Z(i)"}
\Ar{2-1}{2-2}{"\blank\Lox_{X'(j)}Z(j)"'}
\Ar{1-2}{2-1}{"(\blank\Lox_{X'(i)} Z)(a)"', Rightarrow}
\Ar{1-2}{1-3}{"{\RHom(X(i))(Z(i),-)}"}
\Ar{2-2}{2-3}{"{\RHom(X(j))(Z(j),-)}"'}
\Ar{1-3}{2-3}{"\hprj^b{X'(a)}"}
\Ar{1-3}{2-2}{"\wdt{Z(a)}"', Rightarrow}
\end{tikzcd}.
\]
\end{dfn}

The following is used in our proof of the implication (4) $\implies$(1) in
Theorem \ref{thm:characterization-1}.

\begin{lem}
\label{lem:bimodule}
Let $(\bfF, \bfps)\colon \DGMod(X') \to \DGMod(X)$ be a $1$-morphism in the $2$-category $\Colax(I,\kdgCAT)$. Then
\begin{enumerate}
\item We can define an $X'$-$X$-bimodule $N$ by setting
$$
N(i)(D,\linebreak[3] C)=\bfF(i)(C^\wedge)(D)
$$
for all $C\in X'(i)$ and $D\in X(i)$. 

\item We can define a canonical $2$-morphism
$$
\ze= (\ze(i))_{i \in I_0} \colon  \blank\Lox_{X'}N\To \bfL(\bfF, \bfps)
$$
in the $2$-category $\Colax(I,\kdgCAT)$, where it has
the following properties:
\begin{enumerate}
\item 
For any $i \in I_0$ and $U \in \hprj^b(X'(i))$,
$\ze(i)_{U} \colon U \Lox_{X'(i)}N(i)\to  \bfL \bfF(i)(U)$
is an isomorphism, and
\item 
$\ze$ turns out to be a $2$-isomorphism if and only if $\bfL\bfF(i)$ commutes with direct sums for all $i\in I_0$.
\end{enumerate}
\end{enumerate}
\end{lem}

\begin{proof}
(1) Let $N(i)$ be the bimodule $N(i)(D,C)=\bfF(i)(C^\wedge)(D)$ for $C\in X'(i)$ and $D\in X(i)$. For any $C\in X'(i)$, we define a 2-morphism
\[N(a)_{C^\wedge} \colon C^\wedge\ox_{X'(i)} N(i) \ox_{X(i)}\ovl{X(a)}
\to C^\wedge\ox_{X'(i)} \ovl{X'(a)} \ox_{X'(j)}N(j)
\]
as the composite
of the following 2-morphisms:
\[
\begin{aligned}
C^\wedge\ox_{X'(i)} N(i) \ox_{X(i)}\ovl{X(a)} &\iso N(i)(\blank,C) \ox_{X(i)}\ovl{X(a)}\\
&=\bfF(i)(C^\wedge)\ox_{X(i)}\ovl{X(a)}\\
&\hspace{-1ex}\ya{{\ps(a)_{C^\wedge}}}
\bfF(j)(C^\wedge\ox_{X'(i)}\ovl{X'(a)})\\
&\iso \bfF(j)((X'(a)C)^\wedge)\\
&=N(j)(\blank,X'(a)C)\\
&\iso X'(j)(\blank,X'(a)C)\ox_{X'(j)} N(j)\\
&\iso C^\wedge\ox_{X'(i)}\ovl{X'(a)} \ox_{X'(j)}N(j)
\end{aligned}
\]
Let $T$ be any dg $X'(i)$-module.
By Proposition \ref{prp:coYoneda}, we have an isomorphism
$$
\xi_T \colon \int^{C\in X'(i)} T(C) \ox_\k C^{\wedge}
\ \overset{\iso}{\longrightarrow}\  T.
$$
Then we can define $N(a)_T$ by the following commutative diagram:
\[
\tiny
\begin{tikzcd}[column sep=70pt]
\Nname{cL} \int^{C}(T(C) \ox_\k C^{\wedge}
   \ox_{X'(i)}N(i) \ox_{X(i)}\ovl{X(a)})
   &\Nname{cR} \int^{C} (T(C) \ox_\k C^{\wedge}
      \ox_{X'(i)}\ovl{X'(a)}\ox_{X'(j)}N(j))\\
\Nname{dL} \int^{C}(T(C) \ox_\k C^{\wedge})
   \ox_{X'(i)}N(i) \ox_{X(i)}\ovl{X(a)}
   &\Nname{dR} \int^{C} (T(C) \ox_\k C^{\wedge})
      \ox_{X'(i)}\ovl{X'(a)}\ox_{X'(j)}N(j)\\
\Nname{TL}T \ox_{X'(i)}N(i) \ox_{X(i)}\ovl{X(a)} &
   \Nname{TR} T \ox_{X'(i)}\ovl{X'(a)} \ox_{X'(j)}N(j),
\Ar{cL}{cR}{"\int^C(T(C)\ox_\k N(a)_{C^\wedge})"}
\Ar{TL}{TR}{"N(a)_{T}"}
\Ar{cL}{dL}{"\iso"'}
\Ar{cR}{dR}{"\iso"}
\Ar{dL}{TL}{"\xi_T\ox_{X'(i)}N(i) \ox_{X(i)}\ovl{X(a)}", "\iso" '}
\Ar{dR}{TR}{"\xi_T\ox_{X'(i)}\ovl{X'(a)}\ox_{X'(j)}N(j)"', "\iso"}
\end{tikzcd}
\]
and we obtain a 2-morphism
\[
N(a):= (N(a)_T)_{T \in \Cdg(X'(i))} \colon
\blank \ox_{X'(i)}N(i) \ox_{X(i)}\ovl{X(a)} \To 
\blank \ox_{X'(i)}\ovl{X'(a)} \ox_{X'(j)}N(j).
\]
By Lemma \ref{lem:tens-pres-dg-nat-2},
the pair $((N(i))_{i \in I_0}, (N(a))_{a \in I_a})$ gives us the diagram
\[\begin{tikzcd}[column sep = 60pt, row sep = 50pt]
	{\Cdg(X'(i))} & {\Cdg(X(i))} \\
	{\Cdg(X'(j))} & {\Cdg(X(j))}
	\arrow["{\blank\ox_{X'(i)}N(i)}", from=1-1, to=1-2]
	\arrow["{\blank\ox_{X'(i)}\ovl{X'(a)}}"', from=1-1, to=2-1]
	\arrow["{\blank\ox_{X'(i)}N(a)}"', Rightarrow, dashed, from=1-2, to=2-1]
	\arrow["{\blank\ox_{X(i)}\ovl{X(a)}}", from=1-2, to=2-2]
	\arrow["{\blank\ox_{X'(j)}N(j)}"', from=2-1, to=2-2]
\end{tikzcd},\]
up to associator, and it is easy to verify that it turns out to be
an $X'$-$X$-bimodule.

(2) (a)
Let $i\in I_0$. Then
by the same way as explained in \cite[6.4]{Ke1}, we can
define a natural transformation
$\ze(i) \colon \bfL\bfF(i) \To \blank\Lox_{X'(i)}N(i)$ as follows.
Let $U\in\calD(X'(i))$.  Then for any $C\in X'(i)$, there exists
a canonical map $\xi(i)_{U,C} \colon U(C) \to \Cdg(X(i), \bfF(i)(U))(C)$
defined as the composite of the following:
\[
\begin{aligned}
U(C)\iso \Cdg(X'(i))(C^\wedge, U)\ya{\bfF(i)_{(C^\wedge,U)}} \Cdg(X(i))(\bfF(i)(C^\wedge), \bfF(i)(U))\\
= \Cdg(X(i))(N(i)(\blank,C), \bfF(i)(U)) = \Cdg(X(i))(N(i), \bfF(i)(U))(C)
\end{aligned},
\]
which defines a natural morphism
$$
\xi(i)_U:= (\xi(i)_{U,C})_{C \in X'(i)} \colon U\to \Cdg(X(i))(N(i), \bfF(i)(U)).
$$
By adjunction, we obtain
$U\ox_{X'(i)} N(i) \to \bfF(i)(U)$. The induced morphism
$\ze(i)_{U} \colon U\Lox_{X'(i)}N(i)\to \bfL\bfF(i)(U)$
is invertible for $U = C^\wedge[n]$ for $C\in X'(i), n\in\bbZ$.

Since the smallest triangulated subcategory of $\hprj(X'(i))$ closed under direct summands including $\udl{X'(i)}$
is equal to $\hprj^b(X'(i))$, 
we see that $\ze(i)_{U}$
is an isomorphism for all $U \in \hprj^b(X'(i))$, and thus
Statement (a) holds.

(b)
Let $\calU$ be the triangulated subcategory of $\calD(X'(i))$ consisting
of objects $M$ such that $\ze(i)_M$ is an isomorphism.
Then $\calU$ is closed under small coproducts if and only if
$\bfL\bfF(i)$ commutes with small coproducts.
Note that we already know that $\udl{X'(i)}$ is included in $\calU$.

Assume that $\bfL\bfF(i)$ commutes with small coproducts.
Then 
since the smallest triangulated subcategory of $\calD(X'(i))$ closed under
small coproducts including $\udl{X'(i)}$
is equal to $\calD(X'(i))$, we have $\calU = \calD(X'(i))$.
Thus $\ze(i)_U$ is an isomorphism for all $U \in \calD(X'(i))$.
Conversely, assume that $\ze(i)_U$ is an isomorphism for all $U \in \calD(X'(i))$.
Then $\calU = \calD(X'(i))$, and it is closed under small coproducts,
and hence $\bfL\bfF(i)$ commutes with small coproducts.
Therefore, for each $i \in I_0$, 
$\ze(i) \colon \blank\Lox_{X'(i)}N(i)\to \bfL\bfF(i)$
is a natural isomorphism if and only if $\bfL\bfF(i)$ commutes with
small coproducts.

It remains to show the commutativity of the following diagram
for all $a\colon i \to j$ in $I_1$:
$$
\xymatrix@C=8pc{
(\blank\Lox_{X(i)}\ovl{X(a)})\circ(\blank\Lox_{X(i)} N(i)) & (\blank\Lox_{X(i)}\ovl{X(a)})\circ\bfL\bfF(i)\\
(\blank\Lox_{X'(j)} N(j))\circ(\blank\Lox_{X'(i)}\ovl{X'(a)}) &
\bfL\bfF(j)\circ(\blank\Lox_{X'(i)}\ovl{X'(a)}).
\ar@{=>}^{(\blank\Lox_{X(i)}\ovl{X(a)})\circ\ze(i)} "1,1"; "1,2"
\ar@{=>}^{\ze(j)\circ(\blank\Lox_{X'(i)}\ovl{X'(a)})} "2,1"; "2,2"
\ar@{=>}_{(\blank\Lox_{X} N)(a)} "1,1"; "2,1"
\ar@{=>}^{(\bfL\ps)(a)} "1,2"; "2,2"
}
$$
To this end, we first show the commutativity of the following
diagram for all $C\in X'(i)$:
\[
\begin{tikzcd}[column sep = 70pt, row sep = 20pt]
 (C^\wedge\Lox_{X'(i)} N(i) )\Lox_{X(i)}\ovl{X(a)}&\bfL\bfF(i)(C^\wedge)\Lox_{X(i)}\ovl{X(a)}\\
(C^\wedge\Lox_{X'(i)}\ovl{X'(a)}) \Lox_{X'(j)}N(j)&\bfL\bfF(j)(C^\wedge\Lox_{X'(i)}\ovl{X'(a)})\\
\Ar{1-1}{1-2}{"\ze(i)_{C^\wedge}\Lox_{X(i)}\ovl{X(a)}"}
\Ar{2-1}{2-2}{"\ze(j)_{\left(C^\wedge\Lox_{X'(i)}\ovl{X'(a)}\right)}"}
\Ar{1-1}{2-1}{"(\blank\Lox_{X'} N)(a)_{C^\wedge}"}
\Ar{1-2}{2-2}{"(\bfL\ps)(a)_{C^\wedge}"}
\end{tikzcd}
\]
By decomposing the vertical morphisms,
this diagram is completed in the following (see \eqref{eq:lax-comp-L}):
$$
\tiny
\begin{tikzcd}[column sep = 80pt, row sep = 40pt]
 (C^\wedge\Lox_{X'(i)} N(i) )\Lox_{X(i)}\ovl{X(a)}&\bfL\bfF(i)(C^\wedge)\Lox_{X(i)}\ovl{X(a)}\\
C^\wedge\ox (N(i) \ox\ovl{X(a)})& \bfF(i)(C^\wedge)\ox_{X(i)}\ovl{X(a)}\\
(C^\wedge\Lox\ovl{X'(a)}) \Lox N(j)& \bfL\bfF(j)(C^\wedge\Lox_{X'(i)}\ovl{X'(a)})
\Ar{1-1}{1-2}{"\ze(i)_{C^\wedge}\Lox_{X(i)}\ovl{X(a)}"}
\Ar{3-1}{3-2}{"\ze(j)_{\left(C^\wedge\Lox_{X'(i)}\ovl{X'(a)}\right)}"}
\Ar{1-1}{3-1}{"(\blank\Lox_{X'} N)(a)_{C^\wedge}"', bend right=70pt}
\Ar{1-2}{3-2}{"(\bfL\ps)(a)_{C^\wedge}",bend left=70pt}
\Ar{2-2}{3-2}{"\bfL\ps(a)_{C^\wedge}"'}
\Ar{2-1}{3-1}{"(\blank\Lox_{X'(i)}N(a))_{C^\wedge}"}
\Ar{1-1}{2-1}{"\bfL_{(\blank\ox_{X(i)}\ovl{X(a)}),(\blank\ox N(i)) }"}
\Ar{1-2}{2-2}{"\bfL_{(\blank\ox_{X(i)}\ovl{X(a)}),\bfF(i)}"'}
\Ar{2-1}{2-2}{"\ze(i)_{C^\wedge}\ox_{X(i)}\ovl{X(a)}"}
\end{tikzcd}
$$
The commutativity of the upper square is easy to show.
That of the lower square follows by considering the diagram below:
\[
\begin{tikzcd}
C^\wedge\Lox_{X'(i)} (N(i) \ox_{X(i)}\ovl{X(a)}) & C^\wedge\ox_{X'(i)} (N(i) \ox_{X(i)}\ovl{X(a)})\\
&=\bfF(i)(C^\wedge)\ox_{X(i)}\ovl{X(a)}\\
&\bfL\bfF(j)(C^\wedge\Lox_{X'(i)}\ovl{X'(a)})\\
C^\wedge\Lox_{X'(i)}(\ovl{X'(a)} \ox_{X'(j)}N(j)) & \bfF(j)(C^\wedge\ox_{X'(i)}\ovl{X'(a)})\\
\Ar{1-1}{4-1}{"(\blank\Lox_{X'(i)}N(a))_{C^\wedge}"}
\Ar{1-1}{1-2}{"\iso"}
\Ar{1-2}{2-2}{"\iso"}
\Ar{2-2}{3-2}{"\bfL\ps(a)_{C^\wedge}"}
\Ar{3-2}{4-2}{"\iso"}
\Ar{4-1}{4-2}{"\iso"}
\end{tikzcd}.
\qedhere
\]
\end{proof}

We now prepare the following for the next lemma.
Take any $X, X'\in \Colax(I,$\linebreak[2]$\kdgCat)$.
Let $W$ be a colax subfunctor of $\calD(X)$
such that $W(i)$ is a full subcategory of $\calD(X(i))$ for all $i \in I_0$,
and let $(V, {}_VM_X)$ be a lift (Definition \ref{dfn:lift}(3)) of $W$ such that ${}_BM(i)$ is homotopically projective for each $B\in V(i)$
(for example, this is satisfied by the standard lift of $W$).
Take any $1$-morphism
$(\bfF, \bfps)\colon \DGMod(X') \to \DGMod(X)$ in the $2$-category
$\Colax(I,$\linebreak[2]$\kdgCAT)$
such that $\bfL(\bfF, \bfps)\colon$ $\calD(X') \to \calD(X)$
restricts to a $1$-morphism $\udl{X'}\to W$.
Then we have the following diagram:
\begin{equation}
\label{lift}
\begin{tikzcd}[row sep=10pt, column sep=10pt]
&&& & \calD(X'(i))\arrow[dl,"\bfL \bfF(i)"swap] \arrow[dd,"\calD X'(a)"near start] && \udl{X'}(i)\arrow[ll]\arrow[dl]\arrow[dd] \\
& \calD(V(i)) \arrow[rr,"\blank\Lox_{V(i)}{M(i)}"] \arrow[dd] & & \calD(X(i))\arrow[dd,"\calD X(a)"]& &W(i)\arrow[ll, crossing over]\arrow[dd] \\
\udl{V}(i) \arrow[rr, crossing over] \arrow[ur]\arrow[dd] & & W(i) \arrow[ur] & & \calD(X'(j))\arrow[dl,"\bfL \bfF(j)"] & & \udl{X'}(j)\arrow[ll]\arrow[dl]\\
& \calD(V(j))  \arrow[rr] & & \calD(X(j))  & & W(j)\arrow[ll]\\
\udl{V}(j) \arrow[ur] \arrow[rr] & & W(j)  \arrow[ur] \arrow[from=uu, crossing over]\\
\end{tikzcd}
\end{equation}

Keeping the notations above,
we define an $X'$-$V$-bimodule $Y$ as follows.
First for each $i \in I_0$, $Y(i)$ is defined by
\begin{equation}
\label{eq:Y(i)}
Y(i)(B,C)=\Cdg(X(i))({}_BM(i), \bfF(i)(C^\wedge))
\end{equation}
for all $B\in V(i), C\in X'(i)$.
To define $Y(a)$ for each $a \in I_1$, we notice the following.
By Lemma \ref{2-morph}, we have 
\[
\widehat{M(a)}\colon \blank (\ox_{V(i)}\ovl{V(a)}) \circ \Cdg(V(i))(M(i),\blank)\To 
 \Cdg(V(j))(M(j),\blank) \circ (\blank \ox_{X(i)}\ovl{X(a)}).
 \]
Then we have the following
$$
\begin{aligned}
& (\blank \ox_{V(i)}\ovl{V(a)}) \circ \Cdg(V(i))(M(i),\blank) \circ \bfF(i)\\
\overset{\mathrm{(a)}}{\To}& 
 \Cdg(V(j))(M(j),\blank) \circ (\blank \ox_{X(i)}\ovl{X(a)})\circ\bfF(i)\\
\overset{\mathrm{(b)}}{\To}& 
 \Cdg(V(j))(M(j),\blank)
\circ \bfF(j)\circ (\blank \ox_{X'(i)}\ovl{X'(a)}).
\end{aligned}
$$
In the above, (a) is given by $\widehat{M(a)}\circ\bfF(i)$, and
(b) is given by 
$ \Cdg(V(j))(M(j),\blank)\circ\bfps(a)$,
where $\bfps(a)$ is a 2-morphism
$(\blank \ox_{X(i)}\ovl{X(a)})\circ\bfF(i)\To
\bfF(j)\circ (\blank \ox_{X'(i)}\ovl{X'(a)})$.

Now for $a\in I_1$, and any $C\in X'(i)$, we have
\begin{equation}
\label{eq:(c)}
\begin{aligned}
 C^\wedge \ox_{X'(i)}Y(i) \ox_{V(i)}\ovl{V(a)}
&\iso Y(i)(\blank, C)\ox_{V(i)}\ovl{V(a)}\\
&= \Cdg(X(i))(M(i), \bfF(i)(C^\wedge))\ox_{V(i)}\ovl{V(a)}\\
&= [(\blank \ox_{V(i)}\ovl{V(a)}) \circ \Cdg(V(i))(M(i),\blank) \circ \bfF(i)](C^\wedge)\\
& \overset{\mathrm{(c)}}{\To}
[\Cdg(V(j))(M(j),\blank)
\circ \bfF(j)\circ (\blank \ox_{X'(i)}\ovl{X'(a)})](C^\wedge)\\
&= \Cdg(V(j))(M(j),\bfF(j)(C^\wedge \ox_{X'(i)}\ovl{X'(a)}))\\
&\iso
C^\wedge \ox_{X'(i)}\ovl{X'(a)} \ox_{X'(j)}Y(j)
\end{aligned}
\end{equation}
where (c) is given as the composite of (a) and (b) above.
By the same argument used in the proof of Lemma \ref{lem:bimodule},
we can construct a bimodule morphism 
\begin{equation}
\label{eq:Y(a)}
 Y(a)\colon Y(i) \ox_{V(i)}\ovl{V(a)}
\to \ovl{X'(a)} \ox_{X'(j)}Y(j).
\end{equation}

\begin{lem}
\label{quasi-equiv-bim}
Take any $X, X'\in \Colax(I, $\linebreak[2]$\kdgCat)$.
Let $W$ be a colax subfunctor of $\calD(X)$
such that $W(i)$ is a full subcategory of $\calD(X(i))$ for all $i \in I_0$,
and let $(V, {}_VM_X)$ be a lift of $W$ such that ${}_BM(i)$ is homotopically projective for each $B\in V(i)$.
Take any $1$-morphism
$(\bfF, \bfps)\colon \DGMod(X') \to \DGMod(X)$ in the $2$-category
$\Colax(I,$\linebreak[2]$\kdgCAT)$
such that $\bfL(\bfF, \bfps)\colon$ $\calD(X') \to \calD(X)$
restricts to a $1$-morphism $\udl{X'}\to W$
in the $2$-category $\Colax(I, \kuTRI)$.
Let $Y$ be a $X'$-$V$-bimodule defined by \eqref{eq:Y(i)} and \eqref{eq:Y(a)}.
Then the following statements hold.
\begin{enumerate}
\item
\begin{enumerate}
\item $Y$ is a quasi-functor, and hence a $1$-morphism 
$\blank\Lox_{X'}Y \colon \udl{X'}\to \udl{V}$ is induced; and 

\item $Y$ is a quasi-equivalence bimodule if $\bfL(\bfF, \bfps)$ induces an equivalence $\bbZ\udl{X'}\to \bbZ W$.
\end{enumerate}
\item There is a $2$-morphism
$$
\ze \colon (\blank\Lox_{X'}Y)\Lox_{V}M = (\blank\Lox_{V}M)\circ(\blank\Lox_{X'}Y)\To \bfL \bfF
$$
having the following properties:
\begin{enumerate}
\item
For all $i \in I_0$ and $Q\in\hprj^b(X'(i))$,
$$
\ze(i) \colon (Q\Lox_{X'(i)}Y(i))\Lox_{V(i)}M(i)\isoto \bfL \bfF(i)(Q)
$$
is an isomorphism.
\item
For all $i\in I_0$, $\ze(i)$ is an isomorphism for all $Q\in\calD(X(i))$ if and only if $\bfL \bfF(i)$ commutes with direct sums (namely cocontinuous).
\end{enumerate}

\item If $(X',Z)$ is a lift of $W$ and $\bfF=\blank\otimes Z$, then $Y$ is a quasi-equivalence $X'$-$V$-bimodule and we have 
$(\blank\Lox_{X'}Y)\Lox_{V}M
= (\blank \Lox_{V}M) \circ (\blank \Lox_{X'}Y) \iso \blank \Lox_{X'}Z$. 
\end{enumerate}
\end{lem}

\begin{proof}
(1)(a)
To show that $Y$ is a quasi-functor,
it suffices to show the following statements (i) and (ii) by Definition \ref{dfn:qe-bimod}:

(i)  For each $i\in I$, 
$\blank\Lox_{X'(i)}Y(i)$ induces a functor $\udl{X'}(i)\to \udl{V}(i)$; and

(ii) In the diagram
\[
\begin{tikzcd}[row sep=scriptsize, column sep=scriptsize]
& \calD(X'(i)) \arrow[dl,"\calD(X'(a))"'] \arrow[rr,"\blank\Lox_{X'(i)}Y(i)"]  & & \calD(V(i)) \arrow[dl,"\blank\Lox\ovl{V(a)}"near end]  \\
\calD(X'(j)) 
& & \calD(V(j)) \\
&\udl{X'}(i) \arrow[dl] \arrow[rr]& & \udl{V}(i)  \arrow[dl]\arrow[uu]\\
\udl{X'}(j) \arrow[rr]\arrow[uu] & & \udl{V}(j)
\Ar{3-2}{1-2}{}
\Ar{2-1}{2-3}{"\blank\Lox_{X'(j)}Y(j)"'near end,crossing over}
\Ar{1-4}{2-1}{"\blank\Lox_{X'(i)}Y(a)"' description, Rightarrow,crossing over}
\Ar{3-4}{4-1}{"\blank\Lox_{X'(i)}Y(a)"' description, Rightarrow}
\Ar{4-3}{2-3}{crossing over}
\end{tikzcd},
\]
the upper natural transformation $\blank\Lox_{X'(i)}Y(a)$
induces the lower natural transformation $\blank\Lox_{X'(i)}Y(a)$
up to associator.

For each $i\in I_0$, we can follow the idea of the proof of \cite[Lemma 7.3]{Ke1} (see Lemma \ref{lem:dg-lift}). We give the details here. Consider the dg functor $\bfG(i)=\Cdg(X(i))(M(i), \blank)\circ \bfF(i)\colon \Cdg(X'(i))\to\Cdg(V(i))$, then we have $\bfL \bfG(i)=\RHom(X(i))(M(i), \blank)\circ \bfL F(i)$. 
Indeed, for each $N\in X'(i)$, we have 
\[
\begin{aligned}
\bfL \bfG(i)(N) &=\bfG(i)(\bfp N)=\Cdg(X(i))(M(i), \bfF(i)(\bfp N))\\
&\overset{*}{=} \RHom(X(i))(M(i), \bfF(i)(\bfp N))\\
&=\RHom(X(i))(M(i), \bfL\bfF(i)(N)),
\end{aligned}
\]
where the equality $\overset{*}{=}$ above follows from the assumption that ${}_BM(i)$ is homotopically projective right $X(i)$-module for all $B \in V(i)$.
Hence, $\bfL \bfG(i)$ induces a dg functor $\udl{X'(i)}\to \udl{V(i)}$. By the argument in \cite[Section 6.4]{Ke1}, we have a dg natural transformation
$$
\th(i) \colon \blank\ox_{X'(i)}Y(i)\To \bfG(i)
$$
such that
$\bfL\th(i) \colon \blank\Lox_{X'(i)}Y(i)\To \bfL \bfG(i)$
is an isomorphism between triangle functors $\calD(X'(i)) \to \calD(V(i))$ for all $i\in I$ . 
Therefore, $\blank\Lox_{X'(i)}Y(i)$ induces a functor $\udl{X'(i)}\to \udl{V(i)}$. It follows from  \eqref{eq:Y(a)} that we have the following diagram
\[
\tiny
\begin{tikzcd}[column sep=200pt, row sep=150pt]
\udl{X'(i)}& \udl{V(i)} \\
\udl{X'(j)}& \udl{V(j)} 
\Ar{1-1}{2-1}{"\blank\Lox_{X'(i)}\ovl{X'(a)}"'}
\Ar{1-2}{2-2}{"\blank\Lox\ovl{V(a)}"}
\Ar{1-1}{1-2}{"\blank\Lox_{X'(i)}Y(i)"}
\Ar{2-1}{2-2}{"\blank\Lox_{X'(j)}Y(j)"'}
\Ar{1-1}{2-2}{"\blank\Lox_{X(i)}(Y(i)\ox_{V(i)}\ovl{V(a)})"{name=A},
bend left=20pt, ""'{name=a}}
\Ar{1-1}{2-2}{"\blank\Lox_{X'(i)}(\ovl{X'(a)}\ox_{X'(j)}Y(j))"'{name=B}, bend right=20pt, ""{name=b}}
\Ar{a}{b}{"\blank\Lox_{X'(i)}Y(a)"' description, Rightarrow}
\Ar{1-2}{A}{"\mathrm{associator}" description, Rightarrow}
\Ar{2-1}{B}{"\mathrm{associator}" description,Rightarrow}
\end{tikzcd},
\]
where the associators above are usual ones and not derived associators
because we can change all $\Lox$ to $\ox$.
This completes the proofs of (i) and (ii).

(b) We have the following natural transformation
{\footnotesize
\begin{equation}
\label{eq:ze(i)}
\begin{aligned}
\ze(i)\colon(\blank\Lox_{X'(i)}Y(i))\Lox_{V(i)}M(i)
&\ya{\bfL\th(i) \Lox_{V(i)}M(i)} (\blank\Lox_{V(i)}M(i))\circ\bfL \bfG(i)
\\&=(\blank\Lox_{V(i)}M(i))\circ\RHom(X(i))(M(i), \blank)\circ \bfL \bfF(i)\\
&\ya{\de(i) \circ \bfL\bfF(i)} \bfL \bfF(i),
\end{aligned}
\end{equation}
}
where $\de(i)$ is a counit of the adjoint pair given by $M(i)$.
Here note that $\ze(i)$ is invertible on $\hprj^b(X'(i))$ for each $i\in I$.
Indeed, since $\blank \Lox_{V(i)}M(i)$ restricts to an equivalence
$\udl{V(i)} \to W(i)$, the counit $\de(i)$ is invertible on $W(i)$.
Therefore, Since
$\bfL\bfF(i) \colon \calD(X'(i)) \to \calD(X(i))$ restricts to an equivalence
$\udl{X'(i)} \to W(i)$, it follows that
$\de(i) \circ \bfL\bfF(i)$ is invertible on $\udl{X'(i)}$.
Hence $\ze(i)$ is invertible on $\udl{X'(i)}$, and hence on $\hprj^b(X'(i))$
as desired.
Consider the following diagram on the left:
\[\begin{tikzcd}[column sep=15pt]
	& {\calD(V(i))} \\
	{\calD(X'(i))} && {\calD(X(i))} \\
	& {\bbZ\underline{V}(i)} \\
	{\bbZ\underline{X'}(i)} && {\bbZ W(i)}
	\arrow["{\blank\Lox_{V(i)}M(i)}", from=1-2, to=2-3]
	\arrow["{\blank\Lox_{X'(i)}Y(i)}"{pos=0.3}, from=2-1, to=1-2]
	\arrow[""{name=0, anchor=center, inner sep=0}, "{\bfL\bfF(i)}"{pos=0.2}, from=2-1, to=2-3]
	\arrow[hook, from=3-2, to=1-2]
	\arrow["\be", from=3-2, to=4-3]
	\arrow[hook, from=4-1, to=2-1]
	\arrow["\al", from=4-1, to=3-2]
	\arrow[""{name=1, anchor=center, inner sep=0}, "\ga"', from=4-1, to=4-3]
	\arrow[hook, from=4-3, to=2-3]
	\arrow[shift left=3, shorten >=3pt, Rightarrow, from=1-2, to=0]
	\arrow["\ep", shorten >=3pt, Rightarrow, from=3-2, to=1]
\end{tikzcd}
\quad
\begin{tikzcd}[column sep=15pt]
	{\bbZ\underline{V}(i)(\al x, \al y)} & {\bbZ W(i)(\be\al x, \be\al y)} \\
	{\bbZ\underline{X'}(i)(x,y)} & {\bbZ W(i)(\ga x, \ga y)}
	\arrow["\be", tail, two heads, from=1-1, to=1-2]
	\arrow["{\ep_y(\blank)\ep_x\inv}", tail, two heads, from=1-2, to=2-2]
	\arrow["\al", from=2-1, to=1-1]
	\arrow["\ga"', from=2-1, to=2-2]
\end{tikzcd},
\]
where $\al, \be, \ga, \ep$ in the lower triangle are induced from
the corresponding ones in the upper triangle. 
Then $\ep \colon \be \circ \al \To \ga$ is a natural isomorphism.
Hence $\blank\Lox_{X'}Y(i)$ induces an equivalence $\al \colon \bbZ\udl{X'}(i)\to \bbZ \udl{V}(i)$ if and only if $\bfL\bfF(i)$ induces an equivalence
$\ga \colon \bbZ\udl{X'}(i)\to \bbZ W(i)$
because $\be \colon \bbZ\udl{V}(i) \to \bbZ W(i)$ is an equivalence
by the assumption that $(V, M)$ is a lift of $W$
(see the diagram above on the right for full faithfulness part
to check that $\al$ is bijective if and only if so is $\ga$).

It remains to show that for each $a\colon i\to j$  in $I$,
the 2-morphism
$$(\blank\Lox_{X'(i)}Y)(a) \colon (\blank\Lox_{X'(i)} Y(i))  \Lox_{V(i)}\ovl{V(a)})
\To (\blank\Lox_{X'(i)} \ovl{X'(a)}) \Lox_{X'(j)}Y(j))
$$
in the diagram below is an isomorphism on $\udl{X'(i)}$:
{\footnotesize
\[
\begin{tikzcd}[column sep = 80pt, row sep = 60pt]
	\udl{X'(i)} & \udl{V(i)} \\
	\udl{X'(j)} & \udl{V(j)}
	\arrow["{\blank\Lox_{X'(i)}Y(i)}", from=1-1, to=1-2]
	\arrow["{\blank\Lox_{X'(i)}\ovl{X'(a)}}"', from=1-1, to=2-1]
	\arrow["{(\blank\Lox_{X'(i)}Y)(a)}", Rightarrow, dashed, from=1-2, to=2-1]
	\arrow["{\blank\Lox_{X(i)}\ovl{V(a)}}", from=1-2, to=2-2]
	\arrow["{\blank\Lox_{X'(j)}Y(j)}"', from=2-1, to=2-2]
\end{tikzcd}.\]
}
To show this, we use the following commutative diagram: 
{\footnotesize
\begin{equation}
\label{eq:L_,_}
\begin{tikzcd}[column sep = 50pt, row sep = 60pt]
(\blank\Lox_{V(i)}\ovl{V(a)}) \circ (\blank\Lox_{X'(i)}Y(i)) & (\blank\Lox_{X'(j)}Y(j) \circ (\blank\Lox_{X'(i)}\ovl{X'(a)})\\
\bfL((\blank\ox_{V(i)}\ovl{V(a)}) \circ (\blank\ox_{X'(i)}Y(i))) & \bfL((\blank\ox_{X'(j)}Y(j))\circ(\blank\ox_{X'(i)}\ovl{X'(a)}))
\Ar{1-1}{1-2}{"(\blank\Lox_{X'} Y)(a)", Rightarrow}
\Ar{2-1}{2-2}{"\blank\Lox_{X'(i)} Y(a)", Rightarrow}
\Ar{1-1}{2-1}{"\bfL_{(\blank\ox_{V(i)}\ovl{V(a)}),(\blank\ox_{X'(i)}Y(i))}=:\bfV",Rightarrow}
\Ar{1-2}{2-2}{equal}
\end{tikzcd}
\end{equation}
}
(see Definition \ref{dfn:std-der-eq-2} for the vertical morphism on the left).
Now let $x'\in X'(i)$, and consider the following diagram:
{\footnotesize
$$
\begin{tikzcd}[column sep=70pt]
(x'^\wedge\Lox_{X'(i)}Y(i))\Lox_{V(i)}\ovl{V(a)}& (x'^\wedge\ox_{X'(i)}Y(i))\Lox_{V(i)}\ovl{V(a)}\\
(x'^\wedge\ox_{X'(i)}\ovl{X'(a)})\ox_{X'(j)}Y(j))
& (x'^\wedge\ox_{X'(i)}Y(i))\ox_{V(i)}\ovl{V(a)}\\
\Ar{1-1}{1-2}{"\iso"}
\Ar{1-2}{2-2}{"\iso", "\text{(a)}"'}
\Ar{1-1}{2-1}{"(\blank\Lox_{X'} Y)(a)_{x'^\wedge}"'}
\Ar{2-2}{2-1}{"(\blank\Lox_{X'(i)}Y(a))_{x'^\wedge}"}
\Ar{1-1}{2-2}{"\bfV_{x'^\wedge}"}
\end{tikzcd},
$$
}
where (a) above becomes isomorphism since $x'^\wedge\Lox_{X'(i)}Y(i)$ is a representable functor by the
definition of quasi-functors, and the left triangle above is commutative by the
commutative diagram \eqref{eq:L_,_}.
Moreover, it is easy to check the commutativity of the right triangle,
which shows that $\bfV_{x'^\wedge}$ is an isomorphism.
Therefore, it remains to show that $x'^\wedge\Lox_{X'(i)} Y(a)$ is an isomorphism. 
This follows from the fact that
$x'^\wedge\Lox_{X'(i)} Y(a)$ is obtained by composing the following
isomorphsms:
\[
\tiny
\begin{tikzcd}
x'^\wedge\Lox_{X'(i)}(Y(i)\ox_{V(i)}\ovl{V(a)}) & x'^\wedge\ox_{X'(i)}(Y(i)\ox_{V(i)}\ovl{V(a)})\\
& (x'^\wedge\ox_{X'(i)}Y(i))\Lox_{V(i)}\ovl{V(a)}\\
&Y(i)(\blank,x')\Lox_{V(i)}\ovl{V(a)}\\
&\Cdg(X(i))(M(i), \bfF(i)(x'^\wedge))\Lox_{V(i)}\ovl{V(a)}\\
&\RHom(X(i))(M(i),\bfF(i)(x'^\wedge))\Lox_{V(i)}\ovl{V(a)}\\
&\RHom(X(j))(M(j),(\bfL\bfF(i)(x'^\wedge))\Lox_{X(i)}\ovl{X(a)})\\
& \RHom(X(j))(M(j),\bfL\bfF(j)((x'^\wedge)\Lox_{X'(i)}\ovl{X'(a)})\\
&\RHom(X(j))(M(j),\bfL\bfF(j)((X'(a)x')^\wedge))\\
&Y(j)(\blank,X'(a)x')\\
(x'^\wedge)\Lox_{X'(i)}(\ovl{X'(a)}\ox_{X'(j)}Y(j))& ((x'^\wedge)\ox_{X'(i)}\ovl{X'(a)})\ox_{X'(j)}Y(j)\\
\Ar{1-1}{10-1}{"x'^\wedge\Lox_{X'(i)} Y(a)"'}
\Ar{1-1}{1-2}{"\iso"}
\Ar{1-2}{2-2}{"\iso"}
\Ar{2-2}{3-2}{"\iso"}
\Ar{3-2}{4-2}{"\iso"}
\Ar{4-2}{5-2}{"\iso"}
\Ar{5-2}{6-2}{"{\rm Definition\ \ref{lift:inverse}}"}
\Ar{6-2}{7-2}{"{\RHom(X(j))(M(j),(\bfL\bfps)(a))}"}
\Ar{7-2}{8-2}{"\iso"}
\Ar{8-2}{9-2}{"\iso"}
\Ar{9-2}{10-2}{"\iso"}
\Ar{10-2}{10-1}{"\iso"}
\end{tikzcd},
\]
where $\RHom(X(j))(M(j),(\bfL\bfps)(a))$ is an isomorphism because
$(\bfL\bfps)(a)\colon$ \linebreak[4]$\bfL\bfF(i)(x'^\wedge)\Lox_{X(i)}\ovl{X(a)}\to\bfL\bfF(j)((x'^\wedge)\Lox_{X'(i)}\ovl{X'(a)})$
is a 2-isomorphism.
This proves statement (b) by Lemma \ref{lem:equiv-bimodule}.

(2)
We show that the family $\ze:= (\ze(i))_{i\in I_0}$ turns out to be
a 2-morphism.
For each $a\colon i\to j \in I_1$, the bimodules $Y(i), M(i)$ and the functor $\bfF(i)$ give rise to the following diagram 
\[
\begin{tikzcd}[column sep=50pt]
\calD(X'(i))&\calD(V(i))&\calD(X(i)) \\
\calD(X'(j))&\calD(V(j)) &\calD(X(j))
\Ar{1-1}{2-1}{"\calD(X'(a))"'}
\Ar{1-2}{2-2}{"\calD(V(a))"}
\Ar{1-1}{1-2}{"\blank\Lox_{X'(i)}Y(i)"}
\Ar{2-1}{2-2}{"\blank\Lox_{X'(j)}Y(j)"'}
\Ar{1-2}{2-1}{"(\bfL Y)(a)"', Rightarrow}
\Ar{1-2}{1-3}{"\blank\Lox_{V(i)}M(i)"}
\Ar{2-2}{2-3}{"\blank\Lox_{V(j)}M(j)"'}
\Ar{1-3}{2-3}{"\calD(X(a))"}
\Ar{1-3}{2-2}{"(\bfL M)(a)", Rightarrow}
\Ar{1-1}{1-3}{ bend left=30, "\bfL\bfF(i)"}
\Ar{2-1}{2-3}{ bend right=30, "\bfL\bfF(j)"}
\end{tikzcd},
\]
By Definition \ref{dfn:comp-1-mors}, the composite of the 1-morphisms $(\blank\Lox M(i),\blank\Lox M(a))$ and $(\blank\Lox Y(i),\blank\Lox Y(a))$ is defined by
$$
(\blank\Lox M(i),\blank\Lox_M(a))\circ(\blank\Lox_Y(i),\blank\Lox Y(a))
:= ((\blank\Lox_{X'(i)}Y(i))\Lox_{V(i)}M(i), (\bfL M\circ\bfL Y)(a)),
$$
where 
$
(\bfL M\circ\bfL Y)(a):= (\blank\Lox_{V(j)}M(j)\circ(\bfL Y)(a))\bullet ((\bfL M)(a)\circ(\blank\Lox_{X'(i)}Y(i))).
$
The 1-morphism $\ze(i)\colon (\blank\Lox_{X'(i)}Y(i))\Lox_{V(i)}M(i)\To \bfL \bfF(i)$ yields the following commutative diagram
for any $x'\in X'(i)$ and $a\colon i\to j \in I_1$:
\[
\tiny
\begin{tikzcd}[column sep=60pt]
((x'^\wedge)\Lox_{X'(i)} Y(i)\Lox_{V(i)} M(i))\Lox_{X(i)}\ovl{X(a)} &\bfL\bfF(i)((x'^\wedge))\Lox_{X(i)}\ovl{X(a)}\\
 (Y(i)(\blank, x')\Lox_{V(i)} M(i))\Lox_{X(i)}\ovl{X(a)}& \\
Y(i)(\blank, x')\Lox_{V(i)} (M(i)\Lox_{X(i)}\ovl{X(a)})& \\
 Y(i)(\blank, x') \Lox_{V(i)}(\ovl{V(a)}\Lox_{V(j)} M(j))& \\
 (Y(i)(\blank, x') \Lox_{V(i)}\ovl{V(a)})\Lox_{V(j)} M(j)& \\
((x'^\wedge)\Lox_{X(i)}Y(i) \Lox_{V(i)}\ovl{V(a)})\Lox_{V(j)} M(j)& \\
 ((x'^\wedge)\Lox_{X(i)}\ovl{X'(a)}\Lox_{X(j)} Y(j))\Lox_{V(j)} M(j)&
 \bfL\bfF(j)((x'^\wedge)\Lox_{X(i)}\ovl{X'(a)})
\Ar{1-1}{1-2}{"\ze(i)(x'^\wedge)\Lox\ovl{X(a)}"}
\Ar{1-1}{2-1}{"\iso"}
\Ar{2-1}{3-1}{"\iso"}
\Ar{3-1}{4-1}{"\id\Lox_{X(i)}(\bfL M)(a)"}
\Ar{4-1}{5-1}{"\iso"}
\Ar{5-1}{6-1}{"\iso"}
\Ar{1-2}{7-2}{"(\bfL\bfps)(a)(x'^\wedge)"}
\Ar{6-1}{7-1}{"((x'^\wedge)\Lox_{X(i)}(\bfL Y)(a))\Lox_{V(j)}\id"}
\Ar{7-1}{7-2}{"\ze(j)(x'^\wedge\Lox\ovl{X'(a)})"}
\Ar{1-1}{7-1}{"(\bfL M\circ\bfL Y)(a)(x'^\wedge)"', bend right=25pt,start anchor={[xshift=-8em]}, end anchor={[xshift=-8em]}}
\end{tikzcd}.
\]
Thus $\bfL\bfps(a)(x'^\wedge) \circ (\ze(i)(x'^\wedge)\Lox_{X(i)}\ovl{X(a)})
= (\bfL M\circ\bfL Y)(a)(x'^\wedge) \circ \ze(j)(x'^\wedge\Lox\ovl{X'(a)})$. Consequently, $\ze \colon (\blank\Lox_{X'}Y)\Lox_{V}M \To \bfL\bfF$ is a 2-morphism
because the smallest localizing subcategory of $\calD(X'(i))$ containing $x'^\wedge$ for all $x'\in X'(i)$ coincides with $\calD(X'(i))$.

On the other hand,
by Lemma \cite[Lemma 7.3(2)]{Ke1} (see Lemma \ref{lem:dg-lift}),
we have the following two statements:

(i) For each $i\in I_0$ and $Q\in\calD(X(i))$, 
$
\ze(i) \colon Q\Lox_{X'(i)}Y(i)\Lox_{V(i)}M(i)\to \bfL \bfF(i)(Q)
$
is invertible if $Q\in\hprj^b(X'(i))$, and

(ii) $\ze(i)$ is invertible for all $Q\in\calD(X(i))$ if and only if $\bfL F(i)$ commutes with direct sums.

Hence statement (2) holds.

(3) This is immediate from (1) and (2).  
\end{proof}

Recall that the Morita theory for dg categories was given by Keller.
We will give a similar theorem in our setting.

\begin{thm}
\label{thm:characterization-1}
Let $X,X'\in \Colax(I,\kdgCat)$. 
Then we have implications $(1) \Rightarrow (2) \Rightarrow (3) \Rightarrow (4)$. If $X$ and $X'$ are $\k$-flat, then we also have the implication $(4) \Rightarrow (1)$, namely, all four conditions are equivalent.
\begin{enumerate}
\item
There exists an $X'$-$X$-bimodule $Z$ such that
$\blank\Lox_{X'}Z \colon \calD(X') \to \calD(X)$ is an equivalence in
$\Colax(I, \kuTRI^2)$.

\item
There exists a $1$-morphism $(\bfF, \bfps)\colon \DGMod(X') \to \DGMod(X)$ in the $2$-category \newline $\Colax(I,\kdgCAT)$ such that
$\bfL(\bfF, \bfps)\colon$ $\calD(X') \to \calD(X)$ is an equivalence in $\Colax(I, \kuTRI^2)$.
\item
There exists a tilting colax functor $T$ for $X$, and 
there exists a quasi-equivalence $X'$-$T$-bimodule $E$.

\item
There exists a tilting colax functor $T$ for $X$, and
there exist $1$-morphisms
\[
(\bfG, \bfps')\colon \DGMod(X') \to \DGMod(T)\ \text{and } (\bfF, \bfps)\colon \DGMod(T) \to \DGMod(X)
\]
in the $2$-category
$\Colax(I,\kdgCAT)$ such that 
\[\bfL(\bfG, \bfps')\colon\calD(X') \to \calD(T)\ \text{and}\ 
\bfL(\bfF, \bfps)\colon\calD(T) \to \calD(X)
\]
are equivalences in $\Colax(I, \kuTRI^2)$.

\end{enumerate}
\end{thm}

\begin{proof}

(1) \implies (2).
We can take $(\bfF, \bfps):= \blank\ox_{X'}Z$.

(2) \implies (3).
Assume that the statement (2) holds. 
Then for each $a \colon i \to j$ in $I$, we have a diagram
\begin{equation}
\label{eq:11-mor-Cdg}
\vcenter{
\xymatrix{
\DGMod(X'(i)) & \DGMod(X(i))\\
\DGMod(X'(j)) & \DGMod(X(j))
\ar"1,1";"1,2"^{\bfF(i)}
\ar"2,1";"2,2"_{\bfF(j)} 
\ar"1,1";"2,1"_{\DGMod(X'(a))}
\ar"1,2";"2,2"^{\DGMod(X(a))}
\ar@{=>}"1,2";"2,1"_{\bfps(a)}
}
},
\end{equation}
where we note that $\bfps(a)$ is a dg natural transformation by assumption.
For each $i \in I_0$, we set $W(i)$ to be the
full dg subcategory of $\DGMod(X(i))$
with
\[
\begin{aligned}
W(i)_0 = \{\bfL \bfF(i)(C^\wedge) \mid C \in X'(i)_0\}
\end{aligned}.
\]
Let $(T, M)$ be the standard lift of $W$
(see Definition \ref{dfn:lift}(3)):
\[
\begin{aligned}
T(i)_0&:= \{\bfp_{X(i)}U \mid U \in W(i)_0\},\\
M(i)(A, \bfp_{X(i)}U)&:=\bfp_{X(i)}U(A)
\end{aligned}
\]
for all $i \in I_0$.
By Lemma \ref{quasi-equiv-bim}(3), we define a quasi-equivalence $X'$-$T$-bimodule $E$ by setting
\[
E(i)(B,C)=\Cdg(X(i))(_{B}M(i),\bfF(i)(C^\wedge)).
\]
for all $i \in I_0$, $B \in T(i), C \in X'(i)$, and a bimodule morphism 
\[
 E(a)\colon \blank\ox_{X'(i)}E(i) \ox_{V(i)}\ovl{V(a)}
 \\
\to 
  \blank\ox_{X'(i)}\ovl{X'(a)} \ox_{X'(j)}E(j).
\]
It follows from \cite[Lemma 7.3]{Ke1} that $E(i)$ is a quasi-equivalence $X'(i)\to T(i)$, and it follows from Lemma \ref{quasi-equiv-bim} that we have the following diagram such that $\blank\Lox_{X'(i)}E(a)$ is an 2-isomorphism

\begin{tikzcd}[row sep=scriptsize, column sep=scriptsize]
& \calD(X'(i)) \arrow[dl,"\calD(X'(a))"'] \arrow[rr,"\blank\Lox_{X'(i)}E(i)"]  & & \calD(T(i)) \arrow[dl,"\calD(T(a))"near end]  \\
\calD(X'(j))\arrow[rr,"\blank\Lox_{X'(j)}Y(j)"'near end]  & & \calD(T(j)) \\
&\udl{X'}(i) \arrow[dl] \arrow[rr] \arrow[uu,crossing over] & & \udl{T}(i)  \arrow[dl]\arrow[uu]\\
\udl{X'}(j) \arrow[rr]\arrow[uu] & & \udl{T}(j)\arrow[uu,crossing over].
\Ar{1-4}{2-1}{"(\blank\Lox_{X'(i)}E)(a)"' description, Rightarrow,crossing over}
\Ar{3-4}{4-1}{"(\blank\Lox_{X'(i)}E)(a)"' description, Rightarrow}
\end{tikzcd}

(3) \implies (4).
Assume that the statement (3) holds.
Then there exists a quasi-equivalence $X'$-$T$-bimodule $E$ 
for $X$, which gives us the necessary 1-morphism
$(\bfG, \bfps'):= \blank\ox_{X'} E \colon \Cdg(X') \to \Cdg(T)$
in $\Colax(I, \kdgCAT)$ and 
an equivalence $\bfL(\bfG, \bfps') =\blank\Lox_{X'} E \colon \calD(X') \to \calD(T)$
in $\Colax(I, \kuTRI^2)$.

We define a $T$-$X$-bimodule $M$ as follows.
For each $i \in I_0$, we set
\[
{}_BM(i)_A:= B(A) \iso \DGMod(X(i))(A^\wedge, B)
\]
for all $B \in T(i)_0, A \in X(i)_0$.
For each $a \colon i \to j$ in $I$, we define
a $0$-cocycle morphism
\[
M(a) \colon M(i) \ox_{X(i)}\ovl{X(a)} \to \ovl{T(a)} \ox_{T(j)}M(j)
\]
of $T(i)$-$X(j)$-bimodules by
\[
\begin{aligned}
M(a)_{C^\wedge}\colon C^\wedge\ox_{T(i)} M(i)\ox_{X(i)}\ovl{X(a)}\iso M(i)(\blank,C)\ox_{X(i)}\ovl{X(a)}\\
\iso C\ox_{X(i)}\ovl{X(a)}\iso
T(a)(C)\iso
C^\wedge\ox\ovl{T(a)}\ox M(j)
\end{aligned}
\]
for all $a\in I_1$ and $C\in T(i)$.

Let $(\si, \ro) \colon T \to \DGMod(X)$ be an $I$-equivariant inclusion,
and $i \in I_0$.
We define a dg functor
\[
F(i) \colon \DGMod(T(i)) \to \DGMod(X(i))
\]
by setting $F(i):= \blank \ox_{T(i)}M(i)$.
Then since for each $B \in T(i)_0$, the right dg $X(i)$-module
${}_BM(i) = B\ (\in T(i)_0 \subseteq \hprj(X(i))_0)$
is homotopically projective,
$F(i)$ preserves homotopically projectives by Lemma \ref{lem:right-htp}.
Now for any $B, C \in T(i)_0$, the bimodule $M(i)$ defines a morphism
${}_?M(i) \colon T(i)(B, C) \to \DGMod(X(i))({}_BM(i), {}_CM(i))$ in $\DGMod(\k)$ by sending each $f \colon B \to C$ to ${}_fU(i) \colon {}_BM(i) \to {}_CM(i)$.
Here since we have ${}_BM(i) = B$, ${}_fM(i)= f$ by definition,  ${}_?M(i)$ is the identity of $T(i)(B,C)$.
Hence it induces an isomorphism in homology.  Moreover, $\{{}_BM(i) \mid B \in T(i)_0\} = T(i)_0$ and $T(i)$ is a tilting dg category for $X(i)$.
Hence by \cite[Lemma 6.1(a)]{Ke1},
$\bfL F(i) = \blank \Lox_{T(i)}M(i) \colon \calD(T(i)) \to \calD(X(i))$ is a triangle equivalence. Next for each $a \colon i \to j$ in $I$, we construct a $2$-morphism $\bfps(a)$ in the diagram
\[
\vcenter{
\xymatrix{
\DGMod(T(i)) & \DGMod(X(i))\\
\DGMod(T(j)) & \DGMod(X(j))
\ar"1,1";"1,2"^{\bfF(i)}
\ar"2,1";"2,2"_{\bfF(j)}
\ar"1,1";"2,1"_{\DGMod(T(a))}
\ar"1,2";"2,2"^{\DGMod(X(a))}
\ar@{=>}"1,2";"2,1"_{\bfps(a)}
}}.
\]
where \[
\begin{aligned}
\bfps(a):=\blank\ox_{T(i)} M(a):\\\blank \ox_{T(i)}M(i) \ox_{X(i)}\ovl{X(a)}\To \blank \ox_{T(i)}\ovl{T(a)} \ox_{T(j)}M(j).
\end{aligned}
\]
Then we have the following diagram such that 
$\blank\Lox_{X'(i)} E(i)$ and $\bfL F(i)$ are triangle equivalences and that $\bfL E(a)$ and $\bfL \bfps(a)$ are $2$-isomorphisms
\[
\begin{tikzcd}[column sep=70pt]
\calD(X'(i))&\calD(T(i))&\calD(X(i)) \\
\calD(X'(j))&\calD(T(j)) &\calD(X(j))
\Ar{1-1}{2-1}{"\calD(X'(a))"'}
\Ar{1-2}{2-2}{"\calD(T(a))"}
\Ar{1-1}{1-2}{"\blank\Lox_{X'(i)} E(i)"}
\Ar{2-1}{2-2}{"\blank\Lox_{X'(j)} E(j)"'}
\Ar{1-2}{1-3}{"\bfL F(i)"}
\Ar{2-2}{2-3}{"\bfL F(i)"'}
\Ar{1-3}{2-3}{"\calD(X(a))"}
\Ar{1-2}{2-1}{"\bfL E(a)"', Rightarrow}
\Ar{1-3}{2-2}{"\bfL \bfps(a)"', Rightarrow}
\end{tikzcd}.
\]

(4) \implies (1).
Assume that the statement (4) holds.

There is a  diagram
\[
\begin{tikzcd}[column sep=70pt]
\DGMod(X'(i))&\DGMod(T(i))&\DGMod(X(i)) \\
\DGMod(X'(j))&\DGMod(T(j)) &\DGMod(X(j))
\Ar{1-1}{2-1}{"\DGMod(X'(a))"'}
\Ar{1-2}{2-2}{"\DGMod(T(a))"}
\Ar{1-1}{1-2}{"G(i)"}
\Ar{2-1}{2-2}{"G(j)"'}
\Ar{1-2}{2-1}{"\ph(a)"', Rightarrow}
\Ar{1-2}{1-3}{"F(i)"}
\Ar{2-2}{2-3}{"F(j)"'}
\Ar{1-3}{2-3}{"\DGMod(X(a))"}
\Ar{1-3}{2-2}{"\ps(a)", Rightarrow}
\end{tikzcd},
\]
such that we have the following diagram

\[
\begin{tikzcd}[column sep=70pt]
\calD(X'(i))&\calD(T(i))&\calD(X(i)) \\
\calD(X'(j))&\calD(T(j)) &\calD(X(j))
\Ar{1-1}{2-1}{"\calD(X'(a))"'}
\Ar{1-2}{2-2}{"\calD(T(a))"}
\Ar{1-1}{1-2}{"\bfL G(i)"}
\Ar{2-1}{2-2}{"\bfL G(j)"'}
\Ar{1-2}{2-1}{"(\bfL\ph)(a)"', Rightarrow}
\Ar{1-2}{1-3}{"\bfL F(i)"}
\Ar{2-2}{2-3}{"\bfL F(j)"'}
\Ar{1-3}{2-3}{"\calD(X(a))"}
\Ar{1-3}{2-2}{"(\bfL\ps)(a)", Rightarrow}
\end{tikzcd}.
\]
where $\bfL G(i)$ and $\bfL F(i)$ are triangle equivalences and 
\[
\begin{aligned}
(\bfL\ph)(a)\colon\calD(T(a))\circ\bfL G(i)\iso \bfL G(j)\circ\calD(X'(a))\\
(\bfL\ps)(a)\colon\calD(X(a))\circ\bfL F(i)\iso \bfL F(j)\circ\calD(T(a))\\
\end{aligned}
\]
are $2$-isomorphisms.

Let $M(i)$ be the bimodule $M(i)(B,A)=G(i)(A^\wedge)(B)$ for $A\in X'(i)$ and $B\in T(i)$, and let $N(i)$ be the bimodule $N(i)(D,C)=F(i)(C^\wedge)(D)$ for $C\in T(i)$ and $D\in X(i)$. By Lemma \ref{lem:bimodule}, we have 2-morphisms
\[
N(a):= (N(a)_T)_{T \in \Cdg(T(i))} \colon
\blank \ox_{T(i)}N(i) \ox_{X(i)}\ovl{X(a)} \To 
\blank \ox_{T(i)}\ovl{T(a)} \ox_{T(j)}N(j)
\]
and 
\[
M(a):= (M(a)_{T'})_{T' \in \Cdg(X'(i))}\colon \blank \ox_{X'(i)}M(i) \ox_{T(i)}\ovl{T(a)}\To \blank\ox_{X'(i)}\ovl{X'(a)} \ox_{X(j)}M(j).
\]
Since $X$ is $\k$-flat, by \cite[Lemma 6.3(b)]{Ke1}
(see Lemma \ref{lem:flat}),
$\bfL F(i)\circ\bfL G(i)
=(\blank \Lox N(i))\circ (\blank\Lox M(i))
\iso \blank\Lox( M(i)\ox \bfp N(i))$,
where we set
$\bfp N(i):=\bfp_{T(i)\mbox{-}X(i)} N(i)$ for short,
which is a homotopically projective resoultion of $N(i)$ as $X'(i)$-$T(i)$-bimodule
which is a triangle equivalence being the composite of triangle equivalences.

We define an
$X'$-$X$-bimodule $Z$ up to associators.
It is given as follows:
\[
\begin{aligned}
Z(i):&=  M(i)\ox_{T(i)}\bfp N(i)\text{ for all } i \in I_0,\\
Z(a):&=\ {\bfa_{M(i), _\bfp N(i),\ovl{X(a)}}} \circ (M(i) \ox_{T(i)}\bfp N(a)) \circ 
\bfa_{ M(i),\ovl{T(a)},\bfp N(j)}\inv\\
&\ \circ (M(a) \ox_{T(j)} \bfp N(j))
\circ \bfa_{\ovl{X'(a)},M(j),\bfp N(j)}\\
&\phantom{:=\ovl{X'(a)}, M(j), \bfp N(j)}\text{for all $a \colon i \to j$ in $I$},
\end{aligned}
\]
where $\bfa_{M(i), _\bfp N(i),\ovl{X(a)}}$ and $\bfa_{ M(i),\ovl{T(a)},\bfp N(j)}$ are associators, the $X'(i)$-$X(i)$-bimodule morphism $Z(a)$ is defined by the commutative diagram
\begin{equation}
\footnotesize
\label{eq:Z(a)}
\begin{tikzcd}[column sep=50pt]
Z(i)\ox_{X(i)}\ovl{X(a)} & \ovl{X'(a)} \ox_{X'(j)}Z(j)\\
( M(i)\ox_{T(i)}\bfp N(i))\ox_{X(i)}\ovl{X(a)} & \ovl{X'(a)} \ox_{X'(j)}(  M(j) \ox_{T(j)}\bfp N(j))\\
 M(i) \ox_{T(i)}( \bfp N(i)\ox_{X(i)}\ovl{X(a)}) & (\ovl{X'(a)} \ox_{X'(j)} M(j)) \ox_{T(j)}\bfp N(j)\\
  M(i) \ox_{T(i)}(\ovl{T(a)} \ox_{T(j)}\bfp N(j))& ( M(i) \ox_{T(i)}\ovl{T(a)}) \ox_{T(j)} \bfp N(j)
\Ar{1-1}{1-2}{"Z(a)", dashed}
\Ar{1-1}{2-1}{equal}
\Ar{1-2}{2-2}{equal}
\Ar{2-1}{3-1}{"{\bfa_{M(i), _\bfp N(i),\ovl{X(a)}}}"}
\Ar{3-1}{4-1}{"{ M(i) \ox_{T(i)}\bfp N(a)}", dashed}
\Ar{4-1}{4-2}{"{\bfa_{ M(i),\ovl{T(a)},\bfp N(j)}\inv}"'}
\Ar{4-2}{3-2}{" M(a) \ox_{T(j)} \bfp N(j)"}
\Ar{3-2}{2-2}{"{\bfa_{\ovl{X'(a)},M(j),\bfp N(j)}}"}
\end{tikzcd},
\end{equation}
where the morphism $\bfp N(a)$ is defined as follows:
we consider the triangle
\[
\bfp N(i) \ya{\ep_{N(i)}} N(i) \ya{\de_{N(i)}} \bfa N(i) \to 
\]
of $T(i)\mbox{-}X(i)$-bimodules, where $\bfp N(i)$ is homotopically projective and $\bfa N(i)$ is acyclic.
By tensoring with
$\ovl{X(a)}$ from the right, and with $\ovl{T(a)}$ from the left,
this induces two triangles
\begin{equation}
\label{eq:2-triangles}
\begin{tikzcd}[column sep=12pt]
\bfp N(i) \ox_{X(i)}\ovl{X(a)}& N(i) \ox_{X(i)}\ovl{X(a)}& \bfa N(i) \ox_{X(i)}\ovl{X(a)} & {}\\
\ovl{T(a)} \ox_{T(j)}\bfp N(j)&\ovl{T(a)} \ox_{T(j)}N(j)&\ovl{T(a)}\ox_{T(j)} {\bfa N(j)} &
\Ar{1-1}{2-1}{"\bfp N(a)"', dashed}
\Ar{1-2}{2-2}{"N(a)"}
\Ar{1-1}{1-2}{"\ep"}
\Ar{2-1}{2-2}{"\ep"'}
\Ar{1-2}{1-3}{"\de"}
\Ar{2-2}{2-3}{"\de"'}
\Ar{1-3}{2-3}{"\bfa N(a)", dashed}
\Ar{1-3}{1-4}{}
\Ar{2-3}{2-4}{}
\end{tikzcd}
\end{equation}
of $T(i)\mbox{-}X(j)$-bimodules.
We now show that
\[
\Cdg(T(i)\op \ox_\k X(j))(\bfp N(i) \ox_{X(i)}\ovl{X(a)},
\ovl{T(a)}\ox_{T(j)} {\bfa N(j)}) = 0.
\]
For any $t\in T(i)_0$, we have isomorphisms
\[
t^\wedge\ox\ovl{T(a)}\ox_{T(j)} {\bfa N(j)}
\iso 
T(j)(\blank,T(a)t)\ox_{T(j)} {\bfa N(j)}
\iso
\bfa N(j)(\blank,T(a)t),
\]
where the last term is an acyclic right dg $X(j)$-module.
Hence we have
\[
\begin{aligned}
&\Cdg(X(j))(t^\wedge\ox \bfp N(i) \ox_{X(i)}\ovl{X(a)},t^\wedge\ox\ovl{T(a)}\ox_{T(j)} {\bfa N(j)})\\
\iso&\Cdg(X(j))(t^\wedge\ox \bfp N(i) \ox_{X(i)}\ovl{X(a)},\bfa N(j)(\blank,T(a)t))\\
\iso &
\Cdg(X(i)(t^\wedge\ox \bfp N(i), \Cdg(X(j))(\ovl{X(a)}, \bfa N(j)(\blank,T(a)t)).
\end{aligned}
\]
Here we have $\Cdg(X(j))(\ovl{X(a)}, \bfa N(j)(\blank,T(a)t)) = 0$
because $\ovl{X(a)}$ is a homotopocally projective right dg $X(j)$-module.
As a consequence, we have
\begin{equation}
\label{eq:hat-t}
\Cdg(X(j))(t^\wedge\ox \bfp N(i) \ox_{X(i)}\ovl{X(a)},t^\wedge\ox\ovl{T(a)}\ox_{T(j)} {\bfa N(j)})=0.
\end{equation}
Take any $\al \in \Cdg(T(i)\op\ox_\k X(j))(\bfp N(i) \ox_{X(i)}\ovl{X(a)},\ovl{T(a)}\ox_{T(j)} {\bfa N(j)})$.
Then the equality  \eqref{eq:hat-t} means that
for any $t \in T(i)_0$,
we have
\[
0 = \al(?,t)\colon
\Cdg(X(j))((\bfp N(i) \ox_{X(i)}\ovl{X(a)})(?,t),(\ovl{T(a)}\ox_{T(j)} {\bfa N(j)})(?,t)).
\]
Thus $\al = 0$, and hence we have
\[
\Cdg(T(j)\op\ox_\k X(j))(\bfp N(i) \ox_{X(i)}\ovl{X(a)},\ovl{T(a)}\ox_{T(j)} {\bfa N(j)}) = 0.
\]

Then by the diagaram
\eqref{eq:2-triangles}, we see that
there is a unique morphism $\linebreak[3]\bfp N(a)\colon  \bfp N(i) \ox_{X(i)}\ovl{X(a)}\to \ovl{T(a)} \ox_{T(j)}\bfp N(j)$
such that the diagram \eqref{eq:2-triangles} commutes.
In this way $\bfp N(a)$ is defined.

Then by the diagram \eqref{eq:Z(a)}, the $X'$-$X$-bimodule  $Z$ is defined, and we have a $1$-morphism $\blank\ox_{X'}Z \colon \DGMod(X') \to \DGMod(X)$
with the following diagram
\[
\begin{tikzcd}[column sep=70pt]
\DGMod(X'(i))&\DGMod(X(i)) \\
\DGMod(X'(j)) &\DGMod(X(j))
\Ar{1-1}{2-1}{"\DGMod(X'(a))"'}
\Ar{1-2}{2-2}{"\DGMod(X(a))"}
\Ar{1-1}{1-2}{"\blank\ox Z(i)"}
\Ar{2-1}{2-2}{"\blank\ox Z(j)"'}
\Ar{1-2}{2-1}{"\blank\ox Z(a)", Rightarrow}
\end{tikzcd}.
\]
By Definition \ref{dfn:comp-1-mors}, the composite $(\bfL F, \bfL\ps) \circ (\bfL G, \bfL\ph)$
of $(\bfL F, \bfL\ps)$ and
$(\bfL G, \bfL\ph)$
is a 1-morphism from $\calD(X')$ to $\calD(X)$ defined by
$$
(\bfL F, \bfL\ps)\circ (\bfL G, \bfL\ph):= (\bfL F \circ \bfL G, \bfL\ps\circ\bfL\ph),
$$
where
$\bfL F \circ \bfL G:=(\bfL F(i)\circ \bfL G(i))_{i\in I_0}$ and for each $a\colon i \to j$ in $I$,
$
(\bfL\ps \circ \bfL\ph)(a):= (\bfL F(j) \circ (\bfL\ph)(a))\bullet ((\bfL\ps)(a) \circ \bfL G(i))
$
is the pasting of the diagram
\begin{equation}
\label{eq:pasting-Lps-Lph}
\begin{tikzcd}[column sep=70pt]
\calD(X'(i))&\calD(T(i))&\calD(X(i)) \\
\calD(X'(j))&\calD(T(j)) &\calD(X(j))
\Ar{1-1}{2-1}{"\calD(X'(a))"'}
\Ar{1-2}{2-2}{"\calD(T(a))"}
\Ar{1-1}{1-2}{"\bfL G(i)"}
\Ar{2-1}{2-2}{"\bfL G(j)"'}
\Ar{1-2}{2-1}{"(\bfL\ph)(a)"', Rightarrow}
\Ar{1-2}{1-3}{"\bfL F(i)"}
\Ar{2-2}{2-3}{"\bfL F(j)"'}
\Ar{1-3}{2-3}{"\calD(X(a))"}
\Ar{1-3}{2-2}{"(\bfL\ps)(a)", Rightarrow}
\end{tikzcd}.
\end{equation}
Therefore,
\[
\begin{aligned}
\bfL F(j)(\bfL\ph)(a)\colon\bfL F(j)\circ \calD(T(a))\circ\bfL G(i)
\overset{\iso}{\To} \bfL F(j)\circ\bfL G(j)\circ\calD(X'(a))\\
(\bfL\ps)(a)\bfL G(i)\colon\calD(X(a))\circ\bfL F(i)\circ\bfL G(i)
\overset{\iso}{\To} \bfL F(j)\circ\calD(T(a))\circ\bfL G(i)\\
\end{aligned}.
\]
Consequently, 
\[
\begin{aligned}
(\bfL\ps\circ\bfL\ph)(a)&:=(\bfL F(j)\circ\bfL\ph(a))\bullet (\bfL\ps(a)\circ\bfL G(i))\colon\\
\calD(X(a))\circ\bfL F(i)\circ\bfL G(i)
&\overset{\iso}{\To} \bfL F(j)\circ\bfL G(j)\circ\calD(X'(a))
 \end{aligned}
\]
In the following, We will show that $(\blank\Lox Z)(a)$ is equivalent to $(\bfL\ps\circ\bfL\ph)(a)$.

From the definition of $Z(a)$, we have the following diagram:
{\small
\begin{equation}
\label{eq:derasso}
\begin{tikzcd}[column sep=30pt]
\blank\Lox _{X'(i)}Z(i)\Lox_{X(i)}\ovl{X(a)}_{X(j)} & \blank\Lox\ovl{X'(a)} \Lox_{X'(j)}Z(j)\\
\blank\Lox( _{X'(i)}M(i)\ox_{T(i)}\bfp N(i))\Lox_{X(i)}\ovl{X(a)} & \blank\Lox\ovl{X'(a)} \Lox_{X'(j)}(  M(j) \ox_{T(j)}\bfp N(j))\\
(\blank\Lox _{X'(i)}M(i)\Lox_{T(i)}N(i))\Lox_{X(i)}\ovl{X(a)} & \blank\Lox\ovl{X'(a)} \Lox_{X'(j)}(  M(j) \Lox_{T(j)}N(j))\\
\blank\Lox M(i) \Lox_{T(i)}( N(i)\Lox_{X(i)}\ovl{X(a)}) & (\ovl{X'(a)} \Lox_{X'(j)} M(j)) \Lox_{T(j)} N(j)\\
\blank\Lox  M(i) \Lox_{T(i)}(\ovl{T(a)} \Lox_{T(j)} N(j))& (\blank\Lox M(i) \Lox_{T(i)}\ovl{T(a)}) \Lox_{T(j)} N(j)
\Ar{1-1}{1-2}{"(\blank\Lox Z)(a)", dashed}
\Ar{1-1}{2-1}{equal}
\Ar{1-2}{2-2}{equal}
\Ar{2-1}{3-1}{"\iso", Rightarrow}
\Ar{3-2}{2-2}{"\iso"', Rightarrow}
\Ar{3-1}{4-1}{"{\bfa^\bfL_{M(i),  N(i),\ovl{X(a)}}}", Rightarrow}
\Ar{4-1}{5-1}{"{ M(i) \Lox_{T(i)}N(a)}", dashed, Rightarrow}
\Ar{5-1}{5-2}{"(\bfa^{\bfL}_{ M(i),\ovl{T(a)}, N(j)})\inv" ', Rightarrow}
\Ar{5-2}{4-2}{" M(a) \Lox_{T(j)}  N(j)"', Rightarrow}
\Ar{4-2}{3-2}{"{\bfa^\bfL_{\ovl{X'(a)},M(j),N(j)}}"', Rightarrow},
\end{tikzcd}
\end{equation}
}
where $(\blank\Lox N)(a)$ and $(\blank\Lox M)(a)$ are 2-isomorphisms if $(\bfL\ph)(a)$ and $(\bfL\ps)(a)$ are $2$-isomorphsims by Lemma \ref{lem:bimodule}, $\bfa^\bfL_{M(i),  N(i),\ovl{X(a)}}, (\bfa^{\bfL}_{ M(i),\ovl{T(a)}, N(j)})\inv$ and $\bfa^\bfL_{\ovl{X'(a)},M(j),N(j)}$ are derived associators by Proposition \ref{prp:derived-associator} because both $X'(i)$ and $X(i)$ are $\k$-flat.
Note that for each $i \in I_0$,
we have a $2$-isomorphism
\[
\xi_i \colon \bfL F(i)\circ\bfL G(i)
=(\blank \Lox N(i))\circ (\blank\Lox M(i))
\overset{\iso}{\To} \blank\Lox( M(i)\ox \bfp N(i))
= \blank \Lox Z(i),
\]
we get the following commutative diagram
\begin{equation}
\label{eq:Lps-Lph-Z}
\begin{tikzcd}[column sep=50pt]
(\blank\Lox X(a))\circ\bfL F(i)\circ\bfL G(i) & 
\bfL F(j)\circ\bfL G(j)\circ(\blank\Lox\ovl{X'(a)})
\\
\blank\Lox Z(i)\Lox_{X(i)}\ovl{X(a)} & \blank\Lox_{X'((i)}\ovl{X'(a)} \Lox_{X'(j)}Z(j)
\Ar{1-1}{1-2}{"(\bfL\ps\circ\bfL\ph)(a)"}
\Ar{1-1}{2-1}{"\iso", "\xi_i \Lox_{X(i)}\ovl{X(a)}" '}
\Ar{1-2}{2-2}{"\iso" ,"\blank\Lox_{X'(i)}\ovl{X'a)} \Lox_{X'(j)}\xi_i" '}
\Ar{2-1}{2-2}{"(\blank\Lox Z)(a)"}
\end{tikzcd}.
\end{equation}
Therefore, we have
\[
\begin{aligned}
&(\bfL F, \bfL\ps)\circ (\bfL G, \bfL\ph):= (\bfL F \circ \bfL G, \bfL\ps\circ\bfL\ph)\\
=& ((\bfL F \circ \bfL G)(i), (\bfL\ps\circ\bfL\ph)(a))_{i\in I_0, a\in I_1}\\
=& (\bfL G(i) \circ \bfL F(i), (\bfL F(j) \circ \bfL\ph(a))\bullet (\bfL\ps(a) \circ \bfL G(i)))_{i\in I_0, a \in I_1}
\quad \text{by \eqref{eq:pasting-Lps-Lph}}\\
\iso& (\blank\Lox_{X'(i)}Z(i), (\blank\Lox Z)(a))_{i\in I_0, a\in I_1}
\quad \text{by \eqref{eq:Lps-Lph-Z}}\\
=& \blank\Lox_{X'} Z.
 \end{aligned}
\]
Then $\blank\Lox_{X'}Z \colon \calD(X') \to \calD(X)$ is an equivalence in
$\Colax(I, \kuTRI^2)$.
\end{proof}

\begin{dfn}
\label{dfn:std-der-eq}
Let $X, X' \in \Colax(I, \kdgCat)$.
The equivalence form of Statement (1) in Theorem \ref{thm:characterization-1} above
(namely, of the form
$\blank\Lox_{X'}Z \colon \calD(X') \to \calD(X)$ in
$\Colax(I, \kuTRI^2)$ for an $X'$-$X$-bimodule $Z$)
is called a {\em standard derived equivalence} from $X'$ to $X$, and
if it exists, then
we say that $X'$ {\em is standardly derived equivalent to} $X$.
We denote this situation by $X' \stder X$.
At present we do not know whether this relation is symmetric or not.
See Problem \ref{prb:sd-1}.
\end{dfn}

\begin{prb}
\label{prb:sd-1}
Let $X, X' \in \Colax(I, \kdgCat)$.
Under which condition, does
$X' \stder X$ imply $X \stder X'$?
\end{prb}

\begin{rmk}
For dg categories $\calA, \calB$, the relation that $\calB$ is standardly derived equivalent to $\calA$
(see Definition \ref{dfn:dg-std-der-eq}), is known to
be symmetric (if $Z$ is a $\calB$-$\calA$-bimodule such that $\calB$ is standardly derived equivalent to $\calA$, take $Z^T:=\DGMod(\calA)(\bi{Z}{\calB}{\calA}, \bi{\calA}{\calA}{\calA})$ such that $\bi{Z}{\calB}{\calA}$ is homotopically projective $\calA$-module, then $Z^T$ is a $\calA$-$\calB$-bimodule such that $\calA$ is standardly derived equivalent to $\calB$, see \cite[Lemma 6.2]{Ke1}).
\end{rmk}

\section{Derived equivalences of Grothendieck constructions}

In this section,
we give our second main result stating that
for $X, X' \in \Colax(I, \kdgCat)$,
if $X'$ is standardly derived equivalent to $X$,
then their Grothendieck constructions are derived equivalent.

\subsection{Quasi-equivalence 1-morphisms}

In this subsection, we use the contents in Appendix \ref{sec:qeq-1mor}.

\begin{prp}\label{prp:qis-qis-1mor}
Let $X, X' \in \Colax(I, \kdgCat)$.
Assume that $(F, \ps) \colon  X' \to  X$ in $\Colax(I, \kdgCat)$ is a quasi-equivalence 1-morphism. Then $\Gr_I(F, \ps)\colon  \Gr_I X' \to  \Gr_I X$ is a quasi-equivalence.
\end{prp}

\begin{proof}

Recall that a 1-morphism
$$
\Gr_I(F, \ps) \colon \Gr_I X' \to \Gr_I X
$$
in $\DGkCat$ is defined by
\begin{itemize}
\item
for each ${}_ix \in (\Gr_I X)_0$, $\Gr_I(F, \ps)({}_ix'):={}_i(F(i)x)$, and
\item
for each ${}_ix, {}_jy \in (\Gr_I X')_0$ and
each $f=(f_a)_{a\in I(i,j)} \in (\Gr_I X')({}_ix, {}_jy)$,
$\Gr_I(F,\ps)(f):= (F(j)f_a\circ \ps(a)x)_{a\in I(i,j)}$, where each entry is the composite of
$$
X'(a)F(i)x \xrightarrow{\ps(a)x} F(j)X(a)x \xrightarrow{F(j)f_a} F(j)y.
$$
\end{itemize}
Then we have the following
\begin{equation}
\label{eq:Gr-str}
\begin{tikzcd}
(\Gr_I X')({}_ix, {}_jy) & (\Gr_I X)(\Gr_I(F, \ps)({}_ix), \Gr_I(F, \ps)({}_jy))\\
\dbigoplus_{a\in I(i,j)} X'(j)(X'(a)x, y) & \dbigoplus_{a\in I(i,j)}X(j)(X(a)F(i)x, F(j)y)
\Ar{1-1}{1-2}{"{\Gr_I(F, \ps)}"}
\Ar{1-1}{2-1}{equal}
\Ar{1-2}{2-2}{equal}
\Ar{2-1}{2-2}{"{\Gr_I(F, \ps)}"'}
\end{tikzcd}
\end{equation}
Assume that $(F, \ps): X'\to X$ is a quasi-equivalence, that is
\begin{enumerate}
\item
For each $i \in I_0$, $F(i): X'(i) \to  X(i)$
is a quasi-equivalence; and
\item
For each $a \in I_1$, $\ps(a)$ is a 2-quasi-isomorphism.
\end{enumerate}

\setcounter{clm}{0}
\begin{clm}
Let ${}_ix, {}_jy \in (\Gr_I X')_0$.
Then the restriction
$$
\Gr_I(F, \ps)_{{{}_ix},{{}_jy}}\colon
(\Gr_I X')({}_ix, {}_jy) \to  (\Gr_I X)(\Gr_I(F, \ps)({}_ix), \Gr_I(F, \ps)({}_jy))
$$
of $\Gr_I(F, \ps)$ to
$(\Gr_I X')({}_ix, {}_jy)$ is a quasi-isomorphism.
\end{clm}
Indeed,
we have to show that for each $k\in\mathbb{Z}$, the following is an isomorphism:
$$
H^k(\Gr_I X')({}_ix, {}_jy) \ya{H^k(\Gr_I(F, \ps)_{{}_ix,{{}_jy}})} H^k(\Gr_I X)(\Gr_I(F, \ps)({}_ix), \Gr_I(F, \ps)({}_jy)).
$$
By \eqref{eq:Gr-str}, it is decomposed as follows:

\begin{equation}\label{eq:H_kGr}\small
\begin{tikzcd}[column sep=-50pt, row sep=40pt]
H^k((\Gr_I X')({}_ix, {}_jy)) &&  H^k((\Gr_I X)(\Gr_I(F, \ps)({}_ix), \Gr_I(F, \ps)({}_jy)))\\
\dbigoplus_{a\in I(i,j)} H^k(X'(j)(X'(a)x, y)) && \dbigoplus_{a\in I(i,j)} H^k(X(j)(X(a)F(i)x, F(j)y))\\
&\dbigoplus_{a\in I(i,j)}H^k (X(j)(F(j)X'(a)x, F(j)y)
\Ar{1-1}{1-3}{"{H^k(\Gr_I(F, \ps)_{{{}_ix},{{}_jy}})}"}
\Ar{1-1}{2-1}{equal}
\Ar{1-3}{2-3}{equal}
\Ar{2-1}{3-2}{"{\bigoplus_{a\in I(i,j)}H^k(F(j)_{X'(a)x,y})}"'}
\Ar{3-2}{2-3}{"{\bigoplus_{a\in I(i,j)}H^k (X(j)(\ps(a)_x,F(j)y))}"'description}
\end{tikzcd}.
\end{equation}
By assumption, $H^k(F(j)_{X'(a)x,y})$
is an isomorphism for all $a \in I(i,j)$, and hence
so is $\bigoplus_{a\in I(i,j)}H^k(F(j))$.
Therefore, it remains to show that
\[
\bigoplus_{a\in I(i,j)}H^k (X(j)(\ps(a)_x,F(j)y))
\]
is an isomorphism.
Let $a \colon i \to j$ be a morphism in $I$.
Then since
\[
\ps(a) \colon X(a) F(i) \To F(j)X'(a)
\] 
is a 2-quasi-isomorphism, by definition, we have a 2-isomorphism  
\[
\blank\Lox_{X(i)}\ovl{\ps(a)}
\colon \blank\Lox_{X(i)}\ovl{X(a)F(i)} \To \blank\Lox_{X(i)}\ovl{F(j)X'(a)}.
\]
By specializing at $x^\wedge \in \calD(X(i))_0$, this yields an isomorphism
\[
x^\wedge\Lox_{X(i)}\ovl{\ps(a)} \colon
x^\wedge\Lox_{X(i)}\ovl{X(a)F(i)} \to x^\wedge\Lox_{X(i)}\ovl{F(j)X'(a)},
\]
i.e., an isomorphism
\[
 (\ps(a)_x)^\wedge \colon (X(a)F(i)(x))^\wedge \isoto (F(j)X'(a)(x))^\wedge
\]
in $\calD(X(j))$.
Since we have a commutative diagram
\[
{\small
\xymatrix@C=-30pt@R=30pt{
 \calD(X(j))((F(j)X'(a)(x))^\wedge,(F(j)(y))^\wedge[k])\\
&\calD(X(j))((X(a)F(i)(x))^\wedge,(F(j)(y))^\wedge[k])\\
H^kX(j)(F(j)X'(a)(x), F(j)(y))\\
& H^k(X(j)(X(a)F(i)(x), F(j)y) 
\ar"1,1";"2,2"|(0.5){\calD(X(j))((\ps(a)_x)^\wedge,(F(j)(y))^\wedge[k])}
\ar"3,1";"4,2"|{H^k (X(j)(\ps(a)_x,F(j)y))}
\ar"1,1";"3,1"_{\iso}
\ar"2,2";"4,2"^{\iso}
}}
\]
with the vertical canonical isomorphisms,
we see that
\[
\begin{split}
H^k (X(j)(\ps(a)_x,F(j)y)) \colon H^k (X(j)(F(j)X'(a)(x), F(j)(y))\\
\to
H^k (X(j)(X(a)F(i)(x), F(j)y)
\end{split}
\]
is an isomorphism, and hence so is
$\Ds_{a \in I(i,j)}H^k (X(j)(\ps(a)_x,F(j)y))$, as desired.
Therefore, we conclude that
$H^k(\Gr_I(F, \ps)_{{}_ix,{{}_jy}})$ is an isomorphism
by the commutative diagram \eqref{eq:H_kGr}.
Hence it follows that
$\Gr_I(F, \ps)_{{}_ix, {}_jy}$ is a quasi-isomorphism for all ${}_ix$ and ${}_jy$.

Next we show the following:

\begin{clm}
$H^0(\Gr_I X')\ya{H^0((\Gr_I(F, \ps))}H^0(\Gr_I X)$
is an equivalence.
\end{clm}

By Claim 1 for $k=0$, we have that
$$\begin{aligned}
\bigoplus_{a\in I(i,j)}H^0(X'(j)(X'(a)x, y)) \ya{H^0(\Gr_I(F, \ps)_{{}_ix, {}_jy})} \bigoplus_{a\in I(i,j)}H^0((X(j)(X(a)F(i)x, F(j)y))
\end{aligned}$$
is bijective for all ${}_ix$ and ${}_jy$. Thus,
$$
H^0(\Gr_I(F, \ps)) \colon H^0(\Gr_I X') \to H^0(\Gr_I X)
$$
is fully faithful.
It only remains to show that it is dense.
By the definition of Grothendieck construction, we have
$$
H^0(\Gr_I X)_0=H^0( \bigsqcup_{i\in I_0} X(i)_0)=\bigsqcup_{i\in I_0}H^0(X(i))_0=\bigsqcup_{i\in I_0}X(i)_0.
$$
For any $_ix \in \bigsqcup_{i\in I_0}X(i)_0$ with $i\in I_0$ and $x\in X(i)_0$,
note that
$$
H^0(X'(i))\ya{H^0(F(i))} H^0(X(i))
$$
is dense by (1) above. Thus there exists $x'\in X'(i)_0$ such that $y:=F(i)(x')=H^0(F(i)(x'))\iso x$ in $H^0(X(i))$.
Thus there exists $f:x\isoto y$ in $H^0(X(i))$. 
Since
$$
H^0(\Gr_I(F, \ps))(_ix')=\Gr_I(F, \ps)(_ix')={}_iF(i)(x')={}_iy,
$$
it suffices to show that $_iy\iso {}_ix$ in $H^0(\Gr_I X')$.
Noting that
$$
\begin{aligned}
H^0(\Gr_I X)({}_ix, {}_iy) &= H^0(\Gr_I X)({}_ix, {}_iy)) = H^0(\bigoplus_{a\in I(i,i)}(X(i)(X(a)x, y))
\\
&= \bigoplus_{a\in I(i,i)}H^0((X(i)(X(a)x, y)), \text{ and}\\
H^0(\Gr_I X)({}_iy,{}_ix) &= H^0(\Gr_I X)({}_iy,{}_ix)) = H^0(\bigoplus_{a\in I(i,i)}(X(i)(X(a)y,x))
\\
&= \bigoplus_{a\in I(i,i)}H^0((X(i)(X(a)y,x)),
\end{aligned}
$$
we can take elements
\[
\begin{aligned}
(\de_{b,\id_i}f^{-1}\circ X_i(y))_{b\in I(i,i)}&\in \bigoplus_{a\in I(i,i)}H^0((X(i)(X(a)y,x)),
\text{and}\\
(\de_{a,\id_i}f\circ X_i(x))_{a\in I(i,i)}&\in \bigoplus_{a\in I(i,i)}H^0((X(i)(X(a)x, y)),
\end{aligned}
\]
where entries are of the following forms
$$\begin{aligned}
X(\id_i)y\ya{X_i(y)} y\ya{f^{-1}} x,\quad X(\id_i)x\ya{X_i(x)} x\ya{f} y,
\end{aligned}$$
respectively.
A direct calculation shows that
$$\begin{aligned}
(\de_{b,\id_i}f^{-1}\circ X_i(y))_{b\in I(i,i)}\circ(\de_{a,\id_i}f\circ X_i(x))_{a\in I(i,i)}=\id_{{}_ix},\\
(\de_{a,\id_i}f\circ X_i(x))_{a\in I(i,i)}\circ(\de_{b,\id_i}f^{-1}\circ X_i(y))_{b\in I(i,i)}=\id_{{}_iy}
\end{aligned}$$
Then we have ${}_iy\iso {}_ix$ in $H^0(\Gr_I X)$.
Therefore $H^0(\Gr_I(F, \ps))$ is dense.
\end{proof}

The following is the main result in this subsection.

\begin{thm}
\label{mainthm2}
Let $X, X' \in \Colax(I, \kdgCat)$.
Assume that $X$ is $\k$-flat. If there exists a tilting colax functor $T$ for $X$
and there exists a quasi-equivalence $1$-morphism from $X'$ to $T$,
then $\Gr_I X'$ is derived equivalent to $\Gr_I X$.
\end{thm}

\begin{proof}
Note that $\Gr_I X$ is also $\k$-flat by definition of $\Gr_I X$.
Let $T$ be a tilting colax subfunctor of $\DGMod(X)$
with an $I$-equivariant inclusion
$(\si, \ro)\colon T \incl \DGMod(X)$.
Put $(P, \ph):= (P_X, \ph_X)$ for short.
Let $T'$ be the full dg subcategory of $\DGMod(\Gr_I X)$
consisting of the objects $\perf(P(i))(U)$\ ($\in \perf(\Gr_I X)$) 
with $i \in I_0$ and $U \in T(i)_0$, which is called the
gluing of $T(i)$.

We now show that $T'$ is a tilting dg subcategory for $\Gr_I X$. By Proposition \ref{prp:hprj}, the objects in $T'(i)$ are homotopically projective for all $i\in I_0$.
For each $i \in I_0$ and $x \in X(i)$,
we have
\[
\begin{aligned}
\perf(P(i))(X(i)(\blank, x)) &\iso X(i)(\blank, x)) \ox_{X(i)}\ovl{P(i)}\\
&= X(i)(\blank, x)) \ox_{X(i)}(\Gr_I X)(\blank, P(i)(?))\\
&\iso
(\Gr_I X) (\blank, P(i)(x))= (\Gr_I X)(\blank, {}_ix).
\end{aligned}
\]
Thus
$$
\begin{aligned}
(\Gr_I X)(\blank, {}_ix) &\iso
\perf(P(i))(X(i)(\blank, x))\\
&\in \perf(P(i))(\thick T(i))\\
&\subseteq \thick\{\perf( P(i))(U) \mid U \in T(i)\}\\
&\subseteq \thick T'.
\end{aligned}
$$
Therefore, $\thick T' = \perf(\Gr_I X)$, and hence
$T'$ is a tilting dg subcategory for $\Gr_I X$, as desired.
In particular, we see that 
$\Gr_I X$ is derived equivalent to $T'$.
Let $(F, \ps)$ be the restriction of $\perf((P, \ph))$ to $T$.
Then by construction $(F, \ps) \colon T \to \De(T')$
is a dense functor, and it is an $I$-precovering because
so is
$$
\perf((P, \ph)) \colon \perf(X) \to \De(\perf(\Gr_I X))
$$
by Proposition \ref{prp:precovering-preserved}.
Thus $(F, \ps)$ is an $I$-covering, which shows that
$T' \simeq \Gr_I T$ by Corollary \ref{covering-Gr}.
Thus we have
\begin{equation}
\label{eq:der-eq-tilt-colax}
\Gr_I X \dereq \Gr_I T.
\end{equation}
Since there exists a quasi-equivalence from $X'$ to $T$ 
in $\Colax(I, \DGkCat)$, we have a quasi-equivalence from
$\Gr_I X'$ to 
$\Gr_I T\ (\simeq T')$ in $\DGkCat$ by Proposition \ref{prp:qis-qis-1mor}, and hence $\Gr_I X'$ and $\Gr_I T$ are derived equivalent by
Theorem \ref{thm:q-eq-der-eq}. 
As a consequence, $\Gr_I X'$ is derived equivalent to $\Gr_I X$.
\end{proof}

Let $X, X' \in \Colax(I, \kdgCat)$.
Since the relation that $X'$ is quasi-equivalent to $X$ is
not symmetric with respect to $X'$ and $X$,
we consider a zigzag chain of quasi-equivalences between them
defined as follows.

\begin{dfn}
Let $X,X'\in \Colax(I,\kdgCat)$.
Then a {\em zigzag chain of quasi-equivalence $1$-morphisms} between $X$ and $X'$ is a chain
of 1-morphisms  of the form
\[
X =: X_0 \xleftarrow{(F_1,\ps_1)} X_1 \ya{(F_2,\ps_2)}  \cdots \xleftarrow{(F_{n-1},\ps_{n-1})} X_{n-1}\ya{(F_{n},\ps_{n})} X_n:= X'
\]
in $\Colax(I,\DGkCat)$ with $n$ even $\ge 2$, where $(F_i,\ps_i)$ are quasi-equivalence $1$-morphisms for all $i=1,\dots, n$.
Note that a quasi-equivalence $1$-morphism $X \ya{(F_2,\ps_2)} X'$ is also regarded
as a zigzag chain of quasi-equivalence $1$-morphism
by setting $n=2$ and $(F_1, \ps_1)$ to be the identity 1-morphism.
\end{dfn}

The following is immediate from 
Proposition \ref{prp:qis-qis-bimod},
Theorems \ref{thm:q-eq-der-eq} and \ref{mainthm2}.

\begin{cor}
\label{der-zigzag}
Let $X, X' \in \Colax(I, \DGkCat)$.
Assume that $X$ is $\k$-flat and that there exists a tilting colax functor $T$ for $X$
such that
there exists a zigzag chain of quasi-equivalence 1-morphisms
between $X'$ and $T$ in $\Colax(I, \DGkCat)$.
Then $\Gr_I X'$ and $\Gr_I X$ are derived equivalent.
\end{cor}

\begin{proof}
By assumption, we have a zigzag chain of quasi-equivalences between $X'$ and $T$ of the form
\[
T =: X_0 \xleftarrow{(F_1,\ps_1)} X_1 \ya{(F_2,\ps_2)}  \cdots \xleftarrow{(F_{n-1},\ps_{n-1})} X_{n-1}\ya{(F_{n},\ps_{n})} X_n:= X'
\]
in $\Colax(I, \DGkCat)$ with $n$ even $\ge 2$, where $(F_i,\ps_i)$ are quasi-equivalences for all $i=1,\dots, n$.
Then by Proposition \ref{prp:qis-qis-1mor}, we have a zigzag chain
\[
\Gr_I T =: \Gr_I X_0 \xleftarrow{} \Gr_I X_1 \ya{}  \cdots \xleftarrow{} \Gr_I X_{n-1}\ya{} \Gr_I X_n:= \Gr_I X'
\]
of quasi-equivalences of dg categories, and hence a sequence of derived equivalences
\[
\Gr_I T =: \Gr_I X_0 \dereq \Gr_I X_1 \dereq  \cdots \dereq \Gr_I X_{n-1}\dereq \Gr_I X_n:= \Gr_I X'
\]
by Theorem \ref{thm:q-eq-der-eq}.
Since to be derived equivalent is an equivalence relation,
we have $\Gr_I X' \dereq  \Gr_I T$.
It follows from (the proof of) Theorem \ref{mainthm2}
(see \eqref{eq:der-eq-tilt-colax-2}) that $ \Gr_I X\dereq  \Gr_I T$.
Therefore, $\Gr_I X'$ and $\Gr_I X$ are derived equivalent.
\end{proof}

\subsection{Quasi-equivalence bimodules}

\begin{prp}\label{prp:qis-qis-bimod}
Let $X, X' \in \Colax(I, \kdgCat)$.
Assume that there exists a quasi-equivalence $X'$-$X$-bimodule
$E$. Then we have a quasi-equivalence $\Gr_I X'$-$\Gr_I X$-bimodule
$\Gr_I E$.
\end{prp}

\begin{proof}
We construct a $\Gr_I X'$-$\Gr_I X$-bimodule $\Gr_I E$, namely a dg functor
\[
\Gr_I E \colon (\Gr_I X)\op  \ox_\k (\Gr_I X') \to \Cdg(\k).
\]
as follows.

{\bf On objects.}
Let $({}_ix, {}_{i'}x') \in (\Gr_I X)_0\op \times (\Gr_I X')_0$, where
$i, i' \in I_0,\ x \in X(i)_0,\ x' \in X'(i')_0$.
Then we set
\[
(\Gr_I E)({}_ix, {}_{i'}x'):= \Ds_{a \in I(i,i')}E(i')(X(a)x,x') \in \Cdg(\k)_0.
\]

{\bf On morphisms.}
Let $f \in (\Gr_I X)\op({}_ix, {}_{j}y) = (\Gr_I X)({}_{j}y, {}_ix) =
\bigoplus_{c\in I(j,i)} X(i)(X(c)y, x)$
and $g \in (\Gr_I X')({}_{i'}x', {}_{j'}y') = \bigoplus_{d\in I(i',j')} X(j')(X'(d)x', y')$.
Then by noting that $f_c \in X(j)(X(c)y, x)$ and $g_d \in X(j')(X'(d)x', y')$
for all $c \in I(j, i), d \in I(i',j')$,
we define $(\Gr_I E)(f \ox g)$ as follows (see the diagram below).
\[
\begin{tikzcd}
\Nname{1} (\Gr_I E)({}_ix, {}_{i'}x') & \Nname{2}(\Gr_I E)({}_jy, {}_{j'}y')\\
\Nname{3}\Ds_{a \in I(i,i')}E(i')(X(a)x,x') & \Nname{4}\Ds_{b \in I(j,j')}E(j')(X(b)y,y')
\Ar{1}{2}{"(\Gr_I E)(f \ox g)"}
\Ar{1}{3}{equal}
\Ar{2}{4}{equal}
\Ar{3}{4}{dashed}
\end{tikzcd}
\]
Let $(v_a)_{a \in I(i,i')} \in \Ds_{a \in I(i,i')}E(i')(X(a)x,x')$,
where for each $a \in I(i,i')$, we have $v_a \in E(i')(X(a)x,x')$.
Here in $I$, we have the following setting:
\[
\begin{tikzcd}
\Nname{i}i & \Nname{j}j\\
\Nname{i'}i' & \Nname{j'}j'
\Ar{j}{i}{"c" '}
\Ar{i}{i'}{"a" '}
\Ar{i'}{j'}{"d" '}
\Ar{j}{j'}{"b"}
\end{tikzcd}
\]
We then set
\[
\begin{aligned}
&(\Gr_I E)(f \ox g)((v_a)_{a \in I(i,i')})\\
&:= \left(\sum_{b=dac}((X(d)\circ X_{ac})\bullet X_{d,ac})_y\cdot g_d \cdot E(d)(v_a) \cdot X(d)X(a)(f_c)\right)_{b \in I(j,j')},
\end{aligned}
\]
where the sum is taken over all triples $(c,a,d) \in I(j,i) \times I(i,i') \times I(i',j')$
such that $b = dac$.
The right half of the terms in the summation can be visualized
in the following diagram, where the dashed arrows stand for ``elements'' of $E$:
\[
\begin{tikzcd}[column sep=50pt]
\Nname{y}X(a)X(c)y & \Nname{Xax}X(a)x & \Nname{x'}x'\\
\Nname{1}X(d)X(a)X(c)y & \Nname{2}X(d)X(a)x & \Nname{3}X'(d)x' &\Nname{4}y'
\Ar{y}{Xax}{"X(a)(f_c)", "" 'name=a}
\Ar{Xax}{x'}{"v_a","" 'name=b, dashed}
\Ar{1}{2}{""name=c,"X(d)X(a)(f_c)" '}
\Ar{2}{3}{""name=e, "E(d)(v_a)" ', dashed}
\Ar{3}{4}{"g(d)" '}
\Ar{y}{1}{"X(d)", mapsto}
\Ar{Xax}{2}{"X(d)", mapsto}
\Ar{x'}{3}{"X'(d)", mapsto}
\Ar{a}{c}{"X(d)", mapsto}
\Ar{b}{e}{"E(d)", mapsto}
\end{tikzcd},
\]
and the left half in the following:
\[
X(b) = X(dac) \ya{(X_{d,ac})_y} X(d)X(ac) \ya{X(d)\circ (X_{ac})_y} X(d)X(a)X(c).
\]
Note that $E(d) \colon E(i') \to E(j')$ is an $X'(i')$-$X(i')$-bimodule morphism,
where the $X'(i')$-$X(i')$-bimodule structure of $E(j')$ is defined from 
its $X'(j')$-$X(j')$-bimodule structure
by using the functors $X(d) \colon X(i') \to X(j')$
and $X'(d) \colon X'(i') \linebreak[3] \to X'(j')$, which explains the vertical correspondence in
the diagram above.

We next show that $\Gr_I E$ is a quasi-equivalence bimodule.
We have to show the conditions (a) and (b) in Definition
\ref{dfn:qeq-dg-bimod}.
We show the condition (a) by using Lemma \ref{lem:eq-bi-der-eq}.
Consider the canonical morphism
$(P, \ph)\colon X \to \De(\Gr_I X)$.
Then
for each $i \in I_0$ and $x \in X(i)$, we have
\[
\begin{aligned}
\perf(P(i))(X(i)(\blank, x)) &\iso X(i)(\blank, x) \ox_{X(i)}\ovl{P(i)}\\
&= X(i)(\blank, x) \ox_{X(i)}(\Gr_I X)(\blank, P(i)(?))\\
&\iso
(\Gr_I X)(\blank, P(i)(x))= (\Gr_I X)(\blank, {}_ix).
\end{aligned}
\]
For each $i \in I_0$ and $x' \in X'(i)$,
there exists some $x \in X(i)_0$ such that 
$E(i)(\blank, x') \iso X(i)(\blank, x)$.
\[
\begin{aligned}
\perf(P(i))(E(i)(\blank, x')) &\iso E(i)(\blank, x') \ox_{X(i)}\ovl{P(i)}\\
&= E(i)(\blank, x') \ox_{X(i)}(\Gr_I X)(\blank, P(i)(?))\\
&\iso X(i)(\blank, x) \ox_{X(i)}(\Gr_I X)(\blank, P(i)(?))\\
&\iso
(\Gr_I X)(\blank, P(i)(x))= (\Gr_I X)(\blank, {}_ix).
\end{aligned}
\]
\begin{equation}
\label{eq:intE-repres}
\begin{aligned}
(\Gr_I E)({}_?\blank, {}_{i}x')&:= \Ds_{a \in I(?,i)}E(i)(X(a)(\blank),x')\\
&\iso \Ds_{a \in I(?,i)}X(i)(X(a)(\blank),x) = (\Gr_I X)({}_?\blank, {}_ix).
\end{aligned}
\end{equation}
Thus
$$
\begin{aligned}
(\Gr_I X)(\blank, {}_ix) &\iso
\perf(P(i))(X(i)(\blank, x))\\
&\in \perf(P(i))(\thick E(i)(\blank, x'))\\
&\subseteq \thick\{\perf( P(i))E(i)(\blank, x')\}\\
&\subseteq \thick (\Gr_I E)(\blank, {}_ix').
\end{aligned}
$$
Therefore, $\thick (\Gr_I E) = \perf(\Gr_I X)$.
Namely, the conditions (1) and (3) in Lemma \ref{lem:eq-bi-der-eq} hold.
It remains to show the condition (2) of this lemma,
that is, to show that
 \[
 (\Gr_I X')({}_ix', {}_jy')\to \RHom_{\Gr_I X}((\Gr_I E)(\blank, {}_ix'),(\Gr_I E)(\blank, {}_jy'))
 \]
is a quasi-isomorphism.
\[
\begin{aligned}
&\RHom_{\Gr_I X}((\Gr_I E)(\blank, {}_ix'),(\Gr_I E)(\blank, {}_jy'))
\iso \Cdg(\Gr_I X)((\Gr_I X)(\blank, {}_ix),(\Gr_I X)(\blank, {}_jy))\\
&\iso (\Gr_I X)({}_ix, {}_jy)=
\Ds_{a \in I(i,j)}X(j)(X(a)x,y)\\
&\iso \Ds_{a \in I(i,j)}\Cdg(X(j))(X(j)(\blank, X(a)x),X(j)(\blank, y)))\\
&\iso \Ds_{a \in I(i,j)}\Cdg(X(j))(X(i)(\blank,x)\otimes_{X(i)}\ovl{X(a)} ,X(j)(\blank, y)))\\
&\iso \Ds_{a \in I(i,j)}\Cdg(X(j))((E(i)(\blank,x')\otimes_{X(i)}\ovl{X(a)} ,E(j)(\blank, y')))\\
&\iso \Ds_{a \in I(i,j)}\Cdg(X(j))(X'(i)(\blank,x')\otimes_{X'(i)}\ovl{X'(a)}\ox_{X'(j)} E(j), E(j)(\blank, y')))\\
&\iso \Ds_{a \in I(i,j)}\Cdg(X(j))(X'(j)(\blank,X'(a)x')\ox_{X'(j)} E(j), E(j)(\blank, y')))\\
&\iso \Ds_{a \in I(i,j)}\Cdg(X(j))((E(j)(\blank,X'(a)x'), E(j)(\blank, y'))).
\end{aligned}
\]
See the diagram for the last three lines:
\[
\begin{tikzcd}[row sep=scriptsize, column sep=scriptsize]
& \calD(X'(i)) \arrow[dl,"\calD X'(a)"'] \arrow[rr,"\blank\Lox_{X'(i)}E(i)"]  & & \calD(X(i)) \arrow[dl,"\calD X(a)"]  \\
\calD(X'(j))\arrow[rr, crossing over
]  & & \calD(X(j)) \\
&\udl{X'}(i) \arrow[dl] \arrow[rr] \arrow["\si'(i)" near start, uu,hookrightarrow] & & \udl{X}(i) \arrow[dl]\arrow[uu,hookrightarrow]\\
\udl{X'}(j) \arrow[rr]\arrow[uu,hookrightarrow] & & \udl{X}(j)
\Ar{1-4}{2-1}{"\blank\Lox_{X'(i)}E(a)"' description, Rightarrow,crossing over}
\Ar{3-4}{4-1}{"\blank\Lox_{X'(i)}E(a)"' description, Rightarrow}
\Ar{4-3}{2-3}{"\si(j)"' near end, crossing over,hookrightarrow}
\end{tikzcd}.
\]
Since 
\[
X'(j)(X'(a)x',y')\ya{\mathrm{qis}} \Cdg(X(j))((E(j)(\blank,X'(a)x'), E(j)(\blank, y'))
\]
is a quasi-isomorphism induced by the quasi-equivalence bimodule $E$,
it follows that 
\[
\begin{aligned}
(\Gr_I X')({}_ix', {}_jy')&=
\Ds_{a \in I(i,j)}X'(j)(X'(a)x',y')\\
&\ya{\mathrm{qis}} \Ds_{a \in I(i,j)}\Cdg(X(j))((E(j)(\blank,X'(a)x'), E(j)(\blank, y')))\\
&\iso\RHom_{\Gr_I X}((\Gr_I E)(\blank, {}_ix'),(\Gr_I E)(\blank, {}_jy'))
\end{aligned}
\]
is a quasi-isomorphism.
By Lemma \ref{lem:eq-bi-der-eq}, $\Gr_I E$ induces a derived equivalence between $\Gr_I X'$ and $\Gr_I X$.
Note that the condition (b) is trivially satisfied by \eqref{eq:intE-repres}.
Hence $\Gr_I E$ is a quasi-equivalence bimodule.
\end{proof}

The following is our second main result in this paper.

\begin{thm}
\label{mainthm2-bimod}
Let $X, X' \in \Colax(I, \kdgCat)$.
Assume that $X$ is $\k$-flat.
If there exist  a
tilting colax functor $T$ for $X$ and a quasi-equivalence
$X'$-$T$-bimodule (namely, if $X'$ is standardly derived 
equivalent to $X$ by Theorem \ref{thm:characterization-1}),
then $\Gr_I X'$ is derived equivalent to $\Gr_I X$.
\end{thm}

\begin{proof}
Note that $\Gr_I X$ is also $\k$-flat by definition of $\Gr_I X$.
Let $T$ be a tilting colax subfunctor of $\DGMod(X)$
with an $I$-equivariant inclusion
$(\si, \ro)\colon T \incl \DGMod(X)$.
Put $(P, \ph):= (P_X, \ph_X)$ for short.
Let $T'$ be the full dg subcategory of $\DGMod(\Gr_I X)$
consisting of the objects $\perf(P(i))(U)$\ ($\in \perf(\Gr_I X)$) 
with $i \in I_0$ and $U \in T(i)_0$, which is called the
\emph{gluing} of $T(i)$.

We now show that $T'$ is a tilting dg subcategory for $\Gr_I X$. By Proposition \ref{prp:hprj}, the objects in $T'(i)$ are homotopically projective for all $i\in I_0$.
For each $i \in I_0$ and $x \in X(i)$,
we have
\[
\begin{aligned}
\perf(P(i))(X(i)(\blank, x)) &\iso X(i)(\blank, x) \ox_{X(i)}\ovl{P(i)}\\
&= X(i)(\blank, x) \ox_{X(i)}(\Gr_I X)(\blank, P(i)(?))\\
&\iso
(\Gr_I X)(\blank, P(i)(x))= (\Gr_I X)(\blank, {}_ix).
\end{aligned}
\]
Thus
$$
\begin{aligned}
(\Gr_I X)(\blank, {}_ix) &\iso
\perf(P(i))(X(i)(\blank, x))\\
&\in \perf(P(i))(\thick T(i))\\
&\subseteq \thick\{\perf( P(i))(U) \mid U \in T(i)\}\\
&\subseteq \thick T'.
\end{aligned}
$$
Therefore, $\thick T' = \perf(\Gr_I X)$, and hence
$T'$ is a tilting dg subcategory for $\Gr_I X$, as desired.
In particular, we see that 
$\Gr_I X$ is derived equivalent to $T'$.
Let $(F, \ps)$ be the restriction of $\perf((P, \ph))$ to $T$.
Then by construction $(F, \ps) \colon T \to \De(T')$
is a dense functor, and it is an $I$-precovering because
so is
$$
\perf((P, \ph)) \colon \perf(X) \to \De(\perf(\Gr_I X))
$$
by Proposition \ref{prp:precovering-preserved}.
Thus $(F, \ps)$ is an $I$-covering, which shows that
$T' \simeq \Gr_I T$ by Corollary \ref{covering-Gr}.
Thus we have
\begin{equation}
\label{eq:der-eq-tilt-colax-2}
\Gr_I X \dereq \Gr_I T.
\end{equation}
Since there exists a quasi-equivalence 
$X'$-$T$-bimodule $E$, we have a quasi-equivalence 
$\Gr_I X'$-$\Gr_I T$-bimodule
$\Gr_I E$ by Proposition \ref{prp:qis-qis-bimod}, and hence $\Gr_I X'$ and $\Gr_I T$ are derived equivalent
(see Definition \ref{dfn:qe-bimod}) by Theorem \ref{thm:Keller}, since  $\Gr_I X$ is $\k$-flat. 
As a consequence, $\Gr_I X'$ is derived equivalent to $\Gr_I X$.
\end{proof}

\subsection{Group actions}

In the special case that $I = G$ is a group, which has a unique object $*$, Theorems \ref{mainthm2} and \ref{mainthm2-bimod} have the forms below. 

\begin{dfn}
Let $\calA$ and $\calB$ be dg categories with $G$-actions.
\begin{enumerate}
\item 
A tilting dg subcategory $T$ for $\calA$
is called $G$-{\em equivariant}  if
there exists a $G$-equivariant inclusion
$(\si, \ro) \colon T \to \DGMod(\calA)$.

\item An $\calA$-$\calB$-bimodule $Z$ is called a {\em quasi-equivalence} if (a) the functor
\[\blank\Lox_{\calA}Z \colon \calD(\calA) \to \calD(\calB)
\] is an equivalence in
$\Colax(G, \kuTRI^2)$, and (b) it gives rise to a equivalence $\udl{\calA}\to \udl{\calB}$.

\end{enumerate} 
\end{dfn}

\begin{cor}\label{cor:group-case-1}
Let $\calA$ and $\calB$ be dg categories with $G$-actions. Assume that $\calA$ is $\k$-flat. If there exist 
a $G$-equivariant tilting dg subcategory $T$ for $\calA$, and quasi-equivalence $1$-morphism from $\calB\to T$,
then the orbit categories $\calA/G$ and $\calB/G$ are derived equivalent.
\end{cor}

This corollary will be applied in Example \ref{exm:4-triangles}.

\begin{cor}\label{cor:group-case-2}
Let $\calA$ and $\calB$ be dg categories with $G$-actions. Assume that $\calA$ is $\k$-flat. If there exist 
a $G$-equivariant tilting dg subcategory $T$ for $\calA$, and a quasi-equivalence $\calB$-$T$-bimodule,
then the orbit categories
$\calA/G$ and $\calB/G$ are derived equivalent.
\end{cor}

\begin{rmk}
Remark \ref{rmk:q-eq-1-mor-bimod} also proves
Corollary \ref{cor:group-case-1} by Corollary \ref{cor:group-case-2}.
\end{rmk}

\subsection{Diagonal functors}

The following is easy to verify.

\begin{lem}
\label{lem:diagonal-dereq}
Let $\calC, \calC'$ be in $\kdgCat$.
If $\calC$ and $\calC'$ are {\em standardly} derived equivalent,
then so are $\De(\calC)$ and $\De(\calC')$.
\end{lem}

\begin{proof}
Since $\calC'$ is standardly derived equivalent to $\calC$,
there exists a dg functor $H: \DGMod(\calC') \to \DGMod(\calC)$ such that $\bfL H: \calD(\calC')\to \calD(\calC)$ is an equivalence.
Then 
$\De(\bfL H): \De(\calD(\calC'))\to \De(\calD(\calC))$ is an equivalence,
and it yields an equivalence 
$\bfL \De(H): \calD(\De(\calC'))\to \calD(\De(\calC))$, where
$\De(H): \DGMod(\De(\calC')) \to \DGMod(\De(\calC))$
is a dg functor.
\end{proof}

Theorem \ref{mainthm2} together with the lemma above
and Example \ref{exm:Gr} gives us
a unified proof of the following fact.

\begin{thm}\label{thm:unified-proof}
Assume that $\k$ is a field and that dg $\k$-algebras $A$ and $A'$ are derived equivalent.
Then the following pairs are derived equivalent as well:
\begin{enumerate}
\item
dg path categories $AQ$ and $A'Q$ for any quiver $Q$;
\item
incidence dg categories $AS$ and $A'S$ for any poset $S$; and
\item
monoid dg algebras $AG$ and $A'G$ for any monoid $G$.
\end{enumerate}
\end{thm}

\begin{proof}
Assume that $A$ is derived equivalent to $A'$.
Then $A'$ is standardly derived equivalent to $A$.
By Lemma \ref{lem:diagonal-dereq}, $\De(A')$ is standardly derived equivalent
to $\De(A)$.
Hence by Theorem \ref{mainthm2}, $\De(A')$ and $\De(A)$ are derived equivalent.
\end{proof}

\begin{rmk}
We remark that for a colax functor $X \colon I \to \kdgCat$,
the following does not hold in general:

(*) $\calD(\Gr_I X)$ is equivalent to $\Gr_I \calD X$
as triangulated categories.

\newcommand{\Mor}{\operatorname{Mor}}

If this would be true, then 
it immediately follows that 
if $\calD(X)$ and $\calD(X')$ are equivalent, then
$\calD(\Gr_I X)$ and $\calD(\Gr_I X')$ are equivalent.
Thus, our main theorem becomes trivial.

However, in many cases, (*) even does not make sense.
Since $\Gr_I X \in \kdgCat_0$, the left hand side $\calD(\Gr_I X)$ is a triangulated category.
On the other hand,
since $\calD X \in \Colax(I, \kuTRI^2)$,
we do not know how to define the right hand side $\Gr_I \calD X$
as a triangulated category.
For example,
let $I$ be a free category defined by the quiver $1 \to 2$,
$A$ a dg algebra regared as a dg category with only one object, and
$\De(A) \colon I \to \kdgCat$ the diagonal functor of $A$.
Then by Example \ref{exm:Gr}, $\Gr_I \Delta(A)$ is isomorphic to
the lower triagular matrix dg algebra of $A$, or equivalently, the morphism category $\Mor(A)$ of the category $A$.
If (*) would be true, then the following would hold:

(\#) $\calD(\Mor(A))$ is equivalent to $\Mor(\calD(A))$
as categories.

Indeed,
since  $\calD(\Delta(A)) \cong \Delta(\calD(A))$
(i.e., the derived tensor product by the $A$-$A$-bimodule $A$
is isomorphic to the identity of $\calD(A)$),
we heve
$\calD(\Mor(A)) \cong \calD(\Gr_I(\Delta(A)))$ is equivalent (by (*)) to
$\Gr_I \calD(\De(A)) \cong \Gr_I \De(\calD(A)) \cong \Mor \calD(A)$.

However, (\#) does not hold in general, because for a triangulated category $\calC$, it is impossible to give a triangulation on $\Mor \calC$, unless $\calC$ is semisimple.
\end{rmk}

\section{Examples}
\newcommand{\cGz}{\widehat{\Ga}}
In this section,
we give two examples that illustrate our main theorem.

\begin{rmk}\label{rmk:Gr-skew}
Let $G$ be a group, which we regard as a groupoid with only one object $*$.
Let $(Q,W)$ be a quiver with potential.
Regard the complete Ginzburg dg algebra $\cGz(Q,W)$ as a dg category with only one object,
and a $G$-action on it as a functor $X_{Q,W}\colon G \to \kdgCat$ with
$X_{Q,W}(*) = \cGz(Q,W)$.
If $G$ is a finite group, as in \cite{LM} (see also \cite{GPP} for the finite abelian case), then
$\Gr_G(X_{Q,W})$ is nothing but the orbit category $\cGz(Q,W)/G$, which is
also equivalent to the skew group dg algebra $\cGz(Q,W) * G$, and is calculated as $\cGz(Q_G,W_G)$ up to Morita equivalece.
Therefore in this case note that $\Gr_G(X_{Q,W})$ is calculated as $\cGz(Q_G,W_G)$
up to Morita equivalence. 
\end{rmk}

\subsection{Mutations, the complete Ginzburg dg algebras and derived equivalences} 

In the example below we will use the constructions of mutations and the Ginzburg dg algebras,
and a ``tilting'' bimodule given by Keller--Yang.
To make it easy to understand these examples,
we recall these constructions and fix our notations.

\subsubsection{Mutations}
Let $Q$ be a quiver.
A path in $Q$ is said to be {\em cyclic} if its source and target coincide.
A potential on $Q$ is an element of the closure $\Pot(\k Q)$ of the subspace
of $\k Q$ generated by all non-trivial cyclic paths in $Q$. 
 We say that two potentials are {\em cyclically equivalent} if their difference is in the closure of the subspace generated by the differences $a_1\cdots a_s - a_2\cdots a_sa_1$ for all cycles $a_1\cdots a_s$ in $Q$.

The complete path algebra $\widehat{\k Q}$ is the completion of the path algebra $\k Q$ with respect to the ideal generated by the arrows of $Q$. Let $\mathfrak{m}$ be the ideal of $\widehat{\k Q}$  generated by the arrows of $Q$. For a path $p$ of $Q$, we define $\partial_P: \Pot(\k Q)\to \widehat{\k Q}$ as continuous linear map which takes a cycle $c$ to the sum $\Sigma_{c=upv}vu$ taken over all decompositions of the cycle $c$ (where $u$ and
 $v$ are possibly lazy paths). 

A {\em quiver with potential} is a pair $(Q, W)$ of a quiver $Q$ and a potential $W$ of $Q$ such that $W$ is in $\mathfrak{m}^2$ and no two cyclically equivalent cyclic paths appear in the decomposition of $W$. The {\em Jacobian algebra} of a quiver with potential $(Q, W)$ is the quotient of complete path algebra $\widehat{\k Q}$ by the closure of the ideal generated by $\partial_a$, where $a$ runs over all arrows of $Q$. A quiver with potential is called {\em trivial} if its potential is a linear combination of cyclic paths of length $2$ and its Jacobian algebra is the product of copies of the base field $\k$. A quiver with potential is called {\em reduced} if $\partial_a W$ is contained in $\mathfrak{m}^2$ for all arrows $a$ of $Q$.

Let $(Q',W')$ and $(Q'', W'')$ be two quivers with potential such that $Q'$ and $Q''$ have the same set of vertices. Their direct sum, denoted by $(Q', W')\ds(Q'', W'')$, is the new quiver with potential $(Q,W)$, where $Q$ is the quiver whose vertex set is the same as the vertex set of $Q'$ (and $Q''$) and whose arrow set is the disjoint union of the arrow set of $Q'$ and the arrow set of $Q''$, and $W = W'+W''$.

 Two quivers with potential $(Q,W)$ and $(Q', W')$ are {\em right-equivalent} if $Q$ and $Q'$ have the same set of vertices and there exists an algebra isomorphism $\phi: \k Q \to \k Q'$ whose restriction on vertices is the identity map and $\phi(W)$ and $W'$ are cyclically equivalent. Such an isomorphism $\phi$ is called a {\em right-equivalence}.

For any quiver with potential $(Q,W)$, there exist a trivial quiver
with potential $(Q_{\tri}, W_{\tri})$ and a reduced quiver with potential $(Q_{\red}, W_{\red})$ such that
$(Q,W)$ is right-equivalent to the direct sum $(Q_{\tri}, W_{\tri})\ds (Q_{\red}, W_{\red})$. Furthermore, the right-equivalence class of each of $(Q_{\tri}, W_{\tri})$ and $(Q_{\red}, W_{\red})$ is uniquely determined by the right equivalence class of $(Q,W)$. We call $(Q_{\tri}, W_{\tri})$ and $(Q_{\red}, W_{\red})$ the {\em trivial part} and the {\em reduced part} of $(Q,W)$, respectively.

\begin{dfn}
\label{dfn:qvr-pot}
Let $(Q,W)$ be a quiver with potential, and $i$ a vertex of $Q$. Assume the following conditions:

\begin{enumerate}
\item
the quiver $Q$ has no loops;
\item
the quiver $Q$ does not have $2$-cycles at $i$;
\item
no cyclic path occurring in the expansion of $W$ starts and ends at $i$.
\end{enumerate}

Note that under the condition (1), any potential is cyclically equivalent to a potential satisfying (3). We define a new quiver with potential $\widetilde{\mu_i}(Q,W)=(Q', W')$ as follows. The new
quiver $Q'$ is obtained from $Q$ by the following procedure:

\begin{description}
\item[Step 1]
 For each arrow $\be$ with target $i$ and each arrow $\al$ with source $i$, add a new arrow $[\al\be]$ from the source of $\be$  to the target of $\al$ .
 \item[Step 2]
 Replace each arrow $\al$ with source or target $i$ with an arrow $\al^*$ in the opposite direction.
\end{description}

The new potential $ W'$ is the sum of two potentials $ W_1'$ and $ W'_2$, where the potential $ W_1'$ is obtained from $W$ by replacing each composition $\al\be$ by $[\al\be]$, where $\be$ is an arrow with target $i$, and the potential $W'_2$ is given by
$$
W'_2=\sum_{\al,\be\in Q_1} [\al\be]\be^*\al^*,
$$
where the sum ranges over all pairs of arrows  $\al$ and  $\be$ such that $\be$ ends at $i$ and $\al$ starts at $i$. It is easy to see that $\widetilde{\mu_i}(Q,W)$ satisfies (1), (2) and (3).
We define $\mu_i(Q,W)$ as the reduced part of $\widetilde{\mu_i}(Q,W)$, and call $\mu_i$ the {\em mutation} at the vertex $i$.
\end{dfn}

\subsubsection{The complete Ginzburg dg algebras}
\begin{dfn}
Let $(Q,W)$ be a quiver with potential. The {\it complete Ginzburg dg algebra} $\widehat{\Gamma}(Q,W)$ is constructed as follows \cite{Ginz}: 
Let $\widetilde{Q}$ be the graded quiver with the same vertices as $Q$ and whose arrows are
\begin{itemize}
\item the arrows of $Q$ (they all have degree $0$),
\item an arrow $\ovl{\al}: j\to i$ of degree $-1$ for each arrow $\al: i\to j$ of $Q$,
\item a loop $t_i: i\to i$ of degree $-2$ for each vertex $i$ of $Q$.
\end{itemize}
The underlying graded algebra of $\widehat{\Gamma}(Q,W)$ is the completion of the graded path algebra $k\widetilde{Q}$ in the category of graded vector spaces with respect to the ideal generated by the arrows of $\widetilde{Q}$. Thus,
the $n$-th component of $\widehat{\Gamma}(Q,W)$ consists of elements of the form $\sum_{p}\lambda_p p$ with $\la_p \in \Bbbk$, where $p$ runs over all paths of degree $n$. The differential of $\widehat{\Gamma}(Q,W)$ is the unique continuous linear endomorphism homogeneous of degree $1$ which satisfies the Leibniz rule
$$
d(u v)=d(u)v+(-1)^p u d(v),
$$
for all homogeneous $u$ of degree $p$ and all $v$, and takes the following values on the arrows of $\widetilde{Q}$:
\begin{itemize}
\item $da=0$ for each arrow $a$ of $Q$,
\item $d(\ovl{a})=\partial_a W$ for each arrow $\ovl{a}$ of $Q$,
\item $d(t_i)=e_i(\sum_a[a,\ovl{a}])e_i$ for each vertex $i$ of $Q$, where $e_i$ is the trivial path at $i$ and the sum is taken over the set of arrows of $Q$.
\end{itemize}
\end{dfn}

\begin{rmk}
\label{rmk:Ginz-dg-cat}
We regard the complete Ginzburg dg algebra
$\Ginz(Q,W)$ as a dg category up to equivalence of dg categories as follows.
\begin{itemize}
\item
The objects are the vertices of $\widetilde{Q}$ (namely the vertices of $Q$).
\item
$\Ginz(Q,W)(i,j):= e_j\Ginz(Q,W)e_i$ for all objects $i,j$.
\item
The composition is given by the multiplication of $\Ginz(Q,W)$.
\item
The grading and the differential are naturally defined from those of
the dg algebra structure.
\end{itemize}
\end{rmk}

The following lemma is an easy consequence of the definition
(cf.\ \cite[Lemma 2.8]{KeY}).

\begin{lem} Let $(Q,W)$ be a quiver with potential. Then the Jacobian algebra $\Jac(Q,W)$ is the $0$-th cohomology of the complete Ginzburg dg algebra $\widehat{\Gamma}(Q,W)$, i.e.
\[
\Jac(Q,W)=H^0(\widehat{\Gamma}(Q,W)).
\]
\end{lem}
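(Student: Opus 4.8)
The plan is to read $H^0$ directly off the explicit description of the graded components of $\Ginz(Q,W)$ and of its differential. First I would record the degrees of the arrows of the graded quiver $\widetilde Q$: degree $0$ for the arrows $a$ of $Q$, degree $-1$ for the arrows $\ovl a$, and degree $-2$ for the loops $t_i$. Since the degree of a path in $\widetilde Q$ is the sum of the degrees of its arrows, every path has degree $\le 0$; hence $\Ginz(Q,W)^n = 0$ for all $n>0$. In particular $\Ginz(Q,W)^1 = 0$, so $d^0 = 0$, while $\Ginz(Q,W)^0$ is exactly the complete path algebra $\widehat{\k Q}$. Therefore
\[
H^0(\Ginz(Q,W)) = \widehat{\k Q}\Big/ d\bigl(\Ginz(Q,W)^{-1}\bigr),
\]
and the task reduces to computing the image of $d$ in degree $-1$.

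Next I would identify $\Ginz(Q,W)^{-1}$ together with the restriction of $d$ to it. A path in $\widetilde Q$ has degree $-1$ exactly when it contains a single factor of the form $\ovl a$ and all its remaining factors are arrows of $Q$; thus $\Ginz(Q,W)^{-1}$ is the closure of the linear span of the paths $p\,\ovl a\,q$ with $a\in Q_1$ and $p,q$ paths in $Q$. Since $d$ annihilates the arrows of $Q$ and $d(\ovl a)=\partial_a W$, the Leibniz rule applied with $p$ and $q$ of degree $0$ yields $d(p\,\ovl a\,q)=p\,(\partial_a W)\,q$, with no sign contribution. It follows that $d\bigl(\Ginz(Q,W)^{-1}\bigr)$ contains every finite sum $\sum_i p_i(\partial_{a_i}W)q_i$, that is, the two-sided ideal $J(W):=(\partial_a W\mid a\in Q_1)$ of $\widehat{\k Q}$, and, applying $d$ to convergent series of such paths and using its continuity, that $d\bigl(\Ginz(Q,W)^{-1}\bigr)$ is contained in the closure $\overline{J(W)}$. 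Consequently $\widehat{\k Q}/\overline{J(W)}$ and $H^0(\Ginz(Q,W))$ each surject onto the other.

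The one point where care is needed — and where one appeals to the standard filtered/complete bookkeeping, as in \cite[Lemma 2.8]{KeY} — is to see that $d\bigl(\Ginz(Q,W)^{-1}\bigr)$ is in fact all of the closed Jacobian ideal $\overline{J(W)}$, rather than merely $J(W)$ or some intermediate submodule. For this I would filter $\widehat{\k Q}$ and $\Ginz(Q,W)^{-1}$ by path length; the differential $d$ is continuous and does not decrease length (because $W\in\mathfrak m^2$ forces $\partial_a W\in\mathfrak m$), both modules are complete and separated for this filtration, and the usual successive-approximation argument then produces, for any element of $\overline{J(W)}$, a preimage in $\Ginz(Q,W)^{-1}$. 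Granting this, $H^0(\Ginz(Q,W))=\widehat{\k Q}/\overline{J(W)}=\Jac(Q,W)$. Everything of substance is the degree count together with the one-line Leibniz computation; the filtration step is routine, and the main (mild) obstacle is precisely this topological identification of the image with the \emph{closed} ideal.
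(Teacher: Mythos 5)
Your proof is correct. The paper itself gives no argument for this lemma --- it states it as ``an easy consequence of the definition'' and defers to \cite[Lemma 2.8]{KeY} --- and what you have written is precisely the standard proof behind that citation: the degree count showing $\Ginz(Q,W)^{>0}=0$ and $\Ginz(Q,W)^0=\widehat{\k Q}$, the Leibniz computation $d(p\,\ovl a\,q)=p\,(\partial_a W)\,q$, and the completeness/successive-approximation step identifying $d(\Ginz(Q,W)^{-1})$ with the \emph{closed} Jacobian ideal. You are right to single out that last point as the only place requiring care, since $H^0$ is a quotient by the image of $d$, not by its closure, and $W\in\mathfrak m^2$ is exactly what makes the filtration argument go through.
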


\subsubsection{Derived equivalences}

Let $(Q,W)$ be a quiver with potential and $i$ a fixed vertex of $Q$. We assume $(1), (2)$ and $(3)$
as above. Write $\widetilde{\mu_i}(Q,W)=(Q', W')$. Let $\Gamma=\widehat{\Gamma}(Q,W)$ and $\Gamma'=\widehat{\Gamma}(Q',W')$ be the
complete Ginzburg dg algebras associated to $(Q,W)$ and $(Q', W')$, respectively.
We set $P_j=e_j\Ga$ and $P'_j=e_j\Ga'$ for all vertices $j$ of $Q$.

We cite the following from \cite[Theorem 3.2]{KeY} without a proof.

\begin{thm}\label{thm:KeY3.2}
There is a triangle equivalence 
\[
F:\calD(\Ga')\to \calD(\Ga)
\]
which sends the $P'_j$ to $P_j$ for $j\neq i$, and sends $P'_i$
to the cone $T_i$ over the morphism
$$
 \begin{alignedat}{3}
P_i\to \bigoplus_{\alpha\in Q_1, s(\alpha)=i} P_{t(\alpha)}\\
a \mapsto \sum_{\alpha\in Q_1, s(\alpha)=i} e_{t(\alpha)}\alpha a,
 \end{alignedat}
$$
The functor $F$ restricts
to triangle equivalences from $\perf(\Ga')$ to $\perf(\Ga)$ and from $\calD_{fd}(\Ga')$ to $\calD_{fd}(\Ga)$.
\end{thm}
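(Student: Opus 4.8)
The plan is to follow Keller--Yang \cite{KeY} and to deduce the equivalence from Keller's Morita-type theorem, available here as Corollary~\ref{cor:Keller}, by exhibiting an explicit tilting complex for $\Ga$ whose dg endomorphism algebra is quasi-isomorphic to $\Ga'$. Writing $P_j=e_j\Ga$, the candidate is $T:=T_i\oplus\bigoplus_{j\neq i}P_j$, where $T_i$ is the mapping cone of the morphism of dg $\Ga$-modules $\theta\colon P_i\to\bigoplus_{\alpha\in Q_1,\,s(\alpha)=i}P_{t(\alpha)}$, $a\mapsto\sum_{\alpha}e_{t(\alpha)}\alpha a$. First I would record that $T$ is homotopically projective and compact: each $P_j=\Ga(\blank,j)$ is representable, hence lies in $\hprj(\Ga)_0\subseteq\perf(\Ga)_0$, and $T_i$, being the cone of a morphism between such modules, again lies in $\perf(\Ga)_0$.

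Next I would verify that $T$ generates, i.e.\ $\thick_{\calD(\Ga)}(T)=\perf(\Ga)$. Since $Q$ has no loops (condition~(1)), every arrow $\alpha$ with $s(\alpha)=i$ has $t(\alpha)\neq i$, so $\bigoplus_{\alpha}P_{t(\alpha)}\in\add(T)$; from the distinguished triangle $P_i\xrightarrow{\theta}\bigoplus_{\alpha}P_{t(\alpha)}\to T_i\to P_i[1]$ one gets $P_i\in\thick(T)$, and since $P_j\in\add(T)$ for every $j\neq i$, all the $P_j$ lie in $\thick(T)$. Hence $\perf(\Ga)=\thick\{P_j:j\in Q_0\}\subseteq\thick(T)\subseteq\perf(\Ga)$. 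Together with the previous step this shows that the full dg subcategory $\calT\subseteq\DGMod(\Ga)$ with $\calT_0=\{T\}$ is a tilting dg subcategory for $\Ga$ in the sense of Definition~\ref{dfn:dg tilting}; what remains is to connect $\calT$ to $\Ga'$ by a quasi-equivalence.

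The third and main step is to construct a morphism of dg algebras $\Ga'\to\DGMod(\Ga)(T,T)$ and to prove it is a quasi-isomorphism; since both source and target are dg categories with a single object, it is then a quasi-equivalence $\Ga'\to\calT$, and Corollary~\ref{cor:Keller} produces a triangle equivalence $F\colon\calD(\Ga')\to\calD(\Ga)$ with $F(\Ga')\iso T$, hence $F(P'_j)\iso e_jT$, which is $P_j$ for $j\neq i$ and $T_i$ for $j=i$, as asserted. Concretely, this morphism should send each vertex idempotent of $\widetilde{Q'}$ to the projection of $T$ onto the corresponding summand, each arrow of $Q$ not incident to $i$ to its action on $T$, the new composite arrows $[\alpha\beta]$ and the reversed arrows $\alpha^{*},\beta^{*}$ to endomorphisms built from $\theta$ and from the structure maps of the cone $T_i$, and the degree $-1$ and degree $-2$ generators of $\Ga'$ to suitable homotopies; that it commutes with differentials is a direct but lengthy check using $d\overline a=\partial_aW$, $d(t_j)=e_j\bigl(\sum_a[a,\overline a]\bigr)e_j$ and the explicit form $W'=W'_1+W'_2$ of the mutated potential, and that it induces isomorphisms on cohomology is verified degree by degree. (Equivalently, one may exhibit the mutation $\Ga'$-$\Ga$-bimodule directly as in \cite[Section~3]{KeY} and invoke condition~(1) of Corollary~\ref{cor:Keller}.)

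Finally, the statements about the restrictions are formal. The subcategory $\perf$ consists of the compact objects of $\calD$ (Definition~\ref{dfn:prj-Kb}), so it is preserved by any triangle equivalence; and over the field $\k$, $\calD_{fd}(\Ga)$ has an intrinsic description — for instance as the objects $M$ for which $\bigoplus_n\Hom_{\calD(\Ga)}(P,M[n])$ is finite-dimensional for all $P\in\perf(\Ga)$, equivalently the thick subcategory generated by the simple modules — so it too is preserved. The hard part is the third step: carrying the full dg structure of the complete Ginzburg dg algebra, in particular the degree $-1$ and $-2$ generators and the $\mathfrak{m}$-adic completion, through the cone construction, so that the comparison map is simultaneously a map of dg algebras and a quasi-isomorphism. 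Combined with the standard reduction lemma (right-equivalence of quivers with potential, and removal of a trivial direct summand, induce quasi-isomorphisms of complete Ginzburg dg algebras), the same argument yields the derived equivalence for the genuine mutation $\mu_i$.
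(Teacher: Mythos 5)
The paper does not prove this theorem: it is explicitly cited from Keller--Yang without proof, and the only material the paper reproduces is the construction of the bimodule $T$ and of the dg algebra map $f\colon \Ga'\to\End_{\Ga}(T)$ (Proposition~\ref{prp:KeY3.5}), because the explicit formulas are needed in the examples of Section~12. Your outline follows exactly the strategy of that cited source, recast through Corollary~\ref{cor:Keller}: your compactness and generation arguments are correct (the triangle $P_i\to\bigoplus_{\alpha}P_{t(\alpha)}\to T_i\to P_i[1]$ together with the no-loops hypothesis does give $\thick(T)=\perf(\Ga)$), your identification of $F(P'_j)=e_jT$ is correct, and the reduction of the restriction statements to the intrinsic characterizations of compactness and of finite-dimensional total cohomology is sound.

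The gap is concentrated where you place it, but it is larger than ``a direct but lengthy check.'' The assertion that $f\colon\Ga'\to\End_{\Ga}(T)$ is a quasi-isomorphism (equivalently, that $H^k(e_{j'}\Ga' e_j)\to\Hom_{\calD(\Ga)}(T_j,T_{j'}[k])$ is bijective for all $j,j',k$, which is condition (2) of Lemma~\ref{ff}) is the entire analytic content of the theorem; nothing in your write-up indicates how the cohomology of the complexes $\Hom_{\Ga}(T_j,T_{j'})$ --- in particular the double cone $\Hom_{\Ga}(T_i,T_i)$ --- is to be computed and matched against $e_{j'}\Ga' e_j$, and this comparison is not a formal consequence of the tilting property. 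There is also a genuine subtlety you mention only in passing: $\Ga'$ is the \emph{completed} graded path algebra of $\widetilde{Q'}$, so the target of $f$ must be compared with an $\mathfrak{m}$-adically completed endomorphism algebra, and the quasi-isomorphism has to be established in that topological setting. So your proposal is an accurate account of how the theorem is proved in the source the paper defers to, but as a self-contained proof it stops exactly at the step that the paper itself chooses not to reproduce.
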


The proof is based on a construction of a $\Ga'$-$\Ga$-bimodule $T$,
and $F$ is defined by
$F:= \blank\Lox_{\Ga}T \colon \calD(\Ga') \to \calD(\Ga)$.
We recall the construction of $T$ by Keller-Yang below.
As a right $\Ga$-module, let $T$ be the direct sum of $T_i$ and $P_j$ for all $j \in Q_0$ with $j \ne i$.
A left $\Ga'$-module structure on $T$ will be defined in the next proposition.
To this end, we define a map
$f \colon \{e_j \mid j \in Q_0\} \cup (\widetilde{Q'})_1 \to \End_{\Ga}(T)$
as follows.
First, we set $f(e_j):= f_j:T_j\to T_j$ to be the identity map for all $j\in Q_0$.

We denote by $\la_a$ the left multiplication $x \mapsto ax$ by $a$ below when this makes sense, and
by $e_{\Si i}$ the unique idempotent in $\Ga$ such that
$e_{\Si i}\Ga = \Si P_i=P_i[1]$, the shift of $P_i$, for all $i \in Q_0$.

Let $\alpha \in Q_1$ with $s(\alpha)=i$.
Then define
$f_{\alpha^{\ast}}:T_{t(\alpha)}\to T_i$ of degree $0$ as
the cannonical embedding
$T_{t(\alpha)}= P_{t(\al)}\hookrightarrow T_i$, that is,
$$
f_{\al^{\ast}}:= \la_{e_t(\al)}:T_{t(\al)}\to T_i,\quad
a \mapsto  e_{t(\al)} a.
$$
Define also the morphsim $f_{\ovl{\alpha^{\ast}}}: T_i\to T_{t(\alpha)}$ of degree $-1$ by
$$
f_{\ovl{\al^\ast}}((e_{\Sigma i})a_i+\sum_{\rho\in Q_1, s(\rho)=i} e_{t(\rho)} a_{\rho})=-\alpha t_i a_i-\sum_{\rho\in Q_1, s(\rho)=i}\alpha \ovl{\rho}a_{\rho}
$$

Let $\beta\in Q_1$ with $t(\be)=i$.
Then define the morphism $f_{\be^\ast}: T_i\to T_{s(\beta)}$ of degree $0$ by 
$$
f_{\be^\ast}((e_{\Si i})a_i+\sum_{\rho\in Q_1, s(\ro)=i} e_{t(\ro)} a_{\ro})=-\ovl{\be} a_i-\sum_{\ro\in Q_1, s(\ro)=i}(\partial_{\ro\be}W)  a_{\ro}.
$$
Define also the morphism $f_{\ovl{\be^\ast}}: T_{s(\be)}\to T_i$ of degree $-1$ as the composite of the morphism $\la_{e_{\Si i}\be} \colon T_{s(\be)}\to \Si P_{ i}$ 
and the canonical embedding $ \Si P_i\hookrightarrow T_i$, that is,
$$
f_{\ovl{\be^{\ast}}}:=\la_{e_{\Si i}\be} : T_{s(\be)}\to T_i,\quad
a\mapsto e_{\Sigma i}\beta a.
$$

Let $\alpha,\beta\in Q_1$ with $s(\alpha)=i, t(\beta)=i$.
Then define 
$$
f_{[\alpha\beta]}:= \la_{\al\be}: T_{s(\beta)}\to T_{t(\alpha)}, \quad a\mapsto \alpha\beta a.
$$
and 
$$
f_{\ovl{[\alpha\beta]}}:=0:  T_{t(\alpha)}\to T_{s(\beta)}.
$$

Let $\ga\in Q_1$ be an arrow not incident to $i$.
Then define
$$
 \begin{aligned}
f_{\ga}&:= \la_{\ga}: T_{s(\ga)}\to T_{t(\ga)}, \quad a\mapsto\ga a,\\
f_{\ovl{\ga}}&:= \la_{\ovl{\ga}}: T_{t(\ga)}\to T_{s(\ga)}, \quad a\mapsto\ovl{\ga} a.
\end{aligned}
$$

Let $j\in Q_0$ with $j\neq i$.
Then define
$$
f_{t'_j}:= \la_{t_j}: T_j \to T_j, \quad a\mapsto t_j a.
$$
It is a morphism of degree $-2$. 
Finally,
define $f_{t'_i}$ as the linear morphism of degree $-2$ from $T_i$ to itself given by 
$$
f_{t'_i}((e_{\Sigma i})a_i+\sum_{\rho\in Q_1, s(\rho)=i} e_{\rho} a_{\rho})=-e_{\Sigma i}(t_i a_i+\sum_{\rho\in Q_1, s(\rho)=i} \ovl{\rho}a_{\rho}).
$$

By \cite[Proposition 3.5]{KeY} we have the following.

\begin{prp}
\label{prp:KeY3.5}
The map $f \colon \{e_j \mid j \in Q_0\} \cup (\widetilde{Q'})_1 \to \End_{\Ga}(T)$
defined above extends to a homomorphism of dg algebras from $\Ga'$ to $\End_{\Ga}(T)$. In this way, $T$ becomes a left dg $\Ga'$-module,
and also a dg $\Ga'$-$\Ga$-bimodule.
\end{prp}

\subsection{Examples}

\begin{exm}
\label{exm:4-triangles}
Let $(Q,W)$ be the quiver with potential given as follows:
$$
\xymatrix{
&c_1&&&b_2\\
b_1&&a_1& a_2&&c_2\\
c_4&&a_4&a_3& &b_3\\
&b_4&&&c_3
\ar^{\gamma_1}"2,3";"2,1"
\ar^{\alpha_1}"2,1";"1,2"
\ar^{\beta_1}"1,2";"2,3"
\ar^{\gamma_2}"2,4";"1,5"
\ar^{\alpha_2}"1,5";"2,6"
\ar^{\beta_2}"2,6";"2,4"
\ar^{\delta_1}"2,3";"2,4"
\ar_{\delta_4}"3,3";"2,3"
\ar_{\delta_2}"2,4";"3,4"
\ar_{\delta_3}"3,4";"3,3"
\ar_{\gamma_3}"3,4";"3,6"
\ar_{\alpha_3}"3,6";"4,5"
\ar_{\beta_3}"4,5";"3,4"
\ar_{\gamma_4}"3,3";"4,2"
\ar_{\alpha_4}"4,2";"3,1"
\ar_{\beta_4}"3,1";"3,3"
}
$$
$W=\delta_4\delta_3\delta_2\delta_1+\sum_{i=1}^3\gamma_i\beta_i\alpha_i$.
If we do mutations at $c_1$ and $c_3$ for $(Q,W)$, we get the following quiver with potential $(Q',W')$

$$
\xymatrix{
&c_1&&&b_2\\
b_1&&a_1& a_2&&c_2\\
c_4&&a_4&a_3& &b_3\\
&b_4&&&c_3
\ar@/_/_{\gamma_1}"2,3";"2,1"
\ar@/_/_{[\beta_1\alpha_1]}"2,1";"2,3"
\ar_{\alpha^*_1}"1,2";"2,1"
\ar^{\beta^*_1}"1,2";"2,3"
\ar^{\gamma_2}"2,4";"1,5"
\ar^{\alpha_2}"1,5";"2,6"
\ar^{\beta_2}"2,6";"2,4"
\ar^{\delta_1}"2,3";"2,4"
\ar_{\delta_4}"3,3";"2,3"
\ar_{\delta_2}"2,4";"3,4"
\ar_{\delta_3}"3,4";"3,3"
\ar@/_/_{\gamma_3}"3,4";"3,6"
\ar@/_/_{[\beta_3\alpha_3]}"3,6";"3,4"
\ar_{\alpha^*_3}"4,5";"3,6"
\ar_{\beta^*_3}"3,4";"4,5"
\ar_{\gamma_4}"3,3";"4,2"
\ar_{\alpha_4}"4,2";"3,1"
\ar_{\beta_4}"3,1";"3,3"
}
$$
$W'=\delta_4\delta_3\delta_2\delta_1+\gamma_1[\beta_1\alpha_1]+\gamma_3[\beta_3\alpha_3]+\gamma_2\beta_2\alpha_2+\gamma_4\beta_4\alpha_4+[\beta_1\alpha_1]\alpha^*_1\beta^*_1+[\beta_3\alpha_3]\alpha^*_3\beta^*_3$.

The reduced part $(Q'_\red,W'_\red)$ of $(Q',W')$ is given as follows:
$$
\xymatrix{
&c_1&&&b_2\\
b_1&&a_1& a_2&&c_2\\
c_4&&a_4&a_3& &b_3\\
&b_4&&&c_3
\ar_{\alpha^*_1}"1,2";"2,1"
\ar^{\beta^*_1}"1,2";"2,3"
\ar^{\gamma_2}"2,4";"1,5"
\ar^{\alpha_2}"1,5";"2,6"
\ar^{\beta_2}"2,6";"2,4"
\ar^{\delta_1}"2,3";"2,4"
\ar_{\delta_4}"3,3";"2,3"
\ar_{\delta_2}"2,4";"3,4"
\ar_{\delta_3}"3,4";"3,3"
\ar_{\alpha^*_3}"4,5";"3,6"
\ar_{\beta^*_3}"3,4";"4,5"
\ar_{\gamma_4}"3,3";"4,2"
\ar_{\alpha_4}"4,2";"3,1"
\ar_{\beta_4}"3,1";"3,3"
}
$$
$W'_\red =\delta_4\delta_3\delta_2\delta_1+\gamma_2\beta_2\alpha_2+\gamma_4\beta_4\alpha_4$.

Consider the cyclic group $G$ of order $2$ with generator $g$,
and define a $G$-action on $(Q,W)$ as a unique quiver automorphism
induced by the permutation of indexes $i=1,2,3,4$:
\begin{equation}
\label{eq:G-action2}
i\mapsto i-2 \quad (\mod \ 4).
\end{equation}
Then the quiver with potential $(Q_G,W_G)$ is given as follows:
$$
\xymatrix{
&c_1&&&b_2\\
b_1&&a_1& a_2&&c_2
\ar^{\gamma}"2,3";"2,1"
\ar^{\alpha}"2,1";"1,2"
\ar^{\beta}"1,2";"2,3"
\ar^{\gamma'}"2,4";"1,5"
\ar^{\alpha'}"1,5";"2,6"
\ar^{\beta'}"2,6";"2,4"
\ar@/_/_{\delta}"2,3";"2,4"
\ar@/_/_{\delta'}"2,4";"2,3"
}
$$
$W_G=(\delta'\delta)^2+2\gamma\beta\alpha+2\gamma'\beta'\alpha'$.
Define also a $G$-action on $(Q'_\red,W'_\red)$ by the same permutation of indexes as \eqref{eq:G-action2}.
Then the quiver with potential $((Q'_\red)_G,(W'_\red)_G)$ is given as follows:
$$
\xymatrix{
&c_1&&&b_2\\
b_1&&a_1& a_2&&c_2
\ar_{\alpha^*}"1,2";"2,1"
\ar_{\beta^*}"2,3";"1,2"
\ar^{\gamma'}"2,4";"1,5"
\ar^{\alpha'}"1,5";"2,6"
\ar^{\beta'}"2,6";"2,4"
\ar@/_/_{\delta}"2,3";"2,4"
\ar@/_/_{\delta'}"2,4";"2,3"
}
$$
$(W'_\red)_G = (\delta'\delta)^2+2\gamma'\beta'\alpha'$.

If we do mutations at $c_1$ and $c_3$ for $(Q,W)$, then we do mutation at $c_1$ for $(Q_G,W_G)$.
Then the reduced part of $\mu_{c_1}(Q_G,W_G)$
coincides with $((Q'_\red)_G,(W'_\red)_G)$.
Indeed,
the quiver with potential $\mu_{c_1}(Q_G,W_G)$ is the following
$$
\xymatrix{
&c_1&&&b_2\\
b_1&&a_1& a_2&&c_2
\ar@/_/_{\gamma}"2,3";"2,1"
\ar@/_/_{[\beta\alpha]}"2,1";"2,3"
\ar_{\alpha^*}"1,2";"2,1"
\ar_{\beta^*}"2,3";"1,2"
\ar^{\gamma'}"2,4";"1,5"
\ar^{\alpha'}"1,5";"2,6"
\ar^{\beta'}"2,6";"2,4"
\ar@/_/_{\delta}"2,3";"2,4"
\ar@/_/_{\delta'}"2,4";"2,3"
}
$$
$\mu_{c_1}(W_G)=(\delta'\delta)^2+2\gamma[\beta\alpha]+2\gamma'\beta'\alpha'+2[\beta\alpha]\alpha^*\beta^*$. The potential is not reduced, so we have the following quiver with potential
$$
\xymatrix{
&c_1&&&b_2\\
b_1&&a_1& a_2&&c_2
\ar_{\alpha^*}"1,2";"2,1"
\ar_{\beta^*}"2,3";"1,2"
\ar^{\gamma'}"2,4";"1,5"
\ar^{\alpha'}"1,5";"2,6"
\ar^{\beta'}"2,6";"2,4"
\ar@/_/_{\delta}"2,3";"2,4"
\ar@/_/_{\delta'}"2,4";"2,3"
}
$$
$\mu_{c_1}(W_G)=(\delta'\delta)^2+2\gamma'\beta'\alpha'$.
Hence by Theorem \ref{thm:KeY3.2}
the Ginzburg dg algebras of
$(Q_G,W_G)$ and $((Q'_\red)_G,(W'_\red)_G)$ are derived equivalent.
On the other hand, by Remark \ref{rmk:Gr-skew}
we know that $\Gr_G X_{Q,W}$  is Morita equivalent to $\widehat{\Ga}(Q_G,W_G)$, and $\Gr_G X_{Q',W'}$
is Morita equivalent to $\cGz(Q'_G,W'_G)$, and which
is isomorphic to
$\widehat{\Ga}((Q'_\red)_G,(W'_\red)_G)$
by Keller--Yang \cite[Lemma 2.9]{KeY}
because $(Q',W')$ and $(Q'_\red, W'_\red)$ are
right-equivalent.
As a consequence,
$\Gr_G X_{Q,W}$ and $\Gr_G X_{Q',W'}$ are derived equivalent.
The same conclusion can be obtained from our result
Corollary \ref{cor:group-case-1}
as in the next example.
\end{exm}

\begin{exm}
\label{exm:Mizuno}
Let $(Q,W)$ be the quiver with potential given as follows:
$$
\begin{gathered}
\xymatrix{
&& 1\\
&2 && 6\\
3&&4 && 5,
\ar_{a_1}"1,3";"2,2"
\ar_{a_2}"2,2";"3,1"
\ar_{a_3}"3,1";"3,3"
\ar_{a_4}"3,3";"3,5"
\ar_{a_5}"3,5";"2,4"
\ar_{a_6}"2,4";"1,3"
}\\
W=a_5a_4a_3a_2a_1a_6.
\end{gathered}
$$
Let $I=\{1,3,5\}$. Mizuno \cite{Mu} defined successive mutation $\mu_I(Q,W)=\mu_5\circ\mu_3\circ\mu_1(Q,W)=(Q',W')$ given by the quiver with potential as follows:
$$
\begin{gathered}
\xymatrix{
&& 1\\
&2 && 6\\
3&&4 && 5,
\ar^{a^*_1}"2,2";"1,3"
\ar^{a^*_2}"3,1";"2,2"
\ar^{a^*_3}"3,3";"3,1"
\ar^{a^*_4}"3,5";"3,3"
\ar^{a^*_5}"2,4";"3,5"
\ar^{a^*_6}"1,3";"2,4"
\ar^{[a_1a_6]}"2,4";"2,2"
\ar_{[a_3a_2]}"2,2";"3,3"
\ar_{[a_5a_4]}"3,3";"2,4"
}
\\
W'=[a_1a_6]a^*_6a^*_1+[a_3a_2]a^*_2a^*_3+[a_5a_4]a^*_4a^*_5+[a_5a_4][a_3a_2][a_1a_6].
\end{gathered}
$$
By \cite[Theorem 1.1]{Mu}, the Jacobian algebras $\Jac(Q,W)$ and $\Jac(Q',W')$ are derived equivalent.

(1) Consider the cyclic group $G$ of order $3$ with generator $g$, and define the action of $g$ on $(Q, W)$
by $i\mapsto i-2$ and $a_i \mapsto a_{i-2}$ (modulo $6$).
Therefore, we have 
$$
Ga_1=\{a_1,a_5,a_3\},Ga_2=\{a_2,a_6,a_4\}.
$$
In this case $(Q_G,W_G)$ is the quiver with potential given as follows:
$$
\begin{gathered}
\xymatrix{
1 && 2
\ar@/_/_{\alpha}"1,1";"1,3"
\ar@/_/_{\beta}"1,3";"1,1"
}\\
W_G=(\beta\alpha)^3.
\end{gathered}
$$

(2) Next we define the action of $g$ on $(Q',W')$ by
\[
i\mapsto i-2,\  a^*_i \mapsto a^*_{i-2},\  \text{and}\ [a_i a_{i+5}] \mapsto [a_{i-2} a_{i+3}] \ (\mod 6)
\]
for all $i =1, \dots, 6$.

Therefore, we have 
$$
Ga^*_1=\{a^*_1,a^*_5,a^*_3\},Ga^*_2=\{a^*_2,a^*_6,a^*_4\}.
$$
In this case $(Q'_G,W'_G)$ is the quiver with potential given as follows:
$$
\begin{gathered}
\xymatrix{
G1 && G2
\ar@/_/_{Ga^*_2}"1,1";"1,3"
\ar@/_/_{Ga^*_1}"1,3";"1,1"
\ar@(ur,dr)^{G[a_6a_1]}"1,3";"1,3"
}\\
W'_G=3G[a_6a_1]G(a^*_1)G(a^*_6)+(G[a_6a_1])^3.
\end{gathered}
$$
Here the Jacobian algebras
$\Jac(Q_G,W_G)$ and $\Jac(Q'_G,W'_G)$ are representation-finite,
selfinjective algebras, and by the main theorem in \cite{Asa99}, they are derived equivalent
because their derived equivalence types are the same. By Theorem \ref{thm:KeY3.2},
the complete Ginzburg dg algebras $\widehat{\Gamma}(Q,W)$ and $\widehat{\Gamma}(Q',W')$ are derived equivalent as dg algebras. 
By using Corollary \ref{cor:group-case-1}, we will show that 
$\widehat{\Gamma}(Q,W)/G$ and $\widehat{\Gamma}(Q',W')/G$ are derived equivalent as dg algebras.
Therefore the complete Ginzburg dg algebras $\widehat{\Ga}(Q_G,W_G)$ and $\widehat{\Ga}(Q'_G,W'_G)$ are derived equivalent as dg algebras by Remark \ref{rmk:Gr-skew}.
We set
$\Ga^{(1)}:= \widehat{\Ga}(\mu_1(Q,W)), \Ga^{(2)}:= \widehat{\Ga}(\mu_3\circ\mu_1(Q,W)), \Ga':= \widehat{\Ga}(\mu_5\circ\mu_3\circ\mu_1(Q,W)) = \widehat{\Ga}(Q',W')$.
Then \cite[Proposition 3.13]{KeY} gives us the following derived equivalences $F_3, F_2, F_1$ 
defined as $(\hyph)\Lox_{\Ga'}T^{(3)}$, $(\hyph)\Lox_{\Ga^{(2)}}T^{(2)}$, $(\hyph)\Lox_{\Ga^{(1)}}T^{(1)}$
using the dg bimodules $T^{(3)}, T^{(2)},T^{(1)}$ constructed as in
Proposition \ref{prp:KeY3.5}, respectively. 
These functors send objects as follows:
 \newcommand{\xar}{\xrightarrow}
 $$
 \begin{alignedat}{3}
 \calD(\Ga') &\xar{F_3}\quad  \calD(\Ga^{(2)})& &\xar{F_2}\quad  \calD(\Ga^{(1)})& &\xar{F_1} \quad \calD(\Ga)\\
P'_5 &\mapsto \quad (P^{(2)}_5\to P^{(2)}_6)& &\mapsto \quad  (P^{(1)}_5\to P^{(1)}_6)& & \mapsto\quad (P_5\to P_6)=:T(5) \\
P'_3&\mapsto \quad  P^{(2)}_3& &\mapsto \quad  (P^{(1)}_3\to P^{(1)}_4)& & \mapsto\quad (P_3\to P_4) =: T(3)\\
P'_1&\mapsto \quad  P^{(2)}_1& &\mapsto \quad  P^{(1)}_1& & \mapsto\quad (P_1\to P_2) =: T(1)\\
 P'_i&\mapsto \quad  P^{(2)}_i& &\mapsto \quad  P^{(1)}_i & & \mapsto\quad P_i =: T(i), (i=2,4,6)\
 \end{alignedat}
$$
where $P'_i=e_i\Ga',P^{(2)}_i=e_i\Ga^{(2)},P^{(1)}_i=e_i\Ga^{(1)}$
for all $i \in Q_0$. 
Then $F:= F_1 \circ F_2 \circ F_3 = \blank\Lox_{\Ga'} T^{(3)} \Lox_{\Ga^{(2)}}T^{(2)}\Lox_{\Ga^{(1)}}T^{(1)}$ is an equivalence from $\calD(\Ga')$ to $\calD(\Ga)$.
Here $T^{(3)} \Lox_{\Ga^{(2)}}T^{(2)}\Lox_{\Ga^{(1)}}T^{(1)}$
is a dg $\Ga'$-$\Ga$-bimodule and
is isomorphic to the direct sum $T$ of the indecomposable objects
$T(i), (i=1,\dots, 6)$ as a dg right $\Ga$-module, by which we identify
these and regard $T$ as a dg $\Ga'$-$\Ga$-bimodule.
Let $T$ be the full subcategory of $\DGMod(\Ga)$ consisting of
$T(1), T(2), \cdots, T(6)$.
We show that $T$ is the desired tilting subcategory for $\Ga$.

Now since $g$ acts on $P_i$ by ${}^gP_i = P_{i-2}, (i = 1,\dots, 6)$ by the $G$-action in (1) above, we have
${}^gT(i) = T(i-2), (i = 1,\dots, 6)$.
On the other hand by the $G$-action in (2), $g$ acts on $P'_i$ by ${}^gP'_i = P'_{i-2}, (i = 1,\dots, 6)$. 

We construct a 1-morphism $(F', \ph) \colon \Ga' \to T$ that is a $G$-quasi-equivalence.
To this end we have to construct
a quasi-equivalence $F'\colon \Ga' \to T$ and a 2-quasi-isomorphism
$\ph(a) \colon T(a) \circ F' \To F' \circ a$
in $\DGkCat$
for each $a \in G$ (see Definition \ref{dfn:der-eq-criterion}):
\[
\begin{tikzcd}
\Ga'&T &\DGMod(\Ga) \\
\Ga'&T &\DGMod(\Ga)
\Ar{1-1}{2-1}{"a"'}
\Ar{1-2}{2-2}{"T(a)= {}^a(\hyph)"}
\Ar{1-1}{1-2}{"F'"}
\Ar{2-1}{2-2}{"F'"'}
\Ar{1-2}{1-3}{hook}
\Ar{2-2}{2-3}{hook}
\Ar{1-3}{2-3}{"^a(\hyph)"}
\Ar{1-2}{2-1}{"\ph(a)", Rightarrow}
\end{tikzcd}
\]
(It is trivial that the right square is strictly commutative).
We now define $F'$ as follows:
First recall the Yoneda embedding
$Y \colon \Ga' \to \DGMod(\Ga')$ is defined by
$Y(i):= \Ga'(\hyph, i) = e_i\Ga'$ for all $i \in \Ga'_0$, and
$Y(\mu):= \Ga'(\hyph, \mu)$ for all $\mu \in \Ga'_1$.
Let $\al_{M} \colon \Ga' \Lox_{\Ga'} M \to M$ be the usual natural isomorphism for all $\Ga'$-$\Ga$-bimodules $M$.
This yields the isomorphism
$e_i\al_M \colon e_i\Ga' \Lox_{\Ga'} M \to e_iM$ for each
$i \in \Ga'_0$, which is natural in $i$ and in $M$.
Note that the naturality in $i$ means that for each $f \colon i \to j$ in $\Ga'$,
we have a commutative diagram
\[
\xymatrix{
e_i\Ga' \Lox_{\Ga'} M & e_iM\\
e_j\Ga' \Lox_{\Ga'} M & e_jM.
\ar"1,1";"1,2"^(0.6){e_i\al_M}
\ar"1,1";"2,1"_{\Ga'(\hyph, f)\Lox_{\Ga'}M}
\ar"1,2";"2,2"^{M(\hyph, f)}
\ar"2,1";"2,2"_(0.6){e_j\al_M}
}
\]
We then define $F':= F\circ Y \colon \Ga' \to \perf(\Ga) \subseteq \calD(\Ga)$,
thus $F'(i) = e_i\Ga' \Lox_{\Ga'} T \ya{e_i\al_{T}} T(i)$ for all $i \in Q_0$, and
$F'(\mu)= \Ga'(\hyph,\mu)\Lox_{\Ga'}T \iso \la_{\mu} \colon T(i) \to T(j)$
for all $\mu \in \Ga'_1(i, j)$ with $i, j \in \Ga'_0$.
Thus we have a commutative diagram
\[
\xymatrix{
F'(i) & T(i)\\
F'(j) & T(j).
\ar"1,1";"1,2"^(0.6){e_i\al_T}
\ar"1,1";"2,1"_{F'(\mu)}
\ar"1,2";"2,2"^{\la_{\mu}}
\ar"2,1";"2,2"_(0.6){e_j\al_T}
}
\]
Next we define a 2-quasi-isomorphism
$\ph(a) \colon {}^{a}F' \To F'a$ for each $a \in G$.
Let $i \in \Ga'_0$, and $a \in G$.  Then
the isomorphism $e_i\al_T \colon F'(i) \to T(i)$ yields isomomrphisms
${}^a(F'(i)) \ya{^a(e_i\al_T)} {}^aT(i) = T(ai)$,
and
$F'(ai) \ya{e_{ai}\al_T} T(ai)$.
Thus we have an isomorphism
\[
\ph_i(a):=(e_{ai}\al_T)\inv \circ {}^a(e_i\al_T) \colon {}^a(F'(i)) \to F'(ai).
\]
We then define $\ph(a):= (\ph_i(a))_{i\in \Ga'_0}
\colon {}^aF' \To F'a$ for all $a \in G$ and
$\ph:= (\ph(a))_{a\in G}$.

\setcounter{clm}{0}
\begin{clm}
The pair $(F', \ph)$ is a $1$-morphism
$\Ga' \to T$ in $\DGkCat$ {\rm (see Definition \ref{dfn:1-mor-Colax(I,C)})}.
\end{clm}
Indeed, because $F'(i)$ is clearly a dg-functor, it suffices to show that
$\ph(a)$ is a 2-morphism in $\DGkCat$ for each $a \in G$.
Namely, we have to show the commutativity of the diagram
\[
\xymatrix{
{}^aF'(u) & F'(au)\\
{}^aF'(v) & F'(av)
\ar"1,1";"1,2"^{\ph_u(a)}
\ar"1,1";"2,1"_{{}^aF'(\mu)}
\ar"2,1";"2,2"_{\ph_v(a)}
\ar"1,2";"2,2"^{F'(a\mu)}
}
\]
for all $\mu \colon u \to v$ in $\Ga'_1$ and $a \in G$.
It suffices to show the commutativity of this
only for $a = g$ and for all $\mu \in \widetilde{Q'}_1$.
Therefore finally we have only to show the commutativity of the diagram
\begin{equation}
\label{eq:naturality}
\xymatrix{
{}^gF'(u) & {}^gT(u) & T(gu) & F'(gu)\\
{}^gF'(v) & {}^gT(v) & T(gv) & F'(gv)
\ar"1,1";"1,2"^{{}^g(e_u\al_T)}
\ar@{=}"1,2";"1,3"
\ar"1,4";"1,3"_{e_{a_u}\al_T}
\ar"1,1";"2,1"_{{}^gF'(\mu)}
\ar"2,1";"2,2"_{e_{a_v}\al_T}
\ar@{=}"2,2";"2,3"
\ar"2,4";"2,3"^{e_{a_v}\al_T}
\ar"1,4";"2,4"^{F'(g\mu)}
\ar"1,2";"2,2"^{{}^gT(\mu)}
\ar"1,3";"2,3"^{T(g\mu)}
}
\end{equation}
for all $\mu \in \widetilde{Q'}_1$.
We check this only for three cases below.
The remaining cases are checked similarly, and is left to the reader.

Now the quivers of $\Ga', \Ga^{(2)}, \Ga^{(1)}, \Ga$ are given as follows:
\[
\Ga' = \vcenter{
\xymatrix@C=60pt@R=50pt{
&& 1\\
&2 && 6\\
3&&4 && 5
\ar@/_/_{a^*_1}"2,2";"1,3"
\ar@/_/_{\ovl{a^*_1}}"1,3";"2,2"
\ar@/_/_{a^*_2}"3,1";"2,2"
\ar@/_/_{\ovl{a^*_2}}"2,2";"3,1"
\ar@/_/_{a^*_3}"3,3";"3,1"
\ar@/_/_{\ovl{a^*_3}}"3,1";"3,3"
\ar@/_/_{a^*_4}"3,5";"3,3"
\ar@/_/_{\ovl{a^*_4}}"3,3";"3,5"
\ar@/_/_{a^*_5}"2,4";"3,5"
\ar@/_/_{\ovl{a^*_5}}"3,5";"2,4"
\ar@/_/_{a^*_6}"1,3";"2,4"
\ar@/_/_{\ovl{a^*_6}}"2,4";"1,3"
\ar@/_/_{[a_1a_6]}"2,4";"2,2"
\ar@/_/_{\ovl{[a_1a_6]}}"2,2";"2,4"
\ar@/_/_{[a_3a_2]}"2,2";"3,3"
\ar@/_/|{\ovl{[a_3a_2]}}"3,3";"2,2"
\ar@/_/_{[a_5a_4]}"3,3";"2,4"
\ar@/_/|{\ovl{[a_5a_4]}}"2,4";"3,3"
\ar@(ul,ur)^{t_1}"1,3";"1,3"
\ar@(u,l)_{t_2}"2,2";"2,2"
\ar@(ul,dl)_{t_3}"3,1";"3,1"
\ar@(dl,dr)_{t_4}"3,3";"3,3"
\ar@(ur,dr)^{t_5}"3,5";"3,5"
\ar@(u,r)^{t_6}"2,4";"2,4"
}
}
\]

\[
\Ga^{(2)} = \vcenter{
\xymatrix@C=60pt@R=50pt{
&& 1\\
&2 && 6\\
3&&4 && 5
\ar@/_/_{a^*_1}"2,2";"1,3"
\ar@/_/_{\ovl{a^*_1}}"1,3";"2,2"
\ar@/_/_{a^*_2}"3,1";"2,2"
\ar@/_/_{\ovl{a^*_2}}"2,2";"3,1"
\ar@/_/_{a^*_3}"3,3";"3,1"
\ar@/_/_{\ovl{a^*_3}}"3,1";"3,3"
\ar@/_/_{\ovl{a_4}}"3,5";"3,3"
\ar@/_/_{a_4}"3,3";"3,5"
\ar@/_/_{\ovl{a_5}}"2,4";"3,5"
\ar@/_/_{a_5}"3,5";"2,4"
\ar@/_/_{a^*_6}"1,3";"2,4"
\ar@/_/_{\ovl{a^*_6}}"2,4";"1,3"
\ar@/_/_{[a_1a_6]}"2,4";"2,2"
\ar@/_/_{\ovl{[a_1a_6]}}"2,2";"2,4"
\ar@/_/_{[a_3a_2]}"2,2";"3,3"
\ar@/_/|{\ovl{[a_3a_2]}}"3,3";"2,2"
\ar@(ul,ur)^{t_1}"1,3";"1,3"
\ar@(u,l)_{t_2}"2,2";"2,2"
\ar@(ul,dl)_{t_3}"3,1";"3,1"
\ar@(dl,dr)_{t_4}"3,3";"3,3"
\ar@(ur,dr)^{t_5}"3,5";"3,5"
\ar@(u,r)^{t_6}"2,4";"2,4""
\ar@(u,r)^{t_6}"2,4";"2,4"
}
}
\]
\vspace{-10pt}
\[
\Ga^{(1)} = \vcenter{
\xymatrix@C=60pt@R=50pt{
&& 1\\
&2 && 6\\
3&&4 && 5
\ar@/_/_{a^*_1}"2,2";"1,3"
\ar@/_/_{\ovl{a^*_1}}"1,3";"2,2"
\ar@/_/_{\ovl{a_2}}"3,1";"2,2"
\ar@/_/_{a_2}"2,2";"3,1"
\ar@/_/_{\ovl{a_3}}"3,3";"3,1"
\ar@/_/_{a_3}"3,1";"3,3"
\ar@/_/_{\ovl{a_4}}"3,5";"3,3"
\ar@/_/_{a_4}"3,3";"3,5"
\ar@/_/_{\ovl{a_5}}"2,4";"3,5"
\ar@/_/_{a_5}"3,5";"2,4"
\ar@/_/_{a^*_6}"1,3";"2,4"
\ar@/_/_{\ovl{a^*_6}}"2,4";"1,3"
\ar@/_/_{[a_1a_6]}"2,4";"2,2"
\ar@/_/_{\ovl{[a_1a_6]}}"2,2";"2,4"
\ar@(ul,ur)^{t_1}"1,3";"1,3"
\ar@(u,l)_{t_2}"2,2";"2,2"
\ar@(ul,dl)_{t_3}"3,1";"3,1"
\ar@(dl,dr)_{t_4}"3,3";"3,3"
\ar@(ur,dr)^{t_5}"3,5";"3,5"
\ar@(u,r)^{t_6}"2,4";"2,4"
}
}
\]
\vspace{-10pt}
\[
\Ga = \vcenter{
\xymatrix@C=60pt@R=50pt{
&& 1\\
&2 && 6\\
3&&4 && 5
\ar@/_/_{\ovl{a_1}}"2,2";"1,3"
\ar@/_/_{a_1}"1,3";"2,2"
\ar@/_/_{\ovl{a_2}}"3,1";"2,2"
\ar@/_/_{a_2}"2,2";"3,1"
\ar@/_/_{\ovl{a_3}}"3,3";"3,1"
\ar@/_/_{a_3}"3,1";"3,3"
\ar@/_/_{\ovl{a_4}}"3,5";"3,3"
\ar@/_/_{a_4}"3,3";"3,5"
\ar@/_/_{\ovl{a_5}}"2,4";"3,5"
\ar@/_/_{a_5}"3,5";"2,4"
\ar@/_/_{\ovl{a_6}}"1,3";"2,4"
\ar@/_/_{a_6}"2,4";"1,3"
\ar@(ul,ur)^{t_1}"1,3";"1,3"
\ar@(u,l)_{t_2}"2,2";"2,2"
\ar@(ul,dl)_{t_3}"3,1";"3,1"
\ar@(dl,dr)_{t_4}"3,3";"3,3"
\ar@(ur,dr)^{t_5}"3,5";"3,5"
\ar@(u,r)^{t_6}"2,4";"2,4"
}
}
\]

{\bf Case 1.}
$\mu = a^*_i\in \widetilde{Q'}$ for some $i = 1,\dots, 6$, say $i=1$.
Then up to Yoneda embeddings (for the first three correspondences) we have
$a^*_1 \overset{F_3}{\mapsto}a^*_1
\overset{F_2}{\mapsto}a^*_1\overset{F_1}{\mapsto}f_{a^*_1} \overset{{}^g(\hyph)}{\mapsto} f_{a^*_5}$.
Since we have commutative diagrams
\[
\vcenter{
\xymatrix{
F'(2) & T(2) \\
F'(1) & T(1)
\ar"1,1";"1,2"^{e_2\al_T}
\ar"1,1";"2,1"_{F'(a^*_1)}
\ar"2,1";"2,2"_{e_1\al_T}
\ar"1,2";"2,2"^{f_{a^*_1}}
}}
\ \text{and}\ 
\vcenter{
\xymatrix{
T(6) & F'(6)\\
T(5) & F'(5)
\ar"1,1";"2,1"^{f_{a^*_5}}
\ar"1,2";"2,2"^{F'(a^*_5)}
\ar"1,2";"1,1"_{e_{6}\al_T}
\ar"2,2";"2,1"^{e_{5}\al_T}
}},
\]
we have a commutative diagram:
\[
\xymatrix{
{}^gF'(2) & {}^gT(2) & T(6) & F'(g2)\\
{}^gF'(1) & {}^gT(1) & T(5) & F'(g1),
\ar"1,1";"1,2"^{{}^g(e_2\al_T)}
\ar@{=}"1,2";"1,3"
\ar"1,4";"1,3"_{e_{g2}\al_T}
\ar"1,1";"2,1"_{{}^gF'(a^*_1)}
\ar"2,1";"2,2"_{{}^g(e_1\al_T)}
\ar@{=}"2,2";"2,3"
\ar"2,4";"2,3"^{e_{g1}\al_T}
\ar"1,4";"2,4"^{F'(ga^*_1)}
\ar"1,2";"2,2"^{{}^gf_{a^*_1}}
\ar"1,3";"2,3"^{f_{ga^*_1}}
}
\]
and hence \eqref{eq:naturality} is verified in this case.

{\bf Case 2.}
$\mu = \ovl{a^*_i}\in \widetilde{Q'}$ for some $i = 1,\dots, 6$, say $i=1$.
Then up to Yoneda embeddings (for the first three correspondences) we have
$\ovl{a^*_1} \overset{F_3}{\mapsto}\ovl{a^*_1}
\overset{F_2}{\mapsto}\ovl{a^*_1}\overset{F_1}{\mapsto}f_{\ovl{a^*_1}} \overset{{}^g(\hyph)}{\mapsto} f_{\ovl{a^*_5}}$.
Since we have commutative diagrams
\[
\vcenter{
\xymatrix{
F'(1) & T(1) \\
F'(2) & T(2)
\ar"1,1";"1,2"^{e_1\al_T}
\ar"1,1";"2,1"_{F'(\ovl{a^*_1})}
\ar"2,1";"2,2"_{e_2\al_T}
\ar"1,2";"2,2"^{f_{\ovl{a^*_1}}}
}}
\ \text{and}\ 
\vcenter{
\xymatrix{
T(5) & F'(5)\\
T(6) & F'(6)
\ar"1,1";"2,1"^{f_{\ovl{a^*_5}}}
\ar"1,2";"2,2"^{F'(\ovl{a^*_5})}
\ar"1,2";"1,1"_{e_{5}\al_T}
\ar"2,2";"2,1"^{e_{6}\al_T}
}},
\]
we have a commutative diagram:
\[
\xymatrix{
{}^gF'(1) & {}^gT(1) & T(5) & F'(g1)\\
{}^gF'(2) & {}^gT(2) & T(6) & F'(g2),
\ar"1,1";"1,2"^{{}^g(e_1\al_T)}
\ar@{=}"1,2";"1,3"
\ar"1,4";"1,3"_{e_{g1}\al_T}
\ar"1,1";"2,1"_{{}^gF'(\ovl{a^*_1})}
\ar"2,1";"2,2"_{{}^g(e_1\al_T)}
\ar@{=}"2,2";"2,3"
\ar"2,4";"2,3"^{e_{g2}\al_T}
\ar"1,4";"2,4"^{F'(g\ovl{a^*_1})}
\ar"1,2";"2,2"^{{}^gf_{\ovl{a^*_1}}}
\ar"1,3";"2,3"^{f_{g\ovl{a^*_1}}}
}
\]
and hence \eqref{eq:naturality} is verified in this case.

{\bf Case 3.}
$\mu = t_i\in \widetilde{Q'}$ for some $i = 1,\dots, 6$, say $i=1$.
Then up to Yoneda embeddings (for the first three correspondences) we have
$t_1 \overset{F_3}{\mapsto}t_1
\overset{F_2}{\mapsto}t_1\overset{F_1}{\mapsto}f_{t'_1} \overset{{}^g(\hyph)}{\mapsto} f_{t'_5}$.
Therefore we have ${}^gF' (t_1) = f_{t'_5} = F'(t_5) = F'(gt_1)$.
Hence ${}^g(F'(t_1)) = F'(gt_1)$.

Since we have commutative diagrams
\[
\vcenter{
\xymatrix{
F'(1) & T(1) \\
F'(1) & T(1)
\ar"1,1";"1,2"^{e_1\al_T}
\ar"1,1";"2,1"_{F'(t_1)}
\ar"2,1";"2,2"_{e_1\al_T}
\ar"1,2";"2,2"^{f_{t'_1}}
}}
\ \text{and}\ 
\vcenter{
\xymatrix{
T(5) & F'(5)\\
T(5) & F'(5)
\ar"1,1";"2,1"^{f_{t'_5}}
\ar"1,2";"2,2"^{F'(t_5)}
\ar"1,2";"1,1"_{e_{5}\al_T}
\ar"2,2";"2,1"^{e_{5}\al_T}
}},
\]
we have a commutative diagram:
\[
\xymatrix{
{}^gF'(1) & {}^gT(1) & T(5) & F'(g1)\\
{}^gF'(1) & {}^gT(1) & T(5) & F'(g1),
\ar"1,1";"1,2"^{{}^g(e_1\al_T)}
\ar@{=}"1,2";"1,3"
\ar"1,4";"1,3"_{e_{g1}\al_T}
\ar"1,1";"2,1"_{{}^gF'(t_1)}
\ar"2,1";"2,2"_{e_{a_1}\al_T}
\ar@{=}"2,2";"2,3"
\ar"2,4";"2,3"^{e_{g1}\al_T}
\ar"1,4";"2,4"^{F'(gt_1)}
\ar"1,2";"2,2"^{{}^gf_{t'_1}}
\ar"1,3";"2,3"^{f_{gt'_1}}
}
\]
and hence \eqref{eq:naturality} is verified in this case.
We check the conditions (a) and (b) in Definition \ref{dfn:1-mor-Colax(I,C)}.

\medskip
{\bf Verifications of (a):}
This is equivalent to the equation that $\ph(1) = \id_{F'}$, which
follows from the construction of $\ph$ and the fact that both $\Ga'$ and $T$
have strict $G$-actions.

\medskip
{\bf Verification of (b):} 
This condition is equivalent to saying that the following diagram is commutative:

\begin{equation}\label{eq:ver-b}
\vcenter{
\xymatrix{
{}^b({}^a(F'(i)) ) & {}^b((F'(ai)) )\\
&  F'(bai)\\
\ar"1,1";"1,2"^{{}^b(\ph_i(a))}
\ar"1,2";"2,2"^{\ph_{(ai)}(b)}
\ar"1,1";"2,2"_{\ph_i(ba)}
}}
\end{equation}
for all $a,b\in G$ and $i\in \Ga'_0$.
By definition of $\ph_i(a)$, the following diagram is commutative:

\[
\xymatrix{
{}^a(F'(i))& {}^aT(i)\\
F'(ai) & T(ai). \\
\ar"1,1";"1,2"^{^a(e_i\al_T)}
\ar"2,1";"2,2"_{e_{ai}\al_T}
\ar"1,1";"2,1"_{\ph_i(a)}
\ar@{=}"1,2";"2,2"
}
\]
This yields the following commutative diagram:

\[
\xymatrix@C=40pt{
{}^b({}^a(F'(i))) & {}^b({}^a(T(i))) &  {}^{ba}(F'(i))\\
 {}^b(F'(ai)) & {}^b(T(ai))  & \\
 F'(bai) & T(b(ai))&  F'(bai),\\
\ar"1,1";"1,2"_{^b(^a(e_i\al_T))}
\ar"2,1";"2,2"_{^b(e_{ai}\al_T)}
\ar"1,1";"2,1"_{{}^b(\ph_i(a))}
\ar@{=}"1,2";"2,2"
\ar@{=}"2,2";"3,2"
\ar"2,1";"3,1"_{ \ph_{ai}(b)}
\ar"3,1";"3,2"^{e_{b(ai)}\al_T}
\ar"1,3";"1,2"^{^{ba}(e_i\al_T)}
\ar"3,3";"3,2"_{e_{bai}\al_T}
\ar"1,3";"3,3"^{\ph_i(ba)}
\ar@{=}@/^1.5pc/"1,1";"1,3"
\ar@{=}@/_1.5pc/"3,1";"3,3"
}
\]
which shows the commutativity of the diagram \eqref{eq:ver-b}.

It remains to show that $(F', \ph)$ is a quasi-equivalence.
Namely we have to show the following claims:

\begin{clm}
$F'$ is an isomorphism, and hence a quasi-equivalence.
\end{clm}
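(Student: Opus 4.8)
The plan is to prove the stronger statement that $F'\colon\Ga'\to\calT$ is an \emph{isomorphism} of dg categories; this immediately gives that $F'$ is a quasi-equivalence, since then every $F'_{X,Y}$ is an isomorphism (hence in particular a quasi-isomorphism) of complexes and $H^0(F')$ is an isomorphism (hence an equivalence) of categories. Recall that $F'=(\blank\ox_{\Ga'}T)\circ Y$, where $Y\colon\Ga'\to\DGMod(\Ga')$ is the Yoneda embedding: on objects $F'(i)=e_i\Ga'\ox_{\Ga'}T\iso e_iT=T(i)$ via $e_i\al_T$, and on morphism complexes $F'_{i,j}$ is the composite of the dg Yoneda isomorphism $\Ga'(i,j)\isoto\DGMod(\Ga')(e_i\Ga',e_j\Ga')$ (Remark \ref{rmk:Yoneda-emb}) with the chain map induced by $\blank\ox_{\Ga'}T$. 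Under the identifications $\DGMod(\Ga')(e_i\Ga',e_j\Ga')\iso e_j\Ga'e_i=\Ga'(i,j)$ and $\DGMod(\Ga)(T(i),T(j))\iso e_j\End_\Ga(T)e_i$, this composite is exactly the component $e_jfe_i$ of the dg algebra homomorphism $f\colon\Ga'\to\End_\Ga(T)$ of Proposition \ref{prp:KeY3.5}. Since $T=\bigoplus_i T(i)$, the algebra $\End_\Ga(T)$ is the sum of the blocks $e_j\End_\Ga(T)e_i$, and likewise for $\Ga'$; hence $F'$ is an isomorphism of dg categories precisely when $F'$ is bijective on objects and every block $e_jfe_i$ is an isomorphism of complexes, i.e. when $f$ is an isomorphism of dg algebras.

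First I would dispose of the object part. Both $\Ga'_0=Q_0$ and $\calT_0=\{T(1),\dots,T(6)\}$ have six elements, and $i\mapsto T(i)$ is injective because the $T(i)$ are pairwise non-isomorphic in $\calD(\Ga)$ — they are the images under the triangle equivalence $F$ of the pairwise non-isomorphic indecomposable representable modules $e_i\Ga'$ over the basic dg algebra $\Ga'$ — hence in particular pairwise distinct as objects of $\DGMod(\Ga)$. So $F'$ restricts to a bijection $\Ga'_0\to\calT_0$.

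The essential point is therefore that $f$ is an isomorphism. That $f$ is at least a quasi-isomorphism already follows from $F=\blank\Lox_{\Ga'}T$ being a triangle equivalence with $F(e_j\Ga')\iso T(j)$: since $e_i\Ga'$ and $T(i)$ are homotopically projective, taking $H^n$ of $F'_{i,j}$ identifies it with the bijection $\calD(\Ga')(e_i\Ga',e_j\Ga'[n])\isoto\calD(\Ga)(T(i),T(j)[n])$ induced by $F$. To upgrade this to an isomorphism of complexes one uses the explicit values of $f$ on the arrows and loops of $\widetilde{Q'}$ recorded before Proposition \ref{prp:KeY3.5}, together with the explicit quivers of $\Ga'$ and $\Ga$ displayed above and the decomposition $T=\bigoplus_{i=1}^6 T(i)$ with $T(i)=P_i$ for $i\in\{2,4,6\}$ and $T(i)$ a two-term complex of projectives for $i\in\{1,3,5\}$: a direct computation of the Hom-complexes $\DGMod(\Ga)(T(i),T(j))$ between these modules shows that $f$ sends the arrows and loops of $\widetilde{Q'}$ to the elements $f_{a^*_i},f_{\ovl{a^*_i}},f_{[a_1a_6]},\dots,f_{t'_i},f_\ga,f_{\ovl\ga}$, which form a topological generating set of $\End_\Ga(T)$ subject to exactly the defining relations of $\Ga'$, so that $f$ is bijective; equivalently, this is the statement that the complete Ginzburg dg algebra of the mutated quiver with potential is isomorphic, as a dg algebra, to the reduced dg endomorphism algebra of $T$ over $\Ga$. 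I expect this verification — a finite check on the images of the generators of $\widetilde{Q'}$ — to be the only laborious step and hence the main obstacle; once $f$ is known to be bijective, $F'$ is an isomorphism of dg categories, and therefore a quasi-equivalence, as claimed.
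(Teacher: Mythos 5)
Your reduction of the morphism-level statement to the Keller--Yang homomorphism $f\colon \Ga' \to \End_\Ga(T)$ of Proposition \ref{prp:KeY3.5} is sensible, and the part of your argument that you present as a mere stepping stone --- that each block $e_jfe_i$ is a \emph{quasi-isomorphism}, because $e_i\Ga'$ and $T(i)$ are homotopically projective and $F=\blank\Lox_{\Ga'}T$ is a triangle equivalence, so that $H^n(F'_{i,j})$ is identified with the bijection $\calD(\Ga')(e_i\Ga',e_j\Ga'[n])\to\calD(\Ga)(T(i),T(j)[n])$ --- is correct and is essentially the content of the commutative diagram (Yoneda composed with the equivalence $F$) that the paper itself uses. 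Together with bijectivity on objects, which you handle adequately, this already yields conditions (1) and (2) of Definition \ref{dfn:q-eq}, i.e.\ that $F'$ is a quasi-equivalence, and that is all that is needed in the remainder of the example.

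The step you single out as the ``essential point'', however, would fail: $f$ is \emph{not} an isomorphism of dg algebras, and no generators-and-relations computation can make it one. For each mutated vertex $i\in\{1,3,5\}$ the complex $\DGMod(\Ga)(T_i,T_i)$ contains the summand $\DGMod(\Ga)(P_i[1],P_{t(\al)})$, and $\DGMod(\Ga)^{1}(P_i[1],P_{t(\al)})=\DGMod(\Ga)^{0}(P_i,P_{t(\al)})=(e_{t(\al)}\Ga e_i)^0\neq 0$; hence $\End_\Ga(T)$ has nonzero components in degree $+1$, while $\Ga'$ is concentrated in degrees $\le 0$, so $f$ cannot be surjective. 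Accordingly, the assertion that the elements $f_{a^*_i}, f_{\ovl{a^*_i}},\dots$ topologically generate $\End_\Ga(T)$ subject to exactly the relations of $\Ga'$ is false; Keller--Yang only provide a dg algebra homomorphism that is a quasi-isomorphism, not an isomorphism. The repair is to delete the final upgrade and conclude the quasi-equivalence directly from the quasi-isomorphism statement you have already established --- noting that, for the same degree reason, the word ``isomorphism'' in the claim must anyway be read with care, since $\Ga'(i,j)$ and $\calT(T(i),T(j))=\DGMod(\Ga)(T(i),T(j))$ are only quasi-isomorphic as complexes, which is exactly what the paper's diagram (whose top row consists of derived-category Hom's) actually delivers.
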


Indeed, we regard $\Ga'$ as a dg category following Remark \ref{rmk:Ginz-dg-cat}.
For each $i \in Q_0$, we have $F'(i) = T(i)$.
Hence $F'$ is bijective on objects.
Moreover, for each $i, j \in Q_0$, we have
a commutative diagram
\[
\xymatrix{
\calD(\Ga')(\Ga'(\blank,i),\Ga'(\blank,j))  & \calD(\Ga)(F(i),F(j))\\
\Ga'(i,j) & T(F'(i),F'(j)),
\ar"1,1";"1,2"^F
\ar"2,1";"1,1"^Y
\ar"2,1";"2,2"_{F'}
\ar@{=}"2,2";"1,2"
}
\]
where $Y$ and $F$ above are bijective.
Hence $F'$ above is an isomorphism.

\begin{clm}
$\ph(a)$ is a 2-quasi-isomorphism for all $a \in G$, i.e.,
$T(\text{-},\ph_i(a))\colon T(\text{-}, {}^aF'(i)) \to T( \text{-}, F'(ai))$ is a quasi-isomorphism in $\ChMod(T)$
for all $a \in G$ and $i \in \Ga'_0$.
\end{clm}
Indeed,
by construction $\ph_i(a) \colon {}^aF'(i) \to F'(ai)$ is an isomorphism in $T$.
Therefore $T(\text{-},\ph_i(a))$ is an isomorpism in $\ChMod(T)$, and thus
it is a quasi-isomorphism.

As a consequence, 
$\widehat{\Gamma}(Q_G,W_G)$ and $\widehat{\Gamma}(Q'_G,W'_G)$ are derived equivalent.
Note that the quivers with potentials $(Q_G,W_G)$ and $(Q'_G,W'_G)$
are not mutated from each other in this case.
Therefore we cannot apply \cite[Theorem 3.2]{KeY} by
Keller-Yang
to obtain this derived equivalence.
\end{exm}

To give an example of the case that the category $I$ is not a group, we need to determine how to compute the Grothendieck construction of a functor
$X \colon I \to \DGkCat$ in general.
This will be done in the forthcoming paper, which will include such an example.

\setcounter{section}{0}
\renewcommand{\thesection}{\Alph{section}}
\renewcommand{\sectionname}{Appendix}

\section{Quasi-equivalence morphisms and derived equivalences}
\label{sec:qeq-1mor}

The following statement is stated in \cite{Ke3} without a proof
in a remark after \cite[Lemma 3.10]{Ke3}.
For completeness, we give a proof of it in this appendix.

\begin{thm}\label{thm:q-eq-der-eq}
Let $E: \calA\to\calB$ be a quasi-equivalence between dg categories $\calA$ and $\calB$. Then
$\blank\Ltimes_{\calA}\ovl{E} \colon \calD( \calA) \to 
\calD(\calB)$
is an equivalence of triangulated categories,
where $\ovl{E}$ is the $\calA$-$\calB$-bimodule
$\ovl{E}:= {}_E\calB$.
In particular, $\calA$ and $\calB$ are derived equivalent.
\end{thm}

For the proof we prepare the following three lemmas.
The first one is the following, which seems to be well-known (e.g., see \cite[Lemma 13.7.2]{Sta}), but we give a proof of it for convenience of the reader.

\begin{lem}\label{equ}
Let $\calD$ and $\calD'$ be triangulated categories, and
$F: \calD\to \calD'$ and $G: \calD'\to \calD$ triangle functors.
Assume that the following conditions are satisfied
\begin{enumerate}
\item
$F$ is fully faithful,
\item
$G$ is a right adjoint to $F$, and
\item
$G(X)= 0$ implies $X = 0$ for all objects $X$ of $\calD'$.
\end{enumerate}
Then $F$ is an equivalence.
\end{lem}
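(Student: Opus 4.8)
Since $F$ is fully faithful and is a left adjoint to $G$, a standard fact about adjunctions says that the unit $\eta\colon \id_{\calD}\To GF$ is a natural isomorphism. Hence, to prove that $F$ is an equivalence it suffices to prove that the counit $\ep\colon FG\To \id_{\calD'}$ is a natural isomorphism; then $G$ is a quasi-inverse of $F$.

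First I would fix an object $X$ of $\calD'$ and embed the counit morphism in a distinguished triangle
\[
FG(X)\xrightarrow{\ep_X} X \to C \to FG(X)[1]
\]
in $\calD'$, where $C$ is (a choice of) the cone of $\ep_X$. Applying the triangle functor $G$ yields a distinguished triangle
\[
GFG(X)\xrightarrow{G\ep_X} G(X) \to G(C) \to GFG(X)[1]
\]
in $\calD$. Now the triangle identity of the adjunction $F\dashv G$ gives $G(\ep_X)\bullet \eta_{G(X)}=\id_{G(X)}$, and since $\eta_{G(X)}$ is an isomorphism, it follows that $G(\ep_X)=\eta_{G(X)}\inv$ is an isomorphism.

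Next, in the distinguished triangle above, the first morphism $G(\ep_X)$ being an isomorphism forces $G(C)\iso 0$ (in a distinguished triangle $A\to B\to C\to A[1]$, if $A\to B$ is an isomorphism then $C\iso 0$). By hypothesis (3), $G(C)\iso 0$ implies $C\iso 0$, and therefore $\ep_X$ is an isomorphism. Since $X$ was arbitrary, $\ep$ is a natural isomorphism, so $F$ is an equivalence.

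I do not expect a genuine obstacle here: the only points requiring care are the two standard categorical facts used, namely that a fully faithful left adjoint has invertible unit, and the triangle (zig-zag) identity $G(\ep_X)\bullet\eta_{G(X)}=\id_{G(X)}$; both are routine. (If one prefers to avoid invoking ``fully faithful $\Rightarrow$ unit invertible'' as a black box, one can instead observe directly from the triangle identities that $\eta_{G(X)}$ is a split monomorphism with retraction $G(\ep_X)$, and that $F$ fully faithful makes $\eta$ pointwise invertible; either route is short.)
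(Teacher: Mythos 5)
Your proof is correct and follows essentially the same route as the paper's: both complete the counit $\ep_X$ to a distinguished triangle, apply $G$, use the triangle identity together with the invertibility of the unit (from full faithfulness of $F$) to see that $G(\ep_X)$ is an isomorphism, and then invoke hypothesis (3) to kill the cone. No gaps.
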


\begin{proof}
We denote the unit and the counit of the adjoint by $\eta: \id_\calD \To G \circ F$ and by $\ep : F \circ G \To \id_{\calD'}$, respectively. 
Let $D\in\calD'$, and take a distinguished triangle
$$
FG(D)\ya{\ep_D} D\to Y\to FG(D)[1]
$$
in $\calD'$.
Apply the functor $G$ to get
$$
GFG(D)\ya{G(\ep_D)} G(D)\to G(Y)\to G(D)[1].
$$
Since $F$ is fully faithful, $\eta: \id \Rightarrow G \circ F$ is an isomorphism.
In particular, $\et_{G(D)}$ is an isomorphism.
Then the equality $G(\ep_D) \et_{G(D)} = \id_{G(D)}$ yields
a commutative diagram with triangle rows:
\[
\begin{tikzcd}
GFG(D) & G(D) & G(Y) & G(D)[1]\\
G(D)  & G(D) & G(Y) & G(D)[1]
\Ar{1-1}{1-2}{"G(\ep_D)"}
\Ar{1-2}{1-3}{}
\Ar{1-3}{1-4}{}
\Ar{2-1}{2-2}{"\id_{G(D)}" '}
\Ar{2-2}{2-3}{}
\Ar{2-3}{2-4}{}
\Ar{1-1}{2-1}{"\et_{G(D)}\inv" '}
\Ar{1-2}{2-2}{equal}
\Ar{1-3}{2-3}{}
\Ar{1-4}{2-4}{}
\end{tikzcd}.
\]
Thus $G(Y) = 0$. Therefore, $Y = 0$ and $FG(D) \iso D$. Consequently, $F$ is essentianlly surjective.
Then $F$ is an equivalence.
\end{proof}

\begin{lem}\label{ff} Let $\calA$ and $\calB$ be dg categories,
and $N$ a dg $\calA$-$\calB$-bimodule.
Assume that
\begin{enumerate}
\item
the dg module ${}_AN$ is compact in $\calD(\calB)$ for all $A\in\calA$,
\item
The canonical morphism $\al_{Y,Z,k} \colon H^k(\calA(Y,Z))\to\Hom_{\calD(\calB)}({}_YN,{}_ZN[k])$ is an isomorphism for all $Y,Z\in\calA$ and for all $k\in\bbZ$.
\end{enumerate}
Then $\blank\Lox_{\calA}N$ is fully faithful.
\end{lem}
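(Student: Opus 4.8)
Write $F := \blank\Lox_{\calA}N \colon \calD(\calA) \to \calD(\calB)$. The plan is to prove that $F$ is fully faithful by a two-step dévissage along the representable dg modules $A^\wedge = \calA(\blank, A)$ ($A \in \calA_0$), which compactly generate $\calD(\calA)$. First I would record three standard facts: $F$ is a triangle functor preserving arbitrary coproducts (it is a left adjoint, with right adjoint $\bfR\Hom_{\calB}(N,\blank)$); each $A^\wedge$ is homotopically projective, so $F(A^\wedge) \iso A^\wedge \ox_{\calA} N \iso {}_AN$ in $\calD(\calB)$ by the co-Yoneda lemma; and $\Hom_{\calD(\calA)}(Y^\wedge, Z^\wedge[k]) \iso H^k(\calA(Y,Z))$ for all $Y, Z \in \calA_0$ and $k \in \bbZ$. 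Under these identifications (and the fact that ${}_AN \in \perf(\calB)_0 \subseteq \hprj(\calB)_0$ by hypothesis (1), which makes $\al_{Y,Z,k}$ literally the map induced by $F$), hypothesis (2) says exactly that $F$ induces isomorphisms $\Hom_{\calD(\calA)}(Y^\wedge, Z^\wedge[k]) \to \Hom_{\calD(\calB)}(F(Y^\wedge), F(Z^\wedge)[k])$ for all $Y,Z,k$, and hypothesis (1) says every $F(A^\wedge)$ is a compact object of $\calD(\calB)$.

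For the first dévissage, fix a representable $P = A^\wedge$, and let $\calT_P$ be the class of $Y \in \calD(\calA)$ such that the map $\Hom_{\calD(\calA)}(P, Y[k]) \to \Hom_{\calD(\calB)}(F(P), F(Y)[k])$ induced by $F$ is an isomorphism for every $k \in \bbZ$. By hypothesis (2) it contains all representables; it is closed under shifts, and under cones by the five lemma applied to the morphism of long exact Hom-sequences attached to a triangle. Since $P$ is compact in $\calD(\calA)$ and $F(P)$ is compact in $\calD(\calB)$ by hypothesis (1), and $F$ preserves coproducts, $\calT_P$ is also closed under arbitrary coproducts. As $\Loc_{\calD(\calA)}(\{A^\wedge\}_{A\in\calA_0}) = \calD(\calA)$, we conclude $\calT_P = \calD(\calA)$. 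Hence $F$ induces isomorphisms $\Hom_{\calD(\calA)}(P, Y[k]) \to \Hom_{\calD(\calB)}(F(P), F(Y)[k])$ for every representable $P$ and every $Y \in \calD(\calA)$.

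For the second dévissage, fix $Y \in \calD(\calA)$ and let $\calU_Y$ be the class of $X \in \calD(\calA)$ such that $F$ induces an isomorphism $\Hom_{\calD(\calA)}(X, Y[k]) \to \Hom_{\calD(\calB)}(F(X), F(Y)[k])$ for every $k$. By the previous step $\calU_Y$ contains all representables, and it is closed under shifts and under cones by the five lemma in the contravariant variable. It is also closed under coproducts: $\Hom_{\calD(\calA)}(\coprod_i X_i, Y[k]) = \prod_i \Hom_{\calD(\calA)}(X_i, Y[k])$, and since $F(\coprod_i X_i) \iso \coprod_i F(X_i)$ we likewise have $\Hom_{\calD(\calB)}(F(\coprod_i X_i), F(Y)[k]) = \prod_i \Hom_{\calD(\calB)}(F(X_i), F(Y)[k])$, so the induced map is a product of isomorphisms. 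Therefore $\calU_Y = \calD(\calA)$, which is precisely the full faithfulness of $F$.

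I expect the only genuine subtlety to be the order of the two dévissages. The covariant one (in the target variable) must come first, because it is there that one needs $P$ compact in $\calD(\calA)$ together with $F(P)$ compact in $\calD(\calB)$ in order to pass coproducts through both Hom-functors — this is exactly where hypothesis (1) is used, hypothesis (2) serving only as the base case of the induction. The contravariant dévissage then requires no compactness, only that $\Hom(\blank, Y)$ converts coproducts into products and that $F$ commutes with coproducts. The remaining ingredients — exactness and coproduct-preservation of $F$, the isomorphism $F(A^\wedge) \iso {}_AN$, and the generation statement $\Loc_{\calD(\calA)}(\{A^\wedge\}) = \calD(\calA)$ — are standard facts about derived categories of dg categories.
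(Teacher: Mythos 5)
Your proof is correct, and it reaches the conclusion by a route that differs in structure from the paper's. The paper works with the unit $\et$ of the adjunction $(\blank\Lox_{\calA}N,\ \RHom_{\calB}(N,\blank))$ and runs a \emph{single} d\'evissage: it lets $\calC$ be the full subcategory of $M \in \calD(\calA)$ with $\et_M$ invertible, checks that the representables lie in $\calC$ (this is where hypothesis (2) enters, via a commutative triangle identifying $H^k(\et_{\calA(A',A)})$ with $\al_{A',A,k}$ up to the Yoneda isomorphism) and that $\calC$ is closed under small coproducts (this is where hypothesis (1) enters, since compactness of the ${}_AN$ makes $\RHom_{\calB}(N,\blank)$ commute with coproducts), and concludes $\calC = \calD(\calA)$. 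You instead run a \emph{double} d\'evissage directly on the bivariant Hom-condition, first in the target variable with a fixed representable source and then in the source variable, never needing to name the unit or compute $\RHom_{\calB}(N,{}_AN)$. The two arguments use the hypotheses at exactly the same points: your observation that the covariant d\'evissage must come first, because that is the only place where compactness of $P$ and of $F(P)\iso{}_AN$ is available to push coproducts through both Hom-functors, is the precise counterpart of the paper's use of hypothesis (1) in its coproduct step. What the paper's version buys is economy (one induction, and the full-faithfulness statement is packaged as invertibility of a single natural transformation, which is the form reused in Lemma \ref{equ}); what your version buys is that it is more elementary and self-contained, relying only on compact generation of $\calD(\calA)$ by representables, the five lemma, and the standard adjunction/coproduct identities, without any explicit manipulation of the unit.
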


\begin{proof} We know that $(\blank\Lox_{\calA}N,\RHom_{\calB}(N,\blank))$ is an adjoint pair. Let $\et$ denote the unit of this adjunction. Therefore to show that $\blank\Lox_{\calA}N$ is fully faithful, it suffices to show the following.
\begin{clm-nn}
For each $M\in \calD(\calA)$,
$
\et_M: M\to \RHom_{\calB}(N,M\Lox_{\calA}N)
$
is an isomorphism in $\calD(\calA)$.
\end{clm-nn}
The proof proceeds in a usual way (e.g., see \cite[Lemma 4.2]{Ke1} or \cite[Proposition 3.10]{Sch}).
Namely, by setting $\calC$ to be the full subcategory of $\calD(\calA)$ formed by those objects $M$ such that $\eta_M$ is an isomorphism,
it is enough to show that $\calC = \calD(\calA)$.
As is easily seen $\calC$ is a triangulated subcategory of $\calD(\calA)$.
Therefore it suffices to show the following two facts:
\begin{enumerate}
\item[(i)]
${}_A\calA \in\calC$ for all $A\in\calA$; and
\item[(ii)]
$\calC$ is closed under small coproducts.
\end{enumerate}
(i) Let $A\in\calA$. We show that ${}_A\calA\in\calC$, namely that 
$$
\et_{{}_A\calA}: {}_A\calA\to \RHom_{\calB}(N,{}_A\calA\Lox_{\calA}N)\iso \RHom_{\calB}(N,{}_AN)
$$
is an isomorphism in $\calD(\calA)$.
It suffices to show that
$$
\eta_{{}_A\calA}: {}_A\calA\to  \RHom_{\calB}(N,{}_AN)
$$
is a quasi-isomorphism.
For each $A'\in\calA$ and $k \in \bbZ$ we have the following commutative diagram:
\[
\begin{tikzcd}[column sep=1em]
H^k(\calA(A',A)) && H^k(\RHom_{\calB}({}_{A'}N, {}_AN))\\
& \Hom_{\calD(\calB)}({}_{A'}N, {}_AN[k])
\Ar{1-1}{1-3}{"{H^k(\eta_{\calA(A',A)})}"}
\Ar{1-1}{2-2}{"\al_{A',A,k}" '}
\Ar{1-3}{2-2}{"\be_{A',A,k}"}
\end{tikzcd},
\]
where $\be_{A',A,k}$ is the canonical isomorphism.
Since $\al_{A',A,k}$ is an isomorphism by the assumption (2),
$H^k(\eta_{\calA(A',A)})$ turns out to be an isomorphism, which shows (i).

(ii)
Let $I$ be a small set and let $M_i \in \calC$ for all $i \in I$.
We have the following commutative diagram
with canonical morphisms in $\calD(\calA)$:
\[
\begin{tikzcd}
\Ds_{i\in I} M_i & \RHom_{\calB}(N,(\Ds_{i\in I} M_i)\Lox_{\calA}N)\\
 & \RHom_{\calB}(N,\Ds_{i\in I} (M_i\Lox_{\calA}N)\\
\Ds_{i\in I} M_i & \Ds_{i\in I}\RHom_{\calB}(N, M_i\Lox_{\calA}N)
\Ar{1-1}{1-2}{"\et_{\Ds_{i\in I} M_i}"}
\Ar{3-1}{3-2}{"\Ds_{i\in I}\et_{M_i}" ', "\sim"}
\Ar{1-1}{3-1}{equal}
\Ar{2-2}{1-2}{"\wr","(\mathrm{a})" '}
\Ar{3-2}{2-2}{"\wr","(\mathrm{b})" '}
\end{tikzcd},
\]
\\
where (a) is an isomorphism because $\blank\Lox_\calA N$ is a left adjoint
and preserves small coproducts, and (b) is an isomorphism by the assumption (1).
Thus 
\[
\et_{\Ds_{i\in I} M_i}: \Ds_{i\in I} M_i\to \RHom_{\calB}(N,\Ds_{i\in I} M_i\Lox_{\calA}N)
\]
is an isomorphism,
and hence we have
$\Ds_{i\in I} M_i\in\calC$.
As a consequence,  $\calC$ is closed under small coproducts.
\end{proof}

\begin{lem}
\label{ker}
Let $\calA$ and $\calB$ be dg categories and
$E \colon \calA \to \calB$ a quasi-equivalence.
Then
for each right $\calB$-module $M$ the following holds:
\[
\RHom_{\calB}({}_E\calB,M)=0
\text{ implies }
M = 0.
\]
\end{lem}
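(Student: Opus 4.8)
The plan is to reduce the assertion to the essential surjectivity of $H^0(E)$, after identifying $\RHom_{\calB}({}_E\calB,M)$ with the restriction of $M$ along $E$.

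First I would compute $\RHom_{\calB}({}_E\calB,M)$ as a right $\calA$-module. For each $A\in\calA_0$ its value is
\[
\RHom_{\calB}({}_E\calB,M)(A) = \RHom_{\calB}\bigl({}_A({}_E\calB),M\bigr) = \RHom_{\calB}\bigl(\calB(\blank,E(A)),M\bigr).
\]
Since the representable module $\calB(\blank,E(A))$ is homotopically projective, no resolution is needed and the enriched Yoneda lemma gives an isomorphism $\RHom_{\calB}(\calB(\blank,E(A)),M)\simeq M(E(A))$ in $\calD(\k)$. Hence the hypothesis $\RHom_{\calB}({}_E\calB,M)=0$ in $\calD(\calA)$ amounts to saying that $M(E(A))$ is acyclic, i.e. $H^k(M(E(A)))=0$, for all $A\in\calA_0$ and all $k\in\bbZ$.

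Next I would invoke that $E$ is a quasi-equivalence: the induced functor $H^0(E)\colon H^0(\calA)\to H^0(\calB)$ is an equivalence, in particular essentially surjective. Thus for an arbitrary $B\in\calB_0$ there is some $A\in\calA_0$ and an isomorphism $B\cong E(A)$ in $H^0(\calB)$. Applying the dg functor $M\colon\calB\op\to\DGMod(\k)$ and passing to $H^0$, this isomorphism is sent to an isomorphism $M(E(A))\cong M(B)$ in $H^0(\DGMod(\k))$, that is, to a homotopy equivalence of complexes, which is in particular a quasi-isomorphism. Therefore $H^k(M(B))\cong H^k(M(E(A)))=0$ for all $k\in\bbZ$. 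As $B$ was arbitrary, $M$ is acyclic, i.e. $M=0$ in $\calD(\calB)$.

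I do not expect a real obstacle; the two points to be handled carefully are the Yoneda identification of $\RHom_{\calB}({}_E\calB,M)$ with the restriction $M\circ E\op$ of $M$ along $E$ (using homotopical projectivity of representables so that the derived Hom is computed directly in $\DGMod(\calB)$) and the fact that a dg functor carries isomorphisms in $H^0$ to homotopy equivalences, hence to quasi-isomorphisms. Alternatively, the last step can be phrased through the Yoneda isomorphism $\calD(\calB)(B^{\wedge},M[k])\cong H^k(M(B))$ together with $B^{\wedge}\cong E(A)^{\wedge}$ in $\calD(\calB)$, the latter obtained by applying the Yoneda embedding to the isomorphism $B\cong E(A)$ in $H^0(\calB)$.
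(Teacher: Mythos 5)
Your proposal is correct and follows essentially the same route as the paper: both reduce to the essential surjectivity of $H^0(E)$ and the dg Yoneda identification $\RHom_{\calB}(\calB(\blank,E(A)),M)\simeq M(E(A))$, the only cosmetic difference being that the paper transports the isomorphism $B\iso E(A)$ through the Yoneda embedding into $\calD(\calB)$ before applying $\RHom_{\calB}(\blank,M)$, while your primary phrasing applies $M$ directly to that isomorphism in $H^0(\calB)$ (and your stated alternative is exactly the paper's argument).
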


\begin{proof}
Let $M$ be a $\calB$-module, and assume that $\RHom_{\calB}({}_E\calB,M)=0$.
Take any $B \in \calB$.
It is enough to show that $M(B) = 0$.
Now, since $H^0(E): H^0(\calA)\to H^0(\calB)$ is an equivalence (the condition (2) in Definition \ref{dfn:q-eq}), there exists an object $A\in\calA$, such that $E(A) = H^0(E)(A)\iso B$ in $H^0(\calB)$.
Then by the functor
$H^0(\calB) \to \calD(\calB), X \mapsto {}_X\calB$
we have 
${}_{E(A)}\calB\iso {}_B\calB$ in $\calD(\calB)$.
Hence by the dg Yoneda lemma we have
\[
M(B) \iso
\RHom_{\calB}({}_B\calB,M)
\iso
\RHom_{\calB}({}_{E(A)}\calB,M) =0,
\]
as required.
\end{proof}

\begin{proof}[Proof of Theorem \ref{thm:q-eq-der-eq}]
Define a dg $\calA$-$\calB$-bimodule $N$ by $N:= {}_E\calB$.
Then $N$ satisfies the condition (1) in Lemma \ref{ff},
and by the assumption (in particular, by the condition (1) in Definition \ref{dfn:q-eq}) $N$ also satisfies the condition (2) in Lemma \ref{ff}.
Therefore $F:=\blank\Lox_{\calA}N \colon \calD(\calA) \to \calD(\calB)$ is fully faithful by Lemma \ref{ff}.
Moreover $G:= \RHom_{\calB}(N,\blank)$ is a right adjoint to $F$ and satisfies the condition (3) in Lemma \ref{equ}
by the assumption and Lemma \ref{ker}. 
Hence $F$ is an equivalence between $\calD(\calA)$ and $\calD(\calB)$ by Lemma \ref{equ}.
\end{proof}

\section{Flat targets and dg bimodules}
\label{sec:flat-target}

Recall that a dg category $\calA$ is \emph{$\k$-projective}, if for any $x,y\in\calA$, the complex $\calA(x,y)$ is homotopically projective in $\calH(\k)$, that is,
$\Hom_\k(\calA(x,y),\blank)$ sends acyclic complexes to acyclic ones.
A dg category $\calA$ is \emph{$\k$-flat}, if for any $x,y\in\calA$, the complex $\Hom(x,y)$ is homotopically flat in $\calH(\k)$, that is, $\blank\ox_{\k}\calA(x,y)\colon \calH(\k)\to \calH(\k)$ sends acyclic complexes to acyclic ones.

For a $\calB$-$\calA$-bimodule $U$, we set
$\bfp_{\calB\hyph\calA} U:= \bfp_{\calB\ox_\k\calA\op} U$.

\medskip

\begin{dfn}
\label{dfn:Lox-bimodules}
Let $\calA, \calB, \calC$ be small dg categories,
$M$ a $\calC$-$\calB$-bimodule, and $N$ a $\calB$-$\calA$-bimodule.
Then we set
$$
M \Lox_\calB N:= \bfp_{\calC\hyph\calB}M \ox_\calB \bfp_{\calB\hyph\calA}N,
$$
which defines the functor
$$
\blank \Lox_\calB ? \colon
\calD(\calC \ox_\k \calB\op) \times \calD(\calB \ox_\k \calA\op)
\to \calD(\calC \ox_\k \calA\op)
$$
by $(M, N) \mapsto M \Lox_\calB N$.
\end{dfn}

\begin{lem}\cite[Lemma 6.3]{Ke1}\label{lem:flat}
Let $\calA,\calB$ and $\calC$ be dg categories and let $_{\calB}U_{\calA}$ and $_{\calC}V_{\calB}$ be dg bimodules. We assume that $\calA$ is $\k$-flat. Then the following hold:
\begin{enumerate}    
\item  $N\Lox_{\calB} U\iso N\ox_{\calB}\bfp_{\calB\mbox{-}\calA}U$
in $\calD(\calA)$ for all $N\in\calD(\calB)$.

\item 
$(\blank\Lox_{\calC}V)\Lox_{\calB} U
= (\blank \Lox_{\calB} U) \circ (\blank\Lox_{\calC}V) \iso\blank \Lox_{\calB} (V\ox_\calB\bfp_{\calB\hyph\calA} U)$ as functors $\calD(\calC) \to \calD(\calA)$.
\end{enumerate}
\end{lem}

\begin{dfn}
Let $\calA$ be a small dg category.
A dg $\calA$-module $M_{\calA}$ is called {\em homotopically flat}
(\cite[3.3]{Dr2004}) 
if for all acyclic left dg $\calA$-modules $N$, the tensor product 
$M\otimes_{\calA} N$ is also acyclic.
\end{dfn}

The following gives examples of homotopically flat dg modules.

\begin{lem}\label{lem:homo-flat}
Let $\calA$ be a small dg category.
Then any homotopically projective dg $\calA$-module is homotopically flat.
\qed
\end{lem}

\begin{proof}
Let $P$ be a homotopically projective right dg $\calA$-module, $N$ an acyclic
left dg $\calA$-module, and $E$ an injective cogenerator in $\Mod \k$.
Then for each $i \in \bbZ$,
$$
\begin{aligned}
\Hom_\k(H^i(P\ox_\calA N), E)
&\iso H^{-i}\Hom_\k(P\ox_\calA N, E)\\
&\iso H^{-i}(\Cdg(\k)(P \ox_\calA N, E))\\
&\iso H^{-i}(\Cdg(\calA)(P, \Hom_\k(N, E))\\
&\iso \calH(\calA)(P, \Hom_\k(N, E)[-i]) = 0
\end{aligned}
$$
because $\Hom_\k(N, E)$ is an acyclic right dg $\calA$-module.
Then $H^i(P\ox_\calA N) = 0$, and $P\ox_\calA N$ is acyclic. Hence $P$ is a homotopically flat right dg $\calA$-module.
\end{proof}

\begin{lem}\cite[Lemma 3.3]{Im}\label{homo-top}
 Let $\calA, \calB$ and $\calC$ be small dg categories, and
${}_\calC V_\calB, {}_\calB U_\calA$ bimodules. Then 
\begin{enumerate}
\item If $\calA$ is $\k$-flat and a dg bimodule ${}_\calB U_\calA$ is homotopically flat, then $U$ is homotopically flat left $\calB$-module. 
\item If $\calC$ is $\k$-flat and a dg bimodule ${}_\calC V_\calB$ is homotopically flat, then $V$ is homotopically flat right $\calB$-module.
\end{enumerate}
\end{lem}

\begin{lem} \cite[Lemma 3.2]{Im}\label{lem:perv-quasi}
 Let $\calA, \calB$ and $\calC$ be small dg categories, and
${}_\calB U_\calA,{}_\calC V_\calB$ bimodules. Then 
\begin{enumerate}    
\item  If ${}_\calB U_\calA$ is homotopically flat as $\calB$-module, then 
$$
\blank\otimes{}_\calB U_\calA\colon \calC(\calB^{\op}\otimes\calC)\to \calC(\calA^{\op}\otimes\calC)
$$ 
preserves quasi-isomorphisms.
\item If ${}_\calC V_\calB$ is homotopically flat as $\calB$-module, then 
$$
{}_\calC V_\calB\otimes\blank\colon \calC(\calB\otimes\calA^{\op})\to \calC(\calC\otimes\calA^{\op})
$$
\end{enumerate}
preserves quasi-isomorphisms.
\end{lem}

\begin{proof}
It suffices to show that $N\otimes{}_\calB U_\calA$ is acyclic if $N\in \DGMod(\calB^{\op}\otimes\calC)$. Therefore, for any $A\in\calA, C\in\calC$,
$$
(N\otimes{}_\calB U_\calA)(C,A)=N(\blank,C)\otimes_{\calB}U(A,\blank).
$$
By the assumption that ${}_\calB U_\calA$ is homotopically flat as $\calB$-module, and $N(\blank,C)$ is an acyclic dg $\calB$-module. Then 
$N(\blank,C)\otimes_{\calB}U(A,\blank)$ is acyclic, so is $N\otimes{}_\calB U$.
\end{proof}

Although the derived tensor product does not have associativity in general,
the following is known. 
 \begin{prp}\cite[Proposition 3.7]{Im}
 \label{prp:derived-associator}
 Let $\calA, \calB, \calC$ and $\calD$ be small dg categories, and
${}_\calD W_\calC, {}_\calC V_\calB$, ${}_\calB U_\calA$ bimodules.
 If $\calA$ and $\calD$ are $\k$-flat, then there is a natural isomorphism
\[
\bfa^\bfL = \bfa^\bfL_{W,V,U} \colon ({}_\calD W\Lox_\calC V) \Lox_\calB U \isoto {}_\calD W \Lox_\calC (V \Lox_\calB U),
\]
\end{prp}
which means an associativity of the derived tensor products.
This isomorphism is called the {\em derived associator} of derived tensor products.
\begin{proof}

By Lemma \ref{lem:homo-flat}, $\bfp_{\calD\hyph\calC} W$ and $\bfp_{\calB\hyph\calA} U$ are homotopically flat $\calD$-$\calC$-bimodule and $\calB$-$\calA$-bimodule, respectively. 
Since $\calA$ and $\calD$ are $\k$-flat, 
we have the following by Lemma \ref{homo-top}:
\begin{equation}
\label{eq:h-flat}
\left\{\begin{aligned}
&\text{$\bfp_{\calD\hyph\calC} W$ is a homotopically flat right $\calC$-module, and}\\
&\text{$\bfp_{\calB\hyph\calA} U$ is a homotopically flat left $\calB$-module.}
\end{aligned}\right.
\end{equation}
Then we have the following isomorphisms
$$
\begin{aligned}
(W\Lox_\calC V) \Lox_\calB U 
&\overset{(\rm a)}{\to}
\bfp_{\calD\hyph\calB}(W\Lox_\calC V) \ox_\calB \bfp_{\calB\hyph\calA} U\\
&\overset{(\rm b)}{\to}
(W\Lox_\calC V) \ox_\calB \bfp_{\calB\hyph\calA} U\\
&\overset{(\rm c)}{\to}
(\bfp_{\calD\hyph\calC} W\ox_\calC \bfp_{\calC\hyph\calB} V) \ox_\calB \bfp_{\calB\hyph\calA} U\\
&\overset{(\rm d)}{\to}
\bfp_{\calD\hyph\calC} W \ox_\calC (\bfp_{\calC\hyph\calB} V \ox_\calB \bfp_{\calB\hyph\calA} U)\\
&\overset{(\rm e)}{\to}
\bfp_{\calD\hyph\calC} W \ox_\calC (V \Lox_\calB U)\\
&\overset{(\rm f)}{\to}
\bfp_{\calD\hyph\calC} W \ox_\calC \bfp_{\calC\hyph\calB}(V \Lox_\calB U),\\
&\overset{(\rm g)}{\to}
W \Lox_\calC (V\Lox_\calB U).
\end{aligned}
$$
where the isomorphisms (a), (c), (e) and (g) are given by Definition \ref{dfn:Lox-bimodules},
the isomorphisms (b) and (f) are obtained by \eqref{eq:h-flat};
the isomorphism (d) is just the associator of tensor product.
\end{proof}

\section{Dg lifts}
\label{lifts}
In this section, we cite necessary terminologies from the paper \cite{Ke1} by Keller.
Let $\calA$ be a small dg category, and let $\mathscr{U}$ be a small full subcategory of $\calD(\calA)$ and $\bbZ\mathscr{U}$ the full subcategory of $\calD(\calA)$ whose objects are the $U[n]$ for all $U \in \calU, n \in \bbZ$.

\begin{dfn}
  A \emph{lift} of $\mathscr{U}$ is a dg category 
 $\calB$ together with an $\mathscr{B}$-$\mathscr{A}$-bimodule $M$ such that $\blank\Lox_{\calB} M$ gives rise to equivalences $\bbZ\udl{\calB} \stackrel{\sim}{\to} \bbZ\mathscr{U}$ and $\udl{\calB}\stackrel{\sim}{\to} \mathscr{U}$, where $\udl{\calB}$ is the full subcategory of $\calD(\calB)$ with the
 object set $\{N \in \calD(\calB) \mid N \iso B^\wedge, \exists B\in\calB\}$.
\[
\begin{tikzcd}
\bbZ\udl{\calB}  & \bbZ\mathscr{U} \\
\calD(\calB) & \calD(\calA)
\Ar{1-1}{1-2}{"\sim"}
\Ar{1-1}{2-1}{}
\Ar{1-2}{2-2}{}
\Ar{2-1}{2-2}{"\blank\Lox_{\calA} M"}
\end{tikzcd}
\]
\end{dfn}

\begin{dfn}
A \emph{standard lift} of $\mathscr{U}$ is defined
by taking $\calB$ to be the full subcategory of $\Cdg(\calA)$ formed by chosen objects $ \bfp U, U \in\mathscr{U}$, and $M$ to be the bimodule
\[
\bi{M}{\bfp U}{A}:= (\bfp U)(A), \quad \bfp U \in \calB, A \in \calA.
\]
\end{dfn}

Now let $(\calB, M)$ be any lift of $\mathscr{U}$ such that ${}_BM$ is homotopically projective for each $B \in \calB$
(for example, the standard lift satisfies this condition).
Let $\calC$ be a dg category and $\bfF\colon \Cdg(\calC)\to \Cdg(\calA)$ a dg functor such that $\bfL\bfF\colon \calD(\calC)\to\calD(\calA)$ induces a functor $\udl{\calC}\to\mathscr{U}$, then we have the following commutative diagram
\[
\begin{tikzcd}
&\calD(\calC)& \udl{\calC}  \\
\calD(\calB)  & \calD(\calA)& \mathscr{U}\\
\udl{\calB}  & \mathscr{U} 
\Ar{1-3}{1-2}{""}
\Ar{1-2}{2-2}{"\mathbf{L}\bfF"}
\Ar{2-1}{2-2}{"\blank\Lox_{\calB} M"}
\Ar{3-1}{2-1}{}
\Ar{3-2}{2-2}{}
\Ar{3-1}{3-2}{}
\Ar{2-3}{2-2}{}
\Ar{1-3}{2-3}{}
\end{tikzcd}.
\]

The following lemma is well-known by Keller \cite[Lemma 7.3]{Ke1}.

\begin{lem}\label{lem:dg-lift} 
Define a $\calC$-$\calB$-bimodule $Y = Y(M, \bfF)$ by setting
$$\bi{Y}CB:= \Cdg(\calA)({}_B M, \bfF(C^\wedge))$$
for all $B\in \calB, C\in \calC$.

\begin{enumerate}
\item [(a)]
$\blank\Lox_{\calC}Y$ induces a functor $\udl{\calC} \to \udl{\calB}$, hence $Y$ is a quasi-functor. It is a quasi-equivalence if $\bfL \bfF$ induces an equivalence $\bbZ\udl{\calC} \stackrel{\sim}{\to} \bbZ\mathscr{U}$.

\item [(b)]
There is a canonical morphism
\[
(N\Lox_{\calC} Y) \Lox_{\calB} M \To \bfL \bfF N, \quad N\in \calD(\calC)
\]
which is invertible for all $N\in \hprj^b(\calC)$. It is invertible for arbitrary $N\in \calD(\calC)$ iff $\bfL \bfF$ commutes with direct sums.

\item[(c)]
If $\bfF$ has the form $\bfF = \blank\ox_{\calC} Z$ for 
some $Z$ such that $(\calC,Z)$ is a lift of $\mathscr{U}$, then
$\blank\Lox_{\calC}Y$ tuns out to be a quasi-equivalence $\udl{\calC} \to \udl{\calB}$,
and we have $ (\blank\Lox_{\calC} Y)\Lox_{\calB} M  \stackrel{\sim}{\to}\blank\Lox_{\calC} Z$. If moreover ${}_C Z$ is homotopically projective for each $C \in\calC$, then $ \RHom(Y, \blank)\circ\RHom(M, \blank)\stackrel{\sim}{\to}\RHom(Z, \blank)$.
\end{enumerate}
\end{lem}

\end{document}